\documentclass[a4paper]{amsart}
\usepackage[utf8]{inputenc}
\usepackage[english]{babel}
\usepackage{amsthm}
\usepackage{amsmath}
\usepackage{amsfonts}
\usepackage{mathtools}
\usepackage{enumitem}
\usepackage{xcolor}
\usepackage{amssymb}
\usepackage{csquotes}
\usepackage{stmaryrd}
\usepackage[capitalise, noabbrev]{cleveref}
\usepackage{cancel}
\usepackage{ textcomp }

\setlist[enumerate,1]{label = (\roman*)}

\theoremstyle{definition}
\newtheorem{definition}{Definition}[section]
\newtheorem{proposition}[definition]{Proposition}
\newtheorem{theorem}[definition]{Theorem}
\newtheorem{lemma}[definition]{Lemma}
\newtheorem{corollary}[definition]{Corollary}
\newtheorem{example}[definition]{Example}
\newtheorem*{main-theorem}{Theorem}
\newtheorem*{fact}{Fact}
\theoremstyle{remark}
\newtheorem*{remark}{Remark}
\newtheorem*{claim}{Claim}

\renewcommand{\restriction}{ {\upharpoonright} }
\newcommand{\Z}{\mathcal{Z}}
\newcommand{\A}{\mathcal{A}}
\newcommand{\B}{\mathcal{B}}
\newcommand{\C}{\mathcal{C}}
\newcommand{\D}{\mathcal{D}}
\newcommand{\E}{\mathcal{E}}
\newcommand{\M}{\mathcal{M}}

\newcommand{\U}{\mathcal{U}}
\newcommand{\F}{\mathcal{F}}
\renewcommand{\S}{\mathcal{S}}

\DeclareMathOperator{\dep}{\mathop{=}}
\newcommand{\vx}{\vec{x}}

\newcommand{\vy}{\vec{y}}
\newcommand{\vz}{\vec{z}}

\DeclareMathOperator{\Pow}{\wp}
\DeclareMathOperator{\dom}{dom}
\DeclareMathOperator{\ran}{ran}

\DeclareMathOperator{\acl}{acl}
\DeclareMathOperator{\aut}{aut}

\newcommand{\acleq}{\acl^{\mathrm{eq}}}

\DeclareMathOperator{\Fv}{Fv}

\newcommand\ivee{\mathrel{\rotatebox[origin=c]{270}{$\geqslant$}}}


\newcommand{\dobigivee}[1]{%
	\vcenter{#1\kern.2ex\hbox{$\raisebox{-0.015ex}{\rotatebox[origin=c]{-2}{\text{\textbf\textbackslash}}}\unskip\hspace*{-1.35ex}\ignorespaces\bigvee$}\kern.2ex}%
}

\newcommand{\existsone}{\exists^1}
\newcommand{\forallone}{\forall^1}

\newcommand{\cneg}{\mathord{\dot{\sim}}}

\newcommand\fol{\mathsf{FO}}
\newcommand\sol{\mathsf{SO}}
\newcommand\fot{\mathsf{FOT}}

\newcommand\eso{\mathsf{ESO}}

\newcommand\foil{\mathsf{FO}(\perp_c)}

\DeclareMathOperator{\rel}{rel}

\newcommand\nmodels{\mathbin{\cancel{\models}}}

\newcommand{\Hom}{\mathrm{Hom}}
\newcommand{\K}{\mathcal{K}}
\newcommand{\R}{\mathcal{R}}
\newcommand{\Y}{\mathcal{Y}}
\newcommand{\X}{\mathcal{X}}
\DeclareMathOperator{\team}{team}
\DeclareMathOperator{\ar}{ar}

\newcommand{\V}{\mathbf{V}}
\DeclareMathOperator{\LS}{LS}

\def\Ind{\setbox0=\hbox{$x$}\kern\wd0\hbox to 0pt{\hss$\mid$\hss}
	\lower.9\ht0\hbox to 0pt{\hss$\smile$\hss}\kern\wd0}
\def\Notind{\setbox0=\hbox{$x$}\kern\wd0\hbox to 0pt{\mathchardef
		\nn=12854\hss$\nn$\kern1.4\wd0\hss}\hbox to
	0pt{\hss$\mid$\hss}\lower.9\ht0 \hbox to 0pt{\hss$\smile$\hss}\kern\wd0}

\def\ind{\mathop{\mathpalette\Ind{}}}

\usepackage{tikz}
\usepackage{wrapfig}
\usepackage{tikz-cd}
\usepackage[backend=biber, isbn=false,  doi=false, url=false, style=numeric, sortcites=true, citestyle=numeric, giveninits=true]{biblatex}
\bibliography{revision.bib}

\theoremstyle{definition}

\renewcommand{\A}{\mathfrak{A}}
\renewcommand{\B}{\mathfrak{B}}
\renewcommand{\C}{\mathfrak{C}}
\renewcommand{\D}{\mathfrak{D}}
\renewcommand{\E}{\mathfrak{E}}
\renewcommand{\F}{\mathfrak{F}}
\renewcommand{\M}{\mathfrak{M}}
\newcommand{\bM}{\mathbf{M}}

\DeclareMathOperator{\Th}{Th}
\newcommand{\ESOTh}{\Th_{\eso}}
\DeclareMathOperator{\Str}{Str}
\DeclareMathOperator{\Sig}{Sig}

\newcommand{\Fml}{\mathrm{Fml}_L}
\newcommand{\Sat}{\mathrm{Sat}_L}

\newcommand{\wcimp}{\rightarrowtriangle}
\newcommand{\wcequiv}{\leftrightarrowtriangle}

\DeclareMathOperator{\gtype}{tp_g}
\DeclareMathOperator{\foltype}{tp_\fol}
\DeclareMathOperator{\fottype}{tp_\fot}

\DeclareMathOperator{\stp}{stp}
\DeclareMathOperator{\FE}{FE}

\DeclareMathOperator{\Mod}{Mod}
\DeclareMathOperator{\Sub}{SubMod}
\DeclareMathOperator{\id}{id}
\DeclareMathOperator{\Aut}{Aut}

\DeclareMathOperator{\Diag}{Diag}
\DeclareMathOperator{\eq}{eq}

\newcommand{\cF}{\mathcal{F}}
\newcommand{\cG}{\mathcal{G}}

\newcommand{\f}{\mathcal{f}}
\newcommand{\g}{\mathcal{g}}
\newcommand{\h}{\mathcal{h}}
\renewcommand{\k}{\mathcal{k}}

\usepackage[cal=boondoxo]{mathalfa}
\renewcommand{\Pow}{\mathcal{P}}

\DeclareMathOperator{\upset}{\uparrow}

\DeclareMathOperator{\GMod}{GMod}
\DeclareMathOperator{\GSubMod}{GSubMod}

\DeclareMathOperator{\cl}{cl}

\title{On the Model Theory of Second-Order Objects}
\author[T. Hyttinen]{Tapani Hyttinen}
\author[J. Puljujärvi]{Joni Puljujärvi}
\author[D. E. Quadrellaro]{Davide Emilio Quadrellaro}

\address{(Tapani Hyttinen) Department of Mathematics and Statistics, University of Helsinki, P.O. Box 68 (Pietari Kalmin katu 5), 00014 Helsinki, Finland.}
\email{tapani.hyttinen@helsinki.fi}

\address{(Joni Puljujärvi) Department of Computer Science, University College London, 66–72 Gower Street, London WC1E 6EA, United Kingdom.}
\email{joni.puljujarvi@ucl.ac.uk}

\address{(Davide Emilio Quadrellaro) Department of Mathematics “Giuseppe Peano”, University of Torino, Via Carlo Alberto 10, 10123 Torino, Italy.}
\email{davideemilio.quadrellaro@unito.it}
\address{Istituto Nazionale di Alta Matematica ``Francesco Severi'',
	Piazzale Aldo Moro 5
	00185 Roma, Italy.}
\email{quadrellaro@altamatematica.it}

\thanks{This project has received funding from the European Research Council (ERC) under the
European Union’s Horizon 2020 research and innovation programme (grant agreement No
101020762). J.~Puljujärvi was partially supported by the Academy of Finland, grant 322795. D. E. Quadrellaro was supported by an INdAM Postdoctoral Grant.}

\keywords{abstract elementary classes, existential second-order logic, team semantics}
\subjclass[2020]{03C45, 03C48, 03C85}

\begin{document}

\begin{abstract}
    Motivated by team semantics and existential second-order logic, we develop a model-theoretic framework for studying second-order objects such as sets and relations. We introduce the notion of an abstract elementary team category that generalizes the standard notion of an abstract elementary class, and show that it is an example of an accessible category. We apply our framework to show that the logic $\fot$ introduced by Kontinen and Yang \cite{MR4594292} satisfies a version of Lindström's Theorem. Finally, we consider the problem of transferring categoricity between different cardinalities for complete theories in existential second-order logic (or independence logic) and prove both a downwards and an upwards categoricity transfer result.
\end{abstract}

\maketitle

\setcounter{tocdepth}{1}
\tableofcontents

\section{Introduction}

Model theory is the study of general properties of mathematical structures. In traditional model theory, the center of attention---we could say the smallest unit of interest---are elements, or tuples of elements. One is often interested in whether certain types of elements or tuples exist, or how many different types there might be, or how elements get mapped when one moves from one structure to another via some mapping that preserves certain properties of structures.

On the other hand, often in mathematics one is interested in sets of elements, or sets of tuples (i.e., relations). While elements are first-order objects, sets and relations are second-order objects. Not much model theory has been done in a setting where the basic building block is, instead of an element, a set. The reason may be that second-order logic---the natural counterpart of first-order logic in this setting---is so strong that it lacks many properties that make elementary model theory interesting. For example, the second-order theory of many interesting structures---such as the ordered field of real numbers, the field of complex numbers, or the semiring of natural numbers---are categorical, i.e. they have a unique model up to isomorphism. However, it may not be completely hopeless to study relations using simply first-order logic, or some other logic that lies between first and second order.

Our interest in the model theory of relations comes from the field of \emph{team semantics}. Team semantics is an extension of the usual Tarski semantics of first-order logic that allows formulas to be evaluated over sets of assignments, called \emph{teams}, rather than single assignments. Such a framework was originally introduced by \textcite{Hodges} in order to provide a compositional semantics to Hintikka and Sandu's IF-logic. In particular, \textcite{VAANANEN2010817} and \textcite{Vaananen2007-VNNDLA} used team semantics to extend first-order logic by means of the so-called \emph{dependence atom}
\[
    \dep(x_0,\dots,x_{n-1};y),
\]
which we read as saying ``the value of $x_1,\dots,x_n$ completely determines the value of $y$''. The resulting logic, namely \emph{dependence logic}, dramatically increases the expressive power of first-order logic and turns out to be equivalent both to IF-logic and to the existential fragment of second-order logic $\eso$ with the usual Tarski semantics (cf.~\cref{Translation between independence logic and ESO}).

Additionally, team semantics proved soon to be a very powerful and flexible framework, and it was noticed by Väänänen and others that it allows to consider several atoms characterizing various notions of dependence, e.g. the \emph{independence atom} $x\perp y$ \cite{gradel2013dependence} or the \emph{inclusion atom} $x\subseteq y$ \cite{galliani2012inclusion}. Importantly, each of the logics that one obtains by extending the syntax of first-order logic by these new atomic formulas corresponds to a different fragment of existential second order logic $\eso$ (with usual semantics) via a translation that goes both ways. More recently, however, \textcite{MR4594292} have also introduced a logic over team semantics that corresponds to first-order logic in this way, i.e. it captures exactly the elementary properties of teams. From the perspective of the present work, the results from \cite{MR4594292} represent a significant breakthrough. In our view, Kontinen and Yang's work shows that the most essential aspect of team semantics lies in this shift of attention from first-order to second-order objects, rather than the increase in expressive power.

In fact, since teams are essentially relations, it is very natural to adopt team semantics as a framework to study the model theory of sets and relations. Interestingly enough, there has not been much study of the model theory of logics in team semantics, with the exception of the seminal work of \textcite{Vaananen2007-VNNDLA} and the study of team ultraproducts by \textcite{luck2020team}. In \cite{puljujärvi2022compactness}, the study of model theory for team semantics was initated, in particular by proving a general version of the compactness theorem for independence logic and several of its fragments: in this version, instead of theories---that is, sets of sentences---one looks at sets of possibly open formulas. In \cite{MR4594292}, a version of the compactness theorem for open formulas was proved where the number of free variables allowed to occur in the sets of formulas was limited to be countable. This limitation was then removed in \cite{puljujärvi2022compactness}.

In this article we continue the work from \cite{puljujärvi2022compactness} and develop a model-theoretic framework for team semantics.  We further believe this framework provides the natural environment for studying the elementary and second-order properties of sets and relations. In particular, we apply the model-theoretic toolkit developed in this article to study the problem of transferring categoricity for complete theories in existential second-order logic. ``Categoricity transfer'' refers to the phenomenon where the categoricity of a theory in one cardinality follows from its categoricity in another cardinality; for instance, by Morley's theorem, any first-order theory that is categorical in some uncountable cardinality is, in fact, categorical in every uncountable cardinality. Our categoricity transfer theorems provide an important example of how team semantics can be used to deliver results on the model theory of existential second-order logic.

Let us now summarize the structure and the major contributions of this work. In \cref{Section_Preliminaries} we review some basic notions from team semantics and existential second-order logic. In \cref{Section_maps} we introduce the notion of team maps between structures and prove some of their basic properties. In \cref{Section_AEC}, we discus problems arising from trying to fit a class of structures together with elementary team embeddings into the framework of abstract elementary classes (AECs). We resolve these problems by defining a generalization of AECs that makes more sense in our setting. The objects of these so-called abstract elementary team categories (AETCs) are reminiscent of general models (or Henkin models) of second-order logic. The main result of the section is \cref{theorem:aetc.accessible}, which states that AETCs, when treated as categories with certain team maps as morphisms, are examples of accessible categories~\cite{adamek1994locally}, which are known to generalize AECs. In \cref{Section_Monster} we use the general framework of abstract elementary team categories to show that one can build a suitable version of the monster model construction for theories in independence logic and $\eso$. 

We then consider an application of this model-theoretic toolkit in \cref{Lindstrom}, where we use the machinery introduced in the previous sections to show  that the elementary team logic $\fot$ introduced by Kontinen and Yang in~\cite{MR4594292} satisfies a version of Lindström's theorem. The main tool utilized in the proofs is the direct limit construction from \cref{colimits.subsection}. Finally, we conclude the article with a more model-theoretically oriented addendum in \cref{section:complete_theories}, where we focus on the problem of classifying models of complete existential second-order theories. We prove, in particular, two versions of categoricity transfer: one from uncountable to countable cardinals (\cref{ESO categoricity transfer down to countable}) and one from countable to uncountable cardinals (\cref{ESO categoricity transfer up to uncountable}). These two results show how the spectrum function of an existential second-order theory depends in many cases on the stability properties of its first-order reduct, and they also display that the present work has many potential connections to the standard elementary model theory. \cref{section:complete_theories} is fairly independent from \cref{Section_AEC}.

\subsection*{Acknowledgements}
We thank Åsa Hirvonen, Jonathan Kirby, Nicolás Nájar, Tapio Saarinen, Jouko Väänänen, Andrés Villaveces and Fan Yang for useful comments and discussions regarding this manuscript. We are also thankful to two anonymous reviewers for reading and commenting a prior version of this article.

\section{Preliminaries}\label{Section_Preliminaries}

\subsection{Notational Conventions}

We denote sets and relations with uppercase Roman letters such as $A$, $B$ and $C$ and elements by lowercase Roman letters such as $a$, $b$ and $c$. We usually denote collections of sets $A$ by calligraphic letters such as $\mathcal{A}$. By Fraktur letters such as $\A$, $\B$ and $\C$ we denote first-order structures of a given signature.

If $n$ is a natural number, an $n$-tuple is technically a function with domain $n$. We usually denote tuples by $\vec a$, $\vec b$ etc. and often write $\vec a = (a_0,\dots,a_{n-1})$, where $a_i = \vec a(i)$ for $i<n$. We call $n$ the length of $\vec a$ and may denote it by $|\vec a|$. The empty set is the unique $0$-tuple. An $n$-ary relation is a set of $n$-tuples. If $A$ is an $n$-ary relation, we denote by $\ar(A)$ the arity of $A$, i.e. the number $n$. The empty set is considered a relation of every arity. $\{\emptyset\}$ is the unique nonempty $0$-ary relation. If $f$ is a function from $A^n$ to $A$ for some $n<\omega$, then we say that it has arity $n$ and denote this number by $\ar(f)$.

If $\vec a = (a_0,\dots,a_{n-1})$ is an $n$-tuple and $\vec b = (b_0,\dots,b_{m-1})$ is an $m$-tuple, the concatenation of $\vec a$ and $\vec b$ is the $n+m$-tuple $(a_0,\dots,a_{n-1},b_0,\dots,b_{m-1})$, which we usually denote just by $\vec a\vec b$. If there is a risk of confusion, we may also denote the concatenation by $\vec a^\frown\vec b$. Often for our convenience, we also identify a pair $(\vec a,\vec b)$ with the tuple $\vec a\vec b$. Note that $\emptyset\vec a = \vec a\emptyset = \vec a$ for any tuple $\vec a$. If $A$ is an $n$-ary and $B$ an $m$-ary relation, then we identify the Cartesian product $A\times B$ with the $(n+m)$-ary relation
\[
    \{ \vec a\vec b \mid \text{$\vec a\in A$ and $\vec b\in B$} \}.
\]
Note that by this convention, $\{\emptyset\}\times A = A\times\{\emptyset\} = A$. Also, $\emptyset\times A = A\times\emptyset = \emptyset$.

If $A$ is an $(n+k)$-ary and $B$ a $(k+m)$-ary relation, then the natural $k$-join $A\bowtie_k B$ of $A$ and $B$ is the $(n+k+m)$-ary relation
\[
    \{ \vec a\vec b\vec c \mid \text{$|\vec a|=n$, $|\vec b|=k$, $|\vec c|=m$ and $\vec a\vec b\in A$ and $\vec b\vec c\in B$} \}.
\]
Note that $A\times B = A\bowtie_0 B$. If $A$ and $B$ are $n$-ary relations, then $A\cap B = A\bowtie_n B$.

Let $A$ be an $n$-ary relation, let $i_0,\dots,i_{m-1}<n$ and denote $\vec\imath = (i_0,\dots,i_{m-1})$. We then denote by $\Pr_{\vec\imath}(A)$ the $m$-ary relation
\[
    \{ (a_{i_0},\dots,a_{i_{m-1}}) \mid \text{there is $(b_0,\dots,b_{n-1})\in A$ with $b_{i_j} = a_{i_j}$ for $j<m$} \}.
\]
This is a kind of projection, with the possibility of permuting and repeating components. If $\pi$ is a permutation of $n$, then
\begin{align*}
    \Pr_{(\pi(0),\dots,\pi(n-1))}(A) &= \{ (a_{\pi(0)},\dots,a_{\pi(n-1)}) \mid (a_0,\dots,a_{n-1})\in A \} \\
    &= \{ (a_0,\dots,a_{n-1}) \mid (a_{\pi^{-1}(0)},\dots,a_{\pi^{-1}(n-1)})\in A \}.
\end{align*}

If $\alpha$ is a possibly infinite ordinal and $a$ is a function with domain $\alpha$, we call $a$ an $\alpha$-sequence and write $a = (a_i)_{i<\alpha}$, where $a(i) = a_i$, similar to tuples. 

If $\A$ is a structure, we do not distinguish between $\A$ and the domain of $\A$ when there is no risk of confusion, i.e. we write e.g. $a\in\A$ when $a$ is an element of the domain of $\A$ and we write $|\A|$ for the cardinality of the domain. We also denote the domain of $\A$ by $\dom(\A)$ when necessary for clarity.

If $\tau$ is a signature and $\A$ and $\B$ are elementarily equivalent $\tau$-structures, i.e. satisfy the same $\tau$-sentences of first-order logic, then we write $\A\equiv\B$. If $L$ is another logic and $\A$ and $\B$ satisfy the same $\tau$-sentences of $L$, we write $\A\equiv_L\B$.

We denote the powerset of a set $I$ by $\Pow(I)$. By $\Pow^+(I)$ we mean $\Pow(I)\setminus\{\emptyset\}$. By $\R(I)$ we mean the collection of all relations on $I$, i.e. the set $\bigcup_{n<\omega}\Pow(I^n)$.

\subsection{Existential Second-Order Logic}
We assume the reader is familiar with the syntax and semantics of second-order logic ($\sol$).
We refer the reader to the last chapter of \cite{enderton2001logic} and to \cite{vanbenthem2001higher} and \cite{sep-logic-higher-order} for an overview of basic definitions and results.

If $\phi\in\sol$ does not contain any second-order quantifiers, we say that $\phi$ is first-order, even if it has free second-order variables. If no first-order variable occurs free in $\phi$ and $\phi$ is first-order, we say that $\phi$ is a first-order sentence; with this terminology, a first-order sentence (of $\sol$) may contain free second-order variables. If $\phi$ does not contain free (first- or second-order) variables, we say that $\phi$ is a (second-order) sentence. As is customary, we write $\phi(x_0,\dots,x_{n-1},X_0,\dots,X_{m-1})$ when the free variables of $\phi$ are among $x_0,\dots,x_{n-1},X_0,\dots,X_{m-1}$.

\begin{fact}
    Every formula of $\sol$ is equivalent to a formula of the form
    \[
        Q_0 X_0\dots Q_{m-1} X_{m-1}\phi,
    \]
    where $Q_i\in\{\exists,\forall\}$ for $i<m$, each $X_i$ is either a relation or function variable and $\phi$ is first-order.
\end{fact}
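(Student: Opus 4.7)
The plan is to proceed by structural induction on $\phi \in \sol$, showing that every formula is equivalent to one of the stated shape. Atomic formulas are first-order, so the statement holds trivially with $m = 0$. For Boolean connectives, assume $\phi$ and $\psi$ are both in prenex form; after renaming the bound second-order variables so that the two prefixes are disjoint and avoid the free variables of the other formula, one can pull both prefixes out past $\land$ and $\lor$, using the familiar fact that a second-order quantifier $QX$ commutes with these connectives whenever $X$ does not occur free in the other side. Negation is handled by pushing it through the prefix, dualizing each $Q_i$ and negating the first-order matrix.

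For second-order quantification $\exists X \phi$ or $\forall X \phi$ applied to a prenex $\phi$, one simply attaches the new quantifier at the front; the result is still prenex. The nontrivial step is first-order quantification: a formula $\exists x \, Q_0 X_0 \dots Q_{m-1} X_{m-1} \chi$ is not yet in the required shape, and unlike in first-order logic one cannot simply commute $\exists x$ past the second-order block, because $\exists x \forall X$ is strictly stronger than $\forall X \exists x$ in general.

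The workaround is to replace the first-order existential quantifier by a second-order one ranging over singleton unary relations. Let $\theta(U) := \exists y \forall z (U(z) \leftrightarrow z = y)$ be the first-order formula expressing that $U$ is a singleton. Then
\[
    \exists x \, Q_0 X_0 \dots Q_{m-1} X_{m-1} \chi(x, \vec{X}) \;\equiv\; \exists U \bigl(\theta(U) \land Q_0 X_0 \dots Q_{m-1} X_{m-1} \forall x\, (U(x) \to \chi(x, \vec{X}))\bigr),
\]
and since none of $X_0, \dots, X_{m-1}$ occur free in $\theta(U)$, the conjunct $\theta(U)$ can be commuted into the quantifier block, yielding $\exists U \, Q_0 X_0 \dots Q_{m-1} X_{m-1} \bigl(\theta(U) \land \forall x(U(x) \to \chi(x, \vec{X}))\bigr)$, whose matrix is first-order. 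Universal first-order quantifiers are reduced to this case via $\forall x \equiv \neg \exists x \neg$; function quantifiers are treated analogously using the second-order formula expressing that a binary relation is a total function.

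The only genuinely delicate step is the first-order quantifier case, precisely because first- and second-order quantifiers do not commute in either direction. The singleton-encoding trick above is the standard workaround and is the only place where the argument diverges from the familiar proof of first-order prenex normal form; all remaining steps are routine rewrites of quantifier-swap lemmas.
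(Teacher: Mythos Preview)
The paper states this result as a \emph{Fact} without proof, so there is no argument in the paper to compare against. Your proof is correct: the only genuinely nontrivial step in the induction is indeed the first-order quantifier case, and the singleton-encoding device handles it cleanly. The displayed equivalence is valid because, under the hypothesis $\theta(U)$, the subformula $\forall x\,(U(x)\to\chi(x,\vec X))$ collapses to $\chi(a,\vec X)$ for the unique $a\in U$, so replacing $\exists x$ by $\exists U$ introduces no interaction with the second-order prefix $Q_0X_0\dots Q_{m-1}X_{m-1}$.

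Two minor remarks. First, since the target syntax explicitly allows function variables in the prefix, the most economical version of your trick is simply $\exists x\,\psi(x)\equiv\exists c\,\psi(c)$ with $c$ a $0$-ary function variable; this dispenses with the auxiliary $\theta(U)$ and the commutation step entirely. Second, your closing sentence that ``function quantifiers are treated analogously using the second-order formula expressing that a binary relation is a total function'' is off-target: second-order function quantifiers are handled in your induction exactly like relation quantifiers (just prepend them to the prefix), and no relational encoding is needed. Neither point affects the soundness of the argument.
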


We denote by $\eso$ the existential fragment of $\sol$, i.e., the set of formulas of $\sol$ that are equivalent to a formula of the form
\[
    \exists X_0\dots\exists X_{m-1}\phi
\]
for first-order $\phi$.

The following properties of $\eso$ are well known.

\begin{lemma}\label{Coding many predicates with one}\quad
    \begin{enumerate}
        \item Let $X_0,\dots,X_{n-1}$ be second-order variables. Then there is a relation variable $X$ such that for any first-order $\tau$-sentence $\phi(X_0,\dots,X_{n-1})$, there is a first-order $\tau$-sentence $\phi^*(X)$ such that
        \[
            \exists X_0\dots\exists X_{n-1}\phi \equiv \exists X\phi^*.
        \]
        \item $\eso$ is closed under conjunction, disjunction and first-order quantifiers.
    \end{enumerate}
\end{lemma}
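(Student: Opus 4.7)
For part (1), the plan is to combine $X_0, \dots, X_{n-1}$ into a single relation $X$ via a positional tagging construction. First I would reduce to the case where all $X_i$ are relation variables by replacing each function variable of arity $k$ with its $(k+1)$-ary graph and adding the functionality axiom as a first-order conjunct of $\phi$. Letting $m_i = \ar(X_i)$ and $m = \max_i m_i$, take $X$ to be a fresh relation variable of arity $m + n$, the last $n$ coordinates serving as a tag identifying the index $i < n$. I would construct $\phi^*$ from $\phi$ by replacing each atomic subformula $X_i(\vec{t})$ by a first-order formula asserting that there exist padding elements $\vec u$ and tag elements $\vec e$ in a pattern uniquely marking position $i$ such that $X(\vec{t}, \vec u, \vec e)$ holds. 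Equivalence is then verified in both directions: given witnesses $X_0,\dots,X_{n-1}$, assemble $X$ as the tagged union; given $X$, recover each $X_i$ as the corresponding tagged slice.

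For part (2), I would treat each closure property by a standard prenex manipulation, optionally invoking (1) at the end to compress the resulting existential block into a single variable. Conjunction: after renaming, $(\exists \vec X\, \phi) \wedge (\exists \vec Y\, \psi) \equiv \exists \vec X \exists \vec Y\, (\phi \wedge \psi)$. Disjunction: analogously, $(\exists \vec X\, \phi) \vee (\exists \vec Y\, \psi) \equiv \exists \vec X \exists \vec Y\, (\phi \vee \psi)$, since the irrelevant witnesses on either side can be chosen arbitrarily. First-order existential: $\exists x\, \exists \vec X\, \phi \equiv \exists \vec X\, \exists x\, \phi$, with $\exists x\, \phi$ serving as the new first-order matrix. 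The one interesting case is the first-order universal quantifier, which I would handle by a Skolemization-by-slicing argument: if $X_i$ has arity $k$, replace it by a fresh $X_i'$ of arity $k+1$ whose slice over each $x$ plays the role of the $x$-indexed witness, yielding $\forall x\, \exists \vec X\, \phi(x, \vec X) \equiv \exists \vec{X'}\, \forall x\, \phi'(x, \vec{X'})$ where $\phi'$ replaces each $X_i(\vec y)$ in $\phi$ by $X_i'(x, \vec y)$.

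I expect the main obstacle to be the tag pattern in (1): making the encoding work uniformly across structures of every size, including very small ones where $n$ pairwise distinct anchor elements may not exist. I would therefore design the tag as $n$ independent indicator slots, each marked by a relation between itself and a padding witness, rather than as a single coordinate ranging over $n$ fixed distinct values. Everything else is essentially bookkeeping, and repeated application of (1) can be used at each step of (2) if one wants the final compressed form with a single existentially quantified relation variable.
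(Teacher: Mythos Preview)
The paper states this lemma as well known and gives no proof, so there is nothing to compare against; your plan is the standard one and is essentially correct for both parts.

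There is one residual gap in part (1). Your indicator-slot scheme (tag slot $j$ is ``on'' iff it equals a designated padding witness) does handle structures with fewer than $n$ elements, but it still needs at least two elements: in a one-element structure every equality holds, all tags collapse, and no value of $X$ can decode to, say, $(X_0,X_1)=(\text{nonempty},\emptyset)$. Concretely, take $\phi \equiv (\exists x\, X_0(x)) \wedge \neg\exists x\, X_1(x)$; then $\exists X_0\exists X_1\,\phi$ holds in every structure, but any tag-based $\exists X\,\phi^*$ is false in a one-element structure, since for $X=\emptyset$ both decoded relations are empty and for $X$ full both are full. The standard fix is a first-order case split inside $\phi^*$: when $\exists x\,\exists y\,(x\neq y)$ holds, use your tagging translation; when $\forall x\,\forall y\,(x=y)$ holds, replace the matrix by the finite disjunction $\bigvee_{\vec e\in 2^n}\phi_{\vec e}$, where $\phi_{\vec e}$ is $\phi$ with each atom $X_i(\vec t\,)$ replaced by $\top$ or $\bot$ according to $e_i$. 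That disjunction has no free second-order variable, so the outer $\exists X$ is vacuous and the equivalence is immediate.

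Part (2) is fine as written; the slicing argument for $\forall x$ is exactly the usual second-order Skolemization (using choice to pick the $x$-indexed witnesses), and the remaining cases are routine prenexing.
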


It is easy to show that $\eso$ satisfies the compactness theorem and the following version of the Löwenheim--Skolem theorem: if $T$ is an existential second-order theory with infinite models, then for any infinite $\kappa\geq|\tau|$, $T$ has a model of cardinality $\kappa$. To see this, let $T$ be an $\eso$-theory with infinite models, and let $\{ \exists X_i\alpha_i(X_i) \mid i<|\tau|+\aleph_0\}$, for $\alpha_i$ first-order, axiomatize $T$. Fix an infinite cardinal $\kappa\geq|\tau|$, and let $\A$ be an infinite model of $T$. Then there are relations $R_i\subseteq\A^{\ar(X_i)}$ such that $\A\models\alpha_i(R_i)$ for all $i<|\tau|+\aleph_0$. Let $\hat\A$ be an expansion of $\A$ by fresh predicate symbols $S_i$ with $\ar(S_i)=\ar(R_i)$ such that $S_i^{\hat\A}=R_i$. Then $\Th_\fol(\hat\A)$ has, by the Löwenheim--Skolem theorem of first-order logic, a model $\hat\B$ of cardinality $\kappa$. Then, letting $\B=\hat\B\restriction\tau$, we have $\B\models\exists X_i\alpha_i(X_i)$ for every $i<|\tau|+\aleph_0$, as witnessed by $S_i^{\hat\B}$. We stress that this does not contradict Lindström's theorem, as $\eso$ is not closed under negation, which is a requirement imposed on an abstract logic in the statement of the theorem.

We wish to define the concept of completeness for theories in $\eso$. Usually, one would define a complete theory to be one that semantically (or syntactically if there is a proof system available) entails, for every sentence $\phi$, either $\phi$ or $\neg\phi$. Since $\eso$ is a positive logic, this option is not viable. We can, however, give a definition that does not use negation and is equivalent for logics that are closed under negation.

\begin{definition}\label{definition: complete eso theory}
    Let $T$ be an existential second-order theory, i.e. a set of sentences of $\eso$. We say that $T$ is \emph{complete} if all of its models are $\eso$-equivalent, i.e. for all $\A,\B\models T$ and existential second-order sentences $\phi$, we have
    \[
        \A\models\phi \iff \B\models\phi.
    \]
\end{definition}

It may well be that the set of $\eso$-sentences true in a structure $\A$, which we denote by $\ESOTh(\A)$, is not complete in the above sense. For instance, if $\A$ is a $\{P\}$-structure, where $P$ is a unary predicate, such that $|P^\A| = \aleph_1$ and $|\A\setminus P^\A| = \aleph_0$, then $\ESOTh(\A)$ does not contain the sentence expressing that there is a bijection between $P^\A$ and its complement, but by the Löwenheim--Skolem theorem $\ESOTh(\A)$ has a countable model, and such a model will satisfy said sentence. It turns out that a complete first-order theory has a unique $\eso$-completion, consisting of every $\eso$-sentence that is consistent with the first-order theory. We will discuss this in \cref{Section: Complete Theories}. We  point out the following corollary on complete $\eso$-theories, which immediately follows by the previous Löwenheim--Skolem theorem.

\begin{corollary}
    Let $T$ be a complete $\tau$-theory in $\eso$, with infinite models. Then $T$ has infinite models in all cardinalities $\geq|\tau|$.
\end{corollary}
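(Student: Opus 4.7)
The plan is to invoke the Löwenheim–Skolem theorem for $\eso$ proved in the paragraph immediately preceding the corollary. First I would fix an arbitrary infinite cardinal $\kappa\geq|\tau|$. Since $T$ has infinite models by hypothesis, let $\A\models T$ be one such model. The Löwenheim–Skolem theorem established above, applied to the $\eso$-theory $T$ and the cardinal $\kappa$, produces a $\tau$-structure $\B\models T$ with $|\B|=\kappa$; since $\kappa\geq\aleph_0$, this $\B$ is automatically infinite. Iterating this for each infinite $\kappa\geq|\tau|$ gives the conclusion.

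Writing out the single step in slightly more detail: fix an axiomatization $\{\exists X_i\alpha_i(X_i)\mid i<|\tau|+\aleph_0\}$ of $T$ with each $\alpha_i$ first-order, and choose witnessing relations $R_i\subseteq\A^{\ar(X_i)}$ so that $\A\models\alpha_i(R_i)$. Expand $\A$ by fresh predicates $S_i$ interpreted as $R_i$ to obtain $\hat\A$, apply first-order Löwenheim–Skolem to $\Th_\fol(\hat\A)$ to obtain an elementarily equivalent $\hat\B$ of cardinality $\kappa$, and let $\B:=\hat\B\restriction\tau$; the $S_i^{\hat\B}$ witness $\exists X_i\alpha_i$, so $\B\models T$. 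This is literally the recipe already used in the text, just redeployed for each $\kappa$.

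There is no real obstacle, but two mild points deserve mention. First, completeness of $T$ is not actually used in the argument; the corollary is stated in the complete case only because the surrounding discussion is about complete $\eso$-theories and their spectra. Second, when $|\tau|$ is finite the phrase ``all cardinalities $\geq|\tau|$'' in the statement must be read as ``all infinite cardinalities $\geq|\tau|$'' — a finite cardinality cannot support an infinite model — which matches precisely the hypothesis ``infinite $\kappa\geq|\tau|$'' of the Löwenheim–Skolem theorem invoked above. With this reading the corollary reduces to a single application of that theorem per target cardinal.
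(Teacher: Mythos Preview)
Your proposal is correct and matches the paper's approach exactly: the paper states that the corollary ``immediately follows by the previous Löwenheim--Skolem theorem,'' and your argument is precisely that application, including the same expansion-by-witnesses trick already spelled out in the preceding paragraph. Your side remarks---that completeness is not actually used and that the statement should be read as concerning infinite cardinalities---are accurate observations but do not affect the validity of the intended one-line deduction.
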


We conclude this section by making the following observation, akin to the Łos--Vaught test of first-order logic.

\begin{proposition}
    Let $T$ be an existential second-order theory. If $T$ does not have models of size $<|\tau|+\aleph_0$ and is categorical in every cardinality in which it has a model, then $T$ is complete.
\end{proposition}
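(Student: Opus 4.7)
The plan is to run a Łos--Vaught style argument, but carefully adapted to the fact that $\eso$ is not closed under negation. Suppose, for contradiction, that $T$ is not complete. Then there exist models $\A,\B\models T$ and an $\eso$-sentence $\phi$ with $\A\models\phi$ and $\B\not\models\phi$.

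By the hypothesis that $T$ has no models of size $<|\tau|+\aleph_0$, both $\A$ and $\B$ are infinite and, in particular, $|\B|\geq|\tau|+\aleph_0$. Now observe that $T\cup\{\phi\}$ is itself an $\eso$-theory (a set of $\eso$-sentences), and it has the infinite model $\A$. Hence we may apply the Löwenheim--Skolem theorem for $\eso$ discussed earlier in the section to obtain a model $\A'\models T\cup\{\phi\}$ with $|\A'|=|\B|$. Since $\A',\B\models T$ and $|\A'|=|\B|$, categoricity of $T$ in the cardinality $|\B|$ gives $\A'\cong\B$, and hence $\A'\equiv_{\eso}\B$. But $\A'\models\phi$ and $\B\not\models\phi$, a contradiction.

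The only place where one might worry about the standard argument breaking down is the lack of negation in $\eso$: one cannot form the theory $T\cup\{\neg\phi\}$ and push it around with Löwenheim--Skolem as in the first-order Łos--Vaught test. The key observation that makes the proof go through nonetheless is that it suffices to move one side of the dichotomy, using $\B$ itself as a ``witness of $\neg\phi$'' in its own cardinality. Since the Löwenheim--Skolem theorem for $\eso$ quoted above provides models of $T\cup\{\phi\}$ in every cardinality $\geq|\tau|+\aleph_0$, we can always match the size of $\B$ and then invoke categoricity in that cardinality to obtain the contradiction.
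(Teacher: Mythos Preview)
Your proof is correct and essentially identical to the paper's: both apply the L\"owenheim--Skolem theorem for $\eso$ to $T\cup\{\phi\}$ to obtain a model of size $|\B|$, then invoke categoricity in that cardinality. The only cosmetic difference is that you frame it as a proof by contradiction while the paper proves directly that $\A\models\phi\implies\B\models\phi$.
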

\begin{proof}
    Let $\A,\B\models T$ and let $\A\models\phi$. Since $|\A|,|\B|\geq|\tau|+\aleph_0$, by Löwenheim--Skolem, there is $\A'$ with $|\A'|=|\B|$ and $\A'\models T\cup\{\phi\}$. As $T$ is $|\B|$-categorical, $\A'\cong\B$, and hence $\B\models\phi$.
\end{proof}

\subsection{Team Semantics} Next we recall the concept of a team and the team-semantic interpretations of first-order connectives and quantifiers. We refer the reader to \textcite{Vaananen2007-VNNDLA,galliani2012inclusion,gradel2013dependence} for an introduction to team semantics and proofs of the main facts recalled here.

Let $\tau$ be a vocabulary, $\A$ a $\tau$-structure and $D\subseteq\{v_i \mid i<\omega\}$ a set of (first-order) variables. An \emph{assingment} of $\A$ with domain $D$ is a function $D\to\A$. A team of $\A$ with domain $D$ is a set of assignments of $\A$ with domain $D$, i.e. a subset of $\A^D$. We usually take $D$ to be finite, but sometimes teams with infinite domains may be of interest. When given a team $X$, we denote its domain by $\dom(X)$, similarly to domains of functions.

If $s$ is an assignment of $\A$ with $x_0,\dots,x_{n-1}\in\dom(s)$ and $t(x_0,\dots,x_{n-1})$ is a $\tau$-term, we write $s(t)$ as a shorthand for $t^\A(s(x_0),\dots,s(x_{n-1}))$. If $\vx=(x_0,\dots,x_{n-1})$ is a tuple of variables, we write $s(\vx)$ for $(s(x_0),\dots,s(x_{n-1}))$. If $X$ is a team and $x_0,\dots,x_{n-1}\in\dom(X)$, then we denote by $X[\vx]$ the relation
\[
    \{ s(\vx) \mid s\in X \}.
\]
Conversely, if $R$ is an $n$-ary relation, we denote by $\team(R)$ the team
\[
    \{ \{ (v_i, a_i) \mid i<n \} \mid (a_0,\dots,a_{n-1})\in R \}
\]
with domain $\{v_i \mid i<n\}$.

If $s$ is an assignment of $\A$, $a\in\A$ and $x$ is a variable, we denote by $s(a/x)$ the assignment $s'$ with domain $\dom(s)\cup\{x\}$ such that
\[
    s'(y) =
    \begin{cases}
        a & \text{if $y = x$}, \\
        s(y) & \text{otherwise}.
    \end{cases}
\]
If $X$ is a team of $\A$, then by $X(a/x)$ we denote the team $Y$ of $\A$ with domain $D\coloneqq \dom(X)\cup\{x\}$ such that
\[
    Y = \{ s(a/x) \mid s\in X \}.
\]
If $F$ is a function $X\to\Pow^+(\A)$, we denote by $X(F/x)$ the team
\[
    \{ s(a/x) \mid \text{$s\in X$ and $a\in F(s)$} \}.
\]
We call $F$ a supplement function and $X(F/x)$ the \emph{supplementation} of $X$ by $F$. If $F(s) = \A$ for all $s\in X$, then we denote the supplemented team by $X(\A/x)$ and call it the \emph{duplication} of $X$. If $D\subseteq\dom(X)$, we denote by $X\restriction D$ the team
\[
    \{s\restriction D \mid s\in X\}.
\]

Given a vocabulary $\tau$, we consider the syntax of $\tau$-formulas of first-order logic $\fol$ to be given by the grammar
\[
	\phi \Coloneqq t=t'  \mid \neg t=t' \mid R(t_0,\dots,t_{n-1}) \mid \neg R(t_0,\dots,t_{n-1}) \mid  (\phi \land \phi) \mid (\phi\lor\phi)  \mid \exists x \phi \mid \forall x \phi,	
\]
where $t$, $t'$ and $t_i$ are $\tau$-terms and $R\in\tau$ is an $n$-ary relation symbol. Note that we readily assume a negation normal form for our first-order formulas. The team semantics of the operators above is the following.

Let $\A$ be a $\tau$-structure and $X$ a team of $\A$ with $x_0,\dots,x_{n-1}\in \dom(X)$. For a first-order formula $\phi(x_0,\dots,x_{n-1})$, $\A\models_X\phi$ is defined as follows.
\begin{enumerate}
    \item $\A\models_X t=t'$ if $s(t)=s(t')$ for all $s\in X$.
    \item $\A\models_X \neg t=t'$ if $s(t)\neq s(t')$ for all $s\in X$.
    \item $\A\models_X R(t_0,\dots,t_{n-1})$ if $(s(t_0),\dots,s(t_{n-1})\in R^{\A}$ for all $s\in X$.
    \item $\A\models_X \neg R(t_0,\dots,t_{n-1})$ if $(s(t_0),\dots,s(t_{n-1})\notin R^{\A}$ for all $s\in X$.
    \item $\A\models_X \psi \land \chi$ if $\A\models_X \psi$ and $\A\models_X \chi$.
    \item $\A\models_X \psi \vee \chi$ if there are $Y,Z\subseteq X$ such that $ Y\cup Z = X$,  $\A\models_Y \psi$ and $\A\models_Z \chi$.
    \item $\A\models_X \exists x \psi$ if there is a function $F\colon X\to \Pow^+(\A)$ such that $ \A\models_{X(F/x)} \psi$.
    \item $\A\models_X\forall x \psi $ if $\A\models_{X(\A/x)} \psi$. 
\end{enumerate}

We stress that the team-semantic definition of the classical connectives and quantifiers does not provide us with much new when it comes to just first-order logic, in the sense that formulas of first-order logic are \emph{flat}.

\begin{fact}[Flatness]
    For any $\A$, $X$ and $\phi(x_0,\dots,x_{n-1})\in\fol$,
    \[
        \A\models_X\phi \iff \text{$\A\models\phi(s(x_0),\dots,s(x_{n-1}))$ for all $s\in X$}.
    \]
\end{fact}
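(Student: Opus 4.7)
The plan is to prove Flatness by induction on the complexity of the formula $\phi$, showing simultaneously that satisfaction in a team reduces to pointwise Tarskian satisfaction at each assignment. The atomic and negated atomic cases are immediate from clauses (1)--(4): each clause is explicitly a universal quantification over $s\in X$, which matches the right-hand side of the Flatness claim. The conjunction case follows at once from the inductive hypothesis applied to each conjunct, since both sides of the biconditional commute with intersection over $s\in X$.

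The two clauses that require actual work are disjunction and existential quantification, since their team-semantic meaning invokes external splits or supplement functions. For $\psi\vee\chi$, one direction uses any witnessing split $Y,Z\subseteq X$ with $Y\cup Z=X$: induction hands each $s\in Y$ satisfaction of $\psi$ and each $s\in Z$ satisfaction of $\chi$, so every $s\in X$ satisfies $\psi\vee\chi$ in the Tarski sense. For the converse I would set $Y=\{s\in X : \A\models\psi(s(\vec x))\}$ and $Z=\{s\in X : \A\models\chi(s(\vec x))\}$; these cover $X$ because each $s$ satisfies one disjunct, and induction promotes pointwise truth on $Y$ and $Z$ to team satisfaction. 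For $\exists x\,\psi$, given a witnessing supplement $F\colon X\to\Pow^+(\A)$, induction on $\psi$ applied to $X(F/x)$ yields $\A\models\psi(s(\vec x),a)$ for every $s\in X$ and every $a\in F(s)$, which (picking any $a\in F(s)$) gives the Tarski-style existential for each $s$. In the other direction I would define the supplement $F(s)=\{a\in\A : \A\models\psi(s(\vec x),a)\}$, which is nonempty by hypothesis; then every assignment in $X(F/x)$ satisfies $\psi$ pointwise, so induction gives $\A\models_{X(F/x)}\psi$.

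The universal case is the cleanest: $\A\models_X\forall x\,\psi$ unfolds by definition to $\A\models_{X(\A/x)}\psi$, which by induction says $\A\models\psi(s(\vec x),a)$ for every $s\in X$ and every $a\in\A$, i.e.\ $\A\models\forall x\,\psi(s(\vec x))$ for every $s\in X$. One small edge case to flag throughout is the empty team: when $X=\emptyset$, both sides of the biconditional hold vacuously, so the induction goes through without modification (in particular, for $\exists x$, the supplement function $F\colon\emptyset\to\Pow^+(\A)$ is the empty function, which is legitimate).

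The main obstacle, such as it is, lies in the disjunction and existential cases, where one must produce the right split $Y,Z$ or the right supplement function $F$ from pointwise Tarski data. The natural choices described above work precisely because the semantic clauses for $\vee$ and $\exists$ in team semantics were designed to align with their first-order meanings on flat objects; the key observation is that requiring $F(s)\neq\emptyset$ is harmless exactly when every $s\in X$ has at least one Tarskian witness, which is exactly what the right-hand side provides.
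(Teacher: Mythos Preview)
Your argument is correct and is exactly the standard induction used to establish flatness; the paper itself does not give a proof, recording the statement as a known fact from the literature. Your handling of the disjunction and existential cases---choosing the canonical split $Y=\{s\in X:\A\models\psi(s(\vec x))\}$, $Z=\{s\in X:\A\models\chi(s(\vec x))\}$ and the canonical supplement $F(s)=\{a\in\A:\A\models\psi(s(\vec x),a)\}$---is the expected move, and your observation about the empty team is the right way to dispatch that edge case.
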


\noindent In particular, any sentence is satisfied in a structure $\A$ in the sense of Tarski semantics if and only if it is satisfied in $\A$ in the sense of team semantics.

\subsection{Dependence Logic} 

When working with team semantics, we are generally interested in expanding the syntax of first-order logic by means of new atomic formulas encoding different relations of (in)dependence between variables. We recall here the semantics of the \emph{dependence atom} $\dep(\vx; \vy)$, the \emph{independence atom} $\vx\perp_{\vz}\vy$, the \emph{inclusion atom} $\vx\subseteq\vy$ and the \emph{exclusion atom} $\vx\mid\vy$.

\begin{enumerate}[resume]
    \item $\A\models_X \dep(\vx; \vy)$ if for all $s,s'\in X$, if $s(\vx)=s'(\vx)$  then $s(\vy)=s'(\vy)$, i.e. there is a function $f\colon\A^{|\vx|}\to\A^{|\vy|}$ such that $f(s(\vx)) = s(\vy)$ for all $s\in X$.
    \item $\A\models_X \vx\subseteq\vy$, where $\vx$ and $\vy$ are tuples of the same length, if for all $s\in X$ there is some $s'\in X$ such that $s(\vx)=s'(\vy)$, i.e. $X[\vx]\subseteq X[\vy]$.
    \item $\A\models_X \vx\mid\vy$, where $\vx$ and $\vy$ are tuples of the same length, if for all $s,s'\in X$ we have $s(\vx)\neq s'(\vy)$, i.e. $X[\vx]\cap X[\vy] = \emptyset$.
    \item $\A\models_X \vx\perp_{\vz}\vy$ if for all $s,s'\in X$ such that $ s(\vz)=s'(\vz)$, there is $s''\in X$ such that $s''(\vx\vz) = s(\vx\vz)$ and $s''(\vy)=s'(\vy)$, i.e. $X[\vx\vz\vy] = X[\vx\vz]\bowtie_{|\vz|}X[\vz\vy]$.
\end{enumerate}
As a special case of the dependence atom, we have the \emph{constancy atom} $\dep(\emptyset;\vx)$ that is true in a team $X$ if for all $s,s'\in X$, $s(\vx) = s'(\vx)$, i.e. $|X[\vx]|\leq 1$. We denote the constancy atom simply by $\dep(\vx)$.

Note that adding any of the aforementioned atoms to the syntax of first-order logic results in a non-flat logic. If $C\subseteq\{\dep(\dots), \perp_c, \subseteq, \mid \}$, we denote by $\fol(C)$ the logic resulting from adding the atoms in $C$ to the syntax of $\fol$. Below we list some well-known properties of these logics.

\begin{fact}
    Let $\A$ be a $\tau$-structure and $X$ and $Y$ teams of $\A$ with the same domain.
	\begin{enumerate}
	    \item \emph{Locality}: For any formula $\phi$ of $\fol( \dep(\dots), \perp_c, \subseteq, | )$, we have $\A\models_{X} \phi$ if and only if $\A\models_{X\restriction \Fv(\phi)} \phi$.
		\item \emph{The empty team property}: For any formula $\phi$ of $\fol( \dep(\dots), \perp_c, \subseteq, | )$ we have $\A\models_{\emptyset} \phi$.
		\item \emph{Downwards closure}: For any formula $\phi$ of $\fol( \dep(\dots), | )$, if $\A\models_X \phi$ and $Y\subseteq X$, then $\A\models_Y \phi$.
		\item \emph{Union-closure}: For any formula $\phi$ of $\fol(\subseteq )$, if $\A\models_{X} \phi$ and $\A\models_{Y} \phi$, then $\A\models_{X\cup Y} \phi$.
	\end{enumerate}    
\end{fact}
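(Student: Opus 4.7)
The plan is a simultaneous induction on the complexity of $\phi$ for all four items. The base cases are the first-order literals together with the team atoms $\dep(\vx;\vy)$, $\vx\perp_c\vy$, $\vx\subseteq\vy$, $\vx\mid\vy$. Inspecting the semantic clauses, first-order literals are flat and each team atom only inspects the values $s(\vx)$, $s(\vy)$, so Locality is immediate; every atom is vacuously true on $\emptyset$; Downwards closure holds for $\dep$ and $\mid$ but fails for $\subseteq$ and $\perp_c$ (explaining the restricted fragment in clause~(3)); and Union closure is immediate for flat literals and direct for $\vx\subseteq\vy$.

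The inductive step is where the real work is. The $\land$-case is immediate in every item, so the substantive cases are $\lor$, $\exists x$ and $\forall x$, and they all reduce to short set-theoretic identities relating splittings, restrictions, supplementations, and unions of teams. For Locality~(1) and $\phi=\psi\lor\chi$, a splitting $X=Y\cup Z$ restricts to a splitting of $X\restriction\Fv(\phi)$, and conversely a splitting of $X\restriction\Fv(\phi)$ lifts back to $X$ by preimage. For $\phi=\exists x\psi$ with $D=\Fv(\phi)$, a supplement function $F\colon X\to\Pow^+(\A)$ pushes down to $F'\colon X\restriction D\to\Pow^+(\A)$ defined by $F'(s)=\bigcup\{F(t)\mid t\in X,\ t\restriction D=s\}$ (nonempty since each $F(t)$ is), and one checks the identity
\[
    (X\restriction D)(F'/x) = X(F/x)\restriction (D\cup\{x\}),
\]
after which the induction hypothesis on $\psi$ closes the case. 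The $\forall$-case uses the analogous and easier identity $(X\restriction D)(\A/x)=X(\A/x)\restriction(D\cup\{x\})$.

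The inductions for (2)--(4) are parallel and shorter. For the Empty team property the equalities $\emptyset=\emptyset\cup\emptyset$, $\emptyset(F/x)=\emptyset$ and $\emptyset(\A/x)=\emptyset$ make each step vacuous. For Downwards closure, given $Y\subseteq X$ and a witnessing splitting $X=X_0\cup X_1$ one uses $Y=(Y\cap X_0)\cup(Y\cap X_1)$; for $\exists x\psi$ one restricts the supplement function to $Y$; for $\forall x\psi$ one uses $Y(\A/x)\subseteq X(\A/x)$ and the induction hypothesis. For Union closure, given splittings $X=X_0\cup X_1$ and $Y=Y_0\cup Y_1$, the splitting $(X_0\cup Y_0)\cup(X_1\cup Y_1)$ witnesses $\psi\lor\chi$ on $X\cup Y$ by induction; for $\exists$, one glues supplement functions $F_X$ and $F_Y$ into a function on $X\cup Y$ using $(X\cup Y)(F/x)=X(F_X/x)\cup Y(F_Y/x)$; for $\forall$, one uses $(X\cup Y)(\A/x)=X(\A/x)\cup Y(\A/x)$.

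The only genuine difficulty is the $\exists$-clause of Locality: one has to see that the push-down $F\mapsto F'$ is compatible with the later restriction to $\Fv(\psi)$, which in general is strictly contained in $D\cup\{x\}$. This is handled by invoking Locality inductively on $\psi$ itself, applied to both sides of the displayed identity. Once these set-theoretic identities are recorded, the rest of the proof is routine bookkeeping.
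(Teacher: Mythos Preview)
Your sketch is correct and follows the standard inductive argument found in the literature the paper cites (V\"a\"an\"anen, Galliani, Gr\"adel--V\"a\"an\"anen); the paper itself does not give a proof of this Fact but simply refers to those sources. The set-theoretic identities you isolate are exactly the ones needed, and your handling of the only delicate point---the $\exists$-clause in Locality, where one must pass through the intermediate domain $D\cup\{x\}$ and then invoke the induction hypothesis on $\psi$ to reach $\Fv(\psi)$---is correct.

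One minor remark on the Union-closure $\exists$-case: when gluing $F_X$ and $F_Y$ into a single supplement $F$ on $X\cup Y$, you should make the definition on $X\cap Y$ explicit (e.g.\ $F(s)=F_X(s)\cup F_Y(s)$ there) so that the claimed identity $(X\cup Y)(F/x)=X(F_X/x)\cup Y(F_Y/x)$ is literally true; your prose suggests you have this in mind, but it is worth spelling out.
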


By locality, whenever $\phi$ is a sentence of $\fol( \dep(\dots), \perp_c, \subseteq, | )$, we have
\[
    \A\models_X\phi \iff \A\models_{X\restriction\emptyset}\phi \iff \A\models_{\{\emptyset\}}\phi
\]
for any $\A$ and nonempty $X$. We write $\A\models\phi$ for $\A\models_{\{\emptyset\}}\phi$ whenever $\phi$ is a sentence.

It is well known that in $\foil$ one can define all the other aforementioned atoms, i.e. the logics $\foil$ and $\fol(\dep(\dots), \perp_c, \subseteq, \mid)$ are equiexpressive. In fact, any property of teams definable in $\eso$ (modulo the empty team property) can be defined in $\foil$ and, conversely, any formula of $\foil$ can be defined in $\eso$, in the following sense.

\begin{lemma}[Galliani, Grädel--Väänänen~\cite{gradel2013dependence, galliani2012inclusion}]\label{Translation between independence logic and ESO}
    Fix a finite set $D = \{x_0,\dots,x_{n-1}\}$ of variables.
    \begin{enumerate}
        \item For every $\tau$-formula of $\phi(x_0,\dots,x_{n-1})$ of $\foil$, there is a $\tau$-formula $\chi(R)$ of $\eso$ with no free first-order variables and only one free $n$-ary second-order variable $R$, such that for every $\tau$-structure $\A$ and team $X$ of $\A$ with $D\subseteq\dom(X)$,
        \[
            \A\models_X\phi \iff \A\models\chi(X[x_0,\dots,x_{n-1}]).
        \]
        We call $\chi(R)$ a \emph{translation} of $\phi$ to $\eso$.

        \item For every $\tau$-formula $\chi(R)$ of $\eso$ with no free first-order variables and only one free $n$-ary second-order variable $R$, there is a $\tau$-formula $\phi(x_0,\dots,x_{n-1})$ of $\foil$ such that for every $\tau$-structure $\A$ and team $X$ of $\A$ with $D\subseteq\dom(X)$,
        \[
            \A\models_X\phi \iff \A\models\chi^+(X[x_0,\dots,x_{n-1}]),
        \]
        where $\chi^+(R) = \exists x_0\dots\exists x_{n-1} R(x_0,\dots,x_{n-1})\to\chi(R)$. We call $\phi$ a \emph{translation} of $\chi(R)$ to $\foil$.
    \end{enumerate}
\end{lemma}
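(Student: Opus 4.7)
Both parts will be proved by induction on formula complexity, though the two directions have rather different flavours.

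For part (1), the plan is to induct on the structure of $\phi\in\foil$, relying on locality so that at each stage the truth of $\phi$ in a team $X$ depends only on the projection $R\coloneqq X[\vec x]$. For literals and their negations, flatness will immediately yield a translation of the form $\chi(R)\coloneqq\forall\vec v\,(R(\vec v)\to\phi(\vec v))$. The independence, inclusion, exclusion and dependence atoms can all be translated directly into first-order properties of $R$, since their team semantics is literally given in terms of projections and joins of the team. Conjunctions go to conjunctions; for $\psi\vee\theta$ I would set
$$\chi_{\psi\vee\theta}(R)\coloneqq\exists S_1\exists S_2\bigl(R=S_1\cup S_2\wedge\chi_\psi(S_1)\wedge\chi_\theta(S_2)\bigr).$$
For $\exists x\,\psi$ I would existentially quantify a relation $S$ of one higher arity whose projection on the original coordinates equals $R$ (encoding a supplementation $X(F/x)$), and demand $\chi_\psi(S)$. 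For $\forall x\,\psi$ I would apply $\chi_\psi$ to $R\times A$, encoding the duplication. All of these constructions are manifestly expressible in $\eso$.

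For part (2), the plan is to start with $\chi(R)$; by \cref{Coding many predicates with one} we may assume $\chi(R)=\exists Y\,\alpha(R,Y)$ with $\alpha$ first-order. I will then construct a $\foil$-formula $\phi(\vec x)$ which uses additional existential quantifiers $\exists\vec y$ (with $|\vec y|=\ar(Y)$), together with independence and inclusion atoms, to force the enriched team to carry, in its $\vec y$-coordinates, an encoding of an arbitrary witness relation $Y$. The condition $\alpha$ will then be imposed via a first-order, hence flat, statement about $(\vec x,\vec y)$. The appearance of $\chi^+(R)=\exists\vec v\,R(\vec v)\to\chi(R)$ in the statement, rather than $\chi(R)$, is forced by the empty team property: every $\foil$-formula holds on the empty team, so no $\foil$-translation can distinguish an empty $R$ from a witness-carrying one; the implication in $\chi^+$ absorbs exactly this asymmetry by trivialising when $R$ is empty.

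The hard part will be this second direction: once the fresh variables $\vec y$ are introduced, the enriched team does not automatically range $\vec y$ over an arbitrary subset of $A^{|\vec y|}$ while leaving the $\vec x$-coordinates untouched, nor does it automatically attach those $\vec y$-values to a prescribed witness relation $Y$. I would arrange this by combining an independence atom (forcing the $(\vec x,\vec y)$-projection of the enriched team to be a Cartesian product, so that the same set of $\vec y$-values is attached to every $\vec x$) with an inclusion atom (forcing that common set of $\vec y$-values to be contained in, and large enough to coincide with, a prescribed relation, which one then takes to be the intended $Y$). Once this encoding is in place, translating the first-order operators of $\alpha$ via the flat clauses from part (1) and completing the outer induction on the structure of $\chi$ should be routine.
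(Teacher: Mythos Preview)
The paper does not prove this lemma; it is stated with attribution to Galliani and Gr\"adel--V\"a\"an\"anen and cited from~\cite{gradel2013dependence,galliani2012inclusion} as a known result, so there is no proof in the paper to compare against.

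Your plan is the standard one from those references and is essentially correct. A few remarks on the sketch for part~(2): the phrase ``forcing that common set of $\vec y$-values to be contained in, and large enough to coincide with, a prescribed relation'' is where the real work lies, and your description is slightly off. One does not use an inclusion atom to pin the $\vec y$-values to a \emph{prescribed} relation; rather, the combination $\exists\vec y\,(\vec y\perp\vec x\land\cdots)$ lets the $\vec y$-component of the enriched team range over an \emph{arbitrary} nonempty subset of $A^{|\vec y|}$, and that subset then \emph{is} the witness $Y$. Inclusion atoms enter when translating the atomic subformulas $R(\vec t)$ and $Y(\vec t)$ of $\alpha$: one replaces $R(\vec t)$ by $\vec t\subseteq\vec x$ and $Y(\vec t)$ by $\vec t\subseteq\vec y$, and (since $\alpha$ is first-order and in negation normal form) their negations by exclusion-style statements. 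With that adjustment your outline matches the literature proof.
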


Finally, we define what it means for a theory in $\foil$ to be complete.

\begin{definition}\label{definition: complete foil-theory and first-order-completeness}
    Let $T$ be an $\foil$-theory.
    \begin{enumerate}
        \item We say that $T$ is \emph{first-order-complete} if for all $\A,\B\models T$, we have $\A\equiv\B$.
        \item We say that $T$ is \emph{complete} if its $\eso$-translation is, i.e. if for all $\A,\B\models T$ and sentences $\phi$ of $\foil$,
        \[
            \A\models\phi \iff \B\models\phi.
        \]
    \end{enumerate}
\end{definition}

\subsection{The Logic $\fot$}

First-order Team Logic, $\fot$, was introduced by \textcite{MR4594292} to capture exactly the team properties definable in first-order logic (modulo the empty team property) the same way $\foil$ captures existential second-order team properties. In other words, $\fot$ with team semantics corresponds to $\fol$ with Tarski semantics exactly the same way in which $\foil$ with team semantics corresponds to $\eso$ with Tarski semantics. We can view $\fot$ here as a small extension of first order logic whose expressive power lies between $\fol$ and $\foil$. In particular, since $\fot$ captures all elementary team properties, it is powerful enough to express the most common dependence atoms, for instance those defined in the previous section.

Given a vocabulary $\tau$, the syntax of $\tau$-formulas of $\fot$ is given by the grammar
\[
    \phi \Coloneqq \lambda \mid \vx\subseteq\vy \mid \dep(\vx) \mid \cneg\phi \mid (\phi\land\phi) \mid (\phi\ivee\phi) \mid \existsone x\phi \mid \forallone x\phi,
\]

\noindent where $\lambda$ is a first-order atomic formula and the \emph{weak classical negation} $\cneg$, the \emph{weak disjunction} $\ivee$, and the \emph{weak quantifiers} $\existsone$ and $\forallone$ are interpreted as follows:

\begin{enumerate}[resume]
    \item $\A\models_X \phi \ivee \psi$ if $\A\models_X \phi$ or $\A\models_X \psi$.
    \item $\A\models_X \cneg \phi$ if $X=\emptyset$ or $\A\nmodels_X \phi$.
    \item $\A\models_X \existsone x \psi$ if there is an element $a\in\A$ such that $\A\models_{X(a/x)} \psi$.
    \item $\A\models_X \forallone x \psi$ if for every element $a\in\A$ we have $\A\models_{X(a/x)} \psi$.
\end{enumerate}

In \cite{MR4594292}, the syntax of $\fot$ excludes the constancy atom $\dep(\vx)$. However, our definition results in an equiexpressive logic, as the constancy atom can be defined via the equivalence
\[
    \dep(x_0,\dots,x_{n-1}) \equiv \existsone y_0\dots\existsone y_{n-1}\bigwedge_{i<n}x_i = y_i.
\]
Note that as quantifiers seem to be required here, at the level of quantifier-free formulas there may be a difference in expressive power between the two versions of $\fot$.

We denote by $\phi\wcimp\psi$ the formula $\cneg\phi\ivee\psi$ and call it the \emph{weak classical implication} and by $\phi\wcequiv\psi$ the formula $(\phi\wcimp\psi)\land(\psi\wcimp\phi)$ and call it the \emph{weak classical equivalence}. Clearly, for a nonempty team $X$, $\A\models_X \phi\wcimp\psi$ if and only if
\[
    \A\models_X \phi \implies \A\models_X \psi.
\]
Note that the empty constancy atom $\dep(\emptyset)$ is true in any team, and hence its weak classical negation $\cneg\dep(\emptyset)$ is true only in the empty team. We denote these two atomic sentences by $\top$ and $\bot$.

The following is the $\fot$-counterpart of \cref{Translation between independence logic and ESO}.

\begin{lemma}[Kontinen--Yang~\cite{MR4594292}]\label{Translation between FOT and FO}
    Fix a finite set $D = \{x_0,\dots,x_{n-1}\}$ of variables.
    \begin{enumerate}
        \item For every $\tau$-formula $\phi(x_0,\dots,x_{n-1})$ of $\fot$, there is a first-order $\tau$-sentence $\chi(R)$ with a single free $n$-ary second order variable $R$ such that for every $\tau$-structure $\A$ and team $X$ of $\A$ with $D\subseteq\dom(X)$,
        \[
            \A\models_X\phi \iff \A\models\chi(X[x_0,\dots,x_{n-1}]).
        \]
        We call $\chi(R)$ a \emph{translation} of $\phi$ to $\fol$.

        \item For every first-order $\tau$-sentence $\chi(R)$ with a single free $n$-ary second order variable $R$, there is a $\tau$-formula $\phi(x_0,\dots,x_{n-1})$ of $\fot$ such that for every $\tau$-structure $\A$ and team $X$ of $\A$ with $D\subseteq\dom(X)$,
        \[
            \A\models_X\phi \iff \A\models\chi^+(X[x_0,\dots,x_{n-1}]),
        \]
        where $\chi^+(R) = \exists x_0\dots\exists x_{n-1} R(x_0,\dots,x_{n-1})\to\chi(R)$. We call $\phi$ a translation of $\chi(R)$ to $\fot$.
    \end{enumerate}
\end{lemma}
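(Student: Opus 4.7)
The plan is to prove the two parts by independent inductions on formula complexity: for part~(1), induct on the $\fot$-formula $\phi$; for part~(2), induct on the first-order formula $\chi$ whose only second-order variable is $R$. The goal of each induction is to define the translation and simultaneously verify the equivalence, with the free first-order variables of the $\fot$-formula corresponding to the coordinates of $R$.

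For part~(1), the atomic cases are direct: a first-order atom $\lambda$ (or its negation) is captured by $\forall \vec x\,(R(\vec x)\to\lambda(\vec x))$; the inclusion atom $\vec y\subseteq\vec z$ is captured by a first-order condition on the appropriate projections of $R$; and the constancy atom $\dep(\vec y)$ by the first-order statement that the projection of $R$ onto the coordinates indexed by $\vec y$ has at most one element. The Boolean and weak-classical-negation steps are handled by the expected $\chi_{\phi\land\psi}=\chi_\phi\land\chi_\psi$, $\chi_{\phi\ivee\psi}=\chi_\phi\lor\chi_\psi$, and $\chi_{\cneg\phi}=(\forall\vec x\,\neg R(\vec x))\lor\neg\chi_\phi$, the last clause accounting for the empty-team case in the semantics of $\cneg$. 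The substantive step is the weak quantifier: for $\existsone x\phi$ with $x\notin D$ (without loss of generality), the supplemented team $X(a/x)$ has underlying relation $X[\vec x]\times\{a\}$, so one defines $\chi_{\existsone x\phi}(R)=\exists y\,\chi_\phi^*(R,y)$, where $\chi_\phi^*$ is produced from the inductive $\chi_\phi(R')$ by syntactically replacing each atomic occurrence $R'(t_0,\dots,t_n)$ with $R(t_0,\dots,t_{n-1})\land t_n=y$; the $\forallone x$ case is analogous with $\forall y$ outside.

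For part~(2), the translation is driven by $R(y_0,\dots,y_{n-1})\mapsto y_0\dots y_{n-1}\subseteq x_0\dots x_{n-1}$, while first-order atoms not involving $R$ translate to themselves. Boolean connectives and quantifiers go to their weak $\fot$-counterparts: $\neg$ to $\cneg$, $\lor$ to $\ivee$, $\land$ to $\land$, and $\exists y,\forall y$ to $\existsone y,\forallone y$. The inductive verification relies on two observations: the weak connectives and weak quantifiers of $\fot$ mimic their classical counterparts on nonempty teams; and after processing outer $\existsone$ and $\forallone$ quantifiers, each variable $y_i$ appearing inside an atomic $R(y_0,\dots,y_{n-1})$ has a single fixed value across the whole team, so the inclusion atom $y_0\dots y_{n-1}\subseteq x_0\dots x_{n-1}$ precisely expresses that the tuple of those fixed values belongs to $X[x_0,\dots,x_{n-1}]=R$. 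The $\chi^+$ shape of the target is then automatic, since the empty team property of $\fot$ gives vacuous truth when $X=\emptyset$, matching the vacuous truth of $\chi^+$ when $R=\emptyset$.

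The main obstacle I expect is the weak-quantifier case of part~(1): one must formalize the syntactic substitution of $R'(t_0,\dots,t_n)$ by $R(t_0,\dots,t_{n-1})\land t_n=y$ as an operation on first-order syntax, and verify by a short subinduction that the substituted formula expresses exactly ``$\chi_\phi$ evaluated on the product relation $R\times\{y\}$''. A secondary nuisance is the case $x\in D$, where the supplemented team overwrites an existing coordinate and a slightly different substitution is needed; and one should also check that the atomic translations of $\vec y\subseteq\vec z$ and $\dep(\vec y)$ commute correctly with permutations and repetitions of coordinates. Once these bookkeeping details are settled, both inductions reduce to routine case-checking.
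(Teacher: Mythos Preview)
The paper does not prove this lemma; it is stated with attribution to Kontinen--Yang~\cite{MR4594292} and no argument is given. Your sketch is essentially the standard proof from that reference: part~(1) by induction on $\phi\in\fot$, with the only nontrivial step being the weak quantifiers (where the supplemented team $X(a/x)$ corresponds to the product relation $X[\vx]\times\{a\}$), and part~(2) by the syntactic translation $R(\vec y)\mapsto \vec y\subseteq\vec x$, $\neg\mapsto\cneg$, $\lor\mapsto\ivee$, $\exists\mapsto\existsone$, $\forall\mapsto\forallone$.

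Two small points worth tightening. First, in part~(2) the arguments of $R$ in $\chi$ may be arbitrary $\tau$-terms, not just variables, while the inclusion atom in the paper's $\fot$-syntax takes only variable tuples; you should either note that inclusion atoms with terms are definable (via $\existsone z\,(z=t\land\ldots)$, which is sound since all first-order variables are weak-quantified and hence constant across the team), or first normalise $\chi$ so that $R$ is only applied to variables. Second, the correctness of the inclusion-atom translation relies on the current team $X'$ (after supplementations) satisfying $X'[\vx]=X[\vx]$; this holds provided the bound first-order variables of $\chi$ are renamed away from $x_0,\dots,x_{n-1}$, which you should state explicitly. With these bookkeeping items in place your argument is correct and matches the original.
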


We can conclude that the relationships of the logics mentioned so far when it comes to expressive power is the following:
\[
    \fol \lneq \fot \lneq \foil.
\]

Next we prove some basic but useful facts about $\fot$.

\begin{lemma}\label{teams_elementary_equivalent_as_predicates}
    Let $X$ be a team of $\A$ and $Y$ a team of $\B$, both with domain $\{v_0,\dots,v_{n-1}\}$. Then
    \[
        \A\models_X \phi \iff \B\models_Y \phi
    \]
    for all formulas $\phi(v_0,\dots,v_{n-1})$ of $\fot$ if and only if
    \[
        (\A, X[v_0,\dots,v_{n-1}]) \equiv (\B, Y[v_0,\dots,v_{n-1}]).
    \]
\end{lemma}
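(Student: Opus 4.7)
The plan is to prove this as a direct two-way application of the Kontinen--Yang translation (\cref{Translation between FOT and FO}), which sets up an exact correspondence between $\fot$-formulas in the variables $v_0,\dots,v_{n-1}$ and first-order $\tau$-sentences in the expanded signature $\tau\cup\{R\}$ where $R$ is a fresh $n$-ary predicate symbol. In both directions the team $X$ (resp.~$Y$) is accessed only through its realization $X[v_0,\dots,v_{n-1}]$ (resp.~$Y[v_0,\dots,v_{n-1}]$), which is precisely the interpretation of $R$ in the expanded structure, and this is why the translation is the right tool.

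For the direction $(\Leftarrow)$, I would take an arbitrary $\fot$-formula $\phi(v_0,\dots,v_{n-1})$ and apply \cref{Translation between FOT and FO}(1) to obtain a first-order $\tau$-sentence $\chi(R)$ (with $R$ of arity $n$) such that
\[
    \A\models_X\phi \iff \A\models\chi(X[v_0,\dots,v_{n-1}])
    \quad\text{and}\quad
    \B\models_Y\phi \iff \B\models\chi(Y[v_0,\dots,v_{n-1}]).
\]
Since $\chi$ is a first-order sentence in the signature $\tau\cup\{R\}$ and the two expanded structures are elementarily equivalent by hypothesis, the right-hand sides coincide, so the left-hand sides do as well. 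For the direction $(\Rightarrow)$, I would start with an arbitrary first-order $\tau\cup\{R\}$-sentence $\chi$ and invoke \cref{Translation between FOT and FO}(2) to obtain a $\fot$-formula $\psi(v_0,\dots,v_{n-1})$ with $\A\models_X\psi \iff \A\models\chi^+(X[v_0,\dots,v_{n-1}])$ and similarly for $\B,Y$, where $\chi^+(R)=\exists v_0\dots\exists v_{n-1}R(v_0,\dots,v_{n-1})\to\chi(R)$. Provided that $X[v_0,\dots,v_{n-1}]$ and $Y[v_0,\dots,v_{n-1}]$ are both nonempty, $\chi^+$ is equivalent to $\chi$ on both sides and the hypothesis $\A\models_X\psi \iff \B\models_Y\psi$ immediately yields $\A\models\chi(X[v_0,\dots,v_{n-1}]) \iff \B\models\chi(Y[v_0,\dots,v_{n-1}])$; since $\chi$ was arbitrary, this gives the required elementary equivalence of the expanded structures.

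The main obstacle is precisely the mismatch between $\chi$ and $\chi^+$ in the forward direction, and more specifically the degenerate case $X[v_0,\dots,v_{n-1}]=\emptyset$. Here one first checks that every $\fot$-formula holds in the empty team (by a routine induction on $\phi$, using that each connective and quantifier preserves emptiness, including $\cneg$ and $\existsone$), so testing the hypothesis against $\bot\coloneqq\cneg\top$ forces $Y[v_0,\dots,v_{n-1}]=\emptyset$ as well. Thus both added relations are empty and the required elementary equivalence $(\A,\emptyset)\equiv(\B,\emptyset)$ reduces to $\A\equiv\B$ in $\tau$. In this subcase the $\fot$-hypothesis is trivial and so needs a separate treatment, either by restricting attention to nonempty teams (which is the natural convention when teams are used to encode relations) or by inspecting the structure of the translation $\chi^+$ to extract the needed first-order content; apart from this edge case, the lemma is a clean consequence of the translation.
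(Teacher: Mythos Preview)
Your approach is essentially the same as the paper's: handle the empty case separately, then apply the two halves of the Kontinen--Yang translation for the nonempty case. Your $(\Leftarrow)$ direction and your use of $\bot$ to rule out the mixed case (one team empty, the other not) match the paper exactly.

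Where you diverge is in the both-empty case, and here you are actually more careful than the paper. The paper simply writes ``If $X=Y=\emptyset$, then clearly the claim holds,'' but your worry is justified: when $X=Y=\emptyset$, the $\fot$-hypothesis is vacuously true by the empty-team property, while the conclusion $(\A,\emptyset)\equiv(\B,\emptyset)$ amounts to $\A\equiv\B$, which need not hold for arbitrary $\A,\B$. So the biconditional as stated fails in that degenerate case (e.g.\ take $\A$ and $\B$ to be finite pure sets of different sizes). Your second suggested fix, ``inspecting the structure of $\chi^+$,'' does not work, since for empty $R$ the implication $\chi^+$ is trivially true regardless of $\chi$. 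Your first fix, restricting to nonempty teams, is the correct one, and it is the only way the lemma is invoked later in the paper (in the proof of \cref{galois-types.fot-types}, where all relations are explicitly nonempty). So your proof is complete for the lemma with the nonemptiness proviso, and that is the statement actually used.
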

\begin{proof}
    Denote $\vx = (v_0,\dots,v_{n-1})$. If $X=Y=\emptyset$, then clearly the claim holds. If $X=\emptyset$ and $Y\neq\emptyset$ (or vice versa), then we have that $\A\models_X\bot$ but $\B\nmodels_Y\bot$. On the other hand, clearly $(\A, X[\vx]) = (\A,\emptyset)\not\equiv(\B, Y[\vx])$, proving our claim. So we may assume that $X\neq\emptyset\neq Y$.

    ``$\Longrightarrow$'': Suppose $(\A, X[\vx])\not\equiv(\B, Y[\vx])$. Then there is a first-order $\tau\cup\{R\}$-sentence $\chi$ such that $(\A, X[\vx])\models\chi$ and $(\B, Y[\vx])\nmodels\chi$. Since $X\neq\emptyset\neq Y$, we have $(\A,X[\vx])\models\chi^+$ and $(\B,Y[\vx])\nmodels\chi^+$. Considering $R$ a second-order variable and letting $\phi(\vx)$ be a translation of $\chi(R)$ to $\fot$, we have $\A\models_X\phi$ but $\B\nmodels_Y\phi$.

    ``$\Longleftarrow$'': If $(\A, X[\vx]) \equiv (\B, Y[\vx])$ and $\phi(\vx)$ is a formula of $\fot$, let $\chi(R)$ be the translation of $\phi$ to $\fol$. Consider $R$ as a relation symbol so that $(\A,X[\vx])$ and $(\B,Y[\vx])$ are $\tau\cup\{R\}$-structures. Then as $X$ and $Y$ are nonempty, we have
    \begin{align*}
        (\A, X[\vx])\models\chi &\iff (\B, Y[\vx])\models\chi,
    \end{align*}
    whence by \cref{Translation between FOT and FO},
    \[
        \A\models_X\phi \iff \B\models_Y\phi. \qedhere
    \]
\end{proof}

\begin{lemma}\label{FOT-formula that characterises being a predicate}
    For any $n$-ary relation symbol $R$ and an $n$-tuple $\vx$ of variables, there is an $\{R\}$-formula $\theta(R,\vx)$ of $\fot$ such that for any $\{R\}$-structure $\A$ and a \emph{nonempty} team $X$ of $\A$,
    \[
        \A\models_X\theta \iff X[\vx] = R^\A.
    \]
\end{lemma}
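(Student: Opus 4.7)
The plan is to write $\theta$ as the conjunction of two parts, one expressing each inclusion of $X[\vec x]=R^\A$.

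For the forward inclusion $X[\vec x]\subseteq R^\A$, I would simply use the first-order atom $R(\vec x)$: by flatness, $\A\models_X R(\vec x)$ iff $s(\vec x)\in R^\A$ for every $s\in X$, which is exactly $X[\vec x]\subseteq R^\A$.

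For the reverse inclusion $R^\A\subseteq X[\vec x]$, I would quantify a fresh $n$-tuple $\vec y=(y_0,\dots,y_{n-1})$, disjoint from $\vec x$ and from $\dom(X)$, weakly-universally and use the inclusion atom. The proposed formula is
\[
    \theta(R,\vec x) \coloneqq R(\vec x)\land\forallone y_0\dots\forallone y_{n-1}\bigl(R(\vec y)\wcimp\vec y\subseteq\vec x\bigr).
\]
The key observation is that for any $\vec b=(b_0,\dots,b_{n-1})\in\A^n$, the iterated supplementation $X(\vec b/\vec y)\coloneqq X(b_0/y_0)\dots(b_{n-1}/y_{n-1})$ is nonempty (since $X$ is) and satisfies $X(\vec b/\vec y)[\vec y]=\{\vec b\}$ while $X(\vec b/\vec y)[\vec x]=X[\vec x]$. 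Consequently $\A\models_{X(\vec b/\vec y)}R(\vec y)$ iff $\vec b\in R^\A$, and $\A\models_{X(\vec b/\vec y)}\vec y\subseteq\vec x$ iff $\vec b\in X[\vec x]$. The inner weak classical implication therefore encodes precisely ``$\vec b\in R^\A \Rightarrow \vec b\in X[\vec x]$'', and ranging over all $\vec b$ gives the desired inclusion.

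The remaining verification is a direct unwinding of the team-semantic clauses for $\wedge$, $\forallone$, and $\wcimp=\cneg(\cdot)\ivee(\cdot)$ in both directions. The one subtlety, and arguably the only obstacle, is keeping track of the nonemptiness hypothesis on $X$: it is what guarantees that every $X(\vec b/\vec y)$ is nonempty, which in turn is needed to have $X(\vec b/\vec y)[\vec y]=\{\vec b\}$ rather than $\emptyset$ and to prevent the vacuous truth of $\cneg R(\vec y)$. Indeed on the empty team all of $R(\vec x)$, $\cneg R(\vec y)$, and $\vec y\subseteq\vec x$ hold trivially, so the equivalence with $X[\vec x]=R^\A$ would fail without this hypothesis. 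Once this bookkeeping is done, the proof is a short case analysis on whether $\vec b\in R^\A$.
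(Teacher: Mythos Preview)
Your proposal is correct and uses exactly the same formula as the paper, namely $\theta = R(\vx)\land\forallone y_0\dots\forallone y_{n-1}(R(\vy)\wcimp\vy\subseteq\vx)$; the paper simply asserts that this $\theta$ suffices without further justification, whereas you spell out the verification in detail, including the role of the nonemptiness hypothesis.
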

\begin{proof}
    Clearly $\theta = R(\vx) \land \forallone y_0\dots\forallone y_{n-1} ( R(\vy) \wcimp \vy\subseteq\vx)$ suffices.
\end{proof}

\subsection{Ultraproducts}

Recall that given an index set $I$, an ultrafilter $\U$ on $I$ and nonempty sets $A_i$, $i\in I$, the ultraproduct $\prod_{i\in I}A_i/\U$ of the sets $A_i$ is the quotient set $\{ f/\U \mid f\in\prod_{i\in I}A_i \}$ of $\prod_{i\in I}A_i$ by the equivalence relation
\[
    f\equiv_\U g \iff \{ i\in I \mid f(i) = g(i) \} \in \U,
\]
where $f/\U$ denotes the equivalence class of $f$. If even one of the sets $A_i$ is empty, then the Cartesian product will also be empty and hence so will the ultraproduct. However, when one defines the ultraproduct $\prod_{i\in I}\A_i/\U$ of $\tau$-structures $\A_i$, $i\in I$, the interpretation of a predicate symbol $R\in\tau$ will be the set
\[
    \{(f_0/\U,\dots,f_{n-1}/\U) \mid \{i\in I \mid (f_0(i),\dots,f_{n-1}(i))\in R^{\A_i}\}\in\U\}.
\]
This set, which we naturally think of as the ultraproduct of the sets $R^{\A_i}$, is empty only when $R^{\A_i}$ is empty for $\U$-many indices $i$. Given a context of an ultraproduct structure $\A = \prod_{i\in I}\A_i/\U$, we generalize the notion of an ultraproduct of sets to correspond to the interpretation of relation symbols, i.e. if $A_i\subseteq\A_i^n$ for all $i\in I$, then we denote by $\prod_{i\in I}A_i/\U$ the set
\[
    \{ (f_0/\U,\dots,f_{n-1}/\U) \in \A^n \mid \{i\in I \mid (f_0(i),\dots,f_{n-1}(i))\in A_i\}\in\U \}.
\]
This leads to the definition of the ultraproduct of teams.

\begin{definition}[\textcite{luck2020team}]
    Let $\A$ be the ultraproduct of the structures $\A_i$, $i\in I$, and let $D$ be a set of first-order variables.
    \begin{enumerate}
        \item Given assignments $s_i\colon D\to\A_i$, $i\in I$, we denote by $(s_i)_{i\in I}$ the assignment $s\colon D\to\prod_{i\in I}\A_i$ such that $s(x) = (s_i(x))_{i\in I}$ for all $x\in D$.
        \item Given an assignment $s\colon D\to\prod_{i\in I}\A_i$, we denote by $s/\U$ the assignment $t\colon D\to\A$ such that $t(x)=s(x)/\U$ for all $x\in D$.
        \item Given a team $X_i$ of $\A_i$ with domain $D$ for each $i\in I$, we define their \emph{team ultraproduct} $\prod_{i\in I}X_i/\U$ as the set of all assignments $s\colon D\to\A$ such that there are $s_i\colon D\to\A_i$, $i\in I$, with $s = (s_i)_{i\in I}/\U$ and $\{i\in I \mid s_i\in X_i\}\in\U$.
    \end{enumerate}
\end{definition}

Note that with these definitions, $\left(\prod_{i\in I}X_i/\U\right)[\vx] = \prod_{i\in I}X_i[\vx]/\U$ for any variable tuple $\vx$.

Recall that the classical fundamental theorem of ultraproducts, also known as Łoś' theorem, is the following: if $f_0,\dots,f_{n-1}\in\prod_{i\in I}\A_i$ and $\phi(x_0,\dots,x_{n-1})$ is a first-order formula, then
\[
    \{i\in I \mid \A_i\models\phi(f_0(i),\dots,f_{n-1}(i)) \}\in\U \iff \prod_{i\in I}\A_i/\U \models\phi(f_0/\U,\dots,f_{n-1}/\U).
\]
It was proved by \textcite{luck2020team} that first-order definable properties of teams are preserved in ultraproducts, which together with \cref{Translation between FOT and FO} gives the following theorem. 

\begin{theorem}[Łoś' Theorem of $\fot$]\label{Los theorem of FOT}
    If $X_i$ is a team of $\A_i$ with domain $D$ for all $i\in I$ and $\phi$ is a formula of $\fot$ with $\Fv(\phi)\subseteq D$, then
    \[
        \{ i\in I \mid \A_i\models_{X_i}\phi \}\in\U \iff \prod_{i\in I}\A_i/\U\models_{\prod_{i\in I}X_i/\U}\phi.
    \]
\end{theorem}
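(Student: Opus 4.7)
The plan is to reduce the theorem to the classical first-order Łoś theorem via the translation from $\fot$ to $\fol$ provided by \cref{Translation between FOT and FO}. Let $D=\{x_0,\dots,x_{n-1}\}$ contain all free variables of $\phi$, and let $\chi(R)$ be a first-order translation of $\phi$ into the expanded vocabulary $\tau'=\tau\cup\{R\}$, where $R$ is a fresh $n$-ary relation symbol. For any $\tau$-structure $\B$ and team $Y$ of $\B$ with $D\subseteq\dom(Y)$, we then have $\B\models_Y\phi$ iff $\B^*\models\chi$, where $\B^*=(\B,Y[\vec{x}])$ denotes the $\tau'$-expansion of $\B$ interpreting $R$ as $Y[\vec{x}]$.

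Set $\A_i^*=(\A_i,X_i[\vec{x}])$ for each $i\in I$ and $\A=\prod_{i\in I}\A_i/\U$. Applying the above translation on the left-hand side (to each $\A_i$ with team $X_i$) and on the right-hand side (to $\A$ with team $\prod_{i\in I}X_i/\U$), the desired equivalence reduces to
\[
    \{\,i\in I : \A_i^*\models\chi\,\}\in\U \iff \prod_{i\in I}\A_i^*/\U\models\chi,
\]
which is precisely the classical Łoś theorem applied to the $\tau'$-structures $\A_i^*$. The only thing left to verify is the identification of the ultraproduct structure on the right-hand side: by the convention for ultraproducts of predicates from the paragraph preceding the theorem statement, the interpretation of $R$ in $\prod_{i\in I}\A_i^*/\U$ is exactly $\prod_{i\in I}X_i[\vec{x}]/\U$, which by the remark immediately following the definition of team ultraproducts equals $\bigl(\prod_{i\in I}X_i/\U\bigr)[\vec{x}]$. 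Hence $\prod_{i\in I}\A_i^*/\U$ is the $\tau'$-expansion of $\A$ interpreting $R$ as $\bigl(\prod_{i\in I}X_i/\U\bigr)[\vec{x}]$, exactly what is needed to apply the translation on the right-hand side.

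There is no substantial obstacle: the three ingredients --- the translation lemma, classical first-order Łoś, and the compatibility $\bigl(\prod_{i\in I}X_i/\U\bigr)[\vec{x}]=\prod_{i\in I}X_i[\vec{x}]/\U$ --- are all already in place. The only point warranting care is the possibly empty team case: when $X_i=\emptyset$ on a $\U$-large index set, both sides hold vacuously via the empty team property on the team-semantic side and via the trivial interpretation $R^{\A_i^*}=\emptyset$ on the first-order side, and these match under the ultraproduct construction.
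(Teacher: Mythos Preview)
Your proposal is correct and follows precisely the approach the paper itself indicates: the paper does not give a self-contained proof but states that the result was proved by L\"uck (that first-order definable team properties are preserved in ultraproducts) and follows by combining this with the translation lemma (\cref{Translation between FOT and FO}). Your write-up simply spells out this combination explicitly, including the key identification $\bigl(\prod_{i\in I}X_i/\U\bigr)[\vec{x}]=\prod_{i\in I}X_i[\vec{x}]/\U$ that the paper records just after the definition of team ultraproducts. One cosmetic point: in the theorem $D$ is the (possibly infinite) domain of the teams, whereas the translation lemma is stated for a finite variable set; a one-line appeal to locality of $\fot$ lets you pass to $X_i\restriction\Fv(\phi)$ and makes the reduction airtight.
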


\noindent A version for $\foil$ was proved by \textcite{puljujärvi2022compactness}:

\begin{theorem}[Łoś' Theorem of $\foil$]\label{Los theorem of FOIL}
    If $X_i$ is a team of $\A_i$ with domain $D$ for all $i\in I$ and $\phi$ is a formula of $\foil$ with $\Fv(\phi)\subseteq D$, then
    \[
        \{ i\in I \mid \A_i\models_{X_i}\phi \}\in\U \implies \prod_{i\in I}\A_i/\U\models_{\prod_{i\in I}X_i/\U}\phi.
    \]
\end{theorem}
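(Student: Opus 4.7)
The plan is to reduce the statement to \cref{Translation between independence logic and ESO}(1), which expresses any $\foil$-formula as an $\eso$-formula about a single free predicate standing for the team, and then to apply a preservation theorem for $\eso$ under ultraproducts. This mirrors the derivation of \cref{Los theorem of FOT} from \cref{Translation between FOT and FO} together with the classical Łoś theorem, and explains at the same time why only one direction is expected: $\eso$ is preserved but not reflected by ultraproducts (for example, ``there exists an infinite set'' is in $\eso$, holds in an ultraproduct of finite structures over a non-principal ultrafilter, but fails in every factor).

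First I would establish preservation of $\eso$ under ultraproducts: if $\chi$ is an $\eso$-sentence over a vocabulary $\sigma$ and $\{ i \in I \mid \A_i \models \chi \} \in \U$, then $\prod_{i\in I} \A_i/\U \models \chi$. Writing $\chi$ as $\exists X_0 \dots \exists X_{m-1} \alpha$ with $\alpha$ first-order, for each $i$ in some $J \in \U$ one picks witnessing relations $R^i_0, \dots, R^i_{m-1}$ on $\A_i$, setting them to $\emptyset$ for $i \notin J$. By the classical Łoś theorem applied to the first-order sentence $\alpha$ in the expanded vocabulary $\sigma \cup \{R_0, \dots, R_{m-1}\}$, the ultraproduct $\prod_{i\in I}(\A_i, R^i_0, \dots, R^i_{m-1})/\U$ satisfies $\alpha$; since this ultraproduct is $(\prod_{i\in I}\A_i/\U, \prod_{i\in I} R^i_0/\U, \dots, \prod_{i\in I} R^i_{m-1}/\U)$, the relations $\prod_{i\in I} R^i_j/\U$ witness $\chi$ in $\prod_{i\in I} \A_i/\U$.

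Now fix an enumeration $\vec{x} = (x_0, \dots, x_{n-1})$ of $\Fv(\phi)$ and, via \cref{Translation between independence logic and ESO}(1), an $\eso$-formula $\chi(R)$ over $\tau$ with a single free $n$-ary second-order variable $R$ such that
\[
    \A \models_X \phi \iff \A \models \chi(X[\vec{x}])
\]
for every $\tau$-structure $\A$ and team $X$ with $\vec{x} \subseteq \dom(X)$. Treating $R$ as a relation symbol and $\chi(R)$ as a $\tau\cup\{R\}$-sentence of $\eso$, the hypothesis $\{i \in I \mid \A_i \models_{X_i} \phi\} \in \U$ becomes $\{ i \in I \mid (\A_i, X_i[\vec{x}]) \models \chi \} \in \U$. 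Applying the preservation result just established to these expanded structures, whose ultraproduct is $(\prod_{i\in I}\A_i/\U, \prod_{i\in I} X_i[\vec{x}]/\U)$, we obtain $\prod_{i\in I}\A_i/\U \models \chi(\prod_{i\in I} X_i[\vec{x}]/\U)$. Using the identity $(\prod_{i\in I}X_i/\U)[\vec{x}] = \prod_{i\in I}X_i[\vec{x}]/\U$ recorded immediately before the theorem statement, and invoking the translation equivalence once more, we conclude $\prod_{i\in I}\A_i/\U \models_{\prod_{i\in I}X_i/\U} \phi$.

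The main obstacle is nothing beyond careful bookkeeping: one must verify that the translation $\chi(R)$ interacts cleanly with ultraproducts and that the interpretation of $R$ as the team-trace $X[\vec{x}]$ commutes with ultraproduct formation, which is exactly the identity provided just before the theorem. The failure of the converse implication, which does hold for $\fot$, stems directly from the absence of classical negation in $\foil$: the outer existential second-order quantifiers in $\chi(R)$ can be witnessed coordinatewise, but a dual universal clause would require choosing a single uniform witness, something ultraproducts do not provide.
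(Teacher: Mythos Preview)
Your argument is correct. The paper does not actually prove this theorem; it merely cites it from \textcite{puljujärvi2022compactness}. Your approach via \cref{Translation between independence logic and ESO} together with the standard preservation of $\eso$ under ultraproducts (proved by expanding each factor with witnessing relations and applying the classical Łoś theorem to the first-order matrix) is exactly the natural one, and the identity $(\prod_{i\in I}X_i/\U)[\vx] = \prod_{i\in I}X_i[\vx]/\U$ that you invoke is precisely the one the paper records just before \cref{Los theorem of FOT}. Your remarks on why only one implication holds are also accurate and consistent with the paper's own commentary following the statement.
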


\noindent Note that in independence logic one does not obtain equivalence between a formula being satisfied $\U$-often and it being satisfied in the ultraproduct; this reflects the positive nature of $\eso$. In fact, a universal second-order sentence that is preserved under ultraproducts is actually definable in first-order logic~\cite{keisler-thesis}. Furthermore, a logic that satisfies the stronger form of Łoś' theorem is equiexpressive with first-order logic~\cite{sgro1977maximal}. Fortunately, the missing direction of the equivalence is unnecessary for proving the compactness theorem, which we obtain as a corollary. We say that a set $\Gamma$ of formulas of $\foil$ is satisfiable if there is a structure $\A$, and a \emph{nonempty} team $X$ of $\A$ whose domain contains all the free variables of $\Gamma$, such that $\A\models_X\Gamma$. Notice that, in general, satisfaction for open formulas of $\foil$ does not boil down to satisfaction for sentences (as noted in~\cite{puljujärvi2022compactness}).

\begin{corollary}
    If $\Gamma$ is a set of formulas of $\foil$ and each finite $\Gamma'\subseteq\Gamma$ is satisfiable, then $\Gamma$ is satisfiable.
\end{corollary}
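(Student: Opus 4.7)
The plan is to mimic the classical Malcev-style derivation of compactness from Łoś' theorem, but paying attention to the peculiarities of team semantics, namely that the teams witnessing finite satisfiability need not have a common domain and must be kept nonempty through the ultraproduct construction.

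First I would set up the index set $I = \{\Gamma' \subseteq \Gamma \mid \Gamma' \text{ finite}\}$. By assumption, for each $\Gamma' \in I$ there exist a structure $\A_{\Gamma'}$ and a nonempty team $X_{\Gamma'}$ with $\Fv(\Gamma') \subseteq \dom(X_{\Gamma'})$ such that $\A_{\Gamma'} \models_{X_{\Gamma'}} \Gamma'$. Let $D$ be the set of all variables occurring free in any formula of $\Gamma$ (possibly infinite, which is fine since the team semantics above allows teams with infinite domain). Pick, for each $\Gamma'$, an element $a_{\Gamma'} \in \A_{\Gamma'}$ and extend each $s \in X_{\Gamma'}$ to $\tilde s\colon D \to \A_{\Gamma'}$ by setting $\tilde s(x) = a_{\Gamma'}$ for $x \in D \setminus \dom(X_{\Gamma'})$, and let $\tilde X_{\Gamma'} = \{\tilde s \mid s \in X_{\Gamma'}\}$. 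Then $\tilde X_{\Gamma'}$ has domain $D$, is nonempty, and by locality (applied formula by formula, since $\tilde X_{\Gamma'} \restriction \Fv(\phi) = X_{\Gamma'} \restriction \Fv(\phi)$ for any $\phi \in \Gamma'$) still satisfies $\Gamma'$ in $\A_{\Gamma'}$.

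Next I would choose an ultrafilter. For each $\phi \in \Gamma$, let $J_\phi = \{\Gamma' \in I \mid \phi \in \Gamma'\}$. Since for any $\phi_0,\dots,\phi_{n-1} \in \Gamma$ the set $\{\phi_0,\dots,\phi_{n-1}\}$ lies in $\bigcap_{i<n} J_{\phi_i}$, the family $\{J_\phi \mid \phi \in \Gamma\}$ has the finite intersection property, so it extends to an ultrafilter $\U$ on $I$. Form the ultraproduct $\A = \prod_{\Gamma' \in I} \A_{\Gamma'}/\U$ and the team ultraproduct $X = \prod_{\Gamma' \in I} \tilde X_{\Gamma'}/\U$. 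Since every $\tilde X_{\Gamma'}$ is nonempty, one may pick $s_{\Gamma'} \in \tilde X_{\Gamma'}$ for each $\Gamma'$, and then $(s_{\Gamma'})_{\Gamma' \in I}/\U \in X$, so $X$ is nonempty.

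Finally, for any $\phi \in \Gamma$, whenever $\Gamma' \in J_\phi$ we have $\phi \in \Gamma'$ and hence $\A_{\Gamma'} \models_{\tilde X_{\Gamma'}} \phi$. Thus $\{\Gamma' \mid \A_{\Gamma'} \models_{\tilde X_{\Gamma'}} \phi\} \supseteq J_\phi \in \U$, and by the (one-direction) Łoś' theorem for $\foil$, \cref{Los theorem of FOIL}, we get $\A \models_X \phi$. Since $\phi \in \Gamma$ was arbitrary, $\A \models_X \Gamma$ with $X$ nonempty, so $\Gamma$ is satisfiable.

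The main obstacle, and the only real difference from the first-order case, is arranging a common team domain while keeping each witness team nonempty; this is exactly what locality together with the ``pad each assignment with a fixed element'' trick buys us. Everything else, including the use of only the forward implication of \cref{Los theorem of FOIL}, is standard, and it is crucial here that we never need to propagate a \emph{failure} of $\phi$ across the ultraproduct.
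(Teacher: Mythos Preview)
Your proof is correct and follows exactly the approach the paper intends: the paper does not spell out the argument but presents the corollary immediately after \cref{Los theorem of FOIL}, remarking that only the forward direction of Łoś is needed, and your proposal is precisely the standard ultraproduct derivation behind that remark (cf.\ also \cite[Theorem~3.14]{puljujärvi2022compactness}). Your handling of the team-semantic subtleties---padding to a common domain via locality and checking nonemptiness of the team ultraproduct---is the right way to fill in the details.
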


In the sequel, the following classic result, usually dubbed the Keisler--Shelah theorem, will be of much use.

\begin{theorem}[\textcite{keisler-thesis},~\textcite{MR0297554}]
    Let $\A$ and $\B$ be $\tau$-structures. If $\A\equiv\B$, then there is a cardinal $\kappa$ and an ultrafilter $\U$ on $\kappa$ such that $\A^\kappa/\U \cong \B^\kappa/\U$.
\end{theorem}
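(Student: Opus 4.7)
The plan is to prove the theorem via the standard route through saturated ultrapowers: first produce an ultrafilter $\U$ on some cardinal $\kappa$ such that both $\A^\kappa/\U$ and $\B^\kappa/\U$ are highly saturated, and then use the uniqueness of saturated models of a complete theory in a given cardinality to conclude they are isomorphic. One direction (that isomorphic ultrapowers give elementary equivalence) is immediate from Łoś' theorem together with the canonical diagonal embedding, so the content is in building the isomorphism from the assumption $\A \equiv \B$.

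The first step is to choose the index set. Fix $\lambda = |\tau| + |\A| + |\B| + \aleph_0$ and let $\kappa = \lambda$. I would invoke the Kunen--Keisler existence theorem for \emph{good} ultrafilters to produce an $\aleph_1$-incomplete, $\kappa^+$-good ultrafilter $\U$ on $\kappa$. The existence of such ultrafilters in ZFC (without GCH) is the heart of Shelah's refinement of Keisler's original argument, and this construction --- a transfinite bookkeeping of multiplicative functions from $[\kappa]^{<\omega}$ to the filter, extending at each stage --- is the step I expect to be the main obstacle, since it is genuinely intricate and not something one should attempt to re-derive in a short proof. I would cite it as a black box.

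The second step is to deduce that $\A^\kappa/\U$ and $\B^\kappa/\U$ are both $\kappa^+$-saturated. This is Keisler's theorem that a $\kappa^+$-good, $\aleph_1$-incomplete ultrapower of any structure (in a language of size $\leq \kappa$) is $\kappa^+$-saturated: given a type $p(x)$ over a parameter set of size $\leq \kappa$ in the ultrapower, one represents the finite conjunctions by elements of $\U$ via Łoś, uses goodness to refine these into a multiplicative system, and then picks a diagonal element realizing $p$. Simultaneously, both ultrapowers have cardinality exactly $2^\kappa$ (as $\U$ is $\aleph_1$-incomplete, the ultrapowers are of size $|\A|^\kappa = |\B|^\kappa = 2^\kappa$, assuming $\A,\B$ infinite; the finite case is trivial since $\A \equiv \B$ already forces $\A \cong \B$).

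Finally, since $\A \equiv \B$ implies $\A^\kappa/\U \equiv \B^\kappa/\U$ by Łoś' theorem, and both are $\kappa^+$-saturated models of the same complete first-order theory of cardinality $2^\kappa \leq \kappa^+$ whenever $2^\kappa = \kappa^+$, a standard back-and-forth argument on enumerations of length $2^\kappa$ produces the required isomorphism. To avoid any cardinal arithmetic assumption, one observes that $\kappa^+$-saturation together with elementary equivalence is enough for a back-and-forth of length $2^\kappa$: at each stage one extends the partial elementary map by realizing a type of size $\leq \kappa < \kappa^+$ in the opposite structure, and this succeeds by saturation on both sides. Assembling these extensions yields an isomorphism $\A^\kappa/\U \cong \B^\kappa/\U$, as required.
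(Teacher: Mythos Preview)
The paper does not give its own proof of this theorem; it is stated as a classical result with citations to Keisler and Shelah and is used as a black box throughout. So there is nothing in the paper to compare your argument against.

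That said, your sketch has a genuine gap in the final paragraph. You correctly reach the point where both ultrapowers are $\kappa^+$-saturated of cardinality $2^\kappa$, and you correctly note that under $2^\kappa=\kappa^+$ the uniqueness of saturated models finishes the job. But your attempt to remove the cardinal-arithmetic assumption does not work: in a back-and-forth of length $2^\kappa$, at stage $\alpha$ the partial elementary map already has domain of size $|\alpha|$, and extending it requires realizing a type over that many parameters. Once $\alpha\geq\kappa^+$, this is a type over $\geq\kappa^+$ parameters, and $\kappa^+$-saturation says nothing about such types. Your parenthetical ``a type of size $\leq\kappa<\kappa^+$'' is simply false past stage $\kappa^+$.

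This is precisely the obstacle that separates Keisler's original GCH-dependent proof from Shelah's ZFC proof. Shelah does not get around it by a naive back-and-forth over $2^\kappa$; the actual argument is substantially more delicate (one route goes through iterated ultrapowers and a careful bookkeeping so that at every stage the saturation level keeps pace with the size of the approximation). If you want to present this as a proof rather than a citation, you must either assume an instance of GCH at $\kappa$ or reproduce Shelah's genuine construction; the shortcut you propose does not exist.
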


We conclude this section by reviewing the notion of a limit ultrapower, which was introduced by \textcite{MR0148547} as a kind of generalization of ultrapowers.

\begin{definition}
    Let $I$ be a set, $\cF$ an ultrafilter on $I$ and $\cG$ a filter on $I^2$.
    \begin{enumerate}
        \item For $f\in\A^I$, we denote by $\eq(f)$ the set of all pairs $(i,j)\in I^2$ such that $f(i) = f(j)$.
        \item Let $A$ be a set and denote $B = A^I/\cF$. By $B|\cG$ we denote the set
        \[
            \{ f/\cF \in B \mid \text{$\eq(g)\in\cG$ for some $g \equiv_\cF f$} \}.
        \]
        \item Let $\A$ be a $\tau$-structure and let $\B = \A^I/\cF$. Then we denote by $\B|\cG$ the substructure of $\B$ generated by the set $\dom(\B)|\cG$ and call it a \emph{limit ultrapower} of $\A$.
    \end{enumerate}
\end{definition}

\noindent The limit ultrapower $\B|\cG$ is a substructure of the ultrapower $\B = \A^I/\cF$ that consists of equivalence classes of those sequences $f\in\A^I$ that are ``almost constant'' according to the filter $\cG$. In the definition we do not require $\cG$ to be proper; if $\cG = \Pow(I^2)$, we have $\B|\cG = \B$, so in this sense limit ultrapowers generalize ultrapowers. It is proved in \cite{MR0148547} that $\B|\cG$ is always an elementary substructure of $\B$, and that the ultrapower embedding $a\mapsto (a)_{i\in I}/\cF$ is an elementary embedding $\A\to\B|\cG$.

Limit ultrapowers are connected to complete embeddings, whose definition we now recall.

\begin{definition}\label{Definition: Complete embedding}
    Let $\A$ and $\B$ be $\tau$-structures. An embedding $f\colon\A\to\B$ is \emph{complete} if for every $\tau'\supseteq\tau$ and $\tau'$-expansion $\hat\A$ of $\A$ there is a $\tau'$-expansion $\hat\B$ of $\B$ such that $f$ is an elementary embedding $\hat\A\to\hat\B$.
\end{definition}

\noindent Clearly the ultrapower embedding $\A\to\A^I/\cF$ is complete. It turns out that this is a special case of the more general fact that the ultrapower embedding to a limit ultrapower is complete. In fact, every complete embedding is essentially a limit ultrapower embedding.

\begin{theorem}[\textcite{MR0148547}]\label{Complete embeddings are limit ultrapower embeddings}
    The following are equivalent for $f\colon\A\to\B$.
    \begin{enumerate}
        \item $f$ is a complete embedding.
        \item There are $I$, $\cF$ and $\cG$ and an isomorphism $\pi\colon(\A^I/\cF)|\cG\to\B$ such that $f = \pi\circ\iota$, where $\iota$ is the ultrapower embedding $\A\to(\A^I/\cF)|\cG$.
    \end{enumerate}
\end{theorem}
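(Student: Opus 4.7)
The theorem is a biconditional, so the plan splits into two directions.

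The easy direction is (2) $\Rightarrow$ (1). Let $\tau'\supseteq\tau$ and let $\hat\A$ be a $\tau'$-expansion of $\A$. The key observation is that the construction of a limit ultrapower commutes with expansion of the signature: given the same data $I$, $\cF$, $\cG$, we can form the limit ultrapower $(\hat\A^I/\cF)|\cG$, which as a reduct to $\tau$ coincides with $(\A^I/\cF)|\cG$. Transporting this $\tau'$-structure along the given isomorphism $\pi$ yields a $\tau'$-expansion $\hat\B$ of $\B$. Since the ultrapower embedding into a limit ultrapower is elementary, $\iota\colon\hat\A\to(\hat\A^I/\cF)|\cG$ is elementary, and composing with $\pi$ shows that $f = \pi\circ\iota\colon\hat\A\to\hat\B$ is elementary. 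Hence $f$ is complete.

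The hard direction is (1) $\Rightarrow$ (2), and the plan is to use completeness to manufacture $I$, $\cF$, $\cG$ explicitly. Take $I = B$. For each $b\in B$, we want a representative function $\varphi_b\in A^I$ whose class modulo the eventual ultrafilter corresponds to $b$. To produce such representatives, expand $\tau$ to $\tau^*$ by adding a new unary function symbol $F_b$ for every $b\in B$, and let $\hat\A$ interpret each $F_b$ as an arbitrary but fixed function $A\to A$. By completeness of $f$, there is a $\tau^*$-expansion $\hat\B$ of $\B$ such that $f\colon\hat\A\to\hat\B$ is elementary. Define $\varphi_b\colon I\to A$ so that it mirrors the behaviour of $F_b^{\hat\B}$ as witnessed in $\hat\A$ via $f$; concretely, pick the $\varphi_b$ so that for each first-order formula and each finite tuple $b_0,\dots,b_{n-1}$, the sets $\{i\in I \mid \A\models\psi(\varphi_{b_0}(i),\dots,\varphi_{b_{n-1}}(i))\}$ reflect $\hat\B\models\psi(b_0,\dots,b_{n-1})$. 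Now take $\cF$ to be an ultrafilter on $I$ extending these sets (existence follows from an FIP argument once one encodes the diagram of $\hat\B$), and take $\cG$ to be the filter on $I^2$ generated by $\{\eq(\varphi_b)\mid b\in B\}$, i.e. the sets telling us which $\varphi_b$'s are "almost constant" in the limit sense.

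Finally, one defines $\pi\colon\B\to(\A^I/\cF)|\cG$ by $\pi(b) = \varphi_b/\cF$. A Łoś-style computation shows $\pi$ is an elementary embedding with respect to $\tau$, and by construction $\pi\circ f$ agrees with the ultrapower embedding $\iota$ on $\A$. The main obstacle—and the point where I would expect to spend most of the effort—is verifying that $\pi$ is \emph{onto} $(\A^I/\cF)|\cG$. Injectivity and elementarity follow from the design of $\cF$, but surjectivity requires that every element of the limit ultrapower, i.e. every $\cF$-class of a sequence in $A^I$ whose kernel belongs to $\cG$, is represented by some $\varphi_b$. This forces one to enlarge the expansion $\hat\A\to\hat\B$ further: one must include enough function symbols (Skolem functions, or symbols naming every element of $B$ together with the action of $\cG$-admissible functions on them) so that the class of any $\cG$-admissible sequence over $A$ is already captured by some $b\in B$. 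Once the right expansion is chosen and completeness is applied to it, the surjectivity falls out and $\pi$ is the desired isomorphism.
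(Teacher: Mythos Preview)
The paper does not supply a proof of this theorem: it is quoted as a result of Keisler and used as a black box later on. There is therefore no ``paper's own proof'' to compare your proposal against.

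On its own merits, your sketch of $(2)\Rightarrow(1)$ is correct. Your outline of $(1)\Rightarrow(2)$ has the right overall shape (use completeness to pass to a rich expansion, build representatives $\varphi_b$, take $\cF$ by FIP and $\cG$ generated by the equalizers), but the central step---the actual construction of the $\varphi_b$---is not carried out. Introducing unary function symbols $F_b$ interpreted by \emph{arbitrary} functions on $\A$ does not help: completeness then gives you interpretations $F_b^{\hat\B}\colon B\to B$, which are functions on $\B$, not maps $I\to A$, and there is no mechanism in your sketch for extracting $\varphi_b\colon I\to A$ from them. The classical construction instead expands $\A$ by naming \emph{all} finitary relations (equivalently, all functions $A^n\to A$), and the $\varphi_b$ are produced concretely from the diagram of $\hat\B$ together with a suitable choice of index set. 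You correctly flag surjectivity of $\pi$ as the delicate point, but your resolution (``enlarge the expansion further'') is a hope rather than a plan. In the standard argument surjectivity falls out once $\cG$ is chosen as the filter generated by $\{\eq(\varphi_b)\mid b\in B\}$: any $g\in A^I$ with $\eq(g)\in\cG$ satisfies $\eq(g)\supseteq\bigcap_{k<n}\eq(\varphi_{b_k})$ for some $b_0,\dots,b_{n-1}$, hence factors as $g = h\circ(\varphi_{b_0},\dots,\varphi_{b_{n-1}})$ for some $h\colon A^n\to A$; since $h$ is already named in the expansion, $g/\cF$ corresponds to $h^{\hat\B}(b_0,\dots,b_{n-1})\in B$.
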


\section{Team Maps}\label{Section_maps}

In traditional model theory, structures are usually not studied in isolation, but in connection with several variants of the substructure relation as well as different kinds of mappings between structures. In particular, a key concept which is fundamental in the setting of first-order model theory is the notion of an \emph{elementary substructure} (and that of an \emph{elementary embedding}). Quite surprisingly, no obvious counterpart has been introduced in the framework of logics in team semantics. In this section we try to fill this gap by studying several different notions of team maps.

In particular, our goal will be (as we detail more explicitly in Section~\ref{Section_AEC} and Section~\ref{Section_Monster})  to understand how much of the properties of embeddings and of elementary embeddings can be replicated in the setting of $\foil$ and $\eso$. As a matter of fact, if, in the definition of an elementary embedding (resp. embedding), one removes the requirement that it is a total function, then one obtains the notion of a partial elementary map (resp. partial isomorphism). It turns out that, in the setting of $\foil$ and $\eso$, it is likewise important to work with partial maps and not simply with total ones.  However, in a context where elements are replaced by relations as the smallest unit of interest, the notion of a ``partial elementary map'', or even a ``partial isomorphism'', becomes a bit more complicated and one has to be very careful when dealing with ``substructures''. Most of the time, it is not sufficient to map relations into their pointwise images under a map that has been defined on elements but, rather, to consider more complicated mappings. We deal with these difficulties in this section: in \cref{definition.team.maps}, we identify several notions of partial and total maps defined on relations and preserving some properties definable in $\foil$ and $\eso$ and, in particular, define team-semantic versions of partial isomorphisms and partial elementary maps, which play a crucial role in the following sections.

For the rest of the paper (except for \cref{Lindstrom}), we identify a relation $X$ with the corresponding team $\team(X)$ with domain $\{v_0,\dots,v_{\ar(X)-1}\}$. If $\phi(v_0,\dots,v_{n-1})$ is a formula of $\fot$ or $\foil$ and $X\subseteq\A^n$ for a structure $\A$, then by $\A\models_X\phi$ we mean $\A\models_{\team(X)}\phi$. If $X\subseteq\A^n$ and $a\in\A$, then by $X(a/n)$ we denote the $(n+1)$-ary relation $X\times\{a\}$. Note that $\team(X(a/n)) = \team(X)(a/v_n)$. Also note that $\emptyset(a/0) = \emptyset$ and $\{\emptyset\}(a/0) = \{a\}$. Given a structure $\A$, we denote by $\Delta_\A$ the diagonal set $\{(a,a) \mid a\in\A\}$, i.e. the identity relation ($a = b$ if and only if $(a,b)\in\Delta_\A$).

\begin{definition}\label{definition.team.maps}
    Let $\A$ and $\B$ be $\tau$-structures.
    \begin{enumerate}
        \item Given a set $\X\subseteq\R(\A)$, we denote by $\cl(\X;\A)$ the $\subseteq$-least set $\Y\subseteq\R(\A)$ such that $\X\subseteq\Y$ and:
        \begin{enumerate}[label = (C\arabic*)]
            \item\label{Domain of partial isomorphism contains bottom and top}
            $\emptyset\in\Y$ and $\A^n\in\Y$ for all $n<\omega$,
            \item\label{Domain of partial isomorphism closed under meets}
            if $X,Y\in\Y$, then $X\cap Y\in\Y$,
            \item\label{Domain of partial isomorphism closed under products}
            if $X,Y\in\Y$, then $X\times Y\in\Y$,
            \item\label{Domain of partial isomorphism closed under projections}
            if $X\in\Y$ is $n$-ary and $\vec\imath\in n^{<\omega}$, then $\Pr_{\vec\imath}(X)\in\Y$, and
            \item\label{Domain of partial isomorphism contains interpretations of vocabulary}
            $\Delta_\A,R^\A,F^\A,\{c^\A\}\in\Y$ for all relation symbols $R\in\tau$, function symbols $F\in\tau$ and constant symbols $c\in\tau$.
        \end{enumerate}
        If the structure $\A$ is clear from the context (which it almost always is), we simply write $\cl(\X)$ for $\cl(\X;\A)$.
        \item We say that $\f$ is a \emph{partial team map} $\A\to\B$ if it is a partial function $\R(\A)\to\R(\B)$ such that
        \begin{itemize}
            \item $\f$ preserves arities, i.e. if $X\subseteq\A^n$, then $\f(X)\subseteq\B^n$, and
            \item the domain and range of $\f$ satisfy the closure properties \ref{Domain of partial isomorphism contains bottom and top}--\ref{Domain of partial isomorphism contains interpretations of vocabulary}, i.e. $\dom(\f) = \cl(\dom(\f);\A)$ and $\ran(\f) = \cl(\ran(\f);\B)$.
        \end{itemize}
        If $\f$ maps all singletons from its domain to singletons and $\{\vec a\}\in\dom(\f)$, we write $\f(\vec a)$ for the unique element inhabiting the singleton $\f(\{\vec a\})$. We write $\f\restriction_n$ for the function $\f\restriction(\dom(\f)\cap\Pow(\A^n))$.

        \item We say that a partial team map $\f\colon\A\to\B$ is a \emph{partial team isomorphism} if the following conditions are satisfied:
        \begin{enumerate}[label=(PI\arabic*)]
            \item\label{Partial isomorphisms preserve bottom and top}
            $X = \emptyset$ if and only if $\f(X) = \emptyset$, and $X=\A^n$ if and only if $\f(X) = \B^n$ for all $n<\omega$,
            \item\label{Partial isomorphisms preserve atoms}
            for all $X\in\dom(\f)$, $X$ is a singleton if and only if $\f(X)$ is,
            \item\label{Partial isomorphisms preserve products}
            for all $X,Y\in\dom(\f)$, $\f(X\times Y) = \f(X)\times\f(Y)$,
            \item\label{Partial isomorphisms preserve projections}
            for all $n$-ary $X\in\dom(\f)$ and $\vec\imath\in n^{<\omega}$, $\f(\Pr_{\vec\imath}(X)) = \Pr_{\vec\imath}(\f(X))$,
            \item\label{Partial isomorphisms preserve order}
            for all $X,Y\in\dom(\f)$, $X\subseteq Y$ if and only if $\f(X)\subseteq\f(Y)$, and
            \item\label{Partial isomorphisms preserve structure}
            $\f(\Delta_\A) = \Delta_\B$, $\f(R^\A) = R^\B$ for any relation or function symbol $R\in\tau$ and $\f(c^\A) = c^\B$ for any constant symbol $c\in\tau$.
        \end{enumerate}
        If $\f$ happens to be total, then we call $\f$ a \emph{team embedding}. If $\f$ is such that for any $a\in\A$, $\{a\}\in\dom(\f)$, then we call $\f$ \emph{element-total}, and if for any $b\in\B$, $\{b\}\in\ran(\f)$, then we call $\f$ \emph{element-surjective}.

        \item We say that $\f$ is a \emph{team isomorphism} if it is an element-surjective team embedding. If $\pi\colon\A\to\B$ is an ordinary isomorphism, we denote by $\hat\pi$ the team isomorphism $\A\to\B$ defined by
        \[
            \hat\pi(X) = \{ \pi(\vec a) \mid \vec a\in X \}
        \]
        for all relations $X$ of $\A$.

        \item We say that a partial team map $\f\colon\A\to\B$ is a partial \emph{elementary team map} if it satisfies \ref{Partial isomorphisms preserve products}, and for all $\tau$-formulas $\phi(v_0,\dots,v_{n-1})$ of $\fot$ and $n$-ary $X\in\dom(\f)$, we have
        \[
            \A\models_X\phi \iff \B\models_{\f(X)}\phi.
        \]
        If $\f$ happens to be total, then we call $\f$ an \emph{elementary team embedding}.

        \item We say that a partial elementary team map $\f\colon\A\to\B$ is a partial \emph{independence team map} if for all $\tau$-formulas $\phi(v_0,\dots,v_{n-1})$ of $\foil$ and $X\subseteq\A^n$, we have
        \[
            \A\models_X\phi \implies \B\models_{\f(X)}\phi.
        \]
        If $\f$ happens to be total, then we call $\f$ an \emph{independence team embedding}.
    \end{enumerate}
\end{definition}

Note that $\f(\emptyset) = \emptyset$ vacuously holds for any team map $\f$: the empty set is the only relation of every arity, and $\f$ preserves arity of relations. If $\f$ is a partial team isomorphism and $\{a_0\},\dots,\{a_{n-1}\}\in\dom(\f)$, then $\{\vec a\} = \{(a_0,\dots,a_{n-1})\}\in\dom(\f)$ and $\f(\vec a) = (\f(a_0),\dots,\f(a_{n-1}))$.

It is not completely clear from the definition that e.g.\ team embeddings are injective. However, justification for the nomenclature will be found in the coming results (e.g. \cref{lemma: partial team isomorphism properties}).

\begin{example}
    Let $\A$ be a $\tau$-structure, $\kappa$ an infinite cardinal and $\U$ an ultrafilter on $\kappa$. Let $\B = \A^\kappa/\U$. Now $\iota\colon\A\to\B$ defined by $\iota(X) = X^\kappa/\U$ is an elementary team embedding by \cref{Los theorem of FOT} and furthermore an independence team embedding by \cref{Los theorem of FOIL}.
\end{example}

\begin{lemma}\label{lemma: partial team isomorphism properties}
    Let $\f$ be a partial team isomorphism. Then
    \begin{enumerate}
        \item $\f$ is injective,
        \item $\f(X(a/n)) = \f(X)(\f(a)/n)$ for an $n$-ary $X\in\dom(\f)$ and $\{a\}\in\dom(\f)$.
        \item $\f(X\cap Y) = \f(X)\cap\f(Y)$ for all $X,Y\in\dom(\f)$.
    \end{enumerate}
\end{lemma}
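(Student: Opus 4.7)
The plan is to prove the three items in order, using condition (PI5) as the main workhorse together with the fact that the domain and range of $\f$ are both closed under the operations defining $\cl$.

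For (1), I would argue by contraposition: if $X \neq Y$ then without loss of generality $X \not\subseteq Y$, and (PI5) immediately gives $\f(X) \not\subseteq \f(Y)$, hence $\f(X) \neq \f(Y)$. For (2), I would use that $X(a/n) = X \times \{a\}$ by definition, that $\{a\} \in \dom(\f)$ is a singleton whose image under $\f$ is a singleton by (PI2), and that by the convention introduced just before this lemma $\f(\{a\}) = \{\f(a)\}$. Then (PI3) gives
\[
    \f(X(a/n)) = \f(X \times \{a\}) = \f(X) \times \f(\{a\}) = \f(X) \times \{\f(a)\} = \f(X)(\f(a)/n).
\]

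The main content is in (3), since intersection is not directly listed among the conditions (PI1)--(PI6). The inclusion $\f(X \cap Y) \subseteq \f(X) \cap \f(Y)$ is routine: $X \cap Y \subseteq X$ and $X \cap Y \subseteq Y$, so (PI5) yields $\f(X \cap Y) \subseteq \f(X)$ and $\f(X \cap Y) \subseteq \f(Y)$. The potentially tricky direction is the reverse, and my plan is to exploit the closure property \ref{Domain of partial isomorphism closed under meets} of $\ran(\f)$: since $\ran(\f) = \cl(\ran(\f); \B)$, the relation $\f(X) \cap \f(Y)$ lies in $\ran(\f)$, so there exists $V \in \dom(\f)$ with $\f(V) = \f(X) \cap \f(Y)$. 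From $\f(V) \subseteq \f(X)$ and $\f(V) \subseteq \f(Y)$, condition (PI5) gives $V \subseteq X$ and $V \subseteq Y$, hence $V \subseteq X \cap Y$, and another application of (PI5) yields $\f(V) \subseteq \f(X \cap Y)$, which is the desired inclusion.

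The main obstacle I anticipate is the temptation to prove (3) by ``definably'' expressing $X \cap Y$ in terms of products, projections, and the diagonal $\Delta_\A$; this route seems to loop back on itself because identifying coordinates of $X \times Y$ with those of a diagonal naturally requires an intersection step. The surrogate-preimage argument above avoids this entirely by exploiting that the range, being a $\cl$-closed set, already contains $\f(X) \cap \f(Y)$, and then using injectivity (proved in (1)) together with order-preservation (PI5) to pull this witness back into $\dom(\f)$.
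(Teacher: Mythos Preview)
Your proposal is correct and follows essentially the same approach as the paper, which also only spells out item~(3) and uses exactly your surrogate-preimage argument via the closure of $\ran(\f)$ under intersections together with (PI5). One minor remark: your final paragraph says the argument uses injectivity from~(1), but in fact it does not---you only need the \emph{existence} of some $V\in\dom(\f)$ with $\f(V)=\f(X)\cap\f(Y)$, which comes purely from $\ran(\f)$ being $\cl$-closed.
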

\begin{proof}
    We show the third claim.
    Let $X,Y\in\dom(\f)$. If they are of different arity, the intersection is empty and hence mapped to the empty set, so we may assume $\ar(X)=\ar(Y)$. By \ref{Partial isomorphisms preserve order}, as $X\cap Y\subseteq X,Y$, we have $\f(X\cap Y)\subseteq\f(X),\f(Y)$ and hence $\f(X\cap Y)\subseteq\f(X)\cap\f(Y)$. For the converse, note that as $\f(X)\cap\f(Y)\in\ran(\f)$, there is some $Z\in\dom(\f)$ such that $\f(Z)=\f(X)\cap\f(Y)$. Since $\f(Z)\subseteq\f(X)\cap\f(Y)$, we have $\f(Z)\subseteq\f(X),\f(Y)$, so by \ref{Partial isomorphisms preserve order} we have $Z\subseteq X,Y$. But then $Z\subseteq X\cap Y$. Thus $\f(Z)\subseteq\f(X\cap Y)$. But as $\f(Z)=\f(X)\cap\f(Y)$, we have $\f(X)\cap\f(Y)\subseteq\f(X\cap Y)$.
\end{proof}

\begin{lemma}\quad
    \begin{enumerate}
        \item If $\f$ is a partial team isomorphism $\A\to\B$, then $\f^{-1}$ is a partial team isomorphism $\B\to\A$. If $\f$ is a partial elementary team map $\A\to\B$, then $\f^{-1}$ is a partial elementary team map $\B\to\A$.
        \item If $\f\colon\A\to\B$ and $\g\colon\B\to\C$ are partial team isomorphisms with $\ran(\f) = \dom(\g)$, then $\g\circ\f$ is a partial team isomorphism $\A\to\C$. If $\f$ and $\g$ are elementary, then so is $\g\circ\f$. If $\f$ and $\g$ are independence team maps, then so is $\g\circ\f$.
    \end{enumerate}
\end{lemma}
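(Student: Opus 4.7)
The plan is to verify, clause by clause, that the defining conditions of a partial team isomorphism---and of its elementary and independence variants---are preserved under inversion and composition. The clauses split naturally into three kinds: closure conditions on $\dom$ and $\ran$, which are inherited automatically from $\f$ (and $\g$); biconditional clauses (\ref{Partial isomorphisms preserve bottom and top}, \ref{Partial isomorphisms preserve atoms}, \ref{Partial isomorphisms preserve order}) that are symmetric in the two sides and transfer for free; and equational clauses (\ref{Partial isomorphisms preserve products}, \ref{Partial isomorphisms preserve projections}, \ref{Partial isomorphisms preserve structure}) that each transfer by one short calculation.

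For (1), the previous lemma tells me $\f$ is injective, so $\f^{-1}$ is a well-defined partial function $\R(\B)\to\R(\A)$ with $\dom(\f^{-1}) = \ran(\f)$ and $\ran(\f^{-1}) = \dom(\f)$; closure and arity preservation are then immediate. For \ref{Partial isomorphisms preserve products} and \ref{Partial isomorphisms preserve projections}, given $X', Y' \in \ran(\f)$ I set $X = \f^{-1}(X')$, $Y = \f^{-1}(Y')$ and apply the corresponding clause for $\f$ to the preimages, e.g.\ $\f(X\times Y) = \f(X)\times\f(Y) = X'\times Y'$, which gives $\f^{-1}(X'\times Y') = \f^{-1}(X')\times\f^{-1}(Y')$. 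Clause \ref{Partial isomorphisms preserve structure} is immediate by inverting the equalities $\f(\Delta_\A) = \Delta_\B$, $\f(R^\A) = R^\B$, $\f(c^\A) = c^\B$. For the partial elementary version, once \ref{Partial isomorphisms preserve products} for $\f^{-1}$ is in hand, the satisfaction condition follows by substituting $X = \f^{-1}(Y)$ in $\A\models_X\phi \iff \B\models_{\f(X)}\phi$.

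For (2), the composite $\g\circ\f$ satisfies $\dom(\g\circ\f) = \dom(\f)$ and $\ran(\g\circ\f) = \ran(\g)$, so closure and arity preservation are inherited. Each clause among \ref{Partial isomorphisms preserve bottom and top}--\ref{Partial isomorphisms preserve structure} transfers by two successive applications, e.g.\ $(\g\circ\f)(X\times Y) = \g(\f(X)\times\f(Y)) = \g(\f(X))\times\g(\f(Y))$; the biconditional clauses chain trivially. The elementary case chains two biconditionals $\A\models_X\phi \iff \B\models_{\f(X)}\phi \iff \C\models_{(\g\circ\f)(X)}\phi$, and the independence case does the same with implications. The whole proof is bookkeeping with no genuine obstacle; the only mild trap is remembering that the elementary and independence notions separately require \ref{Partial isomorphisms preserve products} (which is built into their definitions), so this clause must be verified in both halves of each statement and is \emph{not} subsumed by the satisfaction/implication condition alone.
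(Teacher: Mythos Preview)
Your clause-by-clause verification is correct and is precisely the routine check the paper intends; the paper's own proof reads, in its entirety, ``Clear.'' One small point worth making explicit: when you write ``the previous lemma tells me $\f$ is injective,'' that lemma only covers partial team \emph{isomorphisms}, whereas a partial elementary team map is defined merely by \ref{Partial isomorphisms preserve products} plus $\fot$-preservation---so for the elementary half of (1) you should note separately that injectivity follows from $\fot$-preservation (e.g.\ via $\bot$ for the empty case and the inclusion atom on $X\times Y$ for the nonempty case), after which your argument goes through unchanged.
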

\begin{proof}
    Clear. \qedhere
\end{proof}

\begin{lemma}
    Let $\f\colon\A\to\B$ be a partial team map. Then for any $n,m<\omega$ and $n$-ary and $m$-ary relations $X$ and $Y$ of $\A$, respectively, the following holds: $X\in\dom(\f)$ and $Y\in\dom(\f)$ if and only if for all $k\leq n,m$, $X\bowtie_k Y\in\dom(\f)$. Furthermore, if $\f$ is a partial team isomorphism, then $\f(X\bowtie_k Y) = \f(X)\bowtie_k\f(Y)$ for all $k$.
\end{lemma}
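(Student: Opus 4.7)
The plan is to reduce $X\bowtie_k Y$ to operations under which $\dom(\f)$ is already closed by definition, and conversely to recover $X$ and $Y$ from the case $k=0$ via projections. The key identity to establish first is
\[
    X\bowtie_k Y \;=\; (X\times\A^{m-k})\cap(\A^{n-k}\times Y),
\]
which follows directly from unpacking the definitions: a tuple $\vec a\vec b\vec c$ with $|\vec a|=n-k$, $|\vec b|=k$, $|\vec c|=m-k$ lies in the right-hand side precisely when $\vec a\vec b\in X$ and $\vec b\vec c\in Y$, which is the defining condition for membership in $X\bowtie_k Y$.

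Granted this identity, the forward direction is immediate: if $X,Y\in\dom(\f)$, then $\A^{m-k},\A^{n-k}\in\dom(\f)$ by \ref{Domain of partial isomorphism contains bottom and top}, whence $X\times\A^{m-k}$ and $\A^{n-k}\times Y$ lie in $\dom(\f)$ by \ref{Domain of partial isomorphism closed under products}, and then their intersection $X\bowtie_k Y$ belongs to $\dom(\f)$ by \ref{Domain of partial isomorphism closed under meets}.

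For the converse, specialising to $k=0$ gives $X\times Y = X\bowtie_0 Y\in\dom(\f)$. When $X$ and $Y$ are both nonempty, I would recover them via the projections $X = \Pr_{(0,\dots,n-1)}(X\times Y)\in\dom(\f)$ and $Y = \Pr_{(n,\dots,n+m-1)}(X\times Y)\in\dom(\f)$, which are available by \ref{Domain of partial isomorphism closed under projections}. The degenerate case where one of $X,Y$ equals $\emptyset$ is absorbed into \ref{Domain of partial isomorphism contains bottom and top}.

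For the ``furthermore'' clause, assuming $\f$ is a partial team isomorphism, I would apply $\f$ to both sides of the key identity. The preceding lemma guarantees that $\f$ commutes with binary intersections, \ref{Partial isomorphisms preserve products} that it commutes with Cartesian products, and \ref{Partial isomorphisms preserve bottom and top} forces $\f(\A^j)=\B^j$ (since $\A^j$ lies in $\dom(\f)$ by \ref{Domain of partial isomorphism contains bottom and top}). Chaining these,
\[
    \f(X\bowtie_k Y) \;=\; \bigl(\f(X)\times\B^{m-k}\bigr)\cap\bigl(\B^{n-k}\times\f(Y)\bigr) \;=\; \f(X)\bowtie_k\f(Y),
\]
the last equality being the key identity applied to $\f(X)$ and $\f(Y)$. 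The whole argument is algebraic bookkeeping, and the only thing that requires any care is writing out the join-as-intersection identity with the right arities; I do not anticipate a substantive obstacle.
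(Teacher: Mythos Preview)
Your argument is correct and takes a genuinely different (and tidier) route than the paper's. The paper expresses the join via the diagonal: it first shows by induction that the $2k$-ary relation $\Delta^k_\C = \{\vec a\vec b \mid \vec a=\vec b,\ |\vec a|=k\}$ lies in the closure of $\Delta_\C$ under products and projections, and then writes
\[
    X\bowtie_k Y \;=\; \Pr_{\vec\jmath}\bigl((X\times Y)\cap(\C^{n-k}\times\Delta^k_\C\times\C^{m-k})\bigr)
\]
for a suitable index tuple $\vec\jmath$ that deletes the duplicated middle block. Your identity $X\bowtie_k Y = (X\times\A^{m-k})\cap(\A^{n-k}\times Y)$ sidesteps the diagonal altogether and needs no projection step, so both the closure check and the ``furthermore'' verification become shorter: you only invoke \ref{Partial isomorphisms preserve bottom and top}, \ref{Partial isomorphisms preserve products}, and preservation of intersections from the preceding lemma, whereas the paper must additionally establish $\f(\Delta^k_\A)=\Delta^k_\B$ and appeal to \ref{Partial isomorphisms preserve projections}. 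The converse direction (projecting $X\times Y$) is handled identically in both proofs. One small remark: your sentence about the empty case only covers the relation that \emph{is} empty; if, say, $X=\emptyset$ then every $X\bowtie_k Y$ is empty regardless of $Y$, so the biconditional as literally stated fails in that corner --- but the paper's proof glosses over the same point.
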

\begin{proof}
    Let $\C$ be an arbitrary structure. We first show that for any $n<\omega$, the $2n$-ary relation
    \[
        \Delta^n_\C \coloneqq \{ \vec a\vec b \mid |\vec a| = |\vec b| = n, \vec a = \vec b \}
    \]
    is expressible in terms of products, projections and $\Delta_\C$. Note that $\Delta^0_\C = \{\emptyset\} = \Pr_{\emptyset}(\Delta_\C)$ and $\Delta^1_\C = \Delta_\C$. Suppose we have shown this for $\Delta^n_\C$. Then
    \begin{align*}
        \Delta^{n+1}_\C &= \{ \vec a a\vec b b \mid |\vec a| = |\vec b| = n, \vec a = \vec b, a=b \} \\
        &= \Pr_{\vec\imath}(\{ \vec a\vec b ab \mid |\vec a| = |\vec b| = n, \vec a = \vec b, a=b \}) \\
        &= \Pr_{\vec\imath}(\Delta^n_\C\times\Delta_\C),
    \end{align*}
    where $\vec\imath = (0,\dots,n-1,2n,n,\dots,2n-1,2n+1)$.
    
    Now for any $(n+k)$-ary and $(k+m)$-ary relations $X$ and $Y$ of $\C$, we have
    \begin{align*}
        X\bowtie_k Y &= \{\vec a\vec b\vec c \mid \vec a\vec b\in X, \vec b\vec c\in Y\} \\
        &= \Pr_{\vec\jmath}((X\times Y)\cap(\C^n\times\Delta^k_\C\times\C^m)),
    \end{align*}
    where $\vec\jmath = (0,\dots,n+k-1,n+2k,\dots,n+2k+m-1)$.

    Now it is clear that $\Delta^k_\A\in\dom(\f)$. It follows that $X\bowtie_k Y\in\dom(\f)$ whenever $X,Y\in\dom(\f)$. On the other hand, if $X\bowtie_0 Y = X\times Y\in\dom(\f)$, then $X$ and $Y$ are easily obtained as projections. If $\f$ is a partial team isomorphism, then clearly $\f(\Delta^k_\A) = \Delta^k_\B$, whence also $\f(X\bowtie_k Y) = \f(X)\bowtie_k\f(Y)$.
\end{proof}

\subsection{Characterizations of Partial Isomorphisms and Elementary Maps}

Although we have defined partial team isomorphisms simply by requiring some specific commutativity conditions, the next lemma and the following proposition show that these are exactly those team maps which preserve the quantifier-free fragment of $\fot$.

\begin{lemma}\label{Partial isomorphisms preserve first-order atomic formulas}
    Let $\f\colon\A\to\B$ be a partial team isomorphism. Then $\f$ preserves atomic first-order formulas, i.e. for any $X\in\dom(\f)$ and atomic first-order formula $\phi(v_0,\dots,v_{n-1})$,
    \[
        \A\models_X\phi \iff \B\models_{\f(X)}\phi.
    \]
\end{lemma}
\begin{proof}

    We begin by showing that for any $\tau$-term $t(v_0,\dots,v_{n-1})$, we have $G_n(t;\A)\in\dom(\f)$ and $\f(G_n(t;\A)) = G_n(t;\B)$, where
    \[
        G_n(t;\C) = \{ (c_0,\dots,c_n)\in\C^{n+1} \mid t^\C(c_0,\dots,c_{n-1}) = c_n \}
    \]
    is the graph of the function $\vec c\mapsto t^\C(\vec c)$. We proceed by induction on $t$.
    \begin{enumerate}
        \item If $t = v_i$, then the function $\vec c\mapsto t^\C(\vec c)$ is the $i^{\text{th}}$ projection. Let $\vec\imath = (0,\dots,n-1,i)$. Then
        \[
            G_n(t;\C) = \{ (c_0,\dots,c_n)\in\C^{n+1} \mid c_i = c_n \} = \Pr_{\vec\imath}(\C^{n+1}).
        \]
        Hence
        \[
            \f(G_n(t;\A)) = \f(\Pr_{\vec\imath}(\A^{n+1})) = \Pr_{\vec\imath}(\f(\A^{n+1})) = \Pr_{\vec\imath}(\B^{n+1}) = G_n(t;\B).
        \]

        \item If $t = c$ for a constant symbol $c\in\tau$, then the function $\vec c\mapsto t^\C(\vec c)$ is the constant map $\vec c \mapsto c$. Then
        \[
            G_n(t;\C) = \{ (c_0,\dots,c_n)\in\C^{n+1} \mid c_n = c^\C \} = \C^n\times\{c^\C\}.
        \]
        Hence
        \begin{align*}
            \f(G_n(t;\A)) &= \f(\A^n\times\{c^\A\}) = \f(\A^n)\times\f(\{c^\A\}) \\
            &= \B^n\times\{c^\B\} = G_n(t;\B).
        \end{align*}

        \item Finally suppose that $t = F(t_0(v_0,\dots,v_{n-1}),\dots,t_{m-1}(v_0,\dots,v_{n-1}))$ for $\tau$-terms $t_i$ and an $m$-ary function symbol $F\in\tau$. By the induction hypothesis, $\f(G_n(t_i;\A)) = G_n(t_i;\B)$ for all $i<m$. First consider the relation
        \[
            X_0(\C) = \{ (t_0^\C(c_0,\dots,c_{n-1}),c_0,\dots,c_{n-1}) \mid c_0,\dots,c_{n-1}\in\C \}.
        \]
        Clearly $X_0(\C) = \Pr_{\vec\imath_0}(G_n(t_0;\C))$ for $\vec\imath_0 = (n,0,\dots,n-1)$, and hence
        \begin{align*}
            \f(X_0(\A)) &= \f(\Pr_{\vec\imath_0}(G_n(t_0;\A))) = \Pr_{\vec\imath_0}(\f(G_n(t_0;\A))) \\
            &= \Pr_{\vec\imath_0}(G_n(t_0;\B)) = X_0(\B).
        \end{align*}
        Then consider the relation
        \[
            X_1(\C) = \{ (t_0^\C(\vec c),t_1^\C(\vec c),c_0,\dots,c_{n-1}) \mid \vec c = (c_0,\dots,c_{n-1})\in\C^n \}.
        \]
        Denoting $\vec\imath_1 = (0,n+1,1,\dots,n)$, we have
        \begin{align*}
            X_1(\C) &= \Pr_{\vec\imath_1}( \{ (t_0^\C(\vec c),c_0,\dots,c_{n-1},t_1^\C(\vec c)) \mid \vec c = (c_0,\dots,c_{n-1})\in\C^n \} ) \\
            &= \Pr_{\vec\imath_1}( X_0(\C) \bowtie_n G_n(t_1;\C) ).
        \end{align*}
        Then
        \begin{align*}
            \f(X_1(\A)) &= \f(\Pr_{\vec\imath_1}( X_0(\A) \bowtie_n G_n(t_1;\A) )) \\
            &= \Pr_{\vec\imath_1}( \f(X_0(\A)) \bowtie_n \f(G_n(t_1;\A)) ) \\
            &= \Pr_{\vec\imath_1}( X_0(\B) \bowtie_n G_n(t_1;\B) ) \\
            &= X_1(\B).
        \end{align*}
        Continuing this way we obtain a description of the relation
        \[
            \hspace{3em} X_{m-1}(\C) = \{ (t_0^\C(\vec c),\dots,t_{m-1}^\C(\vec c),c_0,\dots,c_{n-1}) \mid \vec c = (c_0,\dots,c_{n-1})\in\C^n \}
        \]
        as an element of the closure of $G_n(t_0;\C),\dots,G_n(t_{m-1};\C)$ under projections and join, whence $\f(X_{m-1}(\A)) = X_{m-1}(\B)$. Then, denoting $\vec\jmath_0 = (0,\dots,n-1,n+m)$ and $\vec\jmath_1 = (m,\dots,m+n-1,0,\dots,m-1)$, we have
        \begin{align*}
            \hspace{3em} G_n(t;\C) &= \Pr_{\vec\jmath_0}\left( \{ (\vec c,t_0^\C(\vec c),\dots,t_{m-1}^\C(\vec c),F^\C(t_0^\C(\vec c),\dots,t_{m-1}^\C(\vec c))) \mid \vec c\in\C^n \} \right) \\
            &= \Pr_{\vec\jmath_0}\left(\Pr_{\vec\jmath_1}\big( X_{m-1}(\C)  \big)\bowtie_m F^\C \right),
        \end{align*}
        whence
        \begin{align*}
            \f(G_n(t;\A)) &= \f\left( \Pr_{\vec\jmath_0}\left(\Pr_{\vec\jmath_1}\left( X_{m-1}(\A) \bowtie_m F^\A \right)\right) \right) \\
            &= \Pr_{\vec\jmath_0}\left(\Pr_{\vec\jmath_1}\left( \f(X_{m-1}(\A)) \bowtie_m \f(F^\A) \right)\right) \\
            &= \Pr_{\vec\jmath_0}\left(\Pr_{\vec\jmath_1}\left( X_{m-1}(\B) \bowtie_m F^\B \right)\right) \\
            &= G_n(t;\B).
        \end{align*}
    \end{enumerate}

    Now we show that $\f$ preserves atomic formulas $\phi(v_0,\dots,v_{n-1})$ of first-order logic. Suppose $\phi = R(t_0,\dots,t_{m-1})$ for some $m$-ary relation symbol $R\in\tau$ and $\tau$-terms $t_i(v_0,\dots,v_{n-1})$, $i<m$. Now
    \begin{align*}
        \phi(\C) &\coloneqq \{ (c_0,\dots,c_{n-1})\in\C^n \mid \C\models\phi(c_0,\dots,c_{n-1}) \} \\
        &\,= \{ (c_0,\dots,c_{n-1}) \mid (t_0^\C(c_0,\dots,c_{n-1}),\dots,t_{m-1}^\C(c_0,\dots,c_{n-1}))\in R^\C \} \\
        &\,= \Pr_{(0,\dots,n-1)}( Y^m_n(\C) \bowtie_m R^\C ),
    \end{align*}
    where
    \begin{align*}
        Y^m_n(\C) &\coloneqq \{ (c_0,\dots,c_{n-1},t_0^\C(\vec c),\dots,t_{m-1}^\C(\vec c)) \mid \vec c = (c_0,\dots,c_{n-1})\in\C^n \}.
    \end{align*}
    If we can show that $Y^m_n(\A)\in\dom(\f)$ and $\f(Y^m_n(\A)) = Y^m_n(\B)$, then
    \begin{align*}
        \f(\phi(\A)) &= \f\left( \Pr_{(0,\dots,n-1)}(Y^m_n(\A))\bowtie_m R^\A \right) = \Pr_{(0,\dots,n-1)}( \f(Y^m_n(\A))\bowtie_m\f(R^\A) ) \\
        &= \Pr_{(0,\dots,n-1)}(Y^m_n(\B)\bowtie_m R^\B) \\
        &= \phi(\B).
    \end{align*}
    Then for any $n$-ary $X\in\dom(\f)$,
    \begin{align*}
        \A\models_X\phi &\iff X\subseteq\phi(\A) \iff \f(X) \subseteq \f(\phi(\A)) \\
        &\iff \f(X) \subseteq\phi(\B) \iff \B\models_{\f(X)}\phi,
    \end{align*}
    as desired. So it suffices show that $Y^m_n(\C)$ can be presented in terms of products, projections, intersections and the sets $G(t_i;\C)$. We show this by induction on $m>0$. If $m=1$, then $Y^m_n(\C) = G_n(t_0;\C)$, so we have already proved this. If we have already handled $Y^m_n(\C)$, then
    \begin{align*}
        Y^{m+1}_n(\C) &= \Pr_{\vec\imath}(\{(c_0,\dots,c_{n-1},c_0,\dots,c_{n-1},t_0^\C(\vec c),\dots,t_m^\C(\vec c) \mid \vec c\in\C^n\}) \\
        &= \Pr_{\vec\imath}(\Pr_{\vec\jmath}(Y^m_n(\C)\times G_n(t_m;\C))\cap\Delta^n_\C\times\C^{m+1}),
    \end{align*}
    where $\vec\imath = (0,\dots,n-1,2n,\dots,2n+m)$ (skips the second batch of $c_0,\dots,c_{n-1}$) and $\vec\jmath = (0,\dots,n-1,n+m,\dots,2n+m-1,n,\dots,n+m-1,2n+m)$ (permutes the second batch of $c_0,\dots,c_{n-1}$ to be after the first).

    Then, finally, suppose that $\phi$ is the equation $t_0(v_0,\dots,v_{n-1}) = t_1(v_0,\dots,v_{n-1})$. Now
    \begin{align*}
        \phi(\C) &= \{ (c_0,\dots,c_{n-1})\in\C^n \mid t_0^\C(c_0,\dots,c_{n-1}) = t_1^\C(c_0,\dots,c_{n-1}) \} \\
        &= \Pr_{(0,\dots,n-1)}(G_n(t_0;\C)\cap G_n(t_1;\C)).
    \end{align*}
    Thus $\f(\phi(\A)) = \phi(\B)$. Then we obtain
    \[
        \A\models_X\phi \iff \B\models_{\f(X)}\phi,
    \]
    which concludes the proof. \qedhere
\end{proof}

\begin{proposition}\label{Partial isomorphisms preserve quantifier-free formulas}
    Let $\f\colon\A\to\B$ be a team map. Then the following are equivalent.
    \begin{enumerate}
        \item $\f$ is a partial team isomorphism.

        \item $\f$ satisfies \ref{Partial isomorphisms preserve products} and for all quantifier-free formulas $\phi(v_0,\dots,v_{n-1})$ of $\fot$ and $n$-ary $X\in\dom(f)$,
        \[
            \A\models_X\phi \iff \B\models_{\f(X)}\phi.
        \]
    \end{enumerate}
    In particular, a partial elementary team map is a partial team isomorphism.
\end{proposition}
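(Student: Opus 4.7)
The plan is to prove both directions of the equivalence separately, with the substantive work in the converse direction, and then derive the ``in particular'' clause as a corollary.

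For $(1) \Rightarrow (2)$, I would induct on a quantifier-free $\fot$ formula $\phi$: \ref{Partial isomorphisms preserve products} is built into the definition of partial team isomorphism, so only preservation of satisfaction needs proof. First-order atomic formulas are handled by \cref{Partial isomorphisms preserve first-order atomic formulas}. The inclusion atom $(v_{i_0},\dots,v_{i_{m-1}}) \subseteq (v_{j_0},\dots,v_{j_{m-1}})$ on an $n$-ary $X$ unfolds to $\Pr_{\vec\imath}(X) \subseteq \Pr_{\vec\jmath}(X)$ and is preserved by \ref{Partial isomorphisms preserve projections} and \ref{Partial isomorphisms preserve order}; the constancy atom $\dep(\vec x)$ asserts $|\Pr_{\vec\imath}(X)| \leq 1$ and is preserved by \ref{Partial isomorphisms preserve bottom and top}, \ref{Partial isomorphisms preserve atoms} and \ref{Partial isomorphisms preserve projections}. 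The connectives $\land$ and $\ivee$ pass through the induction directly, while $\cneg$ additionally uses \ref{Partial isomorphisms preserve bottom and top} to transport its empty-team clause.

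For $(2) \Rightarrow (1)$, I would verify conditions \ref{Partial isomorphisms preserve bottom and top}--\ref{Partial isomorphisms preserve structure} individually by evaluating suitable quantifier-free $\fot$ formulas on product teams supplied by \ref{Partial isomorphisms preserve products}. The formula $\bot$ detects emptiness, yielding the $\emptyset$-part of \ref{Partial isomorphisms preserve bottom and top}. The constancy atom $\dep(\vec v)$ detects ``$|X| \leq 1$'', which combined with preservation of emptiness gives \ref{Partial isomorphisms preserve atoms}. For \ref{Partial isomorphisms preserve projections}, I would set $Y = \Pr_{\vec\imath}(X)$ and apply preservation of the conjunction $(v_{i_0},\dots,v_{i_{m-1}}) \subseteq (v_n,\dots,v_{n+m-1}) \land (v_n,\dots,v_{n+m-1}) \subseteq (v_{i_0},\dots,v_{i_{m-1}})$ on the product team $X \times Y$, reading off $\f(Y) = \Pr_{\vec\imath}(\f(X))$ from $\f(X \times Y) = \f(X) \times \f(Y)$. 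An analogous inclusion-atom argument on $X \times Y$ handles \ref{Partial isomorphisms preserve order} in the nonempty case, with the arity-mismatch and empty cases following from preservation of emptiness. For \ref{Partial isomorphisms preserve structure}, I would use the first-order atomic formulas $v_0 = v_1$, $R(\vec v)$, $F(v_0,\dots,v_{n-1}) = v_n$ and $v_0 = c$ to force one inclusion, and then establish the reverse inclusion by picking a preimage in $\dom(\f)$ of the target relation in $\ran(\f)$ and running the same formula backwards on a product team.

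The main obstacle is the top part of \ref{Partial isomorphisms preserve bottom and top}, namely $X = \A^n \iff \f(X) = \B^n$: because every quantifier-free $\fot$ formula is satisfied on the empty team and the empty team is the only team separable from arbitrary nonempty teams via $\bot$, there is no quantifier-free $\fot$ formula holding exactly on the full relation $\A^n$. I would circumvent this using the existence of preimages. Since $\B^n \in \ran(\f)$, pick $Y_n \in \dom(\f)$ with $\f(Y_n) = \B^n$. On $Y_n \times \A^n \in \dom(\f)$ the inclusion atom $(v_n,\dots,v_{2n-1}) \subseteq (v_0,\dots,v_{n-1})$ holds iff $Y_n = \emptyset$ or $Y_n = \A^n$. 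Its image under \ref{Partial isomorphisms preserve products} (iterated to obtain $\f(\A^n) = \f(\A)^n$) is the team $\B^n \times \f(\A)^n$, on which the same atom holds trivially because $\f(\A)^n \subseteq \B^n$. Preservation therefore forces $Y_n \in \{\emptyset, \A^n\}$, and the emptiness clause is ruled out by $\bot$-preservation, so $Y_n = \A^n$; the same dichotomy shows $\A^n$ is the unique preimage of $\B^n$, completing \ref{Partial isomorphisms preserve bottom and top}. The ``in particular'' clause is then immediate: a partial elementary team map preserves every $\fot$ formula and satisfies \ref{Partial isomorphisms preserve products} by definition, so it falls under item (2) of the equivalence and hence is a partial team isomorphism.
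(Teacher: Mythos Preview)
Your proposal is correct and follows essentially the same approach as the paper: induction on $\phi$ for $(1)\Rightarrow(2)$, and testing quantifier-free $\fot$ formulas on product teams (via \ref{Partial isomorphisms preserve products}) for $(2)\Rightarrow(1)$. The paper only spells out \ref{Partial isomorphisms preserve order} and declares the rest ``similar''; you give more detail and correctly flag the top half of \ref{Partial isomorphisms preserve bottom and top} as the one case not literally handled by a single formula, supplying a valid preimage argument. That argument can be shortened: once \ref{Partial isomorphisms preserve order} is in hand, pick $Y_n\in\dom(\f)$ with $\f(Y_n)=\B^n$, note $Y_n\subseteq\A^n$ trivially, and conclude $\B^n=\f(Y_n)\subseteq\f(\A^n)$, so $\f(\A^n)=\B^n$.
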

\begin{proof}
    We first show that if $\f$ satisfies \ref{Partial isomorphisms preserve products} and preserves quantifier-free formulas of $\fot$, then it is a partial team isomorphism.
    We show \ref{Partial isomorphisms preserve order}, the rest are similar.
    Let $X,Y\in\dom(\f)$ be $n$-ary. If $X = \emptyset$, then $X\subseteq Y$ and $\f(X)=\emptyset\subseteq\f(Y)$. If $X\neq\emptyset = Y$, then $X\nsubseteq Y$ and $\emptyset\neq \f(X)\nsubseteq \f(Y) = \emptyset$.
    Otherwise denote $\vx = (v_0,\dots,v_{n-1})$ and $\vy = (v_n,\dots,v_{2n-1})$. Then
    \begin{align*}
        X\subseteq Y &\iff \A\models_{X\times Y} \vx\subseteq\vy \\
        &\iff \B\models_{\f(X\times Y)} \vx\subseteq\vy \\
        &\stackrel{\text{\ref{Partial isomorphisms preserve products}}}\iff \B\models_{\f(X)\times\f(Y)} \vx\subseteq\vy \\
        &\iff \f(X)\subseteq\f(Y).
    \end{align*}

    Next we show that if $\f$ is a partial isomorphism, then for all quantifier-free formulas $\phi(v_0,\dots,v_{n-1})$ of $\fot$ and $n$-ary $X\in\dom(\f)$,
    \[
        \A\models_X\phi \iff \B\models_{\f(X)}\phi.
    \]
    We proceed by induction on $\phi$.
    The case for first-order atomic formulas follows from \cref{Partial isomorphisms preserve first-order atomic formulas}, and the cases for connectives are immediate consequences of the induction hypothesis. We show the case for constancy atoms.
    Suppose that $\phi = \dep(\vx)$ for some $\vx\in\{v_0,\dots,v_{n-1}\}^{<\omega}$, and let $\vec\imath$ be the corresponding index tuple. Then
    \begin{align*}
        \A\models_X\phi &\iff \text{$X$ is empty or $\Pr_{\vec\imath}(X)$ is a singleton} \\
        &\iff \text{$\f(X)$ is empty or $\f(\Pr_{\vec\imath}(X))$ is a singleton} \\
        &\iff \text{$\f(X)$ is empty or $\Pr_{\vec\imath}(\f(X))$ is a singleton} \\
        &\iff \B\models_{\f(X)}\phi. \qedhere
    \end{align*}
\end{proof}

We conclude this section by pointing out some equivalent characterisations of elementary team maps. The usual Tarski--Vaught test applies immediately also in this context. 

\begin{proposition}[The Tarski--Vaught Test for $\fot$]\label{Tarski--Vaught of FOT}
    Let $\f\colon\A\to\B$ be a team map. The following are equivalent.
    \begin{enumerate}
        \item\label{Tarski-Vaught: elementary embedding}
        $\f$ is an elementary team embedding.

        \item\label{Tarski-Vaught: T-V condition}
        $\f$ is a team embedding and satisfies the following condition:
        \begin{quote}
            For any formula $\phi(v_0,\dots,v_n)$ of $\fot$ and $X\subseteq\A^n$, \\ if $\B\models_{\f(X)}\existsone v_n\phi$, there is $a\in\A$ such that $\B\models_{\f(X)(\f(a)/n)}\phi$.
        \end{quote}
    \end{enumerate}
\end{proposition}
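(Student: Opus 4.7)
The direction $(1) \Rightarrow (2)$ is immediate: assuming $\f$ is an elementary team embedding, if $\B \models_{\f(X)} \existsone v_n \phi$, then elementarity gives $\A \models_X \existsone v_n \phi$, so there is $a \in \A$ with $\A \models_{X(a/n)} \phi$; applying elementarity to $\phi$ on $X(a/n)$ and using the identity $\f(X(a/n)) = \f(X)(\f(a)/n)$ from the earlier lemma on partial team isomorphisms yields $\B \models_{\f(X)(\f(a)/n)} \phi$.

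For $(2) \Rightarrow (1)$, the plan is to establish $\A \models_X \phi \iff \B \models_{\f(X)} \phi$ for every $n$-ary $X \subseteq \A^n$ by induction on the complexity of $\phi$. The base case of quantifier-free $\phi$ is \cref{Partial isomorphisms preserve quantifier-free formulas}, applicable because a team embedding is in particular a partial team isomorphism. The cases of $\land$ and $\ivee$ follow immediately from the induction hypothesis. For $\cneg \phi_1$, one combines the induction hypothesis for $\phi_1$ with the equivalence $\f(X) = \emptyset \iff X = \emptyset$ guaranteed by \ref{Partial isomorphisms preserve bottom and top}. For $\existsone v_n \phi_1$, one direction transports a witness $a \in \A$ through using $\f(X(a/n)) = \f(X)(\f(a)/n)$ and the induction hypothesis, while the other direction extracts such a witness directly from the T-V condition.

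The main obstacle is the case $\forallone v_n \phi_1$, since $\f$ is not assumed element-surjective and one cannot naively pass between universal quantifiers ranging over $\A$ and over $\B$. The direction from $\B$ to $\A$ is easy---restrict the universal quantifier to the image of $\f$ and apply the induction hypothesis---so the delicate step is to show that $\A \models_X \forallone v_n \phi_1$ implies $\B \models_{\f(X)} \forallone v_n \phi_1$. Here my plan is to apply the T-V condition to the $\fot$-formula $\cneg \phi_1$. If the conclusion failed, there would be $b \in \B$ with $\B \nmodels_{\f(X)(b/n)} \phi_1$; the case $X = \emptyset$ is ruled out by the empty team property, so $\f(X)(b/n) \neq \emptyset$ and $\B \models_{\f(X)(b/n)} \cneg \phi_1$, whence $\B \models_{\f(X)} \existsone v_n \cneg \phi_1$. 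Applying the T-V condition to $\cneg \phi_1$ then produces $a \in \A$ with $\B \models_{\f(X)(\f(a)/n)} \cneg \phi_1$, and nonemptiness of $\f(X)(\f(a)/n)$ forces $\B \nmodels_{\f(X)(\f(a)/n)} \phi_1$; by the induction hypothesis this contradicts $\A \models_{X(a/n)} \phi_1$. Crucially, this argument exploits the fact that the T-V condition is assumed uniformly for \emph{every} $\fot$-formula, not merely for proper subformulas of the $\phi$ currently being handled.
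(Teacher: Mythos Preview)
Your proposal is correct and follows exactly the approach the paper gestures at: the paper's entire proof is the one-line remark that, once one notes $\f(X(a/n)) = \f(X)(\f(a)/n)$, ``the proof is identical to its first-order counterpart.'' You have simply spelled out what that counterpart amounts to in the $\fot$ setting, including the reduction of the $\forallone$ case to the $\existsone$ case via weak classical negation.
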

\begin{proof}
    After the observation that $\f(X(a/n)) = \f(X)(\f(a)/n)$ for all $a\in\A$, the proof is identical to its first-order counterpart.
\end{proof}

\begin{proposition}\label{Partial elementary maps: FOT vs FO}
    Let $\f\colon\A\to\B$ be a team map. The following are equivalent.
    \begin{enumerate}
        \item\label{PEM FOT vs FO: partial elementary map}
        $\f$ is a partial elementary team map.

        \item\label{PEM FOT vs FO: preserves first-order logic}
        For any first-order sentence $\phi(R_0,\dots,R_{n-1})$ with free second-order variables $R_i$, and for any nonempty $\ar(R_i)$-ary $X_i\in\dom(\f)$, $i<n$,
        \[
            \A\models\phi(X_0,\dots,X_{n-1}) \iff \B\models\phi(\f(X_0),\dots,\f(X_{n-1})).
        \]

        \item\label{PEM FOT vs FO: first-order elementary embedding in large vocabulary}
        For any $n<\omega$ and $n$-ary $X\in\dom(\f)$, let $\underline X$ be a new $n$-ary relation symbol and denote $\tau' = \tau\cup\{\underline X \mid X\in\dom(\f)\}$. Then the (partial) function $f\colon\A\to\B$, $f(a) = \f(\{a\})$ for all $\{a\}\in\dom(\f)$, is a partial elementary map between the $\tau'$-structures $\hat\A$ and $\hat\B$, where $\hat\A\restriction\tau = \A$, $\hat\B\restriction\tau = \B$, $\underline X^{\hat\A} = X$ and $\underline X^{\hat\B} = \f(X)$.
    \end{enumerate}
\end{proposition}
\begin{proof}
    The equivalence of \ref{PEM FOT vs FO: preserves first-order logic} and \ref{PEM FOT vs FO: first-order elementary embedding in large vocabulary} follows immediately by the following observation. For any first-order $\phi(x_0,\dots,x_{n-1},\vec R)$ and elements $a_0,\dots,a_{n-1}$ such that $\{a_0\},\dots,\{a_{n-1}\}\in\dom(\f)$,
    \[
        \A\models\phi(a_0,\dots,a_{n-1},\vec X) \iff \A\models\phi^*(\{a_0\},\dots,\{a_{n-1}\},\vec X),
    \]
    where $\phi^*(S_0,\dots,S_{n-1},\vec R)$ is the formula
    \[
        \exists x_0\dots\exists x_{n-1}\left(\phi \land \bigwedge_{i<n}(S_i(x_i) \land \forall y (S_i(y)\to y = x_i)) \right)
    \]
    and $S_i$ are fresh unary second-order variables. We next show the equivalence of \ref{PEM FOT vs FO: preserves first-order logic} and \ref{PEM FOT vs FO: partial elementary map}.

    \ref{PEM FOT vs FO: preserves first-order logic}$\implies$\ref{PEM FOT vs FO: partial elementary map}: The fact that $\f$ preserves formulas of $\fot$ follows directly from \cref{Translation between FOT and FO}. We show that $\f$ satisfies \ref{Partial isomorphisms preserve products}. Let $X,X'\in\dom(\f)$ be an $n$-ary and $m$-ary nonempty relation, respectively, and let $R$ and $R'$ be an $n$-ary and an $m$-ary and $S$ an $(n+m)$-ary second-order variable. We let $\phi(S,R,R')$ be the sentence
    \[
        \forall x_0\dots\forall x_{n-1}\forall y_0\dots\forall y_{m-1} (S(\vx\vy) \leftrightarrow (R(\vx)\land R'(\vy)),
    \]
    expressing that ``$S$ is the cartesian product of $R$ and $R'$''. Denote $Y=X\times X'$. Then $\A\models\phi(Y,X,X')$, whence $\B\models\phi(\f(Y),\f(X),\f(X'))$. Now $\f(Y) = \f(X)\times \f(X')$ as desired.

    \ref{PEM FOT vs FO: partial elementary map}$\implies$\ref{PEM FOT vs FO: preserves first-order logic}: Let $\phi(R_0,\dots,R_{n-1})$ be a first-order sentence with free second-order variables $R_i$ of arity $n_i$. The first observation is that whenever $X_i$ is empty, so is $\f(X_i)$. Let $\phi^*(R_0,\dots,R_{i-1},R_{i+1},\dots,R_{n-1})$ be the formula obtained by replacing in $\phi$ each occurrence of formulas of the form $X_i(\vec t\,)$ by $\bot$. Then, whenever $X_i=\emptyset$,
    \[
        \A\models\phi(X_0,\dots,X_{n-1}) \iff \A\models\phi^*(X_0,\dots,X_{i-1},X_{i+1},\dots,X_{n-1}),
    \]
    and the same holds for $\B$ and $\f(X_j)$.
    Hence we only need to consider nonempty $X_i$.
    
    Now let $\psi(R)$ be the sentence one obtains by replacing in $\phi$ formulas of the form $R_i(t_0,\dots,t_{n_i-1})$ by
    \[
        \exists\vx\exists\vy R(\vx,t_0,\dots,t_{n_i-1},\vy),
    \]
    where $\vx$ and $\vy$ are variable tuples of length $\sum_{j<i}n_j$ and $\sum_{i<j<n}n_j$, respectively, and $R$ is a $\sum_{i<n}n_i$-ary second order variable. Now for any structure $\C$ and $X_i\subseteq\C^{n_i}$,
    \[
        \C\models\phi(X_0,\dots,X_{n-1}) \iff \C\models\psi\left(\prod_{i<n}X_i\right).
    \]
    Denote $N = \sum_{i<n}n_i$. By \cref{Translation between FOT and FO}, there is a formula $\chi(v_0,\dots,v_{N-1})$ of $\fot$ such that for any structure $\C$ and $X\subseteq\C^N$,
    \[
        \C\models\exists\vx R(\vx)\to \psi(X) \iff \C\models_X\chi.
    \]
    Now, let $X_i\in\dom(\f)$ be nonempty and denote $X=\prod_{i<n}X_i$. As $\f$ is an elementary team embedding, we have $\f(X) = \prod_{i<n}\f(X_i)$. Then
    \begin{align*}
        \A\models\phi(X_0,\dots,X_{n-1}) &\iff \A\models\psi\left(X\right) \iff \A\models_X\chi \\
        &\iff \B\models_{\f\left(X\right)}\chi \iff \B\models\psi\left(\f\left(X\right)\right) \\
        &\iff \B\models\psi\left(\prod_{i<n}\f(X_i)\right) \\
        &\iff \B\models\phi(\f(X_0),\dots,\f(X_{n-1})). \qedhere
    \end{align*}
\end{proof}

\begin{corollary}
    If $\f\colon\A\to\B$ is a partial elementary team map, then $\A\equiv\B$.
\end{corollary}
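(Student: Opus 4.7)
The plan is to observe that this is an immediate specialization of Proposition~\ref{Partial elementary maps: FOT vs FO}. Recall that $\A\equiv\B$ means that $\A$ and $\B$ satisfy the same first-order $\tau$-sentences, so it suffices to show that for every first-order $\tau$-sentence $\phi$, we have $\A\models\phi$ if and only if $\B\models\phi$.

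Since $\f$ is a partial elementary team map, by the equivalence \ref{PEM FOT vs FO: partial elementary map}$\iff$\ref{PEM FOT vs FO: preserves first-order logic} in Proposition~\ref{Partial elementary maps: FOT vs FO}, we have that for every first-order sentence $\phi(R_0,\dots,R_{n-1})$ with free second-order variables $R_i$ and every tuple of nonempty $X_i\in\dom(\f)$ of matching arity,
\[
    \A\models\phi(X_0,\dots,X_{n-1}) \iff \B\models\phi(\f(X_0),\dots,\f(X_{n-1})).
\]
I would simply instantiate this equivalence at the degenerate case $n=0$: the sentence $\phi$ then has no free second-order variables, so it is an ordinary first-order $\tau$-sentence, and the universally quantified premise ``for any nonempty $X_i\in\dom(\f)$'' is vacuous. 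The equivalence therefore reduces to $\A\models\phi\iff\B\models\phi$, which is exactly the conclusion $\A\equiv\B$.

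There is no real obstacle here — all of the substantive content has already been absorbed into Proposition~\ref{Partial elementary maps: FOT vs FO}, and the corollary just extracts the parameter-free consequence. If one wished to avoid invoking the $n=0$ case explicitly, one could alternatively use characterization~\ref{PEM FOT vs FO: first-order elementary embedding in large vocabulary}, noting that the induced partial elementary map between the $\tau'$-expansions $\hat\A$ and $\hat\B$ preserves all first-order $\tau'$-sentences (and hence in particular all first-order $\tau$-sentences, since $\tau\subseteq\tau'$), which again yields $\A\equiv\B$.
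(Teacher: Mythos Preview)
Your proposal is correct and matches the paper's intent: the corollary is stated without proof immediately after Proposition~\ref{Partial elementary maps: FOT vs FO}, so the paper treats it as an immediate consequence, and your $n=0$ specialization (or the alternative via characterization~\ref{PEM FOT vs FO: first-order elementary embedding in large vocabulary}) is exactly the intended reading.
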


\subsection{Elementary Team Embeddings and Isomorphisms}

Earlier we noticed that the ultrapower embedding $\iota\colon\A\to\A^\kappa/\U$, $\iota(X)=X^\kappa/\U$, is not just an elementary team embedding but an independence team embedding as well. This is actually a general phenomenon: in fact, all elementary team embeddings preserve $\foil$ in the direction of the map, as demonstrated by the next result. The true difference between elementary and independence team maps shows up at the level of partial maps.

\begin{proposition}\label{elementary.team.embeddings.are.independence}
    If $\f\colon\A\to\B$ is an elementary team embedding, then it is an independence team embedding.
\end{proposition}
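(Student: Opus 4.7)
The plan is to reduce the claim to the characterization of elementary team maps given by \cref{Partial elementary maps: FOT vs FO}(2), using the translation between $\foil$ and $\eso$ furnished by \cref{Translation between independence logic and ESO}. The key observation is that since $\eso$ is the existential fragment of second-order logic, preservation of first-order formulas with arbitrary second-order parameters from the domain automatically yields preservation of $\eso$ in the forward direction.

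Concretely, let $\phi(v_0,\dots,v_{n-1})$ be an $\foil$-formula and fix $X\subseteq\A^n$ with $\A\models_X\phi$. First I would dispose of the degenerate case $X=\emptyset$: then $\f(X)=\emptyset$ and the conclusion is immediate from the empty team property of $\foil$. So assume $X\neq\emptyset$. By \cref{Translation between independence logic and ESO}(1), there is an $\eso$-sentence $\chi(R) = \exists Y_0\dots\exists Y_{m-1}\psi(R,Y_0,\dots,Y_{m-1})$, with $\psi$ first-order, translating $\phi$ in the sense that $\A\models_X\phi$ if and only if $\A\models\chi(X)$ (under our identification of teams with relations). Fix witnesses $Z_0,\dots,Z_{m-1}$ in $\A$ of the appropriate arities so that $\A\models\psi(X,Z_0,\dots,Z_{m-1})$.

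Since $\f$ is total, every relation on $\A$ belongs to $\dom(\f)$, and in particular $X,Z_0,\dots,Z_{m-1}\in\dom(\f)$. Invoking \cref{Partial elementary maps: FOT vs FO}(2) with the first-order formula $\psi$ and these parameters then yields
\[
\B\models\psi(\f(X),\f(Z_0),\dots,\f(Z_{m-1})),
\]
whence $\B\models\chi(\f(X))$ by existential quantification. Applying \cref{Translation between independence logic and ESO}(1) in $\B$ once more gives $\B\models_{\f(X)}\phi$, as required.

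There is no real obstacle here beyond verifying that the cited characterization genuinely handles relation parameters regardless of whether they are empty (which is addressed in the proof of \cref{Partial elementary maps: FOT vs FO}, where empty parameters are eliminated by substituting $\bot$). The crucial feature of the argument is that totality of $\f$ guarantees that the existential witnesses $Z_i$, however wild, automatically lie in $\dom(\f)$, so no choice of ``nice'' witnesses is required. This is also what clarifies why the statement fails for partial elementary team maps: the witnesses $Z_i$ need not lie in $\dom(\f)$, and so the corresponding analogue breaks down precisely where $\eso$ diverges from first-order logic.
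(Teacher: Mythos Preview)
Your proof is correct and follows the same underlying idea as the paper's, but it is organized more efficiently. Both arguments translate $\phi$ to an $\eso$-sentence, pick witnesses for the second-order existential quantifiers, and use totality of $\f$ to transfer the resulting first-order statement with second-order parameters to $\B$. The difference is that you invoke \cref{Partial elementary maps: FOT vs FO} as a black box to do the transfer, whereas the paper re-derives that content inline: it codes $X$ and the witness $X'$ into the single relation $X\times X'$, translates the resulting one-parameter first-order sentence to $\fot$, and uses preservation of $\fot$ formulas together with \ref{Partial isomorphisms preserve products} to decode on the $\B$ side. Your route is shorter precisely because \cref{Partial elementary maps: FOT vs FO} has already packaged this coding-and-translation step.

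One small point worth tightening: the statement of \cref{Partial elementary maps: FOT vs FO}(ii) is formulated for \emph{nonempty} parameters, and some of your witnesses $Z_i$ may well be empty. You correctly note that the proof of that proposition disposes of empty parameters by substituting $\bot$, so the equivalence in fact holds without the nonemptiness hypothesis; but since you are appealing to the proof rather than the stated conclusion, it would be cleaner either to say so explicitly or to insert the one-line reduction yourself (replace each empty $Z_i$ by $\bot$ in $\psi$ before applying the proposition). The paper's proof handles this by an explicit case split on whether the witness is empty, which amounts to the same thing.
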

\begin{proof}
    Let $\phi(v_0,\dots,v_{n-1})$ be a formula of $\foil$, let $X\subseteq\A^n$ and suppose that $\A\models_X\phi$. If $X=\emptyset$, then also $\f(X)=\emptyset$ (and vice versa) and hence trivially $\B\models_{\f(X)}\phi$. So we may assume that $X\neq\emptyset\neq \f(X)$.

    By the translation of $\foil$ to $\eso$ and \cref{Coding many predicates with one}, there exists a first-order $\tau$-sentence $\alpha(R,R')$ such that for any structure $\C$ and $Y\subseteq\C^n$,
    \[
        \C\models_Y\phi \iff \C\models\exists R'\alpha(Y,R'),
    \]
    where $R'$ is some $m$-ary relation symbol. Hence $\A\models\exists R'\alpha(X,R')$. Now, there is $X'\subseteq\A^m$ such that $\A\models\alpha(X,X')$.
    Now either $X'=\emptyset$ or $X'\neq\emptyset$. We look at the two cases separately.
    \begin{enumerate}
        \item Suppose that $X'=\emptyset$. Let $\alpha^*(R)$ be the formula one obtains from $\alpha$ by replacing every occurence of $R'$ by $\bot$. Then for any $\C$ and $Y\subseteq\C^n$,
        \[
            \C\models\alpha(Y,\emptyset) \iff \C\models\alpha^*(Y).
        \]
        Now $\A\models\alpha^*(X)$. By the translation of $\fol$ to $\fot$, there is a $\tau$-formula $\psi$ of $\fot$ such that for any structure $\C$ and $Y\subseteq\C^n$,
        \[
            \C\models\exists\vx Y(\vx)\to\alpha^*(Y) \iff \C\models_Y\psi.
        \]
        Since $\A\models\exists\vx X(\vx)\to\alpha^*(X)$, we have $\A\models_X\psi$. Since $\f$ is an elementary team embedding, we obtain $\B\models_{\f(X)}\psi$. Then $\B\models\exists\vx \f(X)(\vx)\to\alpha^*(\f(X))$, and as $\f(X)$ is nonempty, we have $\B\models\alpha^*(\f(X))$. Hence $\B\models\alpha(\f(X),\emptyset)$, and so $\B\models\exists R'\alpha(\f(X),R')$. Thus $\B\models_{\f(X)}\phi$.

        \item If $X'\neq\emptyset$, then let $X'' = X\times X'$. Let $\alpha^*$ be the formula one obtains from $\alpha$ by replacing each occurence of $R(t_0,\dots,t_{n-1})$ by
        \[
            \exists x_0\dots\exists x_{m-1}R''(t_0,\dots,t_{n-1},x_0,\dots,x_{m-1})
        \]
        and each $R'(t_0,\dots,t_{m-1})$ by
        \[
            \exists x_0\dots\exists x_{n-1}R''(x_0,\dots,x_{n-1},t_0,\dots,t_{m-1}),
        \]
        where $x_i$ are fresh variables. By the translation of $\fol$ to $\fot$, there is a $\tau$-formula $\psi$ of $\fot$ such that for all $\C$ and $Y\subseteq\C^{n+m}$,
        \[
            \C\models\exists\vx R''(\vx)\to\alpha^*(Y) \iff \C\models_Y\psi.
        \]
        Now  as $\A\models_{X''}\psi$, and as $\f$ is an elementary team embedding, we have $\B\models_{\f(X'')}\psi$. Hence $\B\models\exists\vx R''(\vx)\to\alpha^*(\f(X''))$. As $X$ and $X'$ are both nonempty, so is $X''$ and thus so is $\f(X'')$. Therefore $\B\models\alpha^*(\f(X''))$. As $\f$ is an elementary team embedding, we have $\f(X'') = \f(X)\times\f(X')$. As $\B\models\alpha^*(\f(X)\times\f(X'))$, we have $\B\models\alpha(\f(X),\f(X'))$ and thus $\B\models\exists R'\alpha(\f(X), R')$. But then $\B\models_{\f(X)}\phi$.
    \end{enumerate}
    Thus $\B\models_{\f(X)}\phi$, which finishes the proof.
\end{proof}

Next, we show that all element-total, element-surjective partial team isomorphisms are simply usual isomorphism lifted to the level of relations.

\begin{proposition}\label{extension.of.isomorphisms}
    Suppose that $\f\colon\A\to\B$ is an element-total and element-surjective partial team isomorphism. Then there is an (ordinary) isomorphism $\pi\colon\A\to\B$ such that $\f \subseteq \hat\pi$.
\end{proposition}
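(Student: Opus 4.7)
The plan is to build $\pi$ pointwise from the action of $\f$ on singletons, verify it is a $\tau$-isomorphism using the preservation clauses (PI2)--(PI6), and then show $\f$ must coincide with $\hat\pi$ on its whole domain by exploiting that any singleton relation decomposes as a product of element-singletons.

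First, define $\pi\colon\A\to\B$ by $\pi(a)=b$ iff $\f(\{a\})=\{b\}$. This is well-defined and total since $\f$ is element-total and sends singletons to singletons by \ref{Partial isomorphisms preserve atoms}. Injectivity follows from injectivity of $\f$ (proved earlier), and surjectivity follows from element-surjectivity combined with \ref{Partial isomorphisms preserve atoms}: if $b\in\B$ then $\{b\}\in\ran(\f)$, so $\{b\}=\f(X)$ for some singleton $X=\{a\}\in\dom(\f)$, giving $\pi(a)=b$. Constants are handled by the clause $\f(\{c^\A\})=\{c^\B\}$ from \ref{Partial isomorphisms preserve structure}.

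Next, I will verify that $\pi$ preserves relation and function symbols. For a relation symbol $R\in\tau$ of arity $n$ and $\vec a\in\A^n$, closure of $\dom(\f)$ under products together with \ref{Partial isomorphisms preserve products} gives $\{\vec a\}=\{a_0\}\times\cdots\times\{a_{n-1}\}\in\dom(\f)$ and $\f(\{\vec a\})=\{\pi(\vec a)\}$. Then $\vec a\in R^\A$ iff $\{\vec a\}\subseteq R^\A$, and by \ref{Partial isomorphisms preserve order} together with $\f(R^\A)=R^\B$ this is equivalent to $\{\pi(\vec a)\}\subseteq R^\B$, i.e.\ $\pi(\vec a)\in R^\B$. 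The same argument applied to the graph of a function symbol $F$, which lies in $\dom(\f)$ by \ref{Domain of partial isomorphism contains interpretations of vocabulary} and is mapped to the graph of $F^\B$ by \ref{Partial isomorphisms preserve structure}, yields $\pi(F^\A(\vec a))=F^\B(\pi(\vec a))$. Hence $\pi$ is an isomorphism $\A\to\B$.

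Finally, I show $\f(X)=\hat\pi(X)$ for every $X\in\dom(\f)$. For the forward inclusion, take $\vec a\in X$; then $\{\vec a\}\subseteq X$, so by \ref{Partial isomorphisms preserve order} we get $\{\pi(\vec a)\}=\f(\{\vec a\})\subseteq\f(X)$, i.e.\ $\pi(\vec a)\in\f(X)$. For the converse, take $\vec b\in\f(X)$. By element-surjectivity and \ref{Partial isomorphisms preserve products} applied to the singleton decomposition $\{\vec b\}=\{b_0\}\times\cdots\times\{b_{n-1}\}$, we have $\{\vec b\}\in\ran(\f)$, and by \ref{Partial isomorphisms preserve atoms} its preimage is a singleton $\{\vec a\}$ with $\f(\{\vec a\})=\{\vec b\}$; the first part forces $\pi(\vec a)=\vec b$. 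Since $\{\vec b\}\subseteq\f(X)$, \ref{Partial isomorphisms preserve order} gives $\{\vec a\}\subseteq X$, so $\vec a\in X$ and $\vec b=\pi(\vec a)\in\hat\pi(X)$. Thus $\f\subseteq\hat\pi$.

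No step is really hard here; the only subtle point is the argument that singletons of tuples are always in $\dom(\f)$ and $\ran(\f)$, which is what makes the element-total and element-surjective hypotheses do the work of bridging between the relation-level action of $\f$ and the element-level action of $\pi$.
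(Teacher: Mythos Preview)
Your proof is correct and follows essentially the same approach as the paper: define $\pi$ from the action of $\f$ on singletons, check it is an isomorphism, and then verify $\f(X)=\hat\pi(X)$ for each $X\in\dom(\f)$ via (PI5) and the product decomposition of singleton tuples. The only difference is cosmetic: the paper invokes \cref{Partial isomorphisms preserve quantifier-free formulas} to conclude in one line that $\pi$ is an embedding, whereas you unpack the verification for relations, functions, and constants directly from (PI3)--(PI6); your third paragraph is then virtually identical to the paper's argument for the two inclusions.
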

\begin{proof}
    Define $\pi\colon\A\to\B$ by setting $\pi(a) = \f(a)$ (i.e. $\pi(a)$ is the unique element of $\f(\{a\})$) for all $a\in\A$. By \cref{Partial isomorphisms preserve quantifier-free formulas}, $\pi$ is an embedding, and as $\pi$ is surjective by assumption, it is an isomorphism.
    
    Let $X\in\dom(\f)$ be $n$-ary. Then for any $\vec b\in \f(X)$ we have $\{\vec b\}\in\ran(\f)$. By \ref{Partial isomorphisms preserve atoms}, there is $\vec a\in\A^n$ such that $\pi(\vec a) = \f(\vec a) = \vec b$. Hence $\f(X)\subseteq\{\pi(\vec a) \mid \vec a\in X\}$. Then note that given any $\vec a\in\A^n$, by \ref{Partial isomorphisms preserve order} we have
    \begin{align*}
        \vec a\in X &\implies \{\vec a\} \subseteq X \implies \f(\{\vec a\})\subseteq \f(X) \implies \{\f(\vec a)\}\subseteq \f(X) \\
        &\implies \f(\vec a)\in \f(X) \implies \pi(\vec a)\in \f(X).
    \end{align*}
    Hence $\f(X) \supseteq \{\pi(\vec a) \mid \vec a\in X \}$. Thus $\f(X) = \hat\pi(X)$. It follows that $\f\subseteq\hat\pi$.
\end{proof}

\begin{corollary}
    Suppose that $\f\colon\A\to\B$ is a team map. Then $\f$ is a team isomorphism if and only if there is an (ordinary) isomorphism $\pi\colon\A\to\B$ such that $\f = \hat\pi$.
\end{corollary}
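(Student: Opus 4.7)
The plan is to reduce the entire statement to the previous \cref{extension.of.isomorphisms} together with a straightforward verification that every map of the form $\hat\pi$, for $\pi$ an isomorphism, is itself a team isomorphism. No new ideas are needed beyond what was done in the preceding proposition.

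For the forward direction, suppose $\f$ is a team isomorphism. By definition this means that $\f$ is an element-surjective team embedding, and a team embedding is a \emph{total} partial team isomorphism. In particular $\dom(\f) = \R(\A)$, so $\{a\}\in\dom(\f)$ for every $a\in\A$, i.e.\ $\f$ is element-total; and element-surjectivity holds by hypothesis. Thus \cref{extension.of.isomorphisms} applies and yields an isomorphism $\pi\colon\A\to\B$ with $\f \subseteq \hat\pi$. Since $\f$ is total and $\hat\pi$ is also defined on all of $\R(\A)$, the inclusion $\f\subseteq\hat\pi$ is actually an equality $\f = \hat\pi$.

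For the backward direction, given an isomorphism $\pi\colon\A\to\B$, I would verify that $\hat\pi$ meets each clause in \cref{definition.team.maps} for a team isomorphism. Totality is immediate from the definition of $\hat\pi$, and element-surjectivity follows since, for each $b\in\B$, we have $\{b\} = \hat\pi(\{\pi^{-1}(b)\})\in\ran(\hat\pi)$. That $\hat\pi$ is an arity-preserving team map with $\dom(\hat\pi)=\R(\A)$ and $\ran(\hat\pi)=\R(\B)$ trivially gives the closure conditions \ref{Domain of partial isomorphism contains bottom and top}--\ref{Domain of partial isomorphism contains interpretations of vocabulary}. The partial team isomorphism axioms \ref{Partial isomorphisms preserve bottom and top}--\ref{Partial isomorphisms preserve structure} are all routine consequences of $\pi$ being a bijective first-order isomorphism: for instance, $\hat\pi$ preserves products and projections coordinate-wise, it preserves the order $\subseteq$ in both directions because $\pi$ is a bijection between tuples, and $\hat\pi(R^\A) = R^\B$, $\hat\pi(\Delta_\A)=\Delta_\B$, $\hat\pi(\{c^\A\})=\{c^\B\}$ follow because $\pi$ is a $\tau$-isomorphism.

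There is no real obstacle; the content is entirely packed into \cref{extension.of.isomorphisms}, and the corollary is essentially a reformulation. The only thing to be careful about is distinguishing total team maps from element-total partial team maps when invoking the previous proposition, which is handled by observing that a team embedding is total by definition.
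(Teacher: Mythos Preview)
Your proposal is correct and is exactly the intended argument: the paper states the corollary without proof, as it follows immediately from \cref{extension.of.isomorphisms} in the way you describe. Your care in noting that totality of a team embedding upgrades the inclusion $\f\subseteq\hat\pi$ to an equality, and your routine verification of the backward direction, are precisely what is needed.
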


\begin{proposition}\label{From team embedding to a substructure}
    Let $\f\colon\A\to\C$ be a team embedding, and let $\B$ be a structure such that $\dom(\B) = \{\f(a) \mid a\in\A\}$, if $R\in\tau$ is a relation or function symbol, then $R^\B = \f(R^\A)\cap\B^{\ar(R)}$, and $c^\B = \f(c^\A)$ for constant symbols $c\in\tau$. Then
    \begin{enumerate}
        \item $\B$ is a substructure of $\C$,
        \item $\g\colon\A\to\B$, $\g(X) = \f(X)\cap\B^n$ for $X\subseteq\A^n$, is a team isomorphism, and
        \item if $\f$ is elementary, then also $\f\circ\g^{-1}$ is elementary and $\B$ is an elementary substructure of $\C$.
    \end{enumerate}
\end{proposition}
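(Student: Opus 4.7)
The plan is to reduce everything to the observation that $\f$ induces an ordinary $\tau$-isomorphism $\pi\colon\A\to\B$ via $\pi(a)=\f(a)$, and that $\g=\hat\pi$. Once this is established, (1) follows from a short verification of the substructure axioms, (2) is immediate from the definition of $\hat\pi$, and (3) reduces to the composition lemma for elementary team maps together with the fact that ordinary isomorphisms preserve $\fot$-truth in team semantics.

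For (1), the key inputs are \ref{Partial isomorphisms preserve structure}, which yields $\f(R^\A)=R^\C$ and $\f(c^\A)=c^\C$ so that $R^\B=R^\C\cap\B^{\ar(R)}$ and $c^\B=c^\C$, and \ref{Partial isomorphisms preserve order} applied to a singleton from the graph of a function symbol $F$: since $(\vec a, F^\A(\vec a))\in F^\A$, we obtain $(\f(\vec a),\f(F^\A(\vec a)))\in F^\C$, so $F^\C\restriction\B^{\ar(F)}$ takes values in $\B$ and agrees with $F^\B$.

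For (2), I would define $\pi\colon\A\to\B$ by $\pi(a)=\f(a)$, which is surjective by the definition of $\B$ and injective because $\f$ is injective on $\R(\A)$ and preserves singletons; a routine check using \ref{Partial isomorphisms preserve order} and \ref{Partial isomorphisms preserve structure} shows that $\pi$ is a $\tau$-isomorphism. The crucial identity is $\g(X)=\hat\pi(X)$. The inclusion $\hat\pi(X)\subseteq\g(X)$ is immediate from \ref{Partial isomorphisms preserve order}: $\{\vec a\}\subseteq X$ gives $\{\f(\vec a)\}\subseteq\f(X)$, and $\f(\vec a)\in\B^n$. For the reverse, given $\vec c\in\f(X)\cap\B^n$, each coordinate has the form $c_i=\f(a_i)$ for some $a_i\in\A$, whence $\f(\{\vec a\})=\{\vec c\}\subseteq\f(X)$, and \ref{Partial isomorphisms preserve order} applied in reverse places $\vec a$ in $X$. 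With $\g=\hat\pi$, claim (2) is immediate.

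For (3), since ordinary isomorphisms preserve $\fot$-truth in team semantics (by straightforward induction on formulas), $\g=\hat\pi$ and hence $\g^{-1}=\widehat{\pi^{-1}}$ are elementary team isomorphisms, and so $\f\circ\g^{-1}$ is elementary as a composition of elementary team maps. To obtain $\B\prec\C$, I would take a first-order $\phi(\vec v)$ and $\vec b\in\B^n$, compute $(\f\circ\g^{-1})(\{\vec b\})=\{\vec b\}$ using $\pi\circ\pi^{-1}=\mathrm{id}_\B$, and read off $\B\models\phi(\vec b)\iff\C\models\phi(\vec b)$ via flatness of first-order logic. The main subtlety lies in the inclusion $\g(X)\subseteq\hat\pi(X)$: in general $\f(X)$ properly contains $\{\f(\vec a)\mid\vec a\in X\}$ (for instance, $\f(\Delta_\A)=\Delta_\C$ can have larger cardinality than $\Delta_\A$), so the intersection with $\B^n$ is essential---it restricts attention to coordinates from the image of the element map, enabling \ref{Partial isomorphisms preserve atoms} and \ref{Partial isomorphisms preserve order} to produce a preimage inside $X$.
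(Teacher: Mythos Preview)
Your proposal is correct and aligns with the paper's approach. The paper only spells out part (1) explicitly---with exactly the argument you give for closure under function symbols---and defers (2) and (3) to ``a similar fashion''; your identification $\g=\hat\pi$ is the natural way to fill in those details and mirrors the paper's earlier \cref{extension.of.isomorphisms}.
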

\begin{proof}
    We show that $\B$ is a substructure of $\C$. The other claims are proved in a similar fashion.

    First of all, note that $\g(a) = \f(a)$ for all elements $a\in\A$. Additionally,
    \[
        \f(a_0,\dots,a_{n-1}) = (\f(a_0),\dots,\f(a_{n-1})) = (\g(a_0),\dots,\g(a_{n-1})) = \g(a_0,\dots,a_{n-1}),
    \]
    whence $\g(\vec a) = \f(\vec a)$ for all $\vec a\in\A^n$.
    If $R\in\tau$ is an $n$-ary relation symbol, then $R^\B = \f(R^\A)\cap\B^n = R^\C\cap\B^n$. If $F\in\tau$ is a function symbol, the same holds for $F$, i.e. $F^\B = F^\C\cap\B^n$. We show that $\B$ is closed under $F^\C$. Let $\vec b \in\B^n$. Then $\vec b = \f(\vec a)$ for some $\vec a\in\A^n$. Let $a = F^\A(\vec a)$. Then $\vec aa\in F^\A$, so $\f(\vec aa)\in \f(F^\A) = F^\C$, so $\f(a) = F^\C(\f(\vec a)) = F^\C(\vec b)$. But by the definition of $\B$, $\f(a)\in\B$. Hence $\B$ is closed under $F^\C$, and thus $F^\B$ is well defined. Finally, $\B$ contains $c^\C$ for all constant symbols $c\in\tau$, as $c^\C = \f(c^\A)$. Hence $\B$ is a substructure of $\C$.
\end{proof}

The next proposition highlights a connection between elementary team maps and complete atomic Boolean algebras. It is more of a side remark and shall not be needed in later proofs.

\begin{proposition}
    Let $\f$ be a partial elementary team map. Then
    \begin{enumerate}
        \item for all $X,Y\in\dom(\f)$ of the same arity,
        \[
            \f(X\cup Y) = \f(X)\cup\f(Y),
        \]
        whenever $X\cup Y\in\dom(\f)$, and
        \item for all $X\in\dom(\f)$,
        \[
            \f(X^c) = \f(X)^c,
        \]
        whenever $X^c\in\dom(\f)$.
    \end{enumerate}
    Furthermore, if $\f$ is an elementary team embedding $\A\to\B$, then for any $n<\omega$, $\f\restriction_n$ is an (ordinary) elementary embedding between $(\Pow(\A^n),\cup,\cap,{}^c,\emptyset,\A^n)$ and $(\Pow(\B^n),\cup,\cap,{}^c,\emptyset,\B^n)$ in the language $\{\lor,\land,\neg,\bot,\top\}$ of Boolean algebras.
\end{proposition}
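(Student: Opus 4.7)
The common thread is to express Boolean-algebraic relationships among the relevant relations as first-order properties of a single product predicate, transfer those properties across $\f$ via Lemma \ref{teams_elementary_equivalent_as_predicates}, and then read off the conclusion from the product-preservation property \ref{Partial isomorphisms preserve products}. The elementarity of $\f$ applied to the $\fot$-sentence $\bot$ gives $\f(\emptyset) = \emptyset$, which handles most of the degenerate subcases throughout.

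For (1), assuming $X$, $Y$, and $X \cup Y$ are all nonempty, I form the $3n$-ary predicate $Z := X \times Y \times (X \cup Y)$, which lies in $\dom(\f)$ by closure. Iterating \ref{Partial isomorphisms preserve products} gives $\f(Z) = \f(X) \times \f(Y) \times \f(X \cup Y)$, and nonemptiness of $Z$ together with Lemma \ref{teams_elementary_equivalent_as_predicates} yields $(\A, Z) \equiv (\B, \f(Z))$ as $\{R\}$-structures. The first-order sentence
$$\forall \vec v \Bigl(\exists \vec x\, \vec y\, R(\vec x, \vec y, \vec v) \leftrightarrow \exists \vec y\, \vec z\, R(\vec v, \vec y, \vec z) \lor \exists \vec x\, \vec z\, R(\vec x, \vec v, \vec z)\Bigr)$$
holds in $(\A, Z)$; transferring it to $(\B, \f(Z))$ and using the product form of $\f(Z)$ gives $\f(X \cup Y) = \f(X) \cup \f(Y)$. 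The cases where $X$ or $Y$ is empty follow from $\f(\emptyset) = \emptyset$.

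For (2) the argument is parallel, with one additional ingredient: $\f(\A^n) = \B^n$. For $n > 0$ this follows by applying Lemma \ref{teams_elementary_equivalent_as_predicates} to the nonempty predicate $\A^n$ and transferring the sentence $\forall \vec x\, R(\vec x)$; the case $n = 0$ is immediate. With the edge cases $X = \emptyset$ and $X = \A^n$ handled directly, in the generic case I form $X \times X^c \in \dom(\f)$ and transfer the first-order sentence expressing that the second $n$-projection of $R$ is the complement of the first.

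For (3), parts (1) and (2), combined with $\f(\emptyset) = \emptyset$, $\f(\A^n) = \B^n$, and the De Morgan identity $X \cap Y = (X^c \cup Y^c)^c$, show that $\f\restriction_n$ is a homomorphism of Boolean algebras. To upgrade this to elementarity, fix a Boolean-algebraic formula $\psi(v_1, \ldots, v_k)$ and $X_1, \ldots, X_k \in \Pow(\A^n)$. When every $X_j$ is nonempty, the $kn$-ary predicate $Z := X_1 \times \cdots \times X_k$ is nonempty and each factor is recoverable from $Z$ as a coordinate projection; substituting these projection expressions into the natural $\{R_1, \ldots, R_k\}$-translation of $\psi$ produces a first-order $\{R\}$-sentence $\Psi$ with $\Pow(\A^n) \models \psi(X_1, \ldots, X_k)$ iff $(\A, Z) \models \Psi$, and symmetrically on the $\B$-side, so elementary equivalence $(\A, Z) \equiv (\B, \f(Z))$ closes this case. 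The cases with some $X_i = \emptyset$ reduce to the all-nonempty one by substituting $\bot$ into $\psi$ for those indices. The main technical hurdle is bookkeeping: careful handling of the $\chi \leftrightarrow \chi^+$ conversion in Lemma \ref{Translation between FOT and FO} (which is why nonemptiness of the compound predicate is essential), and verifying that the projection-substitution device faithfully rewrites Boolean-algebraic formulas into first-order $\{R\}$-sentences genuinely preserved under the elementary equivalence of the product-expanded structures.
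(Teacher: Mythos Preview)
Your arguments for (1) and (2) are correct and essentially match the paper's approach: the paper writes down explicit $\fot$-formulas (using the inclusion atom, $\forallone$, $\ivee$, and the exclusion atom) in the team $X\times Y\times(X\cup Y)$ or $X\times X^c$, while you pass through Lemma~\ref{teams_elementary_equivalent_as_predicates} to first-order elementary equivalence of the expanded structures. These are two sides of the same translation, and your handling of the empty/full degenerate cases is fine.

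For the ``furthermore'' part, however, there is a genuine gap. You assume that any first-order formula $\psi(v_1,\dots,v_k)$ in the language of Boolean algebras admits a ``natural $\{R_1,\dots,R_k\}$-translation'' to a first-order $\{R\}$-sentence $\Psi$ about $(\A,Z)$. For quantifier-free $\psi$ this is straightforward, but the moment $\psi$ contains a Boolean-algebra quantifier $\exists w$, the naive translation produces a second-order quantifier $\exists W\subseteq\A^n$, not a first-order one. You list this as a bookkeeping hurdle, but it is not: showing that such quantifiers can be eliminated in favour of first-order data about the $X_i$'s is precisely the nontrivial content of the step.

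The paper handles this by a different route: it first observes that $\f\restriction_n$ is a Boolean-algebra embedding between infinite atomic Boolean algebras that preserves atoms (singletons map to singletons and nothing else does), and then proves a general claim that any atom-preserving embedding between infinite atomic Boolean algebras is elementary. The proof of that claim invokes quantifier elimination for the theory of infinite atomic Boolean algebras augmented with predicates $P_n$ (``above at least $n$ atoms''), citing \cite[Theorem~6.20]{poizat2012course}. Since $\f\restriction_n$ preserves each $P_n$ (a first-order property of a single parameter, so covered by your Lemma~\ref{teams_elementary_equivalent_as_predicates} argument), it is a $\tau$-embedding in the expanded language and hence elementary by model completeness. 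This QE step is exactly what your translation device is missing; once you have it, your approach and the paper's converge.
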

\begin{proof}\quad
    \begin{enumerate}
        \item Denote $\vx = (v_0,\dots,v_{n-1})$, $\vy = (v_n,\dots,v_{2n-1})$, $\vz = (v_{2n},\dots,v_{3n-1})$ and $\vec w = (v_{3n},\dots,v_{4n-1})$. Then it is straightforward to verify that for any structure $\C$ and nonempty $X,Y,Z\subseteq\C^n$, $Z = X\cup Y$ if and only if
        \[
            \C\models_{X\times Y\times Z} \vx\subseteq\vz \land \vy\subseteq\vz \land \forallone\vec w (\vec w\subseteq\vz \wcimp (\vec w\subseteq\vx \ivee \vec w\subseteq\vy)),
        \]
        whence the claim follows.

        \item We show that for all $X$ and $Y$, $X = Y^c$ if and only if $\f(X) = \f(Y)^c$. Since $\f(\emptyset) = \emptyset$ and $\f(\A^n) = \B^n$, the claim holds whenever either of the sets $X$ and $Y$ is empty, so suppose that is not the case. Denote $\vx = (v_0,\dots,v_{n-1})$ and $\vy = (v_n,\dots,v_{2n-1})$. Now let $\phi$ be the formula
        \[
            \forallone z_0\dots\forallone z_{n-1} ( \vz\subseteq\vx \ivee\vz\subseteq\vy ) \land \vx\mid\vy.
        \]
        Then
        \begin{align*}
            X = Y^c &\iff \A\models_{X\times Y}\phi \iff \B\models_{\f(X\times Y)}\phi \\
            &\iff \B\models_{\f(X)\times\f(Y)}\phi \iff \f(X) = \f(Y)^c.
        \end{align*}
    \end{enumerate}
    For the ``furthermore'' part, first note that the Boolean algebras in question are complete and atomic. If $\A$ and $\B$ are finite structures, then $\f$ must arise from an isomorphism between the structures and hence each $\f\restriction_n$ is an isomorphism between the respective Boolean algebras. Otherwise the Boolean algebras are infinite. The atoms of $(\Pow(\A^n),\cup,\cap,{}^c,\emptyset,\A^n)$ are singletons, and $\f\restriction_n$ maps singletons (and nothing else) to singletons, i.e. preserves the atoms of the Boolean algebra in both directions. Hence it is enough to show that whenever $\C$ and $\D$ are infinite atomic Boolean algebras and $g\colon\C\to\D$ is an embedding that preserves atoms both ways, then $g$ is an elementary embedding.

    \begin{claim}
        Suppose that $\C$ and $\D$ are infinite atomic Boolean algebras. Then $g\colon\C\to\D$ is an elementary embedding if and only if it is an embedding that preserves atoms.
    \end{claim}
    \noindent\emph{Proof of claim.}
    If $g$ is elementary, then it preserves atoms, as being an atom is a first-order definable property.
    For the converse, let $\tau = \{\lor,\land,\neg,\bot,\top\}\cup\{P_n \mid n<\omega\}$, i.e. the signature of Boolean algebras augmented with infinitely many fresh unary predicates. Let $T$ be the $\tau$-theory consisting of the first-order theory of infinite atomic Boolean algebras and the additional sentences stating that $P_n(a)$ holds exactly when $a$ is above at least $n$ atoms in the Boolean algebra order. By \cite[Theorem~6.20]{poizat2012course} we have that $T$ admits quantifier elimination and so $T$ is model complete. Now, if $g$ is an atom-preserving embedding $\C\to\D$, then it is easy to show that $\hat\C\models P_n(a)$ if and only if $\hat\D\models P_n(f(a))$ for all $a\in\C$, where $\hat\C$ and $\hat\D$ are the unique $\tau$-expansions of $\C$ and $\D$, respectively, that satisfy $T$. This means that $g$ is a $\tau$-embedding $\hat\C\to\hat\D$, and as $T$ is model complete, $g$ is elementary. Hence $g$ is also an elementary embedding between the reducts $\C$ and $\D$.
\end{proof}

\subsection{Diagrams}

In this section, we give a simple condition to check if there is an elementary team embedding from a structure $\A$ to a structure $\B$. Such condition is the obvious team-semantic extension of the method of elementary diagrams, used to provide a ``template'' for elementary embeddings in first-order logic. In turn, this provides us with a connection between elementary team embeddings and complete (first-order) embeddings.

\begin{definition}
    Let $\A$ be a $\tau$-structure, $A\subseteq\A$ and $\X\subseteq\R(\A)$.
    \begin{enumerate}
        \item By $\tau(A)$ we denote the expansion of $\tau$ by names for all elements of $A$, i.e. the signature $\tau\cup\{\underline a \mid a\in A\}$, where each $\underline a$ is a fresh constant symbol. By $\tau(\X)$ we mean the expansion of $\tau$ by names for all \emph{nonempty} elements of $\X$, i.e. the signature $\tau\cup\bigcup_{n<\omega}\{\underline X \mid \emptyset\neq X\in\X\}$, where $\underline X$ is a fresh $\ar(X)$-ary relation symbol for any $X\in\X$.
        \item By $\A_A$ we mean the $\tau(A)$-expansion $\hat\A$ of $\A$ such that $\underline a^{\hat\A} = a$ for all $a\in A$. By $\A_\X$ we mean the $\tau(\X)$-expansion $\hat\A$ of $\A$ with $\underline X^{\hat\A} = X$ for all $X\in\X$.
        \item The \emph{(elementary) team diagram of $(\A,\X)$}, denoted by $\Diag(\A,\X)$, is the complete $\fot$-theory of $\A_\X$.
    \end{enumerate}
\end{definition}

\begin{lemma}\label{diagram.closure.lemma}
    Let $\A$ be a $\tau$-structure and $\X\subseteq\R(\A)$. Then every $\tau(\X)$-structure $\B$ with $\B\models\Diag(\A,\X)$ has a unique $\tau(\cl(\X))$-expansion $\hat\B$ such that $\hat\B\models\Diag(\A,\cl(\X))$.
\end{lemma}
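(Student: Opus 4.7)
The plan is a standard extension-by-definitions argument: every relation in $\cl(\X)$ is first-order definable in $\A$ from the named relations in $\X$ together with the symbols of $\tau$, and the completeness of $\Diag(\A,\X)$ then forces the expansion of $\B$ to $\tau(\cl(\X))$ to exist uniquely. The enabling observation, which I would record first, is that every classical first-order $\tau(\X)$-sentence $\sigma$ admits a $\fot$-translation $\widehat\sigma$, obtained by replacing $\neg,\lor,\exists,\forall$ by $\cneg,\ivee,\existsone,\forallone$, such that $\C\models\sigma\iff\C\models_{\{\emptyset\}}\widehat\sigma$ for every $\tau(\X)$-structure $\C$; this follows by a routine induction, since on nonempty singleton teams these $\fot$-connectives behave classically. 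In particular, $\B\models\Diag(\A,\X)$ is equivalent to $\B$ being elementarily equivalent, in the ordinary first-order sense, to $\A_\X$ in the vocabulary $\tau(\X)$, and analogously for $\cl(\X)$.

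Next, by induction on the recursive generation of $\cl(\X)$ via \ref{Domain of partial isomorphism contains bottom and top}--\ref{Domain of partial isomorphism contains interpretations of vocabulary}, I would attach to each $Y\in\cl(\X)$ a first-order $\tau(\X)$-formula $\phi_Y(\vec x)$ with $\phi_Y(\A)=Y$: all the base elements are patently first-order definable ($\emptyset$ by $\bot$, $\A^n$ by $\top$, $\Delta_\A$ by $x_0=x_1$, and $R^\A,F^\A,\{c^\A\}$ by their corresponding $\tau$-symbols; an element of $\X$ uses its own name when nonempty and $\bot$ otherwise), and $\cap,\times,\Pr_{\vec\imath}$ are all first-order definable. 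For each nonempty $Y\in\cl(\X)$ I would fix one such $\phi_Y$ and set $\underline Y^{\hat\B}\coloneqq \phi_Y(\B)$; this is independent of the choice of $\phi_Y$, since if $\phi,\psi$ both define $Y$ in $\A$ then $\A_\X\models\forall\vec x(\phi\leftrightarrow\psi)$, a first-order $\tau(\X)$-sentence that therefore also holds in $\B$, giving $\phi(\B)=\psi(\B)$.

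Uniqueness is then immediate: $\forall\vec x(\underline Y(\vec x)\leftrightarrow\phi_Y(\vec x))$ is a first-order $\tau(\cl(\X))$-sentence true in $\A_{\cl(\X)}$, hence its $\fot$-translation lies in $\Diag(\A,\cl(\X))$ and pins $\underline Y^{\hat\B}$ down to $\phi_Y(\B)$ in any expansion satisfying the diagram. For existence, given any $\fot$-sentence $\sigma\in\Diag(\A,\cl(\X))$, one translates it to an equivalent first-order $\tau(\cl(\X))$-sentence $\sigma^*$, then substitutes $\phi_Y(\vec t)$ for every occurrence of $\underline Y(\vec t)$ to produce a first-order $\tau(\X)$-sentence $\sigma^{**}$ with $\A_\X\models\sigma^{**}$ (by the substitution lemma, since $\underline Y^{\A_{\cl(\X)}}=\phi_Y(\A)$); the first-paragraph equivalence then gives $\B\models\sigma^{**}$, and unwinding the substitution yields $\hat\B\models\sigma^*$ and hence $\hat\B\models\sigma$. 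The only real bookkeeping subtlety is that $\tau(\X)$ names only \emph{nonempty} relations, so the construction and the defining formulas must treat empty relations via $\bot$; apart from this the argument is the classical extension by definitions, powered entirely by the $\fot$/first-order sentence equivalence of the first paragraph.
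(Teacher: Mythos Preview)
Your proposal is correct and follows essentially the same route as the paper: both argue that each $Y\in\cl(\X)$ is first-order $\tau(\X)$-definable (by induction on the closure operations), then invoke the $\fot$/first-order correspondence so that $\Diag(\A,\X)$ pins down the expansion. Your write-up is considerably more explicit---separating existence and uniqueness, spelling out the substitution argument, and flagging the empty-relation bookkeeping---whereas the paper compresses all of this into ``the claim clearly follows.''
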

\begin{proof}
    Clearly $\A^n$, $\Delta_\A$ and $s^\A$ for all symbols $s\in\tau$ are each definable by a first-order $\tau$-sentence. Likewise, given relations $X,Y\in\R(\A)$, the relations $X\cap Y$, $X\times Y$ and $\Pr_{\vec\imath}(X)$ for any $\vec\imath\in\ar(X)^{<\omega}$ are each definable by a first-order sentence with only second-order parameters $X$ and $Y$. Hence the relations $\underline{\A^n}$, $\underline{\Delta_\A}$, $\underline{s^\A}$ for $s\in\tau$, $\underline{X\cap Y}$, $\underline{X\times Y}$ and $\underline{\Pr_{\vec\imath}(X)}$ are each definable by a $\tau(\X)$-sentence of first-order logic. Hence each of them is also definable by a $\tau(\X)$-sentence of $\fot$. The claim clearly follows.
\end{proof}

\begin{lemma}\label{diagram.team.embeddings}
    For $\tau$-structures $\A$ and $\B$, the following are equivalent.
    \begin{enumerate}
        \item There is a $\tau(\X)$-expansion $\hat\B$ of $\B$ with $\hat\B\models\Diag(\A,\X)$.
        \item There is a partial elementary team map $\f\colon\A\to\B$ with $\dom(\f)=\cl(\X)$.
    \end{enumerate}
\end{lemma}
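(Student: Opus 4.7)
My plan is to establish each direction by reducing $\fot$-sentences in the expanded vocabulary to first-order $\tau$-sentences with free second-order parameters drawn from $\X$, and then to apply \cref{Translation between FOT and FO} together with \cref{Partial elementary maps: FOT vs FO}. The link between team maps and expansions is always: $\underline X^{\hat\B}=\f(X)$ for nonempty $X$.

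For the direction $(2)\Rightarrow(1)$, given a partial elementary team map $\f$ with $\dom(\f)=\cl(\X)$, I would define the $\tau(\X)$-expansion $\hat\B$ by $\underline X^{\hat\B}=\f(X)$ for nonempty $X\in\X$ (restricting $\f$ to $\X$). To check $\hat\B\models\Diag(\A,\X)$, take an arbitrary $\fot$ $\tau(\X)$-sentence $\phi$ true in $\A_\X$, view the symbols $\underline{X_1},\dots,\underline{X_k}$ occurring in $\phi$ as free second-order variables, and invoke \cref{Translation between FOT and FO} to obtain an equivalent first-order $\tau$-sentence with these second-order parameters. Then \cref{Partial elementary maps: FOT vs FO}(2) transports the truth of this first-order sentence from $\A$ (with the $X_i$) to $\B$ (with the $\f(X_i)$), yielding $\hat\B\models\phi$.

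For the direction $(1)\Rightarrow(2)$, I would first apply \cref{diagram.closure.lemma} to extend $\hat\B$ to the unique $\tau(\cl(\X))$-expansion $\hat\B'$ with $\hat\B'\models\Diag(\A,\cl(\X))$. Then define $\f\colon\cl(\X)\to\R(\B)$ by $\f(\emptyset)=\emptyset$ and $\f(X)=\underline X^{\hat\B'}$ for nonempty $X\in\cl(\X)$. Arity preservation and $\dom(\f)=\cl(\X)$ are built into the construction; closure of $\ran(\f)$ and the Cartesian product condition \ref{Partial isomorphisms preserve products} follow because each of the closure operations \ref{Domain of partial isomorphism contains bottom and top}--\ref{Domain of partial isomorphism contains interpretations of vocabulary} is witnessed by a first-order $\tau$-sentence with the relevant relations as parameters, which lies in $\Diag(\A,\cl(\X))$ and hence is satisfied by $\hat\B'$. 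Elementarity of $\f$ is then the symmetric version of the translation argument from the previous paragraph.

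The main obstacle will be the bookkeeping around the empty relation, which has no name in $\tau(\X)$ and which interacts subtly with \cref{Translation between FOT and FO}: that translation is phrased only for nonempty teams, whereas various relations in $\cl(\X)$, and especially their images under $\f$, may themselves be empty. I expect to handle this uniformly by separating the empty case, which is automatic by the empty team property of $\fot$, from the nonempty case, where \cref{FOT-formula that characterises being a predicate} pins down exactly when $\underline X^{\hat\B'}$ corresponds to a team with the desired support.
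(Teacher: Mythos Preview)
Your proposal is correct and follows essentially the same route as the paper: both directions hinge on \cref{diagram.closure.lemma} to pass to $\cl(\X)$, the definition $\f(X)=\underline X^{\hat\B'}$ (with $\f(\emptyset)=\emptyset$), and \cref{Partial elementary maps: FOT vs FO} to certify elementarity. The paper is slightly terser---for $(2)\Rightarrow(1)$ it invokes item~(3) of \cref{Partial elementary maps: FOT vs FO} directly to get a partial elementary map $\A_{\cl(\X)}\to\hat\B$ (hence elementary equivalence and thus $\hat\B\models\Diag(\A,\cl(\X))$), whereas you go through item~(2) plus the $\fot$/$\fol$ translation; and for $(1)\Rightarrow(2)$ the paper does not explicitly separate out the empty relation or verify closure of $\ran(\f)$, while you do. Your extra care on those points is warranted (the paper is a bit elliptic there), but your anticipated use of \cref{FOT-formula that characterises being a predicate} is unnecessary: the empty case is handled simply by $\f(\emptyset)=\emptyset$ together with the observation that $\A_{\cl(\X)}$ knows, via a first-order sentence, whether each closure-generated relation is empty.
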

\begin{proof}
    Suppose first that $\f\colon\A\to\B$ is a partial elementary team map with $\dom(\f)=\cl(\X)$. Now, by \cref{Partial elementary maps: FOT vs FO}, the map $a\mapsto\f(a)$ is a partial elementary map $\A_{\cl(\X)}\to\hat\B$, where $\hat\B$ is the $\tau(\cl(\X))$-expansion of $\B$ satisfying $\underline X^{\hat\B} = \f(X)$. Then $\hat\B\models\Diag(\A,\cl(\X))$. Thus, $\hat\B\restriction\tau(\X)$ is the desired expansion of $\B$.

    For the converse, suppose there is some $\tau(\X)$-expansion $\hat\B$ of $\B$ such that $\hat\B\models\Diag(\A)$. By \cref{diagram.closure.lemma}, there is a unique $\tau(\cl(\X))$-expansion $\hat\B^*$ of $\hat\B$ such that $\hat\B^*\models\Diag(\A,\cl(\X))$. Let $\Gamma$ be the set of $\fol$-translations of sentences of $\Diag(\A,\cl(\X))$. Now $\Gamma$ is the complete first-order $\tau(\cl(\X))$-theory of $\A_{\cl(\X)}$ and $\hat\B^*\models\Gamma$. Define a team map $\f\colon\A\to\B$ by setting $\f(X) = \underline X^{\hat\B^*}$ for all $X\in\cl(\X)$. Let $\phi(R_0,\dots,R_{n-1})$ be a first-order $\tau$-sentence, let $X_0,\dots,X_{n-1}\in\cl(\X)$ with $\ar(X_i)=\ar(R_i)$ and suppose that $\A\models\phi(X_0,\dots,X_{n-1})$. Then $\A_{\cl(\X)}\models\phi(\underline X_0,\dots,\underline X_{n-1})$, whence $\phi(\underline X_0,\dots,\underline X_{n-1})\in\Gamma$ and so $\hat\B^*\models\phi(\underline X_0,\dots,\underline X_{n-1})$. Then, since $\underline X_i^{\hat\B^*} = \f(X_i)$, we have $ \B\models\phi(\f(X_0),\dots,\f(X_{n-1})$. By \cref{Partial elementary maps: FOT vs FO} it follows that $\f$ is an elementary team embedding.
\end{proof}

As a corollary of \cref{Partial elementary maps: FOT vs FO}, we obtain a connection between complete embeddings in first-order logic (Definition~\ref{Definition: Complete embedding}) and elementary team embeddings in our setting: every elementary team embedding is a complete embedding on the level of elements, and conversely every complete embedding gives rise to an elementary team embedding.

\begin{corollary}\label{elementary.team.maps.vs.complete.embeddings}\quad
    \begin{enumerate}
        \item Let $\f\colon\A\to\B$ be an elementary team embedding. Then the function $a\mapsto \f(a)$ is a complete embedding $\A\to\B$.
        \item Let $f\colon\A\to\B$ be a complete embedding. Then there is an elementary team embedding $\f\colon\A\to\B$ such that $\f(a) = f(a)$ for all $a\in\A$.
    \end{enumerate}
\end{corollary}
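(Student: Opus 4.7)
Both directions are handled via Proposition~\ref{Partial elementary maps: FOT vs FO}, which characterises partial elementary team maps as ordinary partial elementary maps between expansions of $\A$ and $\B$ by fresh predicate symbols naming the relations in $\dom(\f)$ and $\ran(\f)$. This is precisely the bridge needed to connect elementary team embeddings to complete embeddings, whose defining property is preservation of elementarity under all expansions of the signature.

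\textbf{Direction (1).} Given an elementary team embedding $\f\colon\A\to\B$, set $f(a) := \f(a)$. For any $\tau'\supseteq\tau$ and $\tau'$-expansion $\hat\A$, I build a $\tau'$-expansion $\hat\B$ of $\B$ symbol by symbol: for each new relation symbol $R$, set $R^{\hat\B} := \f(R^{\hat\A})$; for each new constant $c$, set $c^{\hat\B} := f(c^{\hat\A})$; and for each new function symbol $F$ with graph $G\subseteq\A^{\ar(F)+1}$, let $F^{\hat\B}$ be the function whose graph is $\f(G)$. The well-definedness of this last step is the main verification: functionality of $G$ is a first-order sentence with $G$ as a second-order parameter, so by Proposition~\ref{Partial elementary maps: FOT vs FO}(2) it transfers to $\f(G)$. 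Elementarity of $f\colon\hat\A\to\hat\B$ then reduces to Proposition~\ref{Partial elementary maps: FOT vs FO}(3) after eliminating function symbols via their graphs and constants via singletons.

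\textbf{Direction (2).} Given a complete embedding $f\colon\A\to\B$, apply completeness to the expansion $\A_{\R(\A)}$ in the signature $\tau(\R(\A))$ to obtain a $\tau(\R(\A))$-expansion $\hat\B$ of $\B$ such that $f\colon\A_{\R(\A)}\to\hat\B$ is an ordinary elementary embedding. Define $\f(X) := \underline{X}^{\hat\B}$ for nonempty $X\in\R(\A)$ and $\f(\emptyset) := \emptyset$. The closure conditions \ref{Domain of partial isomorphism contains bottom and top}--\ref{Domain of partial isomorphism contains interpretations of vocabulary} on $\ran(\f)$ transfer from $\A_{\R(\A)}$ to $\hat\B$ by elementarity, since each of them is captured by a first-order sentence in the symbols $\underline{X}$ (for example, $\f(X\times Y)=\f(X)\times\f(Y)$ follows from the sentence $\forall\vec x\vec y(\underline{X\times Y}(\vec x\vec y)\leftrightarrow \underline{X}(\vec x)\land\underline{Y}(\vec y))$, and $\f(R^\A)=R^\B$ from $\forall\vec x(\underline{R^\A}(\vec x)\leftrightarrow R(\vec x))$). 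Proposition~\ref{Partial elementary maps: FOT vs FO}(3) then yields that $\f$ is an elementary team embedding, and $\f(a)=f(a)$ follows by elementarity applied to $\forall x(\underline{\{a\}}(x)\leftrightarrow x=a)$.

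\textbf{Main obstacle.} The only delicate point in either direction is accommodating function symbols and constants from arbitrary signatures into the relation-based team-map framework; this is consistently resolved by passing to graphs and singletons and invoking elementarity to preserve the relevant structural properties.
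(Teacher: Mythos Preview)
Your proposal is correct and follows essentially the same route as the paper: both directions go through Proposition~\ref{Partial elementary maps: FOT vs FO}, with (1) reading off completeness from the elementarity of $a\mapsto\f(a)$ in the expanded signature $\tau(\R(\A))$, and (2) defining $\f(X)=\underline X^{\hat\B}$ from the expansion provided by completeness. Your version is simply more explicit than the paper's---in (1) you spell out how to handle new function and constant symbols via graphs and singletons (the paper just says ``follows immediately''), and in (2) you verify the closure conditions on $\ran(\f)$ that the paper leaves implicit; one small point you might add for completeness in (1) is that not only functionality but also \emph{totality} of $\f(G)$ over $\B$ transfers by the same first-order argument.
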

\begin{proof}\quad
    \begin{enumerate}
        \item Follows immediately from \cref{Partial elementary maps: FOT vs FO}.

        \item As $f$ is complete, there is a $\tau(\R(\A))$-expansion $\hat\B$ of $\B$ such that $f$ is elementary between $\A_{\R(\A)}$ and $\hat\B$. For all $X\in\R(\A)$, define $\f(X) = \underline X^{\hat\B}$. Then by \cref{Partial elementary maps: FOT vs FO}, $\f$ is an elementary team embedding. Now, for every $a\in\A$, we have $a\in\{a\} = \underline{\{a\}}^{\A_{\R(\A)}}$, whence, as $f$ is elementary, we obtain $f(a)\in\underline{\{a\}}^{\hat\B} = \f(\{a\})$, yielding $f(a) = \f(a)$. \qedhere
    \end{enumerate}
\end{proof}

As every complete embedding is essentially a limit ultrapower embedding, the same holds for elementary team embeddings. Recall that given an isomorphism $\pi$, $\hat\pi$ is the team isomorphism $X \mapsto \{\pi(\vec a) \mid \vec a\in X\}$.

\begin{proposition}
    Suppose that $\f\colon\A\to\B$ is an elementary team embedding. Then there is a cardinal $\kappa$, an ultrafilter $\cF$ on $\kappa$ and a filter $\cG$ on $\kappa^2$ such that, denoting the structure $(\A^\kappa/\cF)|\cG$ by $\C$, there is an isomorphism $\pi\colon\B\to\C$ with
    \[
        (\hat\pi\circ\f)(X) = \g(X)\cap\C^n
    \]
    for all $X\subseteq\A^n$, where $\g\colon\A\to\A^\kappa/\cF$ is the ultrapower team embedding ${X\mapsto X^\kappa/\cF}$.
\end{proposition}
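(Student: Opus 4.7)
The plan is to reduce the statement to \cref{Complete embeddings are limit ultrapower embeddings}, applied in the enriched vocabulary $\tau(\R(\A))$ rather than $\tau$. By \cref{elementary.team.maps.vs.complete.embeddings}(1), the element-level map $f\colon a\mapsto\f(a)$ is a complete $\tau$-embedding $\A\to\B$. Let $\hat\B$ denote the $\tau(\R(\A))$-expansion of $\B$ with $\underline X^{\hat\B}=\f(X)$ for each $X\in\R(\A)$; by \cref{Partial elementary maps: FOT vs FO}(3), the underlying function $f$ is a $\tau(\R(\A))$-elementary embedding $\A_{\R(\A)}\to\hat\B$.

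The crucial step, and the main obstacle, is to upgrade this to a \emph{complete} embedding in the expanded vocabulary, since \cref{Complete embeddings are limit ultrapower embeddings} requires more than elementarity. Since $\R(\A_{\R(\A)})=\R(\A)$, the assignment $\f^+(X):=\f(X)$ defines a team map $\f^+\colon\A_{\R(\A)}\to\hat\B$, and I would show that $\f^+$ is itself an \emph{elementary team embedding} of $\tau(\R(\A))$-structures. The verification uses \cref{Partial elementary maps: FOT vs FO}(2) on both sides: every first-order $\tau(\R(\A))$-sentence $\phi(\vec R)$ with free second-order variables is equivalent to a first-order $\tau$-sentence $\phi'(\vec R,\vec S)$ obtained by replacing each symbol $\underline X$ appearing in $\phi$ by a fresh $\ar(X)$-ary second-order variable $S_X$; the elementarity of $\f$ in $\tau$ (applied to nonempty $\vec Y$ together with the relations $\vec X$ themselves) then yields $\A_{\R(\A)}\models\phi(\vec Y)\iff\hat\B\models\phi(\f(\vec Y))$, and preservation of products is inherited directly from $\f$. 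Once $\f^+$ is recognized as an elementary team embedding in $\tau(\R(\A))$, applying \cref{elementary.team.maps.vs.complete.embeddings}(1) to $\f^+$ shows that $f\colon\A_{\R(\A)}\to\hat\B$ is a complete $\tau(\R(\A))$-embedding.

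With completeness in hand, \cref{Complete embeddings are limit ultrapower embeddings} yields a cardinal $\kappa$, an ultrafilter $\cF$ on $\kappa$, a filter $\cG$ on $\kappa^2$, and a $\tau(\R(\A))$-isomorphism $\pi_0\colon(\A_{\R(\A)}^\kappa/\cF)|\cG\to\hat\B$ with $f=\pi_0\circ\iota$, where $\iota$ is the ultrapower embedding. Taking $\tau$-reducts, $(\A_{\R(\A)}^\kappa/\cF)|\cG\restriction\tau=(\A^\kappa/\cF)|\cG=\C$, and $\pi_0$ restricts to a $\tau$-isomorphism $\C\to\B$; set $\pi=\pi_0^{-1}$. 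Because $\pi_0$ preserves each predicate $\underline X$, we have $\pi_0(\underline X^{(\A_{\R(\A)}^\kappa/\cF)|\cG})=\underline X^{\hat\B}=\f(X)$, while by Łoś's theorem and the construction of the limit ultrapower, $\underline X^{(\A_{\R(\A)}^\kappa/\cF)|\cG}=X^\kappa/\cF\cap\C^{\ar(X)}=\g(X)\cap\C^{\ar(X)}$. Therefore $\hat\pi(\f(X))=\pi_0^{-1}(\f(X))=\g(X)\cap\C^{\ar(X)}$ for every $X\subseteq\A^n$, which is exactly the identity asserted in the statement.
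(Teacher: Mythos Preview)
Your proof is correct and follows the same approach the paper has in mind: pass to the expanded vocabulary $\tau(\R(\A))$, establish completeness of the element-level map $f$ there, and then invoke \cref{Complete embeddings are limit ultrapower embeddings}; the final computation $\underline X^{\C}=\g(X)\cap\C^n$ and the transport along $\pi_0$ are exactly right. The paper's own proof is a one-liner citing \cref{Complete embeddings are limit ultrapower embeddings} and \cref{elementary.team.maps.vs.complete.embeddings}, so you have simply unpacked what that line entails, and in particular you correctly flagged that one must work in $\tau(\R(\A))$ to pin down $\f(X)$ rather than merely $f(a)$.

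One small remark: your detour via ``$\f^+$ is an elementary team embedding in $\tau(\R(\A))$, hence by \cref{elementary.team.maps.vs.complete.embeddings}(1) the element map is complete'' is valid, but slightly more than needed. The proof of \cref{elementary.team.maps.vs.complete.embeddings}(1) already shows directly that $f$ is complete in \emph{any} signature extending $\tau$: given any further expansion of $\A_{\R(\A)}$, interpret each new symbol $R$ in $\B$ as $\f(R^{\A})$, and \cref{Partial elementary maps: FOT vs FO}(3) yields elementarity. So you could have argued completeness in $\tau(\R(\A))$ in one step without the intermediate $\f^+$; your route is more modular (reusing the corollary as a black box) while the direct route is marginally shorter. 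Either way, the substance is the same.
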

\begin{proof}
    Straightforwardly follows from \cref{Complete embeddings are limit ultrapower embeddings} and \cref{elementary.team.maps.vs.complete.embeddings}.
\end{proof}

\section{Abstract Elementary Classes for Team Semantics}\label{Section_AEC}

Abstract elementary classes (AECs) were introduced by Shelah in  \cite{shelah2006classification} and provide an important tool to adapt several techniques of elementary model theory beyond the scope of first-order logic. It is, thus, very natural to consider whether we can introduce a suitable AEC to study (complete) theories in independence logic or, equivalently, in existential second-order logic. After recalling the definition of an AEC, we consider, in this section, whether the maps considered in the previous section induce a natural strong substructure relation for model classes of theories in $\eso$ and $\foil$ that would make them AECs. In particular, we shall examine in \cref{subsec:coherence} the coherence axiom of AECs, in \cref{colimits.subsection} the unions of chains axiom (in the form of the existence of direct limits), and in \cref{subsec:LoSk} the Löwenheim--Skolem axiom. In order to motivate our subsequent choices, we proceed in these sections by ``trial and error'', and we seek to explain what goes right and what goes wrong when we try to define AECs for $\eso$ and $\foil$. Afterwards, we suggest in \cref{subsec:accessible.team.category} a way to solve the issues identified in  Sections~\ref{subsec:coherence}--\ref{subsec:LoSk}. Most specifically, we move from the setting of AECs to the setting \emph{abstract elementary team categories}, a specific subclass accessible categories
(cf.~\cite{adamek1994locally}),
and show that they provide the right framework to analyse classes of structures with team maps. We start by recalling the definition of an abstract elementary class.

\begin{definition}
    Let $\K$ be a class of $\tau$-structures and $\preceq$ a partial order on $\K$ that is a subrelation of the substructure relation $\subseteq$. We say that $(\K,\preceq)$ is an abstract elementary class if the following hold.
    \begin{enumerate}
        \item \emph{Closure under isomorphism}: If $\A\in\K$ and $\pi\colon\A\to\B$ is an isomorphism, then $\B\in\K$, and if $\C\preceq\A$, then $\pi(\C)\preceq\B$.
        \item \emph{Coherence}: If $\A,\B\preceq\C$ and $\A\subseteq\B$, then $\A\preceq\B$.
        \item \emph{Unions of chains}: If $\alpha$ is an ordinal and $(\A_i)_{i<\alpha}$ is a $\preceq$-increasing continuous sequence of elements of $\K$, i.e. $\A_i\preceq\A_j$ for $i<j$ and $\A_\beta = \bigcup_{i<\beta}\A_i$ for $\beta<\alpha$ a limit, then
        \begin{enumerate}
            \item $\bigcup_{i<\alpha}\A_i\in\K$,
            \item for all $j<\alpha$, $\A_j\preceq\bigcup_{i<\alpha}\A_i$, and
            \item \emph{Smoothness}: if $\B\in\K$ is such that $\A_i\preceq\B$ for all $i<\alpha$, then $\bigcup_{i<\alpha}\A_i\preceq\B$.
        \end{enumerate}
        \item \emph{Löwenheim--Skolem property}: There is a cardinal $\kappa\geq|\tau|+\aleph_0$ such that for all $\A\in\K$ and $X\subseteq\A$ with $|X|\leq \kappa$, there is $\B\in\K$ such that $X\subseteq\B\preceq\A$ and $|\B|\leq\kappa$.
    \end{enumerate}
\end{definition}

\noindent We call the least cardinal $\kappa$ that satisfies the Löwenheim--Skolem property for $\K$ the Löwenheim--Skolem number of $\K$ and denote it by $\LS(\K)$.

The relation $\preceq$ in the above definition is often called ``strong substructure relation'', and a map $f\colon\A\to\B$ such that $f(\A)\preceq\B$ is called a ``strong embedding''. So far, we have seen two possible candidates for a notion of strong embedding in the context of team semantics:
\begin{enumerate}
    \item a team embedding, and
    \item an elementary team embedding (which is the same thing as an independence team embedding by Proposition~\ref{elementary.team.embeddings.are.independence}).
\end{enumerate}

As mentioned above, in this section we will study all the properties of abstract elementary classes in the context of team semantics. In particular, we shall see that one has to modify the definition of an AEC to fit the context of team maps. In fact, it is more natural to consider just embeddings and strong embeddings, rather than to bother with substructures and strong substructures. Our conclusion is that, essentially, all key properties of abstract elementary classes hold in our setting, though a major issue is that we have to drop the requirement that strong maps ``extend'' the substructure relation. We will consider this issue at length in the following sections. We begin by studying the properties of coherence, closure under direct limits and Löwenheim--Skolem property. Then, in \cref{subsec:accessible.team.category}, we conclude our discussion describing a more category-theoretic ``quasi-AEC'' for team semantics, namely an accessible category of models and elementary team maps which we call an abstract elementary team category. The reader solely interested in the end-result of our approach can simply refer to this \cref{subsec:accessible.team.category}, and the referenced propositions.


\subsection{Coherence}\label{subsec:coherence} The following example shows why the traditional substructure relation is not a good basis for our study of structures in $\eso$ and $\foil$.

\begin{example}
    Let $\tau = \{P\}$, where $P$ is a unary predicate, and let $\preceq$ be the relation
    \begin{quote}
        $\A\preceq\B$ if and only if $\A\subseteq\B$ and there is an elementary team embedding $\f\colon\A\to\B$ with $\f(a) = a$ for all $a\in\A$.
    \end{quote}
    We show that coherence fails for this relation. For this, let $\B$ be a $\tau$-structure with $|\B| = |P^\B| = \aleph_1$ and $|\B\setminus P^\B| = \aleph_0$. Now $\B$ has a countable elementary substructure $\A$. As $\A\equiv\B$, by the Keisler--Shelah theorem there is $\kappa$ and an ultrafilter $\U$ on $\kappa$ such that $\A^\kappa/\U \cong B^\kappa/\U$. Let $\pi\colon\A^\kappa/\U\to\B^\kappa/\U$ be an isomorphism, let $\iota_\A\colon\A\to\A^\kappa/\U$ and $\iota_\B\colon\B\to\B^\kappa/\U$ be the ultrapower embeddings, and denote $\C = \A^\kappa/\U$. Now $\f = \iota_\A$ and $\g = \hat\pi^{-1}\circ\iota_\B$ are elementary team embeddings $\A\to\C$ and $\B\to\C$, respectively. Let $\phi$ be a sentence of $\foil$ expressing that $P$ and the complement of $P$ have the same cardinality. Now, suppose that there is an elementary team embedding $\h\colon\A\to\B$ (such that $\h(a) = a$ for all $a\in\A$). Now, as $\A$ is countable, we have $|P^\A| = |\A\setminus P^\A| = \aleph_0$, whence $\A\models\phi$. By \cref{elementary.team.embeddings.are.independence}, $\h$ is an independence team embedding and hence $\B\models\phi$. But this is impossible, since $|P^\B|>|\B\setminus P^\B|$. Hence no such $\h$ can exist.
\end{example}

The fault in the above example seems to lie in the fact that if $\f\colon\A\to\B$ is an independence team embedding, $\B$ may satisfy more sentences of $\foil$ than $\A$, due to the positive nature of $\eso$. But even in the case that there exist elementary team embeddings $\A\to\C$ and $\B\to\C$ and a team embedding $\A\to\B$ that do not move elements, there is no guarantee that these maps commute in a nice way. We take this as an argument in favour of a more category-theoretic approach. In particular, the following form of coherence works.

\begin{proposition}\label{prop:coherence}
    If $\f\colon\A\to\C$ and $\g\colon\B\to\C$ are elementary team embeddings and $\h\colon\A\to\B$ is a team embedding such that $\f = \g\circ\h$, then $\h$ is elementary.
\end{proposition}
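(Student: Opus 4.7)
The plan is to observe that the hypothesis $\f = \g \circ \h$ reduces the proposition to essentially a one-line diagram-chase, using the fact that elementariness of $\f$ and $\g$ gives bidirectional preservation of $\fot$-formulas which we can compose along the triangle.

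First I would verify that $\h$ already satisfies the non-semantic part of being a partial elementary team map, namely condition \ref{Partial isomorphisms preserve products} (preservation of products). This is immediate because $\h$ is assumed to be a team embedding, which by \cref{definition.team.maps}(3) is a total partial team isomorphism, and every partial team isomorphism satisfies \ref{Partial isomorphisms preserve products} by definition. So the only thing to verify is the formula-preservation clause.

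For the semantic part, fix any $\tau$-formula $\phi(v_0,\dots,v_{n-1})$ of $\fot$ and any $n$-ary $X \in \dom(\h)$. Since $\f = \g \circ \h$, we have $\f(X) = \g(\h(X))$. Then the chain of equivalences
\[
    \A \models_X \phi \iff \C \models_{\f(X)} \phi \iff \C \models_{\g(\h(X))} \phi \iff \B \models_{\h(X)} \phi,
\]
where the first equivalence uses that $\f$ is an elementary team embedding and the third uses that $\g$ is one, yields the desired conclusion. Combined with the product-preservation from the previous paragraph, $\h$ satisfies the definition of a (partial, hence here total since $\h$ is already total) elementary team embedding.

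There is no real obstacle here: all of the work has already been done in setting up the definitions, because the composition $\g \circ \h$ factors $\f$ through $\B$ in a way that lets the bidirectionality of $\g$'s elementariness push the equivalence all the way back to $\h(X)$. The one subtle point worth flagging explicitly in the writeup is that one cannot dispense with the assumption that $\g$ is elementary (not merely an independence team embedding), since we need the ``$\Longleftarrow$'' direction of $\g$'s preservation clause to pull a satisfaction over $\h(X)$ back from $\C$ to $\B$; by \cref{elementary.team.embeddings.are.independence}, this is exactly the place where elementary and independence team embeddings genuinely differ at the partial-map level.
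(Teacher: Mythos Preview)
Your proof is correct and is essentially identical to the paper's: the paper gives exactly the same chain of equivalences $\A\models_X\phi \iff \C\models_{\f(X)}\phi \iff \C\models_{\g(\h(X))}\phi \iff \B\models_{\h(X)}\phi$ and concludes. Your additional remarks about \ref{Partial isomorphisms preserve products} being automatic and about the role of the backward direction of $\g$'s elementariness are correct elaborations that the paper leaves implicit.
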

\begin{proof}
    Let $\phi(v_0,\dots,v_{n-1})$ be a formula of $\fot$ and $X\subseteq\A^n$. Then
    \[
        \A\models_X\phi \iff \C\models_{\f(X)}\phi \iff \C\models_{\g(\h(X))}\phi \iff \B\models_{\h(X)}\phi.
    \]
    Hence $\h$ is elementary. \qedhere
\end{proof}

\subsection{Direct Limits}\label{colimits.subsection}

Since we are considering (elementary) team embeddings and are not so interested in substructures, we want to also consider the natural generalization of unions of chains, that is, direct limits.

If $I$ is a set and $\leq$ is a binary relation on $I$, we say that $(I,\leq)$ is a \emph{$\kappa$-directed set} if $\leq$ is a preorder (i.e. reflexive and transitive) and every $J\subseteq I$ of power $<\kappa$ has a $\leq$-upperbound. $(I,\leq)$ is a \emph{directed set} if it is an $\aleph_0$-directed set. A set $J\subseteq I$ is an \emph{upset} if it is upwards closed, i.e. if $i\in I$, $j\in J$ and $i\geq j$, then $i\in J$. For an element $i\in I$, the \emph{upward closure} $\upset i$ of $i$ is the upset $\{j\in I \mid j\geq i\}$. For a subset $J\subseteq I$, the upward closure $\upset J$ is the upset $\bigcup_{j\in J}\upset j$. A subset $J\subseteq I$ is \emph{cofinal} if for every $i\in I$ there is $j\in J$ with $i\leq j$, i.e. it meets all upsets.

\begin{definition}\label{colimit}\quad
    \begin{enumerate}
        \item Let $(I,\leq)$ be a directed set. A directed system of $\tau$-structures and team maps based on $(I,\leq)$ is a sequence $(\A_i,\f_{i,j})_{i,j\in I, i\leq j}$ such that
        \begin{enumerate}
            \item each $\A_i$ is a $\tau$-structure and $\f_{i,j}\colon\A_i\to\A_j$ a team map,
            \item for all $i,j\in I$ with $j\geq i$, we have $\dom(\f_{i,j}) = \dom(\f_{i,i})$ and $\f_{i,i} = \id\restriction\dom(\f_{i,i})$, and
            \item for all $i,j,k\in I$ with $i\leq j\leq k$, we have $\ran(\f_{i,j})\subseteq\dom(\f_{j,k})$ and $\f_{i,k} = \f_{j,k}\circ\f_{i,j}$.
        \end{enumerate}
        We say that a directed system is $\kappa$-directed if $(I,\leq)$ is a $\kappa$-directed set.
        \item Let $\mathcal A = (\A_i,\f_{i,j})_{i,j\in I, i\leq j}$ be a directed system of $\tau$-structures and element-total partial team isomorphisms. We define a structure $\B\coloneqq\lim\mathcal A$, called the \emph{direct limit} of the system $\mathcal A$, as follows. The domain of $\B$ is the set of functions $\eta$ such that
        \begin{enumerate}
            \item $\dom(\eta)$ is a nonempty upset of $(I,\leq)$,
            \item $\eta(i)\in\A_i$ for all $i\in\dom(\eta)$,
            \item for all $i,j\in\dom(\eta)$, if $i\leq j$, then $\f_{i,j}(\eta(i)) = \eta(j)$, and
            \item if $i\in I$, $j\in\dom(\eta)$, $i\leq j$ and there is $a\in\A_i$ with $\f_{i,j}(a) = \eta(j)$, then $i\in\dom(\eta)$.
        \end{enumerate}
        For all $i\in I$, we define team maps $\g_i\colon\A_i\to\B$ as follows: given $X\in\dom(f_{i,i})$, we let $\g_i(X)$ be the set of all $(\eta_0,\dots,\eta_{n-1})\in\B^n$ such that there is some $j\in\bigcap_{k<n}\dom(\eta_k)$ with $j\geq i$ such that $(\eta_0(j),\dots,\eta_{n-1}(j))\in\f_{i,j}(X)$. Then the interpretations of relation and function symbols $R\in L$ are defined by setting $R^\B = \g_i(R^{\A_i})$, and for constant symbols $c\in\tau$, $c^\B = \g_i(c^{\A_i})$, where $i\in I$ is arbitrary.

        When there is no risk of confusion, we write simply $\lim_{i\in I}\A_i$ instead of $\lim\mathcal A$. We call the maps $\g_i$ the direct limit maps of the system $\mathcal A$.
    \end{enumerate}
\end{definition}

We immediately make the following observations. In particular, this shows that the direct limit is well defined.

\begin{lemma}\label{Limit elements are the same iff they are the same somewhere}\quad
    \begin{enumerate}
        \item\label{Direct limit lemma first poperty}
        If $\eta,\xi\in\lim_i\A_i$ and $\eta(i)=\xi(i)$ for some $i\in\dom(\eta)\cap\dom(\xi)$, then $\eta = \xi$. Hence for any $i\in I$ and $a\in\A_i$, there is a unique $\eta\in\lim_i\A_i$ with $\eta(i)=a$.

        \item For all $i\leq j$, $\g_i = \g_j\circ\f_{i,j}$.

        \item The interpretations of symbols $R\in\tau$ in $\lim_{i\in I}\A_i$ are well defined.

        \item Each $\g_i$ is a well-defined team map.

        \item For any $i\in I$, $n$-ary $X\in\dom(\g_i)$ and $\vec a\in X$, there is a unique $\vec\eta\in\g_i(X)$ with $\eta_k(i) = a_i$ for all $k<n$. Furthermore, $\vec\eta = \g_i(\vec a)$.

        \item If each $\f_{i,j}$ is (element-)total, then so is each $\g_i$.

        \item If $J\subseteq I$ is a cofinal subset such that $(J,\leq)$ is also directed, then there is an isomorphism $\pi\colon\lim_{i\in I}\A_i\to\lim_{j\in J}\A_j$ such that if $\g^J_j$ and $\g^I_i$ are the direct limit maps in the systems based on $J$ and $I$, respectively, then for all $j\in J$, $\g^J_j = \hat\pi\circ\g^I_j$.
    \end{enumerate}
\end{lemma}
\begin{proof}
    We prove the last claim and leave the rest to the reader.

    Define $\pi$ by setting $\pi(\eta) = \eta\restriction(\dom(\eta)\cap J)$ for all $\eta\in\lim_{i\in I}\A_i$. We show that this is an isomorphism.
    For injectivity, let $\eta,\xi\in\lim_{i\in I}\A_i$ and suppose that $\pi(\eta) = \pi(\xi)$. Let $j\in\dom(\pi(\eta))=\dom(\pi(\xi))$. Now $\eta(j) = \pi(\eta)(j) = \pi(\xi)(j) = \xi(j)$, whence  by item~\ref{Direct limit lemma first poperty} we have that  $\eta=\xi$.
    
    For surjectivity, let $\eta\in\lim_{j\in J}\A_j$. Pick some $j\in\dom(\eta)$. Now there is a unique $\xi\in\lim_{i\in I}\A_i$ with $\xi(j)=\eta(j)$. Let $i\in\dom(\eta)$. Letting $k\in J$ be an upper bound of $\{i,j\}$, we have $\xi(k) = \f_{j,k}(\xi(j)) = \f_{j,k}(\eta(j)) = \eta(k) = \f_{i,k}(\eta(i))$, whence $i\in\dom(\xi)$. Hence $\dom(\eta)\subseteq\dom(\xi)$, and by uniqueness of all the values, this means that $\eta\subseteq\xi$. Then let $i\in\dom(\xi)\cap J$. Let again $k\in J$ be an upper bound of $\{i,j\}$. Now $\f_{i,k}(\xi(i)) = \xi(k) = \eta(k)$, so we have $i\in\dom(\eta)$. Hence $\dom(\eta) = \dom(\xi)\cap J$ and for all $i\in\dom(\eta)$, $\xi(i) = \eta(i)$. Thus $\eta = \xi\restriction(\dom(\xi)\cap J) = \pi(\xi)$.

    Since the interpretations of symbols of $\tau$ are defined via $\g^I_i$ and $\g^J_j$, the isomorphism condition of $\pi$ will follow from $\g^J_j = \hat\pi\circ\g^I_j$. So we show this final fact. Fix $j\in J$ and let $X\in\dom(\g^I_j) = \dom(\g^J_j)$, and let $\vec\eta\in\hat\pi(\g^I_j(X)) = \pi[\g^I_j(X)]$. Then $\vec\eta = \pi(\vec\xi\,)$ for some $\vec\xi\in\g^I_j(X)$. Let $i\in \upset j\cap J\cap\bigcap_{k<n}\dom(\xi_k)$ be such that $(\xi_0(i),\dots,\xi_{n-1}(i))\in\f_{j,i}(X)$. Now $\eta_k(i)=\xi_k(i)$ for all $k<n$, whence $\vec\eta\in\g^J_j(X)$. Hence $\hat\pi(\g^I_j(X))\subseteq\g^J_j(X)$.

    Then let $\vec\eta\in\g^J_j(X)$ and denote $\vec \xi = \pi^{-1}(\vec\eta)$. Now there is $i\in\upset j\cap J\cap\bigcap_{k<n}\dom(\eta_k)$ such that $(\eta_0(i),\dots,\eta_{n-1}(i))\in\f_{j,i}(X)$. As $i\in\dom(\xi_k)$ and $\eta_k(i) = \xi_k(i)$ for all $k<n$, we have $\vec\xi\in\g^I_j(X)$ and so $\vec\eta=\pi(\vec\xi\,)\in\pi[\g^I_j(X)] = \hat\pi(\g^I_j(X))$. Hence $\g^J_j(X)\subseteq\hat\pi(\g^I_j(X))$. This finishes the proof.
\end{proof}

\begin{lemma}\label{Limit maps are embeddings}
    Each $\g_i$ is a partial team isomorphism $\A_i\to\lim_{i\in I}\A_i$.
\end{lemma}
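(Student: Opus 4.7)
The plan is to verify the six conditions \ref{Partial isomorphisms preserve bottom and top}--\ref{Partial isomorphisms preserve structure} from \cref{definition.team.maps} for each map $\g_i$. Throughout, the key tools are the directedness of $(I,\leq)$ together with the facts collected in \cref{Limit elements are the same iff they are the same somewhere}. We repeatedly exploit that the domain $\dom(\eta)$ of every element $\eta\in\B$ is a nonempty upset: given finitely many elements $\eta_0,\dots,\eta_{k-1}\in\B$ and any $i\in I$, we can find a common index $j\geq i$ lying in $\bigcap_{\ell<k}\dom(\eta_\ell)$, at which we may invoke the partial team isomorphism property of $\f_{i,j}$.

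For \ref{Partial isomorphisms preserve structure}, the interpretations of $\tau$-symbols in $\B=\lim_{i\in I}\A_i$ are defined precisely by $R^\B=\g_j(R^{\A_j})$, $F^\B=\g_j(F^{\A_j})$ and $c^\B=\g_j(c^{\A_j})$, so taking $j=i$ gives the required equalities. The equality $\g_i(\Delta_{\A_i})=\Delta_\B$ needs a short separate argument: the inclusion $\subseteq$ follows because a witness $j\geq i$ forces $\eta_0(j)=\eta_1(j)$, which by \cref{Limit elements are the same iff they are the same somewhere}(i) yields $\eta_0=\eta_1$; the reverse inclusion is immediate from directedness.

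For \ref{Partial isomorphisms preserve products} and \ref{Partial isomorphisms preserve projections}, the inclusion $\subseteq$ is immediate from the definition by taking a single witness $j$ and using the corresponding identity for $\f_{i,j}$. For the reverse inclusion in \ref{Partial isomorphisms preserve products}, given $\vec\eta\in\g_i(X)$ and $\vec\xi\in\g_i(Y)$ witnessed at possibly different indices $j_1,j_2\geq i$, we merge them via an upper bound $j\geq j_1,j_2$ lying in all relevant domains; the compatibility $\f_{i,j}=\f_{j_1,j}\circ\f_{i,j_1}$ moves both witnesses up to $j$, and $\f_{i,j}(X\times Y)=\f_{i,j}(X)\times\f_{i,j}(Y)$ gives $(\vec\eta,\vec\xi)\in\g_i(X\times Y)$. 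The argument for \ref{Partial isomorphisms preserve projections} is analogous.

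For \ref{Partial isomorphisms preserve order}, the forward direction is clear from the definition. For the converse, given $\vec a\in X$, the element $\g_i(\vec a)$ from \cref{Limit elements are the same iff they are the same somewhere}(v) lies in $\g_i(X)\subseteq\g_i(Y)$, and at a witnessing index $j\geq i$ one reads off $\f_{i,j}(\vec a)\in\f_{i,j}(Y)$, so \ref{Partial isomorphisms preserve order} for $\f_{i,j}$ yields $\vec a\in Y$. The conditions \ref{Partial isomorphisms preserve bottom and top} and \ref{Partial isomorphisms preserve atoms} then drop out easily: emptiness is preserved both ways by \cref{Limit elements are the same iff they are the same somewhere}(v); the top relation $\A_i^n$ maps to $\B^n$ since every $\vec\eta\in\B^n$ admits a common $j\geq i$ in all $\dom(\eta_k)$ with $\vec\eta(j)\in\A_j^n=\f_{i,j}(\A_i^n)$, while $X\subsetneq\A_i^n$ gives $\g_i(X)\subsetneq\B^n$ by \ref{Partial isomorphisms preserve order}; and distinct tuples $\vec a\neq\vec b\in X$ produce $\g_i(\vec a)\neq\g_i(\vec b)$ (their $i$-th coordinates differ), so a singleton image forces a singleton preimage. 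The only real obstacle in the proof is the careful bookkeeping of witness indices, particularly in \ref{Partial isomorphisms preserve order}, where one must be disciplined about passing from the limit level back down to a finite stage using directedness.
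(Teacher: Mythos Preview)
Your approach matches the paper's: verify \ref{Partial isomorphisms preserve bottom and top}--\ref{Partial isomorphisms preserve structure} for $\g_i$ by passing to a common witness index $j\geq i$ and invoking the corresponding property of $\f_{i,j}$, together with \cref{Limit elements are the same iff they are the same somewhere}. The paper in fact writes out only \ref{Partial isomorphisms preserve atoms} and declares the rest ``similar although tedious'', so in scope your sketch is more complete.

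There is, however, one genuine omission in your treatment of \ref{Partial isomorphisms preserve atoms}. Your sentence ``distinct tuples $\vec a\neq\vec b\in X$ produce $\g_i(\vec a)\neq\g_i(\vec b)$, so a singleton image forces a singleton preimage'' establishes only the implication $\g_i(X)$ singleton $\Rightarrow$ $X$ singleton. The forward direction, $X$ singleton $\Rightarrow$ $\g_i(X)$ singleton, does \emph{not} follow from this, because an element $\vec\eta\in\g_i(X)$ need not be of the form $\g_i(\vec a)$ for any $\vec a\in X$: the $\f_{i,j}$ are team maps, not pointwise maps, so $\f_{i,j}(X)$ may contain tuples that are not pointwise images of tuples in $X$. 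This forward direction is exactly what the paper's proof spells out: given $\eta,\xi\in\g_i(\{a\})$ with respective witnesses $j,k\geq i$, pass to an upper bound $l$ of $\{j,k\}$; then $\eta(l),\xi(l)\in\f_{i,l}(\{a\})$, which is a singleton by \ref{Partial isomorphisms preserve atoms} for $\f_{i,l}$, so $\eta(l)=\xi(l)$ and hence $\eta=\xi$ by \cref{Limit elements are the same iff they are the same somewhere}(i). Add this two-line argument and your proof is complete.
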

\begin{proof}
    Denote $\B=\lim_i\A_i$.
    We prove \ref{Partial isomorphisms preserve atoms}, the rest are similar although tedious.

    Let $X\in\dom(\g_i)$. We show that $X$ is a singleton if and only if $\g_i(X)$ is. We consider the case $n=1$, the rest follows from \ref{Partial isomorphisms preserve products}. Suppose $X = \{a\}$. Let $\eta,\xi\in\g_i(X)$. Now there are $j\in\dom(\eta)$ and $k\in\dom(\xi)$ such that $j,k\geq i$, $\eta(j)\in\f_{i,j}(X)$ and $\xi(k)\in\f_{i,k}(X)$. Let $l$ be an upper bound of $\{j,k\}$. Now $\eta(l) = \f_{j,l}(\eta(j))\in \f_{j,l}(\f_{i,j}(X)) = \f_{i,l}(X)$, and similarly $\xi(l)\in\f_{i,l}(X)$. As $\f_{i,l}$ is a team embedding, $\f_{i,l}(X)$ is a singleton, whence $\eta(l)=\xi(l)$. But then $\eta = \xi$. Hence $\g_i(X)$ is a singleton.

    On the other hand, if $\g_i(X) = \{\eta\}$, let $j\in\dom(\eta)$ be such that $j\geq i$ and $\eta(j)\in\f_{i,j}(X)$. Now, if $a\in\f_{i,j}(X)$ is such that $a\neq\eta(j)$, then the unique $\xi\in\B$ such that $\xi(j) = a$ is different from $\eta$, but this is not possible, since then $\xi$ would be an element of $\g_i(X)$. Hence $\f_{i,j}(X)$ is a singleton, and as $\f_{i,j}$ is a partial team isomorphism, so is $X$.
\end{proof}

\begin{proposition}\label{prop:direct.limit.maps.elementary}
    If each $\f_{i,j}$ is elementary, then so is each $\g_i$.
\end{proposition}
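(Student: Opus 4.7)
The plan is a straightforward induction on $\fot$-formulas $\phi(v_0,\dots,v_{n-1})$, proving the equivalence $\A_i\models_X\phi \iff \B\models_{\g_i(X)}\phi$ for every $i\in I$ and $n$-ary $X\in\dom(\g_i)$, where $\B=\lim_i\A_i$. As elementary team embeddings are by definition total, item~(vi) of \cref{Limit elements are the same iff they are the same somewhere} ensures each $\g_i$ is element-total, and together with \cref{Limit maps are embeddings} each $\g_i$ is a (total) partial team isomorphism. This already settles all atomic cases—first-order atoms, inclusion atoms $\vx\subseteq\vy$ and dependence atoms $\dep(\vx)$—via \cref{Partial isomorphisms preserve quantifier-free formulas}. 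The connective cases $\land$ and $\ivee$ are immediate from the IH, and $\cneg$ uses additionally that $X = \emptyset$ iff $\g_i(X) = \emptyset$, which follows from the partial team isomorphism axioms.

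The substance lies in the two weak-quantifier cases, and the two key ingredients are the factorization $\g_i = \g_j\circ\f_{i,j}$ for $j\geq i$ (item~(ii) of \cref{Limit elements are the same iff they are the same somewhere}) and the observation that every $\eta\in\B$ can be written as $\g_j(b)$ for some $j\geq i$ and $b\in\A_j$, because $\dom(\eta)$ is a nonempty upset in the directed set $(I,\leq)$, so it meets $\upset i$. Combined with the closure of $\g_i$ under cartesian products, these yield the identity
\begin{equation*}
    \g_i(X)(\g_j(b)/n) \;=\; \g_j\bigl(\f_{i,j}(X)(b/n)\bigr),
\end{equation*}
which is what lets us translate between teams of the form $\g_i(X)$ with a new column appended and teams lying fully inside the image of $\g_j$.

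For $\existsone x\psi$, the forward direction picks a witness $a\in\A_i$ and applies the IH for $\psi$ at $\g_i$ to $X(a/n)$. The backward direction takes a witness $\eta\in\B$ for the existential in $\B$, writes $\eta = \g_j(b)$ for a sufficiently large $j\geq i$, applies the IH for $\psi$ at $\g_j$ to conclude $\A_j\models_{\f_{i,j}(X)(b/n)}\psi$, and then descends to $\A_i$ by elementarity of $\f_{i,j}$. The $\forallone x\psi$ case is dual, and here lies the only real obstacle: the universal quantifier over $\B$ ranges over \emph{all} elements of $\B$, not merely those of the form $\g_i(a)$. The remedy is again to write an arbitrary $\eta\in\B$ as $\g_j(b)$ for some $j\geq i$, lift $\A_i\models_X\forallone x\psi$ to $\A_j\models_{\f_{i,j}(X)}\forallone x\psi$ by elementarity of $\f_{i,j}$, and then invoke the IH for $\psi$ at $\g_j$. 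Cofinality of $(I,\leq)$ together with elementarity of the transition maps is exactly what lets the direct limit absorb each such new element in a controlled way.
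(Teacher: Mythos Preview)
Your proof is correct and follows essentially the same approach as the paper's: induction on $\fot$-formulas, with atomic cases handled by the fact that each $\g_i$ is a partial team isomorphism, and the quantifier cases handled via the factorization $\g_i = \g_j\circ\f_{i,j}$ and the identity $\g_i(X)(\eta/n) = \g_j(\f_{i,j}(X)(\eta(j)/n))$ for $j\in\dom(\eta)$ with $j\geq i$. The paper is somewhat terser about the $\forallone$ case (calling it ``dual'') and does not explicitly separate out the element-totality discussion, but the substance is identical.
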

\begin{proof}
    Denote $\B = \lim_{i\in I}\A_i$, and let $\phi$ be a formula of $\fot$. We show by induction on $\phi$ that for any $i\in I$ and $X\in\dom(\g_i)$ of arity $|\Fv(\phi)|$,
    \[
        \A_i\models_X\phi \iff \B\models_{\g_i(X)}\phi.
    \]
    If $\phi$ is atomic, this follows from the fact that $\g_i$ is a partial team isomorphism, and the connective cases follow directly from the induction hypothesis. So suppose that $\phi = \existsone v_n\psi(v_0,\dots,v_n)$. If $\A_i\models_X\phi$, then there is $a\in\A_i$ with $\A_i\models_{X(a/n)}\psi$. Element-totality of $\f_{i,i}$ and hence also of $\g_i$ implies that $X(a/n)\in\dom(\g_i)$, whence the induction hypothesis gives $\B\models_{\g_i(X(a/n))}\psi$. Since $\g_i$ is a partial team isomorphism, we have $\g_i(X(a/n)) = \g_i(X)(\g_i(a)/n)$, whence $\B\models_{\g_i(X)}\phi$.
    
    Conversely, suppose that $\B\models_{\g_i(X)}\phi$. Then there is some $\eta\in\B$ such that $\B\models_{\g_i(X)(\eta/n)}\psi$. Let $j\in\dom(\eta)$ be such that $j\geq i$. Then, as $\g_j$ is element-total, we have $Y(\eta(j)/n)\in\dom(\g_j)$ for any $Y\in\dom(\g_j)$, in particular when $Y = \f_{i,j}(X)$. Since $\g_j$ is a partial team isomorphism, we have
    \[
        \g_i(X)(\eta/n) = \g_j(\f_{i,j}(X))(\g_j(\eta(j))/n) = \g_j(\f_{i,j}(X)(\eta(j)/n)).
    \]
    Now by the induction hypothesis, $\A_j\models_{\f_{i,j}(X)(\eta(j)/n)}\psi$, whence $\A_j\models_{\f_{i,j}(X)}\phi$. Since $\f_{i,j}$ is elementary, this means that $\A_i\models_X\phi$.

    The case $\phi = \forallone v_n\psi$ is dual to the existential quantifier case.
\end{proof}

The previous results show that direct limits are well defined in the setting of team semantics. We conclude this section by pointing out how a direct limit comes naturally equipped with a specific set of relations, which we shall call admissible.

\begin{definition}\label{bounded/admissible.maps0}
    Let $\mathcal A = (\A_i, \f_{i,j})_{i,j\in I, i\leq j}$ be a directed system of $\tau$-structures and element-total partial team isomorphisms, and let $\B = \lim\mathcal A$.
    We say that a relation $R\subseteq\B^n$ is \emph{admissible} (in the system $\mathcal A$) if there is $i\in I$ and $S\subseteq\A_i^n$ such that $R = \g_i(S)$.
\end{definition}

Now, suppose each $\f_{i,j}$ (and hence also $\g_i$) is a team embedding and denote $\B=\lim\A_i$. If $\C$ is another structure with team embeddings $\h_i\colon\A_i\to\C$, $i\in I$, such that $\h_i = \h_j\circ\f_{i,j}$, then we may define a partial map $\mathcal k\colon\B\to\C$ such that for any $i\in I$ and $X\subseteq\A_i^n$, $\mathcal k(\g_i(X)) = \h_i(X)$. However, there is no obvious way to extend the map $\mathcal k$ to non-admissible sets. Hence it is unclear whether it is possible for our direct limits to satisfy the smoothness property of the unions of chains axiom of AECs (which corresponds to the universal mapping property of direct limits in category theory). However, we shall solve this problem in \cref{subsec:accessible.team.category} by transitioning to work with something reminiscent of general models of second-order logic.

\subsection{The Löwenheim--Skolem Property}\label{subsec:LoSk}

In the definition of an AEC, the Löwen\-heim--Skolem number $\LS(\K)$ of an AEC $\K$ is the least $\kappa\geq|\tau|+\aleph_0$ such that for any $\A\in\K$, whenever $X\subseteq\A$ has power $\leq\kappa$, there is $\B\in\K$ with power $\leq\kappa$ and $X\subseteq\B\preceq\A$. This is equivalent to the seemingly stronger claim that for every $\A\in\K$, whenever $X\subseteq\A$, then there is $\B\in\K$ with $X\subseteq\B\preceq\A$ and $|\B|\leq|X|+\kappa$. However, this seems to be too much to ask in our context, as even the weaker property where the set $X$ of objects is required to be strictly smaller than $\kappa$ is a very strong set-theoretic assumption: we observe that the existence of this weaker notion of a Löwenheim--Skolem number, even in the model class of the empty theory, is equivalent to the existence of \emph{supercompact cardinals}, which we show using a theorem of \textcite{MR0295904}.

Supercompact cardinals are found very high in the large cardinal hierarchy. They have strong reflection properties---for instance, if the generalized continuum hypothesis holds below a supercompact cardinal, then it holds everywhere~\cite{CODY2014620}. The first supercompact is also the Löwenheim--Skolem--Tarski number of second-order logic~\cite{magidor2011lowenheim}, i.e., the smallest $\kappa$ such that any structure $\A$ has a substructure $\B$ of size strictly smaller than $\kappa$, such that for all $b_0,\dots,b_{n-1}\in\B$ and second-order formulas $\phi(x_0,\dots,x_{n-1})$,
\[
    \B\models\phi(b_0,\dots,b_{n-1}) \iff \A\models\phi(b_0,\dots,b_{n-1}).
\]
This is very similar to what we have here, although not all of second-order logic is utilized. It follows from the observation that already universal second-order logic requires the supercompact in order to get reflected down from the superstructure to the substructure. Thus, for us to be able to preserve the truth of existential second-order logic upwards from the substructure to the superstructure, we need the enormous consistency strength of the supercompact cardinal.

In our view, having to go beyond the axioms of ZFC in order to have the Löwenheim--Skolem property for our AEC, while existential second-order logic is rather well-behaved, is a sign of a problem like the ones demonstrated in the previous two sections. This problem, like the other two, disappears when we transition to work inside the framework of \cref{subsec:accessible.team.category}.

\newcommand{\WLS}{\LS_{\mathrm{w}}}
\begin{definition}
    For a class $\K$ of $\tau$-structures, denote by $\WLS(\K)$ the least infinite cardinal $\kappa>|\tau|$ such that the following holds and call it the weak Löwenheim--Skolem number of $\K$.
    For any $\A\in\K$ and $\X\subseteq\R(\A)$ with $|\X|<\kappa$, there is $\B\in\K$ and an elementary team embedding $\iota\colon\B\to\A$ such that $|\B|<\kappa$ and $\X\subseteq\ran(\iota)$.
\end{definition}

We fix some notation. We denote by $\V$ the universe of sets. We denote other models of set theory by boldface letters such as $\bM$. If $j\colon\V\to\bM$ is an elementary embedding in the language of set theory, the critical point of $j$ is the least ordinal $\alpha$ such that $j(\alpha)>\alpha$.

\begin{definition}
    A cardinal $\kappa$ is supercompact if for every $\lambda\geq\kappa$ there is a transitive model $\bM$ of set theory with $\bM^\lambda\subseteq\bM$ (i.e., $\bM$ is closed under $\lambda$-sequences of its elements), and an elementary embedding $j\colon\V\to\bM$ with critical point $\kappa$, such that $j(\kappa)>\lambda$.
\end{definition}

\begin{theorem}[\textcite{MR0295904}]\label{First supercompact vs Pi^1_1}
    The first supercompact cardinal is the least $\kappa$ such that the following holds: for any finite $\tau$, a $\tau$-structure $\A$ with $|\A|\geq\kappa$ and a $\tau$-sentence $\phi$ of universal second-order logic with $\A\models\phi$, there exists $\B\subseteq\A$ with $|\B|<|\A|$ and $\B\models\phi$.
\end{theorem}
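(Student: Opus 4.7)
The theorem is a characterisation of the first supercompact cardinal, and I would prove the two directions of the equivalence separately: first, that supercompactness implies the stated downward Löwenheim--Skolem property, and second, that the least cardinal satisfying the property must itself be (the first) supercompact.

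For the direction that supercompactness implies the Löwenheim--Skolem property, fix a structure $\A$ of cardinality $\lambda \geq \kappa$ in a finite vocabulary $\tau$, together with a universal second-order sentence $\phi = \forall X\,\psi(X)$ such that $\A \models \phi$. Using $\lambda$-supercompactness of $\kappa$, fix an elementary embedding $j\colon \V \to \bM$ into a transitive class $\bM$ with $\bM^\lambda \subseteq \bM$, critical point $\kappa$, and $j(\kappa) > \lambda$. Since $\bM$ is closed under $\lambda$-sequences, the pointwise image $j[\A]$ lies in $\bM$, is isomorphic to $\A$ via $j\restriction\A$, and is a substructure of $j(\A)$ of cardinality $\lambda < j(\kappa)$. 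The key verification is that $j[\A] \models \phi$ as computed inside $\bM$: any $X \in \bM$ with $X \subseteq j[\A]^n$ corresponds under the isomorphism to some $X' \in \V$ with $(\A, X') \models \psi$, and absoluteness of first-order satisfaction between $\V$ and $\bM$ transports this truth to $(j[\A], X) \models \psi$ in $\bM$. Thus $\bM$ sees ``$j(\A)$ has a substructure of cardinality less than $j(\kappa)$ satisfying $\phi$'', and elementarity of $j$ then yields, back in $\V$, a substructure $\B$ of $\A$ of cardinality $<\kappa \leq |\A|$ with $\B \models \phi$.

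For the converse, assume $\kappa$ is least with the stated Löwenheim--Skolem property. Given $\lambda \geq \kappa$, the aim is to produce a normal fine ultrafilter on $P_\kappa(\lambda)$, which is equivalent to $\lambda$-supercompactness. Following Magidor's approach, I would encode enough of $H(\theta)$ (for suitably large $\theta$) in a finite-vocabulary structure $\mathfrak M$ and design a $\Pi^1_1$ sentence $\phi$ asserting that every relevant second-order object behaves coherently under a target closure condition with parameter $\lambda$. Applying the Löwenheim--Skolem property to $(\mathfrak M, \phi)$ produces a substructure $\mathfrak N$ of strictly smaller cardinality still satisfying $\phi$; a Mostowski-style collapse of $\mathfrak N$ then yields an elementary embedding whose critical point, by minimality of $\kappa$ in the Löwenheim--Skolem property, must be exactly $\kappa$, and whose degree of closure packages the desired fine normal measure. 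Letting $\lambda$ vary gives full supercompactness, and minimality among supercompacts follows once more from the minimality assumption on $\kappa$.

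The principal obstacle is the converse direction: choosing the sentence $\phi$ and the auxiliary structure $\mathfrak M$ so that the smaller substructure supplied by the Löwenheim--Skolem property actually collapses to a witness of $\lambda$-supercompactness, with critical point exactly $\kappa$ rather than some smaller cardinal. Engineering the sentence so that its truth in a smaller substructure forces the existence of a normal fine measure is the technical heart of Magidor's original argument, and it is there that the minimality of $\kappa$ in the Löwenheim--Skolem property is essentially used.
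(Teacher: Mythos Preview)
The paper does not supply its own proof of this theorem: it is quoted verbatim as a result of Magidor and used as a black box in the subsequent argument that the existence of weak L\"owenheim--Skolem numbers implies a supercompact. So there is nothing in the paper to compare your proposal against.

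That said, your sketch is a reasonable outline of Magidor's argument. The forward direction (supercompact $\Rightarrow$ $\Pi^1_1$-L\"owenheim--Skolem) via the pointwise image $j[\A]$ inside $\bM$ is essentially the standard proof; the only point to be a little careful about is that the relevant second-order quantifier in $\phi$ ranges over \emph{all} subsets of $j[\A]^n$ in $\bM$, and closure $\bM^\lambda\subseteq\bM$ is exactly what guarantees that $\bM$ sees every such subset, so that truth of $\phi$ transfers. For the converse you correctly identify the hard part and honestly leave it as a black box (``following Magidor's approach''); that is fine for a proof plan, but be aware that this direction is where essentially all the work lies, and your description of it is not yet a proof but a pointer to one.
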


\begin{theorem}
    Suppose that for any finite signature $\tau$, if $\K_\tau$ is the class of all $\tau$-structures, then $\WLS(\K_\tau)$ exists. Then there exists a supercompact cardinal.
\end{theorem}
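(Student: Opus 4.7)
The strategy is to invoke Magidor's characterization (\cref{First supercompact vs Pi^1_1}) by exhibiting a single cardinal $\kappa$ at which the universal-second-order reflection property holds uniformly across all finite signatures. Since there are only countably many finite signatures up to isomorphism, I would take
\[
    \kappa = \sup\{\WLS(\K_\tau) : \tau \text{ a finite signature}\},
\]
which is well-defined under the hypothesis. It then suffices to show: for every finite $\tau$, every $\tau$-structure $\A$ with $|\A| \geq \kappa$, and every universal second-order $\tau$-sentence $\phi$ with $\A \models \phi$, there is a substructure $\B_0 \subseteq \A$ with $|\B_0| < |\A|$ and $\B_0 \models \phi$. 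By Magidor's theorem, this forces the first supercompact cardinal to exist and be at most $\kappa$.

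Fix such $\A$ and $\phi$, and write $\phi = \forall R_0 \dots \forall R_{n-1}\, \psi(R_0,\dots,R_{n-1})$ with $\psi$ first-order in the signature $\tau \cup \{R_0,\dots,R_{n-1}\}$. Since $|\A| \geq \WLS(\K_\tau)$, I would apply the weak Löwenheim--Skolem property with, say, $\X = \emptyset$ to obtain a $\tau$-structure $\B$ of cardinality $<\WLS(\K_\tau) \leq |\A|$ together with an elementary team embedding $\iota\colon \B \to \A$. By \cref{From team embedding to a substructure}, the image of $\iota$ determines a (first-order) elementary substructure $\B_0 \preceq \A$ with $|\B_0| = |\B| < |\A|$, and $\B_0$ is isomorphic to $\B$ as an ordinary $\tau$-structure. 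Since universal second-order sentences are preserved under isomorphism, it remains to establish $\B \models \phi$.

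Fix any relations $S_0,\dots,S_{n-1}$ with $S_i \subseteq \B^{\ar(R_i)}$; the goal is $(\B, S_0,\dots,S_{n-1}) \models \psi$. When every $S_i$ is nonempty, this is immediate from \cref{Partial elementary maps: FOT vs FO}: the equivalence
\[
    (\B, S_0,\dots,S_{n-1}) \models \psi \iff (\A, \iota(S_0),\dots,\iota(S_{n-1})) \models \psi
\]
reduces the claim to $\A \models \psi(\iota(S_0),\dots,\iota(S_{n-1}))$, which holds because $\A \models \phi$. For the case where some $S_i = \emptyset$, let $J = \{i<n : S_i \neq \emptyset\}$ and let $\psi^*$ be the first-order sentence obtained from $\psi$ by replacing each atomic subformula $R_i(\vec t\,)$ with $\bot$ for $i \notin J$; then for any structure $\C$ and any tuple $\vec T$ with $T_i = \emptyset$ for $i \notin J$, one has $(\C, \vec T) \models \psi \iff (\C, (T_j)_{j\in J}) \models \psi^*$. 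If $J \neq \emptyset$, the preceding argument applied to $\psi^*$ yields the claim; if $J = \emptyset$, $\psi^*$ is a pure $\tau$-sentence and the claim follows from \cref{elementary.team.maps.vs.complete.embeddings}, which guarantees that $\iota$ restricts to a complete, hence first-order elementary, embedding on elements. In all cases $\B \models \phi$, and hence $\B_0 \models \phi$.

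The main obstacle I anticipate is the careful bookkeeping needed to handle the non-flat behaviour of $\fot$ and $\eso$ on empty second-order parameters, since \cref{Partial elementary maps: FOT vs FO} requires nonempty parameters. Once one exploits the fact that every team map sends $\emptyset$ to $\emptyset$ and that empty second-order slots can be eliminated syntactically by substitution with $\bot$, the reduction to the equivalence provided by \cref{Partial elementary maps: FOT vs FO} is routine, and Magidor's characterization delivers the existence of a supercompact cardinal.
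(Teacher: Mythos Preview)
Your proof is correct and follows essentially the same overall line as the paper: take $\kappa$ as the supremum of the $\WLS(\K_\tau)$ over finite $\tau$, use the weak Löwenheim--Skolem property to produce a small $\B$ with an elementary team embedding into $\A$, pass to the isomorphic substructure, and then verify the hypothesis of Magidor's characterization.

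The one place the arguments diverge is in how the transfer of universal second-order truth from $\A$ to $\B$ is obtained. You unpack the universal quantifier, invoke \cref{Partial elementary maps: FOT vs FO} on the first-order matrix with second-order parameters, and handle empty $S_i$ by the $\bot$-substitution trick. The paper instead appeals once to \cref{elementary.team.embeddings.are.independence}: since an elementary team embedding preserves $\foil$ (equivalently $\eso$) forward, it preserves universal second-order sentences backward, and the desired conclusion drops out without any explicit case analysis. Your route is equally valid (and in fact the proof of \cref{elementary.team.embeddings.are.independence} does exactly the empty-relation bookkeeping you carry out by hand), but citing that proposition directly yields a shorter argument.
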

\begin{proof}
    Denote $\kappa = \sup_{|\tau|<\aleph_0}\WLS(\K_\tau)$. Here note that if $\tau$ and $\tau'$ are finite signatures and $f\colon\tau\to\tau'$ is a bijection such that $\ar(R)=\ar(f(R))$ for all relation or function symbols $R\in\tau$, then $\WLS(\K_\tau) = \WLS(\K_{\tau'})$, so $\kappa$ is well defined (the class of finite signatures is set-sized modulo renaming of symbols). By the definition of the weak LS-number, if $\tau$ is finite and $\A\in\K_\tau$ has power $\geq\kappa$, then, as $\A$ has power $\geq\WLS(\K_\tau)$, there exists $\B\in\K_\tau$ of power $<\WLS(\K_\tau)\leq\kappa$ and an elementary team embedding $\iota\colon\B\to\A$. Let $\C = \iota(\B)$. Now $\C$ is a substructure of $\A$ that is isomorphic with $\B$, and by \cref{elementary.team.embeddings.are.independence}, for all formulas $\phi$ of $\foil$ and relations $X$ of $\B$,
    \[
        \C\models_{\iota(X)}\phi \implies \B\models_{X}\phi \implies \A\models_{\iota(X)}\phi.
    \]
    In particular, for every sentence $\phi$ of $\eso$, we have
    \[
        \C\models\phi \implies \A\models\phi.
    \]
    Taking the contrapositive, this means that for every sentence $\phi$ of $\eso$,
    \[
        \A\models\neg\phi \implies \C\models\neg\phi,
    \]
    i.e. for every sentence $\phi$ of universal second-order logic,
    \[
        \A\models\phi \implies \C\models\phi.
    \]
    But then $\kappa$ is such that for every finite $\tau$, a $\tau$-structure $\A$ with $|\A|\geq\kappa$ and a universal second-order $\tau$-sentence $\phi$ with $\A\models\phi$, there exists $\C\subseteq\A$ with $|\C|<|\A|$ and $\C\models\phi$. Hence the class of cardinals with this property is nonempty. By Theorem~\ref{First supercompact vs Pi^1_1}, the first cardinal of this class is supercompact.
\end{proof}

The first supercompact cardinal also suffices as our weak Löwenheim--Skolem number. The proof for said fact
also would suggest that the stronger form of the LS-property with non-strict inequalities would be too much to ask.

\begin{theorem}\label{Lowenheim--Skolem-theorem}
    Suppose $\kappa>|\tau|$ is supercompact and $\K = \Mod(T)$ for some first-order-complete $\tau$-theory in $\foil$. Then $\WLS(\K)\leq\kappa$.
\end{theorem}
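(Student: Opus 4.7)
The plan is to reduce the problem to a supercompact reflection on a suitable expansion of $\A$. Given $\A \in \K$ and $\X \subseteq \R(\A)$ with $|\X| < \kappa$, I assume $|\A| \geq \kappa$ (else $\B = \A$ works). For each $\phi \in T$, writing $\phi = \exists R\, \psi_\phi(R)$ with $\psi_\phi$ first-order, I pick a witness relation $R_\phi$ in $\A$ (handling the case $R_\phi = \emptyset$ via the collapse $\psi_\phi(R/\bot)$). Setting $\X^* = \X \cup \{R_\phi : \phi \in T\}$, we have $|\X^*| \leq |\X| + |T| \leq |\X| + |\tau| + \aleph_0 < \kappa$. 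Let $\tau^* = \tau(\X^*)$ and $\hat\A = \A_{\X^*}$; choosing the fresh symbols to be of small rank ensures $\tau^* \in V_\kappa$.

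The heart of the argument exploits supercompactness. Pick $\lambda \geq 2^{|\hat\A|}$ and take an elementary embedding $j \colon V \to M$ with critical point $\kappa$, $j(\kappa) > \lambda$, and $M^\lambda \subseteq M$. Since $\tau^* \in V_\kappa$, $j$ fixes $\tau^*$ pointwise, and $j(\hat\A)$ is again a $\tau^*$-structure. Define $\f \colon \hat\A \to j(\hat\A)$ by $\f(X) = j(X)$ for each $X \in \R(\hat\A)$. Elementarity of $j$ immediately gives all the axioms of a partial team isomorphism as well as the preservation of every first-order $\tau^*$-formula with second-order parameters, so by \cref{Partial elementary maps: FOT vs FO}, $\f$ is an elementary team embedding. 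Moreover, $|\f| \leq 2^{|\hat\A|} \leq \lambda$, whence $\f \in M$. Thus in $M$ the pair $(\hat\A, \f)$ witnesses the first-order set-theoretic assertion ``there exist a $\tau^*$-structure of cardinality less than $j(\kappa)$ and an elementary team embedding of it into $j(\hat\A)$.'' Reflecting this existential statement through $j$, we obtain in $V$ a $\tau^*$-structure $\hat\B$ with $|\hat\B| < \kappa$ together with an elementary team embedding $\iota^* \colon \hat\B \to \hat\A$.

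Finally, I set $\B = \hat\B \restriction \tau$ and let $\iota = \iota^*$, viewed now as a team map into $\A$. Since team embeddings preserve the interpretation of every relation symbol, $\iota(\underline{X}^{\hat\B}) = \underline{X}^{\hat\A} = X$ for every $X \in \X^*$, and in particular $\X \subseteq \ran(\iota)$. Since $\fol$-sentences are $\fot$-sentences, the first-order $\tau^*$-sentences $\psi_\phi(\underline{R_\phi})$ transfer from $\hat\A$ to $\hat\B$, yielding $\B \models \phi$ for every $\phi \in T$, so $\B \in \K$. The main technical hurdle is making the reflection rigorous: one must verify that ``there is an elementary team embedding'' is genuinely first-order in the language of set theory and that $\hat\A$ and $\f$ really lie in $M$, which is precisely why we arrange $\tau^* \in V_\kappa$, take $\lambda \geq 2^{|\hat\A|}$, and exploit the closure $M^\lambda \subseteq M$.
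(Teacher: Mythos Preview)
Your overall strategy matches the paper's: use a supercompact embedding $j\colon V\to M$ and reflect the existence of a small structure admitting an elementary team embedding. The executions differ, and there is one technical gap in your version.

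The gap is in the claim that $\hat\A,\f\in M$. Closure under $\lambda$-sequences yields this only if every element of $\dom(\A)$ already lies in $M$, which your hypotheses do not guarantee; arranging $\tau^*\in V_\kappa$ and $\lambda\geq 2^{|\hat\A|}$ is not sufficient, since the elements of $\dom(\A)$ are arbitrary sets. The paper avoids this issue by taking as its witness not $\A$ but the pointwise image $\B\coloneqq j[\A]$ (which lies in $M$ automatically, as each $j(a)\in M$) together with the map $R\mapsto j(j^{-1}[R])$ into $j(\A)$. Your argument is easily repaired either by assuming without loss of generality that $\dom(\A)$ is an ordinal, or by switching to the pointwise image as the paper does.

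On the other hand, two aspects of your write-up are cleaner. First, your verification that $X\mapsto j(X)$ preserves first-order sentences with second-order parameters (hence is an elementary team map via \cref{Partial elementary maps: FOT vs FO}) is a one-line appeal to elementarity of $j$ plus absoluteness of first-order satisfaction; the paper instead carries out an explicit induction on the complexity of $\fot$-formulas to establish the analogous claim. Second, your device of adjoining witness relations $R_\phi$ for each axiom of $T$ yields $\B\models T$ directly after reflection; the paper's proof as written does not explicitly verify $\B\in\K$, and making that step precise there requires either including ``$\B\models T$'' in the reflected statement or essentially the witness trick you employ.
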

\begin{proof}
    Let $\A$ be a $\tau$-structure and $\X\subseteq\bigcup_{n<\omega}\Pow(\A^n)$, $|\X|<\kappa$. If $|\A|<\kappa$, we are done, so assume that $|\A|\geq\kappa$. We augment $\tau$ into a signature $\tau'$ by adding fresh relation symbols for elements of $\X$. Then denote $\mu = |\A|$ and let $\lambda = 2^{\mu}$. Note that $|\tau'|<\kappa\leq\mu$. Let $j$ be an elementary embedding of $\V$ into a transitive model $\bM$ with critical point $\kappa$ such that $\bM^\lambda\subseteq\bM$ and $j(\kappa)>\lambda$. Note that as $\tau'$ has power $<\kappa$, we may assume that $\tau'\in\V_\kappa$ and hence $j(\tau') = \tau'$. Let $\B$ be the $\tau'$-structure with domain $j[\dom(\A)]\coloneqq\{j(a) \mid a\in\A\}$ and with the interpretation of symbols of $\tau'$ induced by $j$.
    We make the following claims.

    \begin{claim}\quad
        \begin{enumerate}
            \item $\B\in\bM$ and $\bM\models|\B|\leq\lambda$.
            \item For any $R\subseteq\B^n$, we have $R\in\bM$ and $R=j[S] \coloneqq \{j(\vec a) \mid \vec a \in S\}$ for some $S\subseteq\A^n$.
            \item The map $R\mapsto j(j^{-1}[R])$, $R\in\R(\B)$, is in $\bM$.
        \end{enumerate}
    \end{claim}
    \begin{proof}
        \begin{enumerate}
            \item First notice that in $\V$ we have $|\B|=\mu\leq\lambda$, so there exists a surjection $f\colon\lambda\to\dom(\B)$. As $\dom(\B)\subseteq \bM$  and since $\bM$ is closed under $\lambda$-sequences, we have  that $f\in\bM$. Thus also $\dom(\B)\in\bM$, and a similar argument works for $R^\B$ for any $R\in\tau'$. As $|\tau'|\leq\lambda$ we obtain, again by closure under $\lambda$-sequences, that $\B\in\bM$. As being a surjection is absolute, we have $\bM\models\text{``$f$ is a surjection $\lambda\to\dom(\B)$''}$, so $\bM\models|\B|\leq\lambda$.
            
            \item  Since $\B^n = j[\A^n]$, we have for any $R\subseteq\B^n$ that $R = j[S]$ for some $S\subseteq\A^n$. Now $R\in\bM$ for the same reason that $\B\in\bM$

            \item For the third claim, first notice that
            \[
                |\R(\B)| = \left|\bigcup_{n<\omega}\Pow(\B^n)\right| = \aleph_0\cdot 2^{|\B|} = 2^\mu\leq\lambda.
            \]
            Hence the map $R\mapsto j(j^{-1}[R])$ also has cardinality $\leq\lambda$, and as it is a set of elements of $\bM$, it is in itself in $\bM$. \qedhere{ Claim}
        \end{enumerate}
    \end{proof}
    
    Now, suppose for a moment that for any $\tau$-formula $\phi$ of $\fot$ and any $S\subseteq\A^n$ for $n<\omega$, we have managed to prove that
    \begin{equation}
        \bM\models\text{``$\B\models_{j[S]}\phi$''} \iff \A\models_S\phi. \label{Elementary embedding of the universe preserves independence logic}
    \end{equation}
    By claim (ii) above, relations of the form $j[S]$ exhaust all relations of $\B$. Now for any $R\subseteq\B^n$, by elementarity of $j$, we then obtain
    \begin{align*}
        \bM\models\text{``$\B\models_R\phi$''} &\iff \A\models_{j^{-1}[R]}\phi \\
        &\iff \V\models\text{``$\A\models_{j^{-1}[R]}\phi$''} \\
        &\iff \bM\models\text{``$j(\A)\models_{j(j^{-1}[R])}\phi$''},
    \end{align*}
    By claim (iii) above, the map $R\mapsto j(j^{-1}[R])$ is an element of $\bM$.
    Hence,
    \[
        \bM\models\text{``$R\mapsto j(j^{-1}[R])$ is an elementary team map $\B\to j(\A)$''}.
    \]

    Now, note that by claim (i) above, in $\bM$ we have $|\B| \leq \lambda < j(\kappa)$. Thus, $\bM$ satisfies the statement ``there is $\B$ and an elementary team embedding $\B\to j(\A)$ such that $|\B|<j(\kappa)$''. By elementarity of $j$, we have (in $\V$) that there exists $\B$ and an elementary team embedding $\iota\colon\B\to\A$ such that $|\B|<\kappa$. Additionally, each $X\in\X$ is in the range of $\iota$, as $\iota$ is, in particular, a partial team isomorphism in the signature $\tau'$ and $X$ is the interpretation of $\underline X\in\tau'$ in $\A$. This is enough to prove the theorem.

    Therefore, the only thing left to show is that~\eqref{Elementary embedding of the universe preserves independence logic} holds. We prove it by induction on the complexity of $\phi\in\fot$.
    \begin{enumerate}
        \item Suppose first that $\phi = R(\vec t)$ for $R\in\tau'\cup\{=\}$ and $\vec t$ a tuple of $\tau'$-terms. Fix $X\subseteq\A^n$ with the free variables of $\phi$ contained in $\{v_i\mid i\leq n\} $. Then
    \begin{align*}
        \bM\models\text{``$\B\models_{j[X]}\phi$''} &\iff \bM\models\text{``$\forall s\in j[X],\ s(\vec t)\in R^\B$''} \\
        &\iff \forall s\in j[X],\ s(\vec t)\in j[R^{\A}] \text{ (absoluteness)} \\
        &\iff \forall s\in X,\ s(\vec t)\in R^\A \text{ (elementarity)} \\
        &\iff \A\models_X\phi.
    \end{align*}
    Nothing changes if one considers $\phi = \neg R(\vec t)$ instead.
    
    \item Next suppose that $\phi = \vx\subseteq\vy$ and again fix $X$. Now
    \begin{align*}
        \hspace{2em}\bM\models\text{``$\B\models_{j[X]}\vx\subseteq\vy$\,''} &\iff \bM\models\text{``$\forall s\in j[X]\ \exists s'\in j[X]\ s(\vx)=s'(\vy)$''} \\
        &\iff \forall s\in j[X]\ \exists s'\in j[X]\ s(\vx)=s'(\vy) \text{ (absoluteness)} \\
        &\iff \forall s\in X\ \exists s'\in X\ s(\vx)=s'(\vy) \text{ (elementarity)} \\
        &\iff \A\models_X\vx\subseteq\vy.
    \end{align*}

    \item The case of the constancy atom $\dep(\vx)$ follows analogously to the previous one. The cases $\phi=\psi\land\theta$, $\phi = \psi\ivee\theta$ and $\phi = \cneg\psi$ follow immediately from the induction hypothesis.
    
    \item Next suppose that $\phi = \existsone v_n\psi$ and fix $X\subseteq\A^n$. Then
    \begin{align*}
        \bM\models\text{``$\B\models_{j[X]}\phi$''} &\iff \bM\models\text{``$\B\models_{j[X](b/n)}\psi$ for some $b\in\B$''} \\
        &\iff \exists a\in\A,\ \bM\models\text{``$\B\models_{j[X](j(a)/n)}\psi$''} \\
        &\stackrel{(*)}\iff \exists a\in\A,\ \bM\models\text{``$\B\models_{j[X(a/n)]}\psi$''} \\
        &\stackrel{\text{i.h.}}\iff \exists a\in\A,\ \A\models_{X(a/n)}\psi \\
        &\iff \A\models_X\phi.
    \end{align*}
    The step $(*)$ follows by the fact that
    \begin{equation*}
        j[X(a/n)] = j[X](j(a)/n).
    \end{equation*}
    To see this, let $(b_0,\dots,b_n)\in j[X(a/n)]$. Then $b_i = j(a_i)$ for some $a_i$ such that $(a_0,\dots,a_{n-1})\in X$, $b_n = j(a)$ and $(b_0,\dots,b_{n-1}) = j(a_0,\dots,a_{n-1})\in j[X]$, so
    \[
        (b_0,\dots,b_n) = j(a_0,\dots,a_{n-1})^\frown j(a) \in j[X](j(a)/x).
    \]
    Conversely, we have that every $(b_0,\dots,b_n)\in j[X](j(a)/x)$ is of the form $j(a_0,\dots,a_{n-1},a)$ for $(a_0,\dots,a_{n-1})\in X$.
    
    \item The remaining case $\phi = \forallone x\psi$ is similar.
    \end{enumerate}
    \noindent This completes the proof of~\eqref{Elementary embedding of the universe preserves independence logic} and thus of our theorem.
\end{proof}

Even while having a traditional Löwenheim--Skolem property in general seems hopeless, in the framework of \cref{subsec:accessible.team.category}, the following very weak formulation of the Löwenheim--Skolem theorem, following essentially from the LS theorem of first-order logic, seems to suffice.

\begin{proposition}\label{weak-LS}
    Let $\A$ be a $\tau$-structure and $\X\subseteq \R(\A)$. Then there is a $\tau$-structure $\B$ of power $\leq|\X|+|\tau|+\aleph_0$ and an element-total partial elementary team map $\iota\colon\B\to\A$ such that $\B\models\ESOTh(\A)$ and $\X\subseteq \ran(\iota)$.
\end{proposition}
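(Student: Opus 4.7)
The idea is to reduce to the standard first-order downward Löwenheim--Skolem theorem applied in a suitably enriched signature that simultaneously names the relations in $\X$ and supplies $\eso$-witnesses, and then convert the resulting first-order elementary inclusion into a partial elementary team map via Proposition~\ref{Partial elementary maps: FOT vs FO}.

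Concretely, I would expand $\tau$ to a signature $\tau^*$ by adding (a) a fresh $\ar(X)$-ary relation symbol $\underline X$ for each $X\in\X$, and (b) for each sentence $\exists\bar Y_i\,\phi_i(\bar Y_i)\in\ESOTh(\A)$---of which there are at most $|\tau|+\aleph_0$---a matching tuple of fresh relation symbols $\bar W_i$. Then $|\tau^*|\leq|\X|+|\tau|+\aleph_0$. Fix a witnessing tuple $\bar R_i$ on $\A$ for each $i$, and let $\tilde\A$ be the $\tau^*$-expansion of $\A$ interpreting $\underline X^{\tilde\A}=X$ and $\bar W_i^{\tilde\A}=\bar R_i$. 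The classical first-order downward Löwenheim--Skolem theorem yields a first-order elementary substructure $\tilde\B\preceq\tilde\A$ with $|\tilde\B|\leq|\tau^*|+\aleph_0\leq|\X|+|\tau|+\aleph_0$. Setting $\B=\tilde\B\restriction\tau$, the elementarity $\tilde\B\preceq\tilde\A$ yields $\tilde\B\models\phi_i(\bar W_i^{\tilde\B})$ for every $i$, whence $\B\models\exists\bar Y_i\,\phi_i$, and therefore $\B\models\ESOTh(\A)$.

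For the team map, consider the further enrichment $\tau^{**}=\tau\cup\{\underline X:X\in\X\}\cup\{c_b:b\in\B\}$, where each $c_b$ is a fresh constant interpreted by $b$ in both $\A$ and $\B$ (possible since $\dom(\B)\subseteq\dom(\A)$). The inclusion $\B\hookrightarrow\A$ is first-order elementary in $\tau\cup\{\underline X:X\in\X\}$, as this signature is a reduct of $\tau^*$, and it remains elementary upon adding the constants $c_b$ since each $c_b$ names an element of $\B$. Since each singleton predicate $\underline{\{b\}}$ is first-order definable from $c_b$, applying the equivalence (i)~$\Leftrightarrow$~(iii) of Proposition~\ref{Partial elementary maps: FOT vs FO} to this elementary embedding produces a partial elementary team map $\iota\colon\B\to\A$ whose domain contains $\{b\}$ for every $b\in\B$ and $\underline X^{\tilde\B}$ for every $X\in\X$, with $\iota(\{b\})=\{b\}$ and $\iota(\underline X^{\tilde\B})=X$. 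Hence $\iota$ is element-total, $\X\subseteq\ran(\iota)$, and $|\B|\leq|\X|+|\tau|+\aleph_0$.

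The main bookkeeping obstacle is threading three requirements---smallness of $\B$, validity of $\ESOTh(\A)$ in $\B$, and both element-totality of $\iota$ and $\X\subseteq\ran(\iota)$---through a single application of first-order Löwenheim--Skolem. This is resolved by absorbing all demands into the one signature $\tau^*$: the $\eso$-witnesses contribute at most $|\tau|+\aleph_0$ symbols and the names $\underline X$ contribute $|\X|$, while the element-naming constants $c_b$ are added \emph{after} $\B$ is constructed and thus contribute only $|\B|\leq|\X|+|\tau|+\aleph_0$, all within the target cardinality.
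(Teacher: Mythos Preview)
Your approach is essentially the same as the paper's: expand the signature by names for the relations in $\X$, take a first-order elementary substructure of the right cardinality, and convert the inclusion into a partial elementary team map. The paper compresses your first three paragraphs into a single appeal to ``the Löwenheim--Skolem theorem of $\eso$'' (whose proof, sketched earlier in the paper, is exactly your witness-predicate trick), so your version is simply more explicit about that step.

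The one place where your citation is slightly off is the final conversion. Proposition~\ref{Partial elementary maps: FOT vs FO} is a characterization: it tells you that an \emph{already given} team map $\iota$ is partial elementary if and only if its element-level trace is first-order elementary in the expanded signature. It does not by itself construct $\iota$ on the full closure $\cl\big(\{\underline X^{\tilde\B}:X\in\X\}\cup\{\{b\}:b\in\B\}\big)$, nor verify that such an extension is well defined. The paper handles this via Lemma~\ref{diagram.team.embeddings} (together with Lemma~\ref{diagram.closure.lemma}), which packages precisely that construction: from $\hat\B\models\Diag(\A,\X)$ one obtains a partial elementary team map with domain $\cl(\X)$. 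If you prefer to stick with Proposition~\ref{Partial elementary maps: FOT vs FO}, you should first define $\iota$ on the generators and extend along the closure operations (invoking Lemma~\ref{diagram.closure.lemma} for uniqueness), and only then apply the proposition to certify elementarity. This is routine, but it is a genuine missing sentence in your write-up.
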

\begin{proof}
    By the Löwenheim--Skolem theorem of $\eso$, there is an elementary substructure $\hat\B$ of $\A_\X$ with $|\hat\B|\leq|\tau(\X)|+\aleph_0 = |\X|+|\tau|+\aleph_0$ and $\hat\B\models\ESOTh(\A)$. Now let $\B=\hat\B\restriction\tau$. Then, by \cref{diagram.team.embeddings}, the conditions $\f(\underline X^{\hat\B}) = X$ for all $X\in\X$ and $\f(b) = b$ for all $b\in\B$ uniquely determine a partial elementary team map $\f\colon\B\to\A$ with $\ran(\f)=\cl(\X\cup\{\{b\} \mid b\in\B\})$.
\end{proof}

\subsection{A Category-theoretic Framework}\label{subsec:accessible.team.category}

We solve all of our AEC problems introduced in the previous sections by moving to work inside a framework where structures come with a predetermined set of relations of interest, much resembling general models, or Henkin models, of second-order logic. General models in the setting of team semantics were investigated in~\cite{MR3028798}, but our definition will be slightly different. We define our framework as a category and show that it is an example of an \emph{accessible category}~\cite{adamek1994locally}, which are known to generalize abstract elementary classes (see e.g.~\cite{beke2012abstract}).

\begin{definition}\quad
    \begin{enumerate}
        \item Given a signature $\tau$, a \emph{general $\tau$-structure} is a pair $(\A,\X)$, where $\A$ is a $\tau$-structure and $\X\subseteq\R(\A)$ is a set of relations such that $\cl(\X) = \X$ and for every $a\in \A$, $\{a\}\in \X$.

        \item If $\mathcal A = ((\A_i,\X_i),\f_{i,j})_{i,j\in I, i\leq j}$ is a directed system of general structures and partial team isomorphisms $\f_{i,j}\colon\A_i\to\A_j$ such that $\dom(\f_{i,j}) = \X_i$ and $\ran(\f_{i,j})\subseteq\X_j$ for all $i\leq j$, then the direct limit $\lim\mathcal A$ of $\mathcal A$ is the general model $(\A,\X)$, where $\A = \lim_{i\in I}\A_i$ and $\X$ is the collection of all admissible sets of $\A$, i.e. all relations $R\in\R(\A)$ such that there is $i\in I$ with $R\in\ran(\g_i)$, where $\g_i\colon\A_i\to\A$, $i\in I$, are the direct limit maps. Like before, when the maps $\f_{i,j}$ are clear from the context, we simply write $\lim_{i\in I}(\A_i,\X_i)$ for the direct limit.

        \item An \emph{abstract elementary team category} (AETC) is a category $\mathcal C$
        \begin{itemize}
            \item whose objects are general $\tau$-structures $(\A,\X)$ for some fixed $\tau$,
            \item morphisms $(\A,\X)\to(\B,\Y)$ are partial team isomorphisms $\f\colon\A\to\B$ such that $\dom(\f) = \X$ and $\ran(\f)\subseteq\Y$, and
            \item composition of morphisms is simply function composition,
        \end{itemize}
        satisfying the following conditions.
        \begin{enumerate}
            \item \emph{Closure under isomorphisms}: If $(\A,\X)\in\mathcal C$ and $\pi\colon\A\to\B$ is a $\tau$-isomorphism, then $(\B,\Y)\in\mathcal C$ for $\Y = \hat\pi[\X]\coloneqq \{\hat\pi(R) \mid R\in\X\}$ and $\hat\pi\restriction\X$ is a morphism $\f\colon(\A,\X)\to(\B,\Y)$.
            \item \emph{Closure under inverses}: If $\f\colon(\A,\X)\to(\B,\Y)$ is a morphism such that $\ran(\f) = \Y$, then $\f^{-1}$ is a morphism $(\B,\Y)\to(\A,\X)$ (and hence $\f$ is an isormorphism in the category $\mathcal C$).
            \item \emph{Coherence}: If $\f\colon(\A,\X)\to(\C,\Z)$ and $\g\colon(\B,\Y)\to(\C,\Z)$ are morphisms and $\h\colon\A\to\B$ is a partial team isomorphism with $\f = \g\circ\h$, then $\h$ is a morphism $(\A,\X)\to(\B,\Y)$.
            \item \emph{Direct limits of directed systems}: $\mathcal C$ is closed under direct limits, i.e. if $((\A_i,\X_i),\f_{i,j})_{i,j\in I,i\leq j}$ is a directed system such that each $(\A_i,\X_i)$ is an object of $\mathcal C$ and each $\f_{i,j}$ is a morphism of $\mathcal C$, then $\lim_{i\in I}(\A_i,\X_i)\in\mathcal C$, and the direct limit maps $\g_i$ are morphisms $(\A_i,\X_i)\to\lim_{j\in I}(\A_j,\X_j)$. Furthermore, this direct limit is a colimit of the category $\mathcal C$ and hence satisfies the universal property of colimits (\emph{smoothess}).
            \item \emph{Löwenheim--Skolem property}: There is a cardinal $\kappa$ such that for any $(\A,\X)\in\mathcal C$ and $\X'\subseteq\X$, there is $(\B,\Y)\in\mathcal C$ with $|\B|\leq|\X'|+\kappa$ and $|\Y|\leq|\X'|+\aleph_0$, and a morphism $\f\colon(\B,\Y)\to(\A,\X)$ such that $\cl(\X')\subseteq\ran(\f)$. The least such $\kappa$ will be denoted by $\LS(\mathcal C)$.
        \end{enumerate}
    \end{enumerate}
\end{definition}

The identity morphism $\id$ of an object $(\A,\X)$ is such that $\dom(\id)=\ran(\id)=\X$. Therefore the requirement that $\cl(\X) = \X$ is necessary.

Observe that in an AETC, an automorphism of an object $(\A,\X)$ is a partial team isomorphism $\f\colon\A\to\A$ such that $\dom(\f)=\ran(\f)=\X$. We denote by $\Aut(\A,\X)$ the set of all automorphisms of $(\A,\X)$. If $\X'\subseteq\X$, we denote by $\Aut((\A,\X) / \X')$ the set of all $\f\in\Aut(\A,\X)$ such that $\f(X) = X$ for all $X\in\X'$.

If $(\A,\X)$ and $(\B,\Y)$ are general structures, we say that $\f$ is a partial team map $(\A,\X)\to(\B,\Y)$ if $\dom(\f)\subseteq\X$ and $\ran(\f)\subseteq\Y$.

Next we recall some preliminary notions from category theory.

\begin{definition}\quad
    \begin{enumerate}
        \item Let $\kappa$ be a regular cardinal. An object $X$ in some category $\mathcal C$ is \emph{$\kappa$-presentable} if the functor $\Hom(X,-)$ preserves all $\kappa$-directed colimits, i.e. for every $\kappa$-directed system $(Y_i,f_{i,j})_{i\in I, i\leq j}$ of objects and morphisms of $\mathcal C$, for every object $X\in\mathcal C$ and morphism $h\colon X\to\mathrm{colim}_{i\in I} Y_i$ there is $i\in I$ and a morphism $h_i\colon X\to Y_i$ such that $h = g_i\circ h_i$, where $g_i\colon Y_i\to\mathrm{colim}_{i\in I}Y_i$ are the colimit morphisms.
        \item Let $\kappa$ be a regular cardinal. A category $\mathcal C$ is \emph{$\kappa$-accessible} if
        \begin{enumerate}
            \item $\mathcal C$ has $\kappa$-directed colimits, and
            \item there is a set $\mathcal A$ of $\kappa$-presentable objects such that every object in $\mathcal C$ is (isomorphic to) a $\kappa$-directed colimit of objects from $\mathcal A$.
        \end{enumerate}
        \item A category is \emph{accessible} if it is $\kappa$ accessible for some regular cardinal $\kappa$.
    \end{enumerate}
\end{definition}

There are many examples of accessible categories which are relevant to model theory. Among the most natural ones are the following \parencite[Def. 2.2]{kamsma2020kim}.
\begin{example}\quad
    \begin{enumerate}
        \item Let $T$ be a complete first-order theory. The category $\Mod(T)$ of all models of $T$ with elementary embeddings is accessible.
        \item Given a complete first-order theory $T$, let $\Sub(T)$ be the category of pairs $(\A,X)$, where $X\subseteq\A\models T$, and morphisms $(\A,X)\to(\B,Y)$ are partial elementary maps $f\colon\A\to\B$ with $\dom(f) = X$ and $\ran(f)\subseteq Y$. Then $\Sub(T)$ is also an accessible category.
    \end{enumerate}
\end{example}

\begin{theorem}\label{theorem:aetc.accessible}
    Abstract elementary team categories are accessible.
\end{theorem}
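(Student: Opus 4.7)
The plan is to identify an appropriate regular cardinal $\kappa$ and show that $\mathcal C$ is $\kappa$-accessible. A natural choice is $\kappa := \LS(\mathcal C)^+$ (or any regular cardinal strictly greater than $\LS(\mathcal C) + \aleph_0$). Let $\mathcal A$ be a set of representatives for the isomorphism classes of objects $(\B,\Y)\in\mathcal C$ with $|\B|,|\Y|<\kappa$; this is legitimately a set since, up to isomorphism, there are only set-many such structures, and $\mathcal C$ is closed under isomorphism. The proof then splits into two main steps: (i) every object of $\mathcal C$ arises as a $\kappa$-directed colimit of objects in $\mathcal A$, and (ii) every object in $\mathcal A$ is $\kappa$-presentable. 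Combined with the existence of $\kappa$-directed colimits, which is a special case of axiom (d) of AETCs, this yields $\kappa$-accessibility.

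For step (i), fix $(\A,\X)\in\mathcal C$ and form the category $\mathcal I$ whose objects are pairs $((\B,\Y),\iota)$ with $(\B,\Y)\in\mathcal A$ and $\iota\colon(\B,\Y)\to(\A,\X)$ a morphism, and whose morphisms $((\B,\Y),\iota)\to((\B',\Y'),\iota')$ are morphisms $h\colon(\B,\Y)\to(\B',\Y')$ in $\mathcal C$ satisfying $\iota'\circ h=\iota$. The Löwenheim--Skolem property is precisely what makes $\mathcal I$ $\kappa$-directed: given fewer than $\kappa$ objects $((\B_j,\Y_j),\iota_j)_{j\in J}$, take $\X':=\bigcup_{j\in J}\iota_j(\Y_j)$, observe $|\X'|<\kappa$ by regularity of $\kappa$, and apply the Löwenheim--Skolem axiom to obtain an object $(\B^*,\Y^*)\in\mathcal A$ together with a morphism $\iota^*\colon(\B^*,\Y^*)\to(\A,\X)$ whose range contains $\cl(\X')$; for each $j$, the pointwise formula $h_j(Z)=(\iota^*)^{-1}(\iota_j(Z))$ (well defined by injectivity of $\iota^*$) yields a partial team isomorphism with $\iota^*\circ h_j=\iota_j$, which by the coherence axiom is a morphism $(\B_j,\Y_j)\to(\B^*,\Y^*)$. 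One then argues that $(\A,\X)$ is the colimit of the evident diagram $\mathcal I\to\mathcal C$: given any cocone $\{\phi_\iota\colon(\B,\Y)\to(\C,\Z)\}$, the mediating morphism $\psi\colon(\A,\X)\to(\C,\Z)$ is defined on each $W\in\X$ by choosing, via Löwenheim--Skolem, some $\iota$ with $W\in\ran(\iota)$ and setting $\psi(W):=\phi_\iota(\iota^{-1}(W))$; well-definedness follows from $\kappa$-directedness of $\mathcal I$ together with injectivity of all the $\iota$'s, and the universal property of the abstract direct limit (smoothness) identifies this $\psi$ uniquely.

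For step (ii), let $(\B,\Y)\in\mathcal A$ and suppose $(\A,\X)=\lim_{i\in I}(\A_i,\X_i)$ is a $\kappa$-directed colimit with colimit morphisms $\g_i$, and let $h\colon(\B,\Y)\to(\A,\X)$ be any morphism. For each $Z\in\Y$, admissibility of $h(Z)\in\X$ in the direct limit yields an index $i_Z$ and $W_Z\in\X_{i_Z}$ with $\g_{i_Z}(W_Z)=h(Z)$. Since $|\Y|<\kappa$ and $I$ is $\kappa$-directed, there is a common upper bound $i\in I$ of $\{i_Z\mid Z\in\Y\}$; transporting along the transition maps of the system yields $W'_Z:=\f_{i_Z,i}(W_Z)\in\X_i$ satisfying $\g_i(W'_Z)=h(Z)$. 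Define $h'\colon\Y\to\X_i$ by $h'(Z):=W'_Z$; injectivity of $\g_i$ makes $h'$ well defined, $h=\g_i\circ h'$ holds by construction, and coherence then promotes $h'$ to a morphism $(\B,\Y)\to(\A_i,\X_i)$ factoring $h$. Essential uniqueness of the factorization is checked in the same spirit: if $h=\g_i\circ h'=\g_j\circ h''$, then passing to a common upper bound above $i$ and $j$ and using injectivity of the direct limit maps shows the two factorizations become equal.

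The main technical obstacle is verifying that the partial functions obtained via the ``pointwise inverse'' formulas $(\iota^*)^{-1}\circ\iota_j$ and $\g_i^{-1}\circ h$ are genuine partial team isomorphisms, so that the coherence axiom can actually be invoked. This requires either unpacking the six defining clauses of a partial team isomorphism and checking each one transfers across an injective partial team isomorphism, or, more efficiently, combining the already-proved facts in \cref{Section_maps} that inverses and compositions of partial team isomorphisms (with matching domains and ranges) are again partial team isomorphisms with the closure conditions on domains. A subsidiary point that must be handled with care is the identification of the colimit of $\mathcal I$ with $(\A,\X)$ itself rather than with a merely abstractly constructed direct limit; this is where smoothness from axiom (d) and the Löwenheim--Skolem-based covering of $\X$ by the images $\iota(\Y)$ jointly do the work.
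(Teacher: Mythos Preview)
Your proof is correct and follows essentially the same approach as the paper: choose $\kappa=\LS(\mathcal C)^+$, take the small objects to be those $(\B,\Y)$ with $|\B|,|\Y|<\kappa$, prove $\kappa$-presentability by lifting through a single $\g_i$ via coherence, and obtain each object as a $\kappa$-directed colimit of small ones via L\"owenheim--Skolem plus coherence plus smoothness. The one point of divergence is the indexing in step~(i): the paper indexes by the poset of closed subsets $\X_i\subseteq\X$ of size $<\kappa$ and fixes for each such $\X_i$ a single L\"owenheim--Skolem witness $\iota_i$ with range $\X_i$, whereas you work with the full comma category of small objects over $(\A,\X)$. Since all morphisms in an AETC are monic, this comma category is automatically a preorder, so both presentations yield equivalent directed systems; your version is the standard ``canonical diagram'' argument from accessible category theory and avoids the arbitrary choice of one $\iota_i$ per subset, while the paper's version keeps the index poset concrete.
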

\begin{proof}
    Let $\mathcal C$ be an abstract elementary team category and $\kappa = \LS(\mathcal C)^+$. We show that $\mathcal C$ is $\kappa$-accessible. Since $\mathcal C$ has colimits,  in particular it has $\kappa$-directed colimits.
    
    Let $\mathcal C_{<\kappa}$ be the full subcategory of $\mathcal C$ whose objects are all $(\A,\X)\in\mathcal C$ with $|\A|,|\X|<\kappa$. We claim that objects of $\mathcal C_{<\kappa}$ are $\kappa$-presentable. Let $(\A,\X)\in\mathcal C_{<\kappa}$ and consider the colimit $(\B,\Y)$ of a $\kappa$-directed system $((\B_i,\Y_i),\f_{i,j})_{i,j\in I,i\leq j}$, where we denote by $\g_i\colon(\B_i,\Y_i)\to(\B,\Y)$ the colimit morphisms. We claim that we can factor any morphism $\h\colon(\A,\X)\to(\B,\Y)$ through some $(\B_i,\Y_i)$. First, notice that since $|\X|< \kappa$ and $\kappa$ is regular, the fact that $I$ is $\kappa$-directed entails that there is some $i\in I$ such that $\h(X)\in\ran(\g_i)$ for all $X\in \X$. Define a partial team isomorphism $\h_i\colon\A\to\B_i$ by letting
    \[
        \h_i(X) = \g_i^{-1}(\h(X))
    \]
    for all $X\in\X$. Notice that $\X\subseteq\dom(\h_i)$ and $\ran(h_i)\subseteq\dom(\g_i) = \ran(\f_{i,i})\subseteq \Y_i$. Thus $\h=\g_i\circ\h_i$, so by coherence, $\h_i$ is a morphism $(\A,\X)\to(\B_i,\Y_i)$. Clearly, such factorization is unique. Now by e.g.\ taking in only pairs $(\A,\X)$, where the domain of $\A$ is a cardinal, we can make $\mathcal C_{<\kappa}$ a set.

    What is left to show is that every object in $\mathcal C$ can be obtained as a $\kappa$-directed colimit of objects of $\mathcal C_{<\kappa}$. Let $(\A,\X)\in\mathcal C$. Let $(\X_i)_{i\in I}$ list all $\Y\subseteq\X$ of power $<\kappa$ such that $\cl(\Y)=\Y$. By letting $i\leq j$ whenever $\X_i\subseteq\X_j$, we make $(I,\leq)$ a directed set. Since $\kappa$ is regular, $(I,\leq)$ is also $\kappa$-directed.
    
    Now, by the Löwenheim--Skolem property, for each $i\in I$, we obtain a general model $(\B_i,\Y_i)$ with $|\B_i|\leq|\X_i|+\LS(\mathcal C) < \kappa$ and a morphism $\iota_i\colon(\B_i,\Y_i)\to(\A,\X)$ such that $\ran(\iota_i)=\X_i$. As both $\B_i$ and $\Y_i$ have power $<\kappa$, we have $(\B_i,\Y_i)\in\mathcal C_{<\kappa}$.
        
    Now, given $i,j\in I$ with $i\leq j$, we define a partial team isomorphism $\f_{i,j}\colon\B_i\to\B_j$ by letting, for every $X\in\Y_i$,
    \[
        \f_{i,j}(X) = \iota_{j}^{-1}(\iota_i (X)),
    \]
    which is well defined since for $i\leq j$ we have $\X_i\subseteq\X_j$. Additionally, we have
    \begin{align*}
       \Y_i=\dom(\f_{i,j}) \quad\text{  and}\quad \ran(\f_{i,j})\subseteq \Y_j.
    \end{align*}
    Now clearly $\iota_i(X) = \iota_j(\f_{i,j}(X))$ for all $X\in\Y_i$, so by coherence $\f_{i,j}$ is a morphism $(\B_i,\Y_i)\to(\B_j,\Y_j)$. It follows that $((\B_i,\X_i),\f_{i,j})_{i,j\in I,i\leq j}$ forms a $\kappa$-directed system, so we can consider its colimit $(\B,\Y)$. Let $\g_i\colon(\B_i,\Y_i)\to(\B,\Y)$ be the colimit maps and notice that, by construction, we have
    \[
        \iota_j\circ \f_{i,j} = \iota_i
    \]
    for every $i\leq j$. Therefore, by the universal property of colimit, there is a unique morphism $\mathcal u\colon(\B,\Y)\to(\A,\X)$ such that, for all $i\in I$,
    \[
        \mathcal u\circ\g_i = \iota_i.
    \]
    Now, for every $X\in\X$, there is by construction some $i\in I$ such that $X\in\X_i$. Then $X = \iota_i(Y)$ for some $Y\in\Y_i$, so $\mathcal u(\g_i(Y))=X$. This shows that $\mathcal u$ is surjective with respect to $\X$ and so it is an isomorphism in the category $\mathcal C$. Finally, this shows $(\A,\X)$ can be obtained as a  colimit of $\kappa$-presentable objects.
\end{proof}

\begin{definition}
    Let $T$ be a first-order-complete theory in $\foil$. We denote by $\GMod(T)$ the category whose objects are general models of $T$ (i.e. general structures $(\A,\X)$ such that $\A\models T$) and morphisms $(\A,\X)\to(\B,\Y)$ are elementary team maps $\f$ with $\dom(\f)=\X$ and $\ran(\f)\subseteq\Y$.
\end{definition}

\begin{corollary}
    $\GMod(T)$ is an abstract elementary team category.
\end{corollary}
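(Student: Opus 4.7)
The plan is to verify, one by one, the five axioms required of an abstract elementary team category for the category $\GMod(T)$. Three of these—closure under isomorphisms, closure under inverses, and coherence—are either immediate or reduce to earlier results. For closure under isomorphisms, if $\pi\colon\A\to\B$ is a $\tau$-isomorphism and $\A\models T$, then $\B\models T$ because isomorphisms preserve every $\foil$-sentence; the map $\hat\pi\restriction\X$ is then trivially elementary. Closure under inverses follows from the symmetry of the elementarity condition $\A\models_X\phi\iff\B\models_{\f(X)}\phi$ combined with the earlier lemma showing that $\f^{-1}$ is a partial team isomorphism whenever $\f$ is. Coherence is precisely \cref{prop:coherence}.

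For direct limits, start from a directed system $((\A_i,\X_i),\f_{i,j})_{i,j\in I,\,i\leq j}$ in $\GMod(T)$ and form $(\B,\Y)=\lim_{i\in I}(\A_i,\X_i)$ as defined in \cref{colimits.subsection}, with direct limit maps $\g_i$. By \cref{prop:direct.limit.maps.elementary} each $\g_i$ is elementary. To see that $\B\models T$, recall by \cref{elementary.team.embeddings.are.independence} that every elementary team embedding is an independence team embedding, so for any sentence $\phi\in T$, from $\A_i\models_{\{\emptyset\}}\phi$ and $\g_i(\{\emptyset\})=\{\emptyset\}$ we get $\B\models_{\{\emptyset\}}\phi$. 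To see that $(\B,\Y)$ is a general model, note that $\Y$ is the set of admissible relations; it is closed under $\cl$ because each $\g_i$ commutes with these operations, and every singleton $\{\eta\}\subseteq\B$ is admissible since $\eta=\g_i(a)$ for some $a\in\A_i$ by construction, giving $\{\eta\}=\g_i(\{a\})$. For the universal property, given any cocone $\h_i\colon(\A_i,\X_i)\to(\C,\Z)$ of elementary team maps, define $\mathcal u\colon(\B,\Y)\to(\C,\Z)$ on admissible relations by $\mathcal u(\g_i(X))=\h_i(X)$; this is well-defined and unique by the cocone equations, and elementarity follows from $\B\models_{\g_i(X)}\phi\iff\A_i\models_X\phi\iff\C\models_{\h_i(X)}\phi$.

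For the Löwenheim--Skolem property, apply \cref{weak-LS} to $\A$ and $\X'\subseteq\X$: it produces a $\tau$-structure $\B$ with $|\B|\leq|\X'|+|\tau|+\aleph_0$, with $\B\models\ESOTh(\A)\supseteq T$, and an element-total partial elementary team map $\iota\colon\B\to\A$ with $\ran(\iota)=\cl(\X'\cup\{\{b\}\mid b\in\B\})$. Setting $\Y=\dom(\iota)$, injectivity of $\iota$ gives $|\Y|=|\ran(\iota)|\leq|\X'|+|\tau|+\aleph_0$, and $(\B,\Y)$ is a general model of $T$ since $\Y$ is closed (domains of team maps are closed) and $\{b\}\in\Y$ for every $b\in\B$ by element-totality. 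The map $\iota$ is a morphism $(\B,\Y)\to(\A,\X)$ with $\cl(\X')\subseteq\ran(\iota)$, witnessing the property with $\kappa=|\tau|+\aleph_0$.

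The main technical obstacle is the direct-limit clause: one has to ensure that the set of admissible relations really does supply a colimit in the categorical sense, i.e. that the map $\mathcal u$ defined on admissible sets is genuinely a morphism of general models (not just a set-theoretic function) and that the universal property is satisfied. The rest of the verification is routine once the right earlier propositions—especially \cref{prop:coherence,prop:direct.limit.maps.elementary,elementary.team.embeddings.are.independence,weak-LS}—are invoked.
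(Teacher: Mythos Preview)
Your proof is correct and follows essentially the same approach as the paper's: closure under isomorphisms and inverses are immediate, coherence is \cref{prop:coherence}, direct limits come from \cref{prop:direct.limit.maps.elementary} together with the observation that all singletons are admissible (yielding the universal property), and the Löwenheim--Skolem bound comes from \cref{weak-LS}. You supply considerably more detail than the paper does—particularly in verifying that the limit structure models $T$, that the admissible sets form a general model, and in spelling out the universal map $\mathcal u$—but the skeleton is identical.
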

\begin{proof}
    Closure under isomorphisms and inverses are clear. Coherence follows from \cref{prop:coherence}. By \cref{prop:direct.limit.maps.elementary}, $\GMod(T)$ has direct limits and since the limit object $(\A,\X)$ of a directed system is such that $\X$ contains only admissible sets, we have the universal property of colimits (note that all singletons are admissible). Finally, by \cref{weak-LS}, $\LS(\GMod(T))\leq|\tau|+\aleph_0$.
\end{proof}

As we remarked before, the setting of accessible categories is known to generalize the notion of abstract elementary class. The above results thus show that our setting of general models and (elementary) team maps fits this categorical framework to study the model theory of independence logic and existential second-order logic. Additionally, we remark a further connection with another categorification of AECs, namely \emph{abstract elementary categories (AECats)}, which were introduced by Kirby and Kamsma \cite{kirby_abstract_2008,kamsma2020kim}. These are pairs of accessible categories with some further structure, the protypical example of which are  $(\Sub(T),\Mod(T))$ and $(\Mod(T),\Mod(T))$.

\begin{definition}[\cite{kamsma2020kim}]
    An abstract elementary category (AECat) is a pair $(\mathcal C,\mathcal M)$ of accessible categories such that
    \begin{enumerate}
        \item $\mathcal M$ is a full subcategory of $\mathcal C$,
        \item all morphisms of $\mathcal C$ are mono, and
        \item $\mathcal M$ has directed colimits, which are preserved by the inclusion functor $\mathcal M\to\mathcal C$.
    \end{enumerate}
\end{definition}
\begin{corollary}
    The pair $(\GMod(T),\GMod(T))$ is an AECat.
\end{corollary}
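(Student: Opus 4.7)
The plan is to verify the three defining conditions of an AECat directly. Since the pair under consideration is $(\GMod(T),\GMod(T))$, condition (i) is immediate: every category is trivially a full subcategory of itself via the identity functor. For the accessibility requirement on both coordinates, I would simply invoke the preceding corollary that $\GMod(T)$ is an AETC together with \cref{theorem:aetc.accessible}, which establishes that every AETC is accessible.

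For condition (ii), I would verify that every morphism $\f\colon(\A,\X)\to(\B,\Y)$ in $\GMod(T)$ is a monomorphism. Such a morphism is an elementary team map, hence in particular a partial team isomorphism with $\dom(\f)=\X$. By the injectivity part of the earlier lemma on partial team isomorphisms, $\f$ is injective as a function $\X\to\Y$. The standard argument then shows $\f$ is mono: given morphisms $\g_1,\g_2\colon(\C,\Z)\to(\A,\X)$ with $\f\circ\g_1=\f\circ\g_2$, we have $\dom(\g_1)=\dom(\g_2)=\Z$ and for every $Z\in\Z$, $\f(\g_1(Z))=\f(\g_2(Z))$, whence by injectivity of $\f$ we get $\g_1(Z)=\g_2(Z)$, so $\g_1=\g_2$.

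Condition (iii) requires that $\GMod(T)$ has directed colimits and that the inclusion functor into itself preserves them. Since the inclusion functor is the identity, preservation is automatic. The existence of directed colimits was already established in the previous corollary (using \cref{prop:direct.limit.maps.elementary} and the discussion of admissible sets from \cref{colimits.subsection}), which showed that $\GMod(T)$ is closed under direct limits of directed systems and satisfies the universal property of colimits for these limits.

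I do not anticipate any genuine obstacle here; the corollary is essentially a bookkeeping statement, collecting the accessibility, injectivity, and directed colimit facts already proved. The only point worth being slightly careful about is that ``mono'' in the AECat definition is meant in the category-theoretic sense rather than pointwise injectivity, but as noted above these two coincide for our morphisms because composition in $\GMod(T)$ is ordinary function composition on the relation sets $\X$.
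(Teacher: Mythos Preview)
Your proposal is correct and is exactly the verification that the paper leaves implicit: the corollary is stated without proof, as it follows immediately from the preceding results (that $\GMod(T)$ is an AETC, hence accessible with directed colimits, and that partial team isomorphisms are injective). Your care in distinguishing categorical mono from pointwise injectivity is appropriate, and the argument you give is the right one.
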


\noindent We also remark that one could introduce a category $\GSubMod(T)$ analogously to $\Sub(T)$, but we shall not pursue this in the present paper, as we required general models of an AETC to contain all singleton relations.

\section{Galois Types and the Monster Model}\label{Section_Monster}

In this section we continue the study of the model-theoretic properties of AETCs and, in particular, of the model categories of first-order-complete theories in $\foil$ (or $\eso$). For notational convenience, when given a general structure $(\A,\X)$, we often omit the set $\X$ from the notation and denote it by $\rel(\A)$, i.e. if we say that $\A$ is a general structure, we mean the structure $(\A,\rel(\A))$.

\begin{definition}
    Let $\K$ be an AETC.
    \begin{enumerate}
        \item We say that $\K$ has \emph{arbitrarily large models} (ALM) if for every cardinal $\kappa$ there is $\A\in\K$ with $|\A|\geq\kappa$.        
        \item We say that $\K$ has the \emph{joint embedding property} (JEP) if for all $\A,\B\in\K$, there is $\C\in\K$ and morphisms $\f\colon\A\to\C$ and $\g\colon\B\to\C$.
        \item We say that $\K$ has the \emph{amalgamation property} (AP) if for all $\A,\B,\C\in\K$ and morphisms $\f\colon\A\to\B$ and $\g\colon\A\to\C$, there is $\D\in\K$ and morphisms $\h_0\colon\B\to\D$ and $\h_1\colon\C\to\D$ such that $\h_0\circ\f = \h_1\circ\g$.
    \end{enumerate}
\end{definition}

\begin{remark}
    Notice that if $\K$ has AP and contains a prime model $\A$, i.e. for all $\B\in\K$ there is a morphism $\A\to\B$, then $\K$ also has JEP.
\end{remark}

We start by showing some sufficient conditions for an AETC to satisfy the above versions of ALM, JEP and AP.

\begin{definition}\quad
    \begin{enumerate}
        \item Given general structures $(\A_i,\X_i)$, $i\in I$, and an ultrafilter $\U$ on $I$, the ultraproduct $\prod_{i\in I}(\A_i,\X_i)/\U$ of $\A_i$ is the general structure $(\A,\X)$ such that $\A = \prod_{i\in I}\A_i/\U$ and $\X = \bigcup_{n<\omega}\{\prod_{i\in I}X_i/\U \mid \text{$X_i\in\X_i\cap\Pow(\A_i^n)$ for all $i\in I$}\}$.
        \item We say that an AETC $\K$ is \emph{closed under ultrapowers} if for all $\A\in\K$, for all infinite cardinals $\kappa$ and for all ultrafilters $\U$ on $\kappa$, $\A^\kappa/\U\in\K$ and the ultrapower embedding $\iota\colon\A\to\A^\kappa/\U$ is a morphism of $\K$. We say that $\K$ is \emph{closed under ultraproducts} if for all infinite $\kappa$, for all $\A_i\in\K$, $i<\kappa$, and for all ultrafilters $\U$ on $\kappa$, $\prod_{i<\kappa}\A_i/\U\in\K$.
        \item We say that an AETC $\K$ is first-order-complete if for all $\A,\B\in\K$ we have $\A\equiv\B$.
    \end{enumerate}
\end{definition}

\begin{proposition}\label{class.properties}
    Let $\K$ be an AETC.
    \begin{enumerate}
        \item If $\K$ has infinite models and is closed under ultrapowers, then $\K$ has ALM.
        \item If $\K$ is first-order-complete and closed under ultrapowers, then $\K$ has JEP.
        \item If $\K$ is closed under ultrapowers and morphisms of $\K$ are elementary team maps, then $\K$ has AP.
    \end{enumerate}
\end{proposition}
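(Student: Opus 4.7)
\emph{Part (1)} is a direct application of closure under ultrapowers: fix any infinite $\A\in\K$ and a target cardinal $\lambda$, and take a non-principal ultrafilter $\U$ on $\kappa\coloneqq\lambda$. Then $\A^\kappa/\U\in\K$ by closure, and since $\A$ is infinite and $\U$ is non-principal, a standard counting argument shows $|\A^\kappa/\U|\geq\kappa\geq\lambda$, which yields ALM.

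For \emph{part (2)}, I plan to combine first-order completeness with the Keisler--Shelah theorem. Given $\A,\B\in\K$, first-order completeness gives $\A\equiv\B$, so Keisler--Shelah produces a cardinal $\kappa$, an ultrafilter $\U$ on $\kappa$, and a $\tau$-isomorphism $\pi\colon\A^\kappa/\U\to\B^\kappa/\U$. Closure under ultrapowers puts both ultrapowers into $\K$ together with the ultrapower morphisms $\iota_\A\colon\A\to\A^\kappa/\U$ and $\iota_\B\colon\B\to\B^\kappa/\U$, while closure under isomorphism of the AETC lifts $\pi$ to an isomorphism $\hat\pi$ in $\K$ between $(\A^\kappa/\U,\rel(\A^\kappa/\U))$ and $(\B^\kappa/\U,\hat\pi[\rel(\A^\kappa/\U)])$. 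The joint embedding would then be given by $\hat\pi\circ\iota_\A$ and $\iota_\B$, both landing in $\B^\kappa/\U$ as underlying $\tau$-structures. The main subtlety is that the two resulting general structures on $\B^\kappa/\U$ (the one induced by $\hat\pi$ and the natural one) may differ, so I would overcome this either by taking a further ultrapower whose natural relation set absorbs both, or by the direct-limit/colimit machinery of the AETC guaranteed by \cref{theorem:aetc.accessible}.

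For \emph{part (3)}, the plan is to exploit the first-order translation of partial elementary team maps from \cref{Partial elementary maps: FOT vs FO}. Viewing $\f\colon\A\to\B$ and $\g\colon\A\to\C$ as partial elementary first-order maps in the expanded signatures $\tau(\rel(\B))$ and $\tau(\rel(\C))$, the common sublanguage $\tau(\rel(\A))$ carries the complete first-order theory of $\hat\A$, on which $\Th(\hat\B)$ and $\Th(\hat\C)$ agree. By Robinson's joint consistency theorem, the combined theory $\Th(\hat\B)\cup\Th(\hat\C)$ is consistent, furnishing a first-order amalgam. To place this amalgam inside $\K$, I would take a $|\C|^+$-good ultrafilter $\U$ on a sufficiently large $\kappa$, form $\B^\kappa/\U\in\K$ via closure under ultrapowers, and use saturation to realize the type of $\C$ over $\iota_\B(\f(\A))$ inside $\B^\kappa/\U$, producing a first-order complete embedding $\C\to\B^\kappa/\U$. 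By \cref{elementary.team.maps.vs.complete.embeddings}, this lifts to an elementary team embedding. The hardest step will be matching $\rel(\C)$ with the natural relation set $\rel(\B^\kappa/\U)=\{X^\kappa/\U:X\in\rel(\B)\}$ of the ultrapower; this is where I expect to invoke the limit-ultrapower characterisation of complete embeddings (\cref{Complete embeddings are limit ultrapower embeddings}) to ensure the second-order content of the amalgamation is captured by the available relations.
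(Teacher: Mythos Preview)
Your plan for (1) and (2) matches the paper's. One quibble in (1): an arbitrary non-principal ultrafilter need not enlarge $\A$ (take a $\kappa$-complete measure on a measurable $\kappa$); the paper instead cites the Frayne--Morel--Scott theorem to pick $\U$ on $\kappa$ with $|\A^\kappa/\U|\geq 2^\kappa$. In (2) the paper does exactly what you propose---Keisler--Shelah plus composition with the induced team isomorphism---and simply asserts that $\hat\pi\circ\iota_\B$ is a morphism into $\C=\A^\kappa/\U$, without pausing over the relation-set mismatch you flag.

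For (3) your route is considerably more elaborate than necessary, and its last step has a genuine gap. The paper's approach: expand both $\B$ and $\C$ to $\tau(\X)$ with $\X=\rel(\A)$ by setting $\underline X^{\hat\B}=\f(X)$ and $\underline X^{\hat\C}=\g(X)$; since $\f,\g$ are elementary team maps, \cref{Partial elementary maps: FOT vs FO} (or the coding argument in the paper's proof) gives $\hat\B\equiv\hat\C$ directly---this is exactly the conclusion your Robinson argument would yield, but in one step. Keisler--Shelah applied to the \emph{expanded} structures then produces an isomorphism $\pi\colon\hat\B^\kappa/\U\to\hat\C^\kappa/\U$, and because $\pi$ respects the named predicates $\underline X$, the amalgamation identity $\h_0(\f(X))=\h_1(\g(X))$ is automatic for $\h_0=\hat\pi\circ\iota_\B$ and $\h_1=\iota_\C$. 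Taking $\D=\C^\kappa/\U$ and invoking closure under ultrapowers makes $\h_1$ a $\K$-morphism for free.

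By contrast, your plan tries to push $\C$ into an ultrapower of $\B$ via saturation with respect to a good ultrafilter. But saturation only yields an \emph{elementary} embedding $\C\to\B^\kappa/\U$, not a complete one, and \cref{elementary.team.maps.vs.complete.embeddings} needs completeness to lift to an elementary team embedding. Invoking the limit-ultrapower characterisation afterwards does not help: that theorem says complete embeddings \emph{are} limit-ultrapower embeddings, not that an arbitrary elementary embedding into a saturated target is complete. The paper's symmetric use of Keisler--Shelah on the expanded structures sidesteps this difficulty entirely and makes the ``matching relation sets'' problem you anticipate disappear.
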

\begin{proof}\quad
    \begin{enumerate}
        \item Let $\A\in\K$ and $\kappa$ be infinite. By a theorem of \textcite{MR0142459}, there is an ultrafilter $\U$ on $\kappa$ such that $|\A^\kappa/\U| \geq 2^\kappa > \kappa$. By closure under ultrapowers, $\A^\kappa/\U\in\K$.

        \item Let $\A,\B\in\K$. As $\K$ is first-order-complete, $\A$ and $\B$ are elementarily equivalent. By the Keisler--Shelah theorem, there is a cardinal $\kappa$ and an ultrafilter $\U$ on $\kappa$ such that $\A^\kappa/\U\cong\B^\kappa/\U$. Let $\pi$ be an isomorphism $\B^\kappa/\U\to\A^\kappa/\U$, and denote $\C = \A^\kappa/\U$. Now $\pi$ lifts to a team isomorphism $\hat\pi\colon\B^\kappa/\U\to\C$ in the obvious way. Since $\K$ is closed under ultrapowers, we have $\A^\kappa/\U,\B^\kappa/\U\in\K$ and the ultrapower embeddings $\iota_\A\colon\A\to\A^\kappa/\U$ and $\iota_\B\colon\B\to\B^\kappa/\U$ are morphisms. Hence $\hat\pi\circ\iota_B$ is also a morphism. Thus we may take $\f = \iota_\A$ and $\g=\hat\pi\circ\iota_\B$ as our desired joint embeddings, which finishes the proof.

        \item Let $\A$, $\B$, $\C$, $\f$ and $\g$ be as in the definition of AP. Let $\X = \rel(\A)$ and let $\hat\B$ and $\hat\C$ be the $\tau(\X)$-expansions of $\B$ and $\C$, respectively, by interpreting $\underline{X}^{\hat\B} = \f(X)$ and $\underline{X}^{\hat\C} = \g(X)$ for all $X\in\X$. We first show that $\hat\B\equiv\hat\C$.

        Suppose that this is not the case. Then there is a first-order $\tau(\X)$-sentence $\phi$ such that $\hat\B\models\phi$ and $\hat\C\models\neg\phi$. Let $R_0,\dots,R_{n-1}$ enumerate all $R\in\X$ such that $\underline R$ occurs in $\phi$. Without loss of generality, each $R_i$ is nonempty (otherwise replace the symbol $\underline R_i$ in $\phi$ by $\bot$ whenever $R_i = \emptyset$). Then, coding the relation symbols $\underline R_0,\dots\underline R_{n-1}$ into one $\sum_{i<n}\ar(R_i)$-ary relation symbol $S$ as in the proof of \cref{Partial elementary maps: FOT vs FO}, we obtain a $\tau\cup\{S\}$-sentence $\psi$ such that $(\B, \prod_{i<n}R_{i}^{\hat\B})\models\psi$ and $(\C,\prod_{i<n}R_{i}^{\hat\C})\models\neg\psi$. Since $\X$ is closed under Cartesian products, we have $\prod_{i<n}R_i\in\X$, and by \ref{Partial isomorphisms preserve products} we additionally obtain
        \[
            \f\left(\prod_{i<n}R_{i}\right) = \prod_{i<n}\f(R_{i}) \quad\text{and}\quad \g\left(\prod_{i<n}R_{i}\right) = \prod_{i<n}\g(R_{i}).
        \]
        Then, let $\chi$ be the $\fot$-translation of $\psi$. Now $\B\models_{\f(\prod_{i<n}R_i)}\chi$ and $\C\nmodels_{\g(\prod_{i<n}R_i)}\chi$. As $\f$ and $\g$ are elementary team maps, it follows that $\A\models_{\prod_{i<n}R_i}\chi$ but $\A\nmodels_{\prod_{i<n}R_i}\chi$, which is a contradiction.

        Next, since $\hat\B\equiv\hat\C$, it follows by Keisler--Shelah that the structures have isomorphic ultrapowers $\hat\B^\kappa/\U$ and $\hat\C^\kappa/\U$. Let $\pi$ be an isomorphism $\hat\B^\kappa/\U\to\hat\C^\kappa/\U$, denote $\hat\D = \hat\C^\kappa/\U$ and let $\D=\hat\D\restriction\tau$. As $\K$ is closed under ultrapowers, $\B^\kappa/\U,\C^\kappa/\U\in\K$ and the ultrapower embeddings $\iota_\B\colon\B\to\B^\kappa/\U$ and $\iota_\C\colon\C\to\D$ are morphisms. Let $\h_0 = \hat\pi\circ\iota_B$ and $\h_1 = \iota_\C$. Since $\h_0$ and $\h_1$ are elementary team maps even in the signature $\tau(\X)$, we have
        \begin{align*}
            \h_0(\f(R)) &= \h_0(\underline R^{\hat\B}) = \underline R^{\hat\D} = \h_1(\underline R^{\hat\C}) = \h_1(\g(R))
        \end{align*}
        for all $R\in\X$. Hence $\D$ with $\h_0\colon\B\to\D$ and $\h_1\colon\C\to\D$ suffices as an amalgam of $\B$ and $\C$ over $\A$.
        \qedhere
    \end{enumerate}
\end{proof}

Note that the amalgamation property holds in a tiny bit stronger form in the category $\GMod(T)$.

\begin{proposition}\label{AP for elementary classes}
    Let $\K=\GMod(T)$ for some first-order-complete theory $T$ of $\foil$. Then for all $\A,\B,\C\in\K$ and partial elementary team maps $\f\colon\A\to\B$ and $\g\colon\A\to\C$, there is $\D\in\K$ and morphisms $\h_0\colon\B\to\D$ and $\h_1\colon\C\to\D$ such that $\h_0(\f(X)) = \h_1(\g(X))$ for all $X\in\dom(\f)\cap\dom(\g)$.
\end{proposition}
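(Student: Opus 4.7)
The plan is to adapt the proof of \cref{class.properties}(iii) by working over the common domain $\X \coloneqq \dom(\f) \cap \dom(\g)$ rather than all of $\rel(\A)$. First I would check that $\X$ itself satisfies the closure conditions \ref{Domain of partial isomorphism contains bottom and top}--\ref{Domain of partial isomorphism contains interpretations of vocabulary}: each such operation, applied to inputs lying in both $\dom(\f)$ and $\dom(\g)$, yields an output in each of them (since both domains are closed by hypothesis), hence in $\X$. In particular $\cl(\X) = \X$, and $\X$ is closed under Cartesian products, which is the key algebraic fact needed below.

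Next I would form the $\tau(\X)$-expansions $\hat\B$ and $\hat\C$ of $\B$ and $\C$ by setting $\underline X^{\hat\B} = \f(X)$ and $\underline X^{\hat\C} = \g(X)$ for $X \in \X$, and argue $\hat\B \equiv \hat\C$ by the same coding trick as in the proof of \cref{class.properties}(iii). A hypothetical separating first-order $\tau(\X)$-sentence $\phi$ would involve finitely many $\underline{R_0}, \dots, \underline{R_{n-1}}$ with $R_i \in \X$ (wlog nonempty); packaging them into a single relation symbol via Cartesian products---which still lie in $\X$ by closure---and translating the resulting $\tau \cup \{S\}$-sentence to a formula $\chi$ of $\fot$ via \cref{Translation between FOT and FO}, one would derive $\A \models_{\prod_i R_i} \chi$ and simultaneously $\A \nmodels_{\prod_i R_i} \chi$, using axiom \ref{Partial isomorphisms preserve products} for both $\f$ and $\g$ on $\X$ together with the fact that they are partial elementary team maps.

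Having $\hat\B \equiv \hat\C$, Keisler--Shelah produces a cardinal $\kappa$, an ultrafilter $\U$ on $\kappa$ and an isomorphism $\pi \colon \hat\B^\kappa/\U \to \hat\C^\kappa/\U$ of $\tau(\X)$-structures. Let $\hat\pi$ be its $\tau$-reduct lifted to a team isomorphism $\B^\kappa/\U \to \C^\kappa/\U$, and let $\iota_\B, \iota_\C$ be the ultrapower team embeddings, which are elementary team embeddings by \cref{Los theorem of FOT}. Since $T$ is a theory in $\foil$, \cref{Los theorem of FOIL} ensures $\C^\kappa/\U \models T$, so $\D \coloneqq \C^\kappa/\U$, equipped with $\rel(\D)$ generated so as to contain the images of the two candidate morphisms as well as all singletons, is an object of $\GMod(T)$. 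Setting $\h_0 \coloneqq \hat\pi \circ \iota_\B$ and $\h_1 \coloneqq \iota_\C$, for any $X \in \X$ we compute
\[
    \h_0(\f(X)) = \hat\pi\bigl(\f(X)^\kappa/\U\bigr) = \hat\pi\bigl(\underline{X}^{\hat\B^\kappa/\U}\bigr) = \underline{X}^{\hat\C^\kappa/\U} = \g(X)^\kappa/\U = \h_1(\g(X)),
\]
using that $\pi$, being a $\tau(\X)$-isomorphism, preserves the interpretation of each $\underline X$. The main obstacle relative to the earlier proof, where $\f$ and $\g$ were total morphisms, is making sure that the entire argument---expansion, coding, preservation of products, and the final identification---restricts cleanly to the common domain $\X$; the closure property $\cl(\X) = \X$ verified in the first step is precisely what makes this restriction work.
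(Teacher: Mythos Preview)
Your proposal is correct and follows essentially the same approach as the paper, which simply states that the proof is the same as that of \cref{class.properties}(iii). Your explicit verification that $\X = \dom(\f)\cap\dom(\g)$ is closed and your care in equipping $\D$ with an appropriate $\rel(\D)$ are exactly the minor adaptations needed to make that earlier argument go through in the partial setting.
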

\begin{proof}
    The proof is essentially same as that of \cref{class.properties}.
\end{proof}

Amalgamation is a fundamental property in the context of abstract elementary classes as it grants a good behaviour to so-called Galois types. Here we define Galois types of relations and show that also in our context they induce a natural equivalence relation.

\begin{definition}[Galois Types]\label{galois=type0}
Let $\K$ be an AETC.
\begin{enumerate}
    \item We define a relation $\equiv_{\mathrm{g}}^\K$ on the class of all pairs $(\A,\vec X)$, where $\A\in\K$ and $\vec X = (X_i)_{i<\alpha}\in\rel(\A)^\alpha$ for some ordinal $\alpha$, as follows. If $\A,\B\in\K$, $\vec X = (X_i)_{i<\alpha}\in\rel(\A)^\alpha$ and $\vec Y = (Y_i)_{i<\beta}\in\rel(\B)^\beta$, then
    \[
        (\A,\vec X) \equiv_{\mathrm{g}}^\K (\B,\vec Y)
    \]
    if $\alpha = \beta$, $\ar(X_i)=\ar(Y_i)$ for all $i<\alpha$ and there are $\C\in\K$ and morphisms $\h_0\colon\A\to\C$ and $\h_1\colon\B\to\C$ such that $\h_0(X_i) = \h_1(Y_i)$ for all $i<\alpha$. 
    \item Given $(\A,\vec X)$, we denote the class of all $(\B,\vec Y)$ such that $(\A,\vec X) \equiv_{\mathrm{g}}^\K (\B,\vec Y)$ by $\gtype(\vec X /\emptyset; \A)$ and call it the \emph{Galois type} of $\vec X$ (over $\emptyset$, in $\A$).
\end{enumerate}
\end{definition}

The following lemma shows that the notion of Galois type is well defined and, in the presence of AP, having the same type is an equivalence relation.

\begin{lemma}\label{property.gtypes}
    Suppose $\K$ has AP. Then
    \begin{enumerate}
        \item $\equiv_{\mathrm{g}}^\K$ is an equivalence relation, and
        \item for all $\A,\B\in\K$, $\vec X\in\rel(\A)^\alpha$ and morphism $\f\colon\A\to\B$, we have
        \[
            \gtype(\vec X / \emptyset; \A) = \gtype(\f(\vec X) / \emptyset; \B).
        \]
    \end{enumerate}
\end{lemma}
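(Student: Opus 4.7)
The proof is essentially a standard AEC-style argument; nothing about the team-semantic context complicates things since everything happens at the categorical level, using only the existence of morphisms and the amalgamation property.

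For part (1), reflexivity is immediate by taking $\C=\A$ and $\h_0=\h_1=\id_\A$, which is a morphism since its domain is $\rel(\A)$. Symmetry is equally immediate: if $\C,\h_0,\h_1$ witness $(\A,\vec X)\equiv_g(\B,\vec Y)$, then $\C,\h_1,\h_0$ witness $(\B,\vec Y)\equiv_g(\A,\vec X)$. The only content is transitivity, which is exactly what AP is designed for. Suppose $(\A,\vec X)\equiv_g(\B,\vec Y)$ via $(\D_1,\f_1,\g_1)$ with $\f_1\colon\A\to\D_1$, $\g_1\colon\B\to\D_1$, and $(\B,\vec Y)\equiv_g(\C,\vec Z)$ via $(\D_2,\f_2,\g_2)$ with $\f_2\colon\B\to\D_2$, $\g_2\colon\C\to\D_2$. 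I would then apply AP to the span $\D_1\xleftarrow{\g_1}\B\xrightarrow{\f_2}\D_2$ to obtain $\E\in\K$ with morphisms $\mathcal k_0\colon\D_1\to\E$ and $\mathcal k_1\colon\D_2\to\E$ such that $\mathcal k_0\circ\g_1=\mathcal k_1\circ\f_2$. The composed morphisms $\mathcal k_0\circ\f_1\colon\A\to\E$ and $\mathcal k_1\circ\g_2\colon\C\to\E$ will witness $(\A,\vec X)\equiv_g(\C,\vec Z)$, since for every $i<\alpha$ one has the chain $(\mathcal k_0\circ\f_1)(X_i)=\mathcal k_0(\g_1(Y_i))=\mathcal k_1(\f_2(Y_i))=(\mathcal k_1\circ\g_2)(Z_i)$.

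For part (2), it suffices to show $(\A,\vec X)\equiv_g(\B,\f(\vec X))$, as then transitivity from (1) yields the equality of Galois types. But this is witnessed directly by $\C=\B$, $\h_0=\f\colon\A\to\B$ and $\h_1=\id_\B\colon\B\to\B$: both are morphisms in $\K$, and trivially $\h_0(X_i)=\f(X_i)=\id_\B(\f(X_i))=\h_1(\f(X_i))$ for every $i<\alpha$. (Here $\f(\vec X)$ makes sense because $\vec X\in\rel(\A)^\alpha=\dom(\f)^\alpha$.)

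There is no genuine obstacle here. The only thing worth being careful about is that in the transitivity step the amalgamation is performed over $\B$ (not over $\vec Y$ directly), and one must check that the target morphisms compose with $\f_1$ and $\g_2$ to form legitimate morphisms in $\K$, which follows from the fact that the category is closed under composition.
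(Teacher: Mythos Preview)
Your proof is correct and follows essentially the same approach as the paper's: reflexivity and symmetry are dismissed as immediate, transitivity is proved by amalgamating over $\B$ and chasing the resulting commuting square, and part~(2) is obtained directly from the morphism $\f$ together with an identity. The paper's proof is terser (it declares part~(2) ``trivial''), but there is no substantive difference.
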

\begin{proof}\quad
    \begin{enumerate}
        \item Reflexivity and symmetry are clear, so we verify that transitivity holds. Let $\A,\B,\C\in\K$, $\vec X = (X_i)_{i<\alpha}\in\rel(\A)^\alpha$, $\vec Y = (Y_i)_{i<\alpha}\in\rel(\B)^\alpha$ and $\vec Z = (\Z_i)_{i<\alpha}\in\rel(\C)^\alpha$ and suppose that
        \[
            (\A,\vec X) \equiv_{\mathrm{g}}^\K (\B,\vec Y) \quad\text{and}\quad (\B,\vec Y) \equiv_{\mathrm{g}}^\K (\C,\vec Z).
        \]
        Then there are $\D,\E\in\K$ and morphisms $\h_0\colon\A\to\D$, $\h_1\colon\B\to\D$, $\h_2\colon\B\to\E$ and $\h_3\colon\C\to\E$ such that $\h_0(X_i)=\h_1(Y_i)$ and $\h_2(Y_i)=\h_3(Z_i)$ for all $i<\alpha$.  Apply AP to the morphisms $h_1\colon\B\to\D$ and $\h_2\colon\B\to\E$ to obtain $\F\in\K$ and morphisms $\k_0\colon\D\to\F$ and $\k_1\colon\E\to\F$ such that $\k_0\circ\h_1 = \k_1\circ\h_2$. It follows that for all $i<\alpha$,
    	\begin{align*}
            (\k_0\circ\h_0)(X_i) &= \k_0(\h_0(X_i)) = \k_0(\h_1(Y_i)) = (\k_1\circ\h_2)(Y_i) \\
            &= \k_1(\h_2(Y_i)) = \k_1(\h_3(Z_i)) = (\k_1\circ\h_3)(Z_i),
    	\end{align*}
        which shows that $(\A,\vec X)\equiv_{\mathrm{g}}^\K(\C,\vec Z)$.

        \item Trivial.
        \qedhere
    \end{enumerate}
\end{proof}

We have defined Galois types for arbitrary ordinal-length sequences. The next proposition shows that, when we restrict to finite tuples, we can show that $\fot$ actually provides an alternative, syntactic way to define types in $\GMod(T)$.

\begin{definition}[Syntactic Types]
    Let $\A$ be a general $\tau$-structure, $\X\subseteq\rel(\A)$ with $\emptyset\notin\X$ and $n>0$ a natural number.
    \begin{enumerate}
        \item An $\fot$-type over $\X$ in variables $\vx^0,\dots,\vx^{n-1}$, where each $\vx^i$ is a finite tuple of variables and $\vx^i$ and $\vx^j$ do not share variables for $i\neq j$, is a nonempty set of $\tau(\X)$-formulas of $\fot$ with free variables among $\vx^0,\dots,\vx^{n-1}$.
        \item A type $p(\vx^0,\dots,\vx^{n-1})$ over $\X$ is consistent if there is a general $\tau$-structure $\B$ and a morphism $\f\colon\A\to\B$ with $\hat\B\models_{\prod_{i<n}Y_i}\phi$ for all $\phi\in p$ for some $Y_0,\dots,Y_{n-1}\in\rel(\B)\setminus\{\emptyset\}$, where $\hat\B$ is the $\tau(\X)$-extension of $\B$ with $\underline X^{\hat\B} = \f(X)$ for all $X\in\X$. We say that the tuple $\vec Y = (Y_0,\dots,Y_{n-1})$ realizes $p$.
        \item A type $p$ is complete if for all $\tau$-formulas $\phi$ with the proper free variables, we have either $\phi\in p$ or $\cneg\phi\in p$.
        \item We denote by $\fottype(\vec Y / \X; \A)$ the unique complete type realized by $\vec Y\in(\rel(\A)\setminus\{\emptyset\})^n$. In particular,
        \[
            \fottype((X_0,\dots,X_{n-1}) / \emptyset; \A) = \{\phi\in\fot \mid \A\models_{\prod_{i<n}X_i}\phi\}.
        \]
    \end{enumerate}
\end{definition}

\begin{proposition}\label{galois-types.fot-types}
    Suppose that $\K$ is closed under ultrapowers and morphisms are elementary. Let $\A,\B\in\K$, $\vec X = (X_i)_{i<n}\in(\rel(\A)\setminus\{\emptyset\})^n$ and $\vec Y = (Y_i)_{i<n}\in(\rel(\B)\setminus\{\emptyset\})^n$. Then the following are equivalent.
    \begin{enumerate}
        \item $\gtype(\vec X / \emptyset; \A) = \gtype(\vec Y / \emptyset; \B)$.
        \item $\fottype(\vec X / \emptyset; \A) = \fottype(\vec Y / \emptyset; \B)$.
    \end{enumerate}
\end{proposition}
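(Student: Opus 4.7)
The proof divides into two implications. For $(1)\Rightarrow(2)$, assume the Galois types agree, giving $\C\in\K$ and morphisms $\h_0\colon\A\to\C$ and $\h_1\colon\B\to\C$ with $\h_0(X_i)=\h_1(Y_i)$ for each $i<n$. Since morphisms of $\K$ are elementary team maps, they satisfy condition~\ref{Partial isomorphisms preserve products}, so $\h_0$ and $\h_1$ commute with Cartesian products. Hence for every $\fot$-formula $\phi$ of arity $N=\sum_i\ar(X_i)$,
\[
\A\models_{\prod_iX_i}\phi \iff \C\models_{\h_0(\prod_iX_i)}\phi \iff \C\models_{\h_1(\prod_iY_i)}\phi \iff \B\models_{\prod_iY_i}\phi,
\]
where the middle equivalence uses $\h_0(\prod_iX_i)=\prod_i\h_0(X_i)=\prod_i\h_1(Y_i)=\h_1(\prod_iY_i)$. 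Thus the $\fot$-types coincide.

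The converse $(2)\Rightarrow(1)$ is the substantial direction, and the strategy mirrors the proof of \cref{class.properties}(3). The first step is to show that the $\tau\cup\{R_0,\ldots,R_{n-1}\}$-expansions $\hat\A=(\A,X_0,\ldots,X_{n-1})$ and $\hat\B=(\B,Y_0,\ldots,Y_{n-1})$ are elementarily equivalent. Given a first-order $\tau\cup\{R_0,\ldots,R_{n-1}\}$-sentence $\phi$, I would code the $R_i$ into a single $N$-ary relation variable $R$, exactly as in the proof of \cref{Partial elementary maps: FOT vs FO}, by replacing each atom $R_i(\vec t\,)$ with an appropriate existential quantification over the coordinates of $R$ outside its $i$-th block. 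This produces a $\tau$-sentence $\psi(R)$ with $\A\models\phi(\vec X)$ iff $\A\models\psi(\prod_iX_i)$, and similarly for $\B$. \cref{Translation between FOT and FO} then yields a $\fot$-formula $\chi$ with $\A\models_{\prod_iX_i}\chi$ iff $\A\models\psi(\prod_iX_i)$, using that $\prod_iX_i$ is nonempty, and analogously for $\B$. Hence the hypothesis on $\fot$-types transfers $\phi$ between $\A$ and $\B$.

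By the Keisler--Shelah theorem, there exist a cardinal $\kappa$ and an ultrafilter $\U$ on $\kappa$ together with an isomorphism $\pi\colon\hat\A^\kappa/\U\to\hat\B^\kappa/\U$. Restricted to the $\tau$-reducts, $\pi$ is a $\tau$-isomorphism $\A^\kappa/\U\to\B^\kappa/\U$, and since it preserves the interpretations of the $R_i$ we have $\pi(X_i^\kappa/\U)=Y_i^\kappa/\U$ for each $i<n$. Closure under ultrapowers places $\A^\kappa/\U$ and $\B^\kappa/\U$ in $\K$, with the ultrapower embeddings $\iota_\A$ and $\iota_\B$ being morphisms of $\K$. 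Setting $\C=\B^\kappa/\U$, $\h_1=\iota_\B$ and $\h_0=\hat\pi\circ\iota_\A$, one computes
\[
\h_0(X_i) \;=\; \hat\pi(X_i^\kappa/\U) \;=\; Y_i^\kappa/\U \;=\; \iota_\B(Y_i) \;=\; \h_1(Y_i)
\]
for each $i<n$, witnessing $\gtype(\vec X/\emptyset;\A)=\gtype(\vec Y/\emptyset;\B)$.

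The main technical point I expect to require care is the verification that $\h_0=\hat\pi\circ\iota_\A$ is genuinely a morphism of $\K$ into $\B^\kappa/\U$ as a general structure, given the way closure under ultrapowers distinguishes the collections of relations placed on $\A^\kappa/\U$ and on $\B^\kappa/\U$; this is dispatched exactly as in the amalgamation argument within the proof of \cref{class.properties}(3), where the isomorphism obtained from Keisler--Shelah is combined with closure under isomorphism to transport the necessary general-structure data along $\hat\pi$.
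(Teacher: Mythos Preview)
Your proof is correct and follows essentially the same route as the paper. The only cosmetic difference is in the $(2)\Rightarrow(1)$ direction: the paper invokes \cref{teams_elementary_equivalent_as_predicates} to obtain directly that the single-predicate expansions $(\A,\prod_i X_i)$ and $(\B,\prod_i Y_i)$ are elementarily equivalent, whereas you expand by all $n$ predicates and carry out the coding into the product explicitly. Since each $X_i$ is a projection of $\prod_j X_j$ (all factors being nonempty), the two expansions are interdefinable and the subsequent Keisler--Shelah step is identical.
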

\begin{proof}
    First suppose that $\gtype(\vec X / \emptyset; \A) = \gtype(\vec Y / \emptyset; \B)$. Then there is $\C\in\K$ and morphisms $\f\colon\A\to\C$ and $\g\colon\B\to\C$ such that $\f(X_i) = \g(Y_i)$ for all $i<n$. Then $\f(\prod_{i<n} X_i) = \g(\prod_{i<n} Y_i)$, which entails $\fottype(\vec X / \emptyset; \A) = \fottype(\vec Y / \emptyset; \B)$ by elementarity of the morphisms.

    Then suppose that $\fottype(\vec X / \emptyset; \A) = \fottype(\vec Y / \emptyset; \B)$. Then in particular the structures $(\A, \prod_{i<n} X_i)$ and $(\B, \prod_{i<n} Y_i)$ are elementary equivalent by \cref{teams_elementary_equivalent_as_predicates}. By applying Keisler--Shelah exactly as in \cref{class.properties}, we find $\C\in\K$ and morphisms $\f\colon\A\to\C$ and $\g\colon\B\to\C$ such that $\f(X_i)=\g(Y_i)$ for all $i<n$. Hence $\gtype(\vec X / \emptyset; \A) = \gtype(\vec Y / \emptyset; \B)$.
\end{proof}

\begin{remark}
    For the \emph{aficionados} of topology, we allow ourselves a short excursion in this otherwise very concise article.  In fact, we would like to remark that, in the case where $\K=\GMod(T)$, we can provide a natural Stone space topology to the set of types. In fact, since they have a syntactical representation as $\fot$-types by \cref{galois-types.fot-types}, we can use formulas to define a clopen basis. Write $G(\A)$ for the set of all Galois types consistent with $\A$, i.e. all $\gtype( \vec X / \emptyset; \B)$, where  $\vec X\in(\rel(\B)\setminus\{\emptyset\})^{<\omega}$ for some $\B\in\K$ such that there is a morphism $\f\colon\A\to \B$.
    We provide a topology  to $G(\A)$ by taking as basic opens the sets induced by formulas of $\fot$:
    \[
        \llbracket \phi \rrbracket :=    \{ \gtype((X_i)_{i<n}/\emptyset;\B) \mid \B \models_{\prod_{i<n}X_i} \phi \text{ and there is } \f:\A\to \B \}.
    \]
    We verify that the resulting topological space  is a Stone space, namely that it is totally disconnected, Hausdorff and compact.
    \begin{enumerate}
        \item \textit{$G(\A)$ is totally disconnected.} In fact if  a set $Z\subseteq G(\A)$ contains two distinct types $ \gtype(\vec X/\emptyset;\B) $ and $ \gtype(\vec Y/\emptyset;\C) $, then by \cref{galois-types.fot-types} there is a formula $\phi\in \fot$ such that $Z\cap \llbracket \phi \rrbracket$ and   $Z\cap \llbracket \cneg\phi \rrbracket$ split $Z$ into two nonempty opens.
        \item \textit{$G(\A)$ is Hausdorff.} If $ \gtype(\vec X/\emptyset;\B) \neq  \gtype(\vec Y/\emptyset;\C) $ then again by \cref{galois-types.fot-types} we can find a $\fot$-formula $\phi$ such that $\B\models_{\prod_{i<n}X_i} \phi$ but $\C\nmodels_{\prod_{i<n}Y_i}\phi$.
        \item \textit{$G(\A)$ is compact.} Without loss of generality an open cover is of the form $\{\llbracket \phi_i \rrbracket \mid i\in I \}$, thus $ \bigcap_{i\in I}\llbracket \cneg\phi_i \rrbracket =\emptyset$ and in particular $\{  \cneg\phi_i \mid i\in I \}\models \bot$. Since $\fot$ is compact, we obtain that $\{ \cneg \phi_i \mid i\in I_0 \}\models \bot$ for a finite set $I_0\subseteq I$. This is equivalent to say that $ \{ \llbracket \phi_i \rrbracket \mid i \in I_0\}$ is an open cover, showing that the topology is compact.
    \end{enumerate}
    Now let $S(\A)$ be the subset of $G(\A)$ whose types are determined by singletons of tuples of elements, i.e. they are of the form $ \gtype(\{\vec a\}/\emptyset;\B)$. Then $S(\A)$ with the induced subspace topology is a Stone subspace of $G(\A)$ and, additionally, it is exactly (up to homeomorphism) the standard Stone space of first-order types. Interestingly, the above inclusion induces via Stone's duality a homomorphism from the algebra of $\fot$-formulas to the algebra of $\fol$-formulas. This homomorphism is given by the quotient map that sends an equivalence class $\Phi$ of $\fot$-formulas modulo logical equivalence to the equivalence class $\Psi$ of $\fol$-formulas modulo logical equivalence such that for all singleton teams $X$, and $\phi\in\Phi$ and $\psi\in\Psi$,
    \[
        \A\models_X\phi \iff \A\models_X\psi.
    \]
    Finally, we also remark that, using amalgamation, it is easy to verify that the Stone space $G(\A)$ does not depend on $\A$ but only on its first-order theory. Thus we can meaningfully speak of the Stone space of Galois types (or $\fot$-types) of relations of any theory which is first-order-complete
\end{remark}

We  now  work towards building a version of the monster model in our setting. We notice this follows the general idea of the usual construction of the monster model in AECs. However, we remark that one can construct monster-like objects in any accessible category which is closed under colimits and has the amalgamation property -- we refer the reader to  \cite[Thm.1]{rosicky1997accessible} and to \cite{lieberman2016classification} for an extensive discussion of this result.

\begin{definition}
    Let $\K$ be an AETC and $\kappa$ an infinite cardinal.
    \begin{enumerate}
        \item We say that $\A\in\K$ is \emph{$\kappa$-universal} if for every $\B\in\K$ with $|\rel(\B)|<\kappa$, there is a morphism $\f\colon\B\to\A$.
        \item We say that $\A\in\K$ is \emph{$\kappa$-model-homogeneous} if for all $\B,\C\in\K$ with $|\rel(\B)|=|\rel(\C)|<\kappa$, for all morphisms $\f\colon\B\to\A$ and $\g\colon\C\to\A$ and all $\K$-isomorphisms $\h\colon\B\to\C$, there is $\pi\in\Aut(\A)$ with $\pi\circ\f = \g\circ\h$, i.e. the below diagram commutes.
        \begin{center}
            \medskip
        	\begin{tikzcd}
                {\A} && {\A} \\
                && \\
                {\B} && {\C}
                \arrow["{\h}", from=3-1, to=3-3]
                \arrow["{\f}", from=3-1, to=1-1]
                \arrow["{\g}", from=3-3, to=1-3]
                \arrow["{\pi}", from=1-1, to=1-3]
            \end{tikzcd}
            \medskip
        \end{center}
    \end{enumerate}
    Then suppose further that $\K=\GMod(T)$ for a first-order-complete theory $T$ of $\foil$.
    \begin{enumerate}[resume]
        \item We say that $\A\in\K$ is \emph{strongly $\kappa$-homogeneous} if for any partial elementary team map $\f\colon\A\to\A$ with $|\dom(\f)|<\kappa$, there is $\pi\in\Aut(\A)$ with $\pi\supseteq\f$.
    \end{enumerate}
\end{definition}

\begin{proposition}\label{kappa.universality}
	Suppose $\K$ has JEP. Then for every cardinal $\kappa$ there exists a $\kappa$-universal structure $\A\in \K$.
\end{proposition}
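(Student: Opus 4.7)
The plan is to enumerate, up to $\K$-isomorphism, every ``small'' object of $\K$, and then build the desired $\kappa$-universal structure $\A$ as the direct limit of a transfinite chain that successively absorbs each small object via JEP. Concretely, the construction has three steps.

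First, I would show that, up to $\K$-isomorphism, there is only a set of objects $\B \in \K$ with $|\rel(\B)| < \kappa$. Given such a $\B$, apply the Löwenheim--Skolem property of $\K$ with $\X' = \rel(\B)$: this produces $(\B', \Y') \in \K$ with $|\B'| \leq |\rel(\B)| + \LS(\K)$ and $|\Y'| \leq |\rel(\B)| + \aleph_0$, and a morphism $\f \colon (\B', \Y') \to \B$ with $\cl(\rel(\B)) = \rel(\B) \subseteq \ran(\f)$. Since morphisms of $\K$ automatically satisfy $\ran(\f) \subseteq \rel(\B)$, we obtain $\ran(\f) = \rel(\B)$, so by closure under inverses $\f$ is a $\K$-isomorphism. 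Hence each such $\B$ is $\K$-isomorphic to an object whose underlying universe has cardinality at most $\kappa + \LS(\K)$ and whose distinguished-relation set has cardinality less than $\kappa + \aleph_0$, and only set-many such objects exist up to isomorphism. Fix an enumeration $(\B_i)_{i < \mu}$ of representatives.

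Second, assuming $\K$ is nonempty (otherwise the claim is vacuous), I would build by transfinite recursion a chain $(\C_i)_{i \leq \mu}$ in $\K$, together with transition morphisms $\beta_{i,j} \colon \C_i \to \C_j$ for $i \leq j$ and attachment morphisms $\alpha_i \colon \B_i \to \C_{i+1}$. Start with any $\C_0 \in \K$; at successor stages, apply JEP to $\C_i$ and $\B_i$ to obtain $\C_{i+1}$ together with $\beta_{i, i+1} \colon \C_i \to \C_{i+1}$ and $\alpha_i \colon \B_i \to \C_{i+1}$, and then define $\beta_{j, i+1} = \beta_{i, i+1} \circ \beta_{j, i}$ for all $j \leq i$; at a limit stage $i$, use closure of $\K$ under direct limits to take $\C_i = \lim_{j < i} \C_j$, with $\beta_{j, i}$ the colimit morphisms. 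Setting $\A \coloneqq \C_\mu$, for any $\B \in \K$ with $|\rel(\B)| < \kappa$, the first step supplies $i < \mu$ and a $\K$-isomorphism $\pi \colon \B \to \B_i$, so that $\beta_{i+1, \mu} \circ \alpha_i \circ \pi$ is a morphism $\B \to \A$ in $\K$. Thus $\A$ is $\kappa$-universal.

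The substantive step is the first one: the key trick is to combine the Löwenheim--Skolem property (which by itself gives only a range-surjective morphism from some small object) with closure under inverses in order to upgrade this morphism to a genuine $\K$-isomorphism, thereby bounding both the underlying universe and the relation set of each small object and ensuring that there are only set-many iso classes. The remaining transfinite amalgamation argument is routine, relying only on JEP at successor stages and direct-limit closure at limit stages.
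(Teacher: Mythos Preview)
Your proof is correct and follows essentially the same strategy as the paper: enumerate the small objects up to isomorphism, then absorb them one by one into a transfinite chain using JEP at successor stages and direct limits at limit stages.

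The only notable difference is your first step. You invoke the Löwenheim--Skolem property together with closure under inverses to show that every $\B$ with $|\rel(\B)|<\kappa$ is $\K$-isomorphic to an object of bounded size, hence that there are only set-many isomorphism classes. This argument is valid, but it is more machinery than needed here: recall that in an AETC every object $(\B,\rel(\B))$ is required to satisfy $\{a\}\in\rel(\B)$ for all $a\in\B$, so automatically $|\B|\leq|\rel(\B)|<\kappa$. Thus the objects in question already have domains of size $<\kappa$ and relation sets of size $<\kappa$, and set-many isomorphism classes follow immediately without invoking LS or closure under inverses. The paper simply asserts the enumeration exists; the singleton condition is the implicit reason. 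Your LS argument still works, of course, and has the minor advantage that it does not rely on the singleton clause of the definition of general structure.
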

\begin{proof}
    Let $\B_i$, $i<\mu$, list all models of $\K$ of power $<\kappa$ up to isomorphism. We define a chain $(\A_i, \f_{i,j})_{i<j\leq\mu}$ of models $\A_i\in\K$ and morphisms $\f_{i,j}\colon\A_i\to\A_{j}$ recursively as follows.
	\begin{enumerate}
		\item For $i=0$, we let $\A_i=\B_0$ and $\f_{0,0} = \id$.
		
		\item If $i=j+1$, then we let $\A_i$ be a structure into which $\A_j$ and $\B_j$ joint embed. We let $\f_{j,i}\colon\A_j\to\A_i$ and $\g_j\colon\B_j\to\A_i$ be the respective joint embeddings. For $l<j$, we let $\f_{l,i} = \f_{j,i}\circ\f_{l,j}$.
		
		\item If $i$ is limit, then we let $\A_i = \lim_{j<i}\A_j$ and let $\f_{j,i}\colon\A_j\to\A_i$, for $j<i$ be the direct limit morphisms.
	\end{enumerate}	
	Now consider any model $\B$ of size $\leq\kappa$. Then $\B = \B_i$ for some $i<\mu$. By construction, $\f_{i+1,\mu}\circ\g_i\colon\B_i\to\A_\mu$ is a morphism, which shows that $\A \coloneqq \A_\mu$ is $\kappa$-universal.
\end{proof}

\begin{lemma}\label{Partial elementary map extension lemma}\quad
    \begin{enumerate}
        \item Let $\K$ be an AETC with AP and $\kappa$ an infinite cardinal. Then for any $\A\in\K$, there is $\B\in\K$ and a morphism $\iota\colon\A\to\B$ such that for any $\C,\D\in\K$ of power $<\kappa$, morphisms $\f\colon\C\to\A$ and $\g\colon\D\to\A$, and a $\K$-isomorphism $\h\colon\C\to\D$, there is a morphism $\k\colon\A\to\B$ such that $\iota\circ\g\circ\h = \k\circ\f$, i.e. the below diagram commutes.
        \begin{center}
            \medskip
        	\begin{tikzcd}
                & {\B} & \\
                && \\
                {\A} && {\A} \\
                && \\
                {\C} && {\D}
                \arrow["{\h}", from=5-1, to=5-3]
                \arrow["{\f}", from=5-1, to=3-1]
                \arrow["{\g}", from=5-3, to=3-3]
                \arrow["{\k}", from=3-1, to=1-2]
                \arrow["{\iota}", from=3-3, to=1-2]
            \end{tikzcd}
            \medskip
        \end{center}

        \item Suppose that $\K=\GMod(T)$ for first-order-complete $T$. Then for any $\A\in\K$, there is $\B\in\K$ and a morphism $\iota\colon\A\to\B$ such that for any partial elementary team map $\f\colon\A\to\A$ there is a morphism $\g\colon\A\to\B$ with $\iota(\f(X)) = \g(X)$ for all $X\in\dom(\f)$.
    \end{enumerate}
\end{lemma}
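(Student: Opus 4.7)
The plan for both parts is an iterated-amalgamation tower over $\A$: enumerate all relevant ``amalgamation data'' as $(d_i)_{i<\mu}$, recursively build morphisms $\iota_i\colon\A\to\B_i$ together with transition morphisms $j_{i,\ell}\colon\B_i\to\B_\ell$ (with $\B_0=\A$ and $\iota_0=\id_\A$), absorbing $d_i$ at successor stage $i+1$ and taking a direct limit at limit stages. At the end, $\B=\B_\mu$ and $\iota=\iota_\mu$ will work. What differs between the two parts is only the flavour of the amalgamation step and what counts as data.

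For part (1), the data consists of tuples $(\C,\D,\h,\f,\g)$ with $\C,\D\in\K$ of cardinality $<\kappa$, $\h$ a $\K$-isomorphism $\C\to\D$, and $\f\colon\C\to\A$, $\g\colon\D\to\A$ morphisms. I would first fix a set of representatives of the $\K$-isomorphism classes of objects of cardinality $<\kappa$; for each pair of representatives $(\C,\D)$, the tuples $(\h,\f,\g)$ then form a set of size at most $|\rel(\A)|^{|\rel(\C)|+|\rel(\D)|}$, since a morphism out of a set-small object is determined by its action on the set-many relations of the source. Enumerate all this data as $((\C_i,\D_i,\h_i,\f_i,\g_i))_{i<\mu}$. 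At a successor step $i+1$, apply the amalgamation property of $\K$ to the span $\B_i\xleftarrow{\iota_i\circ\g_i\circ\h_i}\C_i\xrightarrow{\f_i}\A$, obtaining $\B_{i+1}\in\K$ with morphisms $j_{i,i+1}\colon\B_i\to\B_{i+1}$ and $\k_{i+1}\colon\A\to\B_{i+1}$ such that $j_{i,i+1}\circ\iota_i\circ\g_i\circ\h_i=\k_{i+1}\circ\f_i$; set $\iota_{i+1}=j_{i,i+1}\circ\iota_i$. Then $\k\coloneqq j_{i+1,\mu}\circ\k_{i+1}\colon\A\to\B$ satisfies $\iota\circ\g_i\circ\h_i=\k\circ\f_i$, as required; for an arbitrary input triple not on the list, I would transport it along the fixed representative isomorphisms to an enumerated entry and carry the resulting witness back.

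For part (2), the data is simpler: let $(\f_i)_{i<\mu}$ enumerate all partial elementary team maps $\A\to\A$, which form a set since each such map is a partial function on $\rel(\A)$. At stage $i+1$, apply the elementary amalgamation property (\cref{AP for elementary classes}) to the partial elementary team maps $\iota_i\circ\f_i\colon\A\to\B_i$ (with domain $\dom(\f_i)$) and $\id_\A\colon\A\to\A$, yielding $\B_{i+1}\in\K$ with morphisms $j_{i,i+1}\colon\B_i\to\B_{i+1}$ and $\g_{i+1}^{\ast}\colon\A\to\B_{i+1}$ such that $j_{i,i+1}(\iota_i(\f_i(X)))=\g_{i+1}^{\ast}(X)$ for all $X\in\dom(\f_i)$; then $\g\coloneqq j_{i+1,\mu}\circ\g_{i+1}^{\ast}\colon\A\to\B$ is the required morphism.

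In both parts, the principal obstacle is the size-theoretic bookkeeping, i.e.\ confirming that the collection of amalgamation data really is a set so that the recursion has a well-defined ordinal length $\mu$, and checking that the intermediate morphisms to $\A$ extend coherently through the limit stages. Once these points are settled, everything else is a routine successor–limit induction: successor stages are handled by the appropriate form of AP, limit stages by closure of the AETC under directed colimits, and the necessary commutations at limits propagate automatically from the universal property of the colimit.
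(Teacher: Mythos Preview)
Your proposal is correct and follows essentially the same iterated-amalgamation-tower strategy as the paper. The only organizational difference is that the paper absorbs the isomorphism $\h$ into the data by precomposing, so that each datum becomes just a pair of morphisms $\C_i\to\A$ from a single representative (indexed by $\mu_0\times\mu_1\times\mu_1$), whereas you keep the full tuple $(\C,\D,\h,\f,\g)$ and transport arbitrary inputs to representatives afterwards; both bookkeepings work and the successor and limit steps are identical.
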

\begin{proof}\quad
    \renewcommand{\i}{\mathcal{i}}
    \begin{enumerate}
        \item Let $\C_i$, $i<\mu_0$, list all $\C\in\K$ of power $<\kappa$ such that there is a morphism $\C\to\A$, up to $\K$-isomorphism. For each $i<\mu_0$, let $\f_{i,j}$, $j<\mu_1$, list all morphisms $\C_i\to\A$. Then let $\mu = \mu_0\cdot\mu_1$ and fix a bijection $\theta\colon\mu\to\mu_0\times\mu_1\times\mu_1$. Denote by $\theta_i$ for $i\leq 2$ the functions such that  $\theta(\alpha) = (\theta_0(\alpha),\theta_1(\alpha),\theta_2(\alpha))$ for all $\alpha<\mu$. 
        
        We define by recursion a directed system $(\B_i,\iota_{i,j})_{i\leq j\leq\mu}$ of objects and morphisms of $\K$. We begin by letting $\B_0 = \A$. At a limit $i$, we let $\B_i = \lim_{j<i}\B_j$ and let $\iota_{j,i}$ be the direct limit morphisms. When $i = j+1$, we amalgamate the morphisms $\iota_{0,j}\circ\f_{\theta_0(j),\theta_1(j)}\colon\C_{\theta_0(j)}\to\B_j$ and $\f_{\theta_0(j),\theta_2(j)}\colon\C_{\theta_0(j)}\to\A$ to obtain $\F\in\K$ and morphisms $\k_0\colon\B_j\to\F$ and $\k_1\colon\A\to\F$ with
        \[
            \k_0\circ\iota_{0,j}\circ\f_{\theta_0(j),\theta_1(j)} = \k_1\circ\f_{\theta_0(j),\theta_2(j)}.
        \]
        We let $\B_i = \F$ and $\iota_{j,i} = \k_0$ and define the other $\iota_{k,i}$ in the obvious way via composition.

        Finally, we let $\B = \B_\mu$ and $\iota = \iota_{0,\mu}$. Now, let $\C,\D\in\K$ have power $<\kappa$, $\h\colon\C\to\D$ be a $\K$-isomorphism, and $\f\colon\C\to\A$ and $\g\colon\D\to\A$ be morphisms. Then there is $i<\mu_0$ and an isomorphism $\pi\colon\C\to\C_i$. Since $\g\circ\h\circ\pi^{-1}$ and $\f\circ\pi^{-1}$ are morphisms $\C_i\to\A$, there are $j,k<\mu_1$ such that $\f_{i,j} = \g\circ\h\circ\pi^{-1}$ and $\f_{i,k} = \f\circ\pi^{-1}$. Let $\alpha<\mu$ be such that $\theta(\alpha) = (i,j,k)$. By construction, there is $\k_1\colon\A\to\B_{\alpha+1}$ such that
        \[
            \iota_{\alpha,\alpha+1}\circ\iota_{0,\alpha}\circ\f_{\theta_0(\alpha),\theta_1(\alpha)} = \k_1\circ\f_{\theta_0(\alpha),\theta_2(\alpha)}.
        \]
        Then denoting $\k = \iota_{\alpha+1,\mu}\circ\k_1$, we have
        \begin{align*}
            \iota\circ\g\circ\h &= \iota_{0,\mu}\circ\f_{i,j}\circ\pi = \iota_{\alpha+1,\mu}\circ\iota_{0,\alpha+1}\circ\f_{i,j}\circ\pi \\
            &= \iota_{\alpha+1,\mu}\circ(\iota_{\alpha,\alpha+1}\circ\iota_{0,\alpha}\circ\f_{\theta_0(\alpha),\theta_1(\alpha)})\circ\pi \\
            &= \iota_{\alpha+1,\mu}\circ(\k_1\circ\f_{\theta_0(\alpha),\theta_2(\alpha)})\circ\pi \\
            &= (\iota_{\alpha+1,\mu}\circ\k_1)\circ(\f_{i,k}\circ\pi) \\
            &= \k\circ\f,
        \end{align*}
        as desired.

        \item Similar. \qedhere
    \end{enumerate}
\end{proof}

\begin{theorem}\quad
    \begin{enumerate}
        \item Let $\K$ be an AETC with JEP and AP. Then for every cardinal $\kappa>\LS(\K)$ there is a $\kappa$-universal, $\kappa$-model-homogeneous $\M\in\K$.
        \item Suppose that $\K=\GMod(T)$ for first-order-complete $T$. Then there is a $\kappa$-universal, strongly $\kappa$-homogeneous $\M\in\K$.
    \end{enumerate}
\end{theorem}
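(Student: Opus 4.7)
The plan is to build $\M$ by transfinite iteration of \cref{Partial elementary map extension lemma}. By \cref{kappa.universality} (which uses JEP), fix a $\kappa$-universal $\A_0\in\K$. Then recursively define a chain $(\A_\alpha,\f_{\alpha,\beta})_{\alpha\leq\beta\leq\mu}$ in $\K$ for a sufficiently large regular cardinal $\mu$: at successor stages, apply \cref{Partial elementary map extension lemma}(1) to obtain $\A_{\alpha+1}$ together with its absorption morphism $\f_{\alpha,\alpha+1}\colon\A_\alpha\to\A_{\alpha+1}$; at limit stages, take the direct limit, which lies in $\K$ by the AETC axioms. Put $\M:=\A_\mu$.

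The $\kappa$-universality of $\M$ is immediate: any $\B\in\K$ with $|\rel(\B)|<\kappa$ admits a morphism into $\A_0$ by the universality of $\A_0$, which post-composed with $\f_{0,\mu}$ yields a morphism into $\M$.

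For $\kappa$-model-homogeneity, fix $\B,\C,\f,\g,\h$ as in the statement. I would construct $\pi\in\Aut(\M)$ with $\pi\circ\f=\g\circ\h$ by a back-and-forth argument along an enumeration of $\rel(\M)$. The starting partial team isomorphism $\pi_0$ is determined on $\f(\rel(\B))$ by $\pi_0(\f(X)):=\g(\h(X))$. At each successor step $\xi\mapsto\xi+1$, one alternately enforces that a new element of $\rel(\M)$ lies in $\dom(\pi_{\xi+1})$ or in $\ran(\pi_{\xi+1})$. The crucial extension step is justified as follows: given a partial team isomorphism $\pi_\xi$ between small substructures $\B_\xi,\C_\xi\subseteq\M$ and a new relation $R\in\rel(\M)$, form a small extension $\B_{\xi+1}$ of $\B_\xi$ containing $R$; by \cref{theorem:aetc.accessible} and the regularity of $\mu$, all three of $\B_\xi,\C_\xi,\B_{\xi+1}$ factor through some $\A_\beta$ with $\beta<\mu$; then applying \cref{Partial elementary map extension lemma}(1) at stage $\beta$ with $\C=\B_\xi$, $\D=\C_\xi$, $\h=\pi_\xi$, we obtain a morphism $\k\colon\A_\beta\to\A_{\beta+1}$ whose restriction to $\B_{\xi+1}$ (post-composed with $\f_{\beta+1,\mu}$) provides the required extension of $\pi_\xi$. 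At limit ordinals, unions remain partial team isomorphisms by the closure properties in \cref{definition.team.maps}. The resulting $\pi=\bigcup_\xi\pi_\xi$ is the sought automorphism. For part (2) with $\K=\GMod(T)$, the construction is identical but using \cref{Partial elementary map extension lemma}(2) at successor stages, and strong $\kappa$-homogeneity follows by the analogous back-and-forth extending the given partial elementary team map.

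The main obstacle is the bookkeeping of sizes: since $|\rel(\M)|\geq\kappa$ (as $\M$ contains $\kappa$-many singletons), the back-and-forth must run for $\geq\kappa$ steps, and one must ensure that \cref{Partial elementary map extension lemma} -- which requires input of size $<\kappa$ -- still applies at each extension step. The fix is to apply the absorption lemma only to the small relevant substructure $\B_\xi$ around each newly added element rather than to the whole of $\pi_\xi$, and to verify that these local extensions patch together into a coherent total team isomorphism; this coherence is guaranteed by the product-preservation property \ref{Partial isomorphisms preserve products} together with the closure of the domain of each $\pi_\xi$ under the operations of \cref{definition.team.maps}.
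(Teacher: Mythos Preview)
Your overall architecture---build the chain $(\A_\alpha)$ via iterated application of \cref{Partial elementary map extension lemma}, take $\M$ at a regular limit, verify universality by composing with $\f_{0,\mu}$---matches the paper. The gap is in the back-and-forth for model-homogeneity.

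You propose to enumerate $\rel(\M)$ and extend a single growing partial team isomorphism $\pi_\xi$ one relation at a time. You correctly note that after $\kappa$ steps $\dom(\pi_\xi)$ has size $\geq\kappa$, so \cref{Partial elementary map extension lemma} no longer applies to $\pi_\xi$ itself, and you propose to apply it only to a small neighbourhood $\B_\xi$ of the new relation $R$. But then the value you assign to $R$ is determined only by $\pi_\xi\restriction\B_\xi$, and there is no reason it should be compatible with $\pi_\xi$ outside $\B_\xi$: for $S\in\dom(\pi_\xi)\setminus\B_\xi$ you need $R\times S\in\dom(\pi_{\xi+1})$ (domains of team maps are closed under products) and $\pi_{\xi+1}(R\times S)=\pi_{\xi+1}(R)\times\pi_\xi(S)$, yet nothing in your local extension controls this. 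Invoking \ref{Partial isomorphisms preserve products} here is circular---it is exactly the property you need to \emph{establish}, not one you may assume.

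The paper avoids this by running the back-and-forth along the chain index $i<\kappa^+$ rather than along $\rel(\M)$. At each stage one has a partial map $\h_i\colon\M_i\to\M_i$; at a successor $i=j+1$ the lemma yields a \emph{total} morphism $\k\colon\M_j\to\M_{j+1}$, and one sets $\h_{j+1}=\k\circ\i_{j,j+1}^{-1}$ (alternating with the dual at even stages). Thus $\dom(\h_{j+1})$ absorbs all of $\ran(\i_{j,j+1})$ in one stroke, with no patching needed because $\k$ is already a morphism. Crucially, $\h_{j+1}$ is \emph{not} required to extend the transport of $\h_j$; the only invariant maintained is $\h_i\circ\f_i=\g_i\circ\h$, i.e.\ compatibility with the original small data. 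Totality of $\pi=\h_{\kappa^+}$ then follows because every $X\in\rel(\M)$ lives in some $\M_\gamma$, and the odd/even alternation forces it into both domain and range at later stages. You should reorganise your back-and-forth in this way.
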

\begin{proof}
    \renewcommand{\i}{\mathcal{i}}
    We prove the theorem in the case that $\K$ is an AETC with JEP and AP. The other case is similar. We define a directed system $(\M_i,\i_{i,j})_{i\leq j\leq\kappa^+}$ as follows.
    \begin{enumerate}
        \item $\M_0$ is a $\kappa$-universal structure whose existence is guaranteed by \cref{kappa.universality}.
        \item We let $\M_{i+1}$ and $\i_{i,i+1}$ be the structure $\B$ and the morphism $\iota$ given by \cref{Partial elementary map extension lemma} for $\A = \M_i$. All the other maps $\i_{k,i+1}$ are defined via composition.
        \item For $i$ limit, we let $\M_i = \lim_{j<i}\M_j$ and $\i_{j,i}$ be the direct limit embeddings.
    \end{enumerate}
    We show that $\M = \M_{\kappa^+}$ is as wanted.

    For $\kappa$-universality, let $\A\in\K$ have cardinality $<\kappa$. As $\M_0$ is $\kappa$-universal, we can find a morphism $\h\colon\A\to\M_0$. Now, $\i_{0,\kappa^+}$ is a morphism $\M_0\to\M$. Hence $\iota \coloneqq \i_{0,\kappa^+}\circ\h$ is a morphism $\A\to\M$.

    For $\kappa$-model-homogeneity, let $\A,\B\in\K$ be such that $|\A|,|\B|<\kappa$ and $|\rel(\A)|=|\rel(\B)|<\kappa$, let $\f\colon\A\to\M$ and $\g\colon\B\to\M$ be morphisms, and let $\h\colon\A\to\B$ be a $\K$-isomorphism. Now by the definition of the direct limit, $\rel(\M) = \bigcup_{i<\kappa^+}\ran(\i_{i,\kappa^+})$. Since $|\rel(\A)|=|\rel(\B)|<\kappa$ and $\kappa^+$ is regular, there is $\alpha<\kappa^+$ such that $\f[\rel(\A)]\cup\g[\rel(\B)]\subseteq\ran(\i_{\alpha,\kappa^+})$. For all $i\geq\alpha$, let $\f_i = \i_{i,\kappa^+}^{-1}\circ\f$ and $\g_i = \i_{i,\kappa^+}^{-1}\circ\g$. By coherence, $\f_i$ is a morphism $\A\to\M_i$ and $\g_i$ a morphism $\B\to\M_i$. Note that $\f_{\kappa^+} = \f$ and $\g_{\kappa^+} = \g$. We now define, by recursion on $i\geq\alpha$, partial team isomorphisms $\h_i\colon\M_i\to\M_i$ such that $\ran(\f_i)\subseteq\dom(\h_i)$ and $\h_i\circ\f_i = \g_i\circ\h$ for all $i$.
    \begin{enumerate}
        \item Suppose $i=\alpha$. We let $\h_i = \g_i\circ\h\circ\f_i^{-1}$. Then
        \[
            \h_i\circ\f_i = (\g_i\circ\h\circ\f_i^{-1})\circ\f_i = \g_i\circ\h,
        \]
        as desired.

        \item Suppose $i = j+1$ and $i$ is odd. By the induction hypothesis, $\h_j\circ\f_j = \g_j\circ\h$, whence $\g_j^{-1}\circ\h_j\circ\f_j$ is a $\K$-isomorphism $\A\to\B$. By the choice of $\M_i$ and $\i_{j,i}$, there is a morphism $\k\colon\M_j\to\M_i$ such that
        \[
            \i_{j,i}\circ\g_j\circ(\g_j^{-1}\circ\h_j\circ\f_j) = \k\circ\f_j,
        \]
        i.e. $\i_{j,i}\circ\h_j\circ\f_j = \k\circ\f_j$.
        We let $\h_i = \k\circ\i_{j,i}^{-1}$. Now notice that
        \begin{align*}
            \ran(\f_i) &= \ran(\i_{i,\kappa^+}^{-1}\circ\f) = \ran(\i_{j,i}\circ\i_{j,\kappa^+}^{-1}\circ\f) \\
            &\subseteq \ran(\i_{j,i}) = \dom(\i_{j,i}^{-1}) = \dom(\k\circ\i_{j,i}^{-1}) \\
            &= \dom(\h_i).
        \end{align*}
        Then
        \begin{align*}
            \h_i\circ\f_i &= (\k\circ\i_{j,i}^{-1})\circ(\i_{i,\kappa^+}^{-1}\circ\f) = \k\circ(\i_{j,\kappa^+}^{-1}\circ\f) = \k\circ\f_j \\
            &= \i_{j,i}\circ\h_j\circ\f_j = \i_{j,i}\circ\g_j\circ\h \\
            &= \i_{j,i}\circ(\i_{j,\kappa^+}^{-1}\circ\g)\circ\h = (\i_{i,\kappa^+}^{-1}\circ\g)\circ\h \\
            &= \g_i\circ\h,
        \end{align*}
        as desired.

        \item If $i = j+1$ is even, then similarly to above, we find a morphism $\k\colon\M_j\to\M_i$ such that $\k\circ\g_j = \i_{j,i}\circ\h_j^{-1}\circ\g_j$ and let $\h_i = \i_{j,i}\circ\k^{-1}$.

        \item If $i$ is a limit, we let $\h_i$ be the partial team isomorphism $\M_i\to\M_i$ with $\dom(\h_i) = \bigcup_{\alpha\leq j<i}\{X\in\rel(\M_i) \mid \i_{j,i}^{-1}(X)\in\dom(\h_j) \}$ such that
        \[
            \h_i(X) = \i_{j,i}(\h_j(\i_{j,i}^{-1}(X)))
        \]
        for the least $j$ such that $\i_{j,i}^{-1}(X)\in\dom(\h_j)$. It is straightforward to check that $\h_i$ is a partial isomorphism. Now let $Y\in\ran(\f_i)$. Then $Y = \f_i(X) = \i_{\alpha,i}(\f_\alpha(X))$ for some $X\in\rel(\A)$. By the induction hypothesis $\ran(\f_\alpha)\subseteq\dom(\h_\alpha)$, so $\i_{\alpha,i}^{-1}(Y) = \f_\alpha(X)\in\dom(\h_\alpha)$. By the definition of $\dom(\h_i)$, we have $Y\in\dom(\h_i)$.

        Let $X\in\rel(\A)$. By the above, $\alpha$ is the least $j<i$ such that $\i_{j,i}^{-1}(\f_i(X))\in\dom(\h_j)$. Hence,
        \begin{align*}
            \h_i(\f_i(X)) &= \i_{\alpha,i}(\h_\alpha(\i_{\alpha,i}^{-1}(\f_i(X)))) = \i_{\alpha,i}(\h_\alpha(\f_\alpha(X))) \\
            &= \i_{\alpha,i}(\g_\alpha(\h(X))) = \g_i(\h(X)).
        \end{align*}
        Thus $\h_i\circ\f_i = \g_i\circ\h$.
    \end{enumerate}

    Finally, we let $\pi = \h_{\kappa^+}$. We show that $\dom(\pi) = \ran(\pi) = \rel(\M)$.
    By definition,
    \[
        \dom(\pi) = \bigcup_{\alpha\leq i<\kappa^+} \{ X\in\rel(\M) \mid i_{i,\kappa^+}^{-1}(X)\in\dom(\h_i) \}.
    \]
    Let $X\in\rel(\M)$. Let $\gamma$ be the least even $i\in[\alpha,\kappa^+)$ such that $X \in\ran(\i_{i,\kappa^+})$. Let $\k$ be the morphism $\M_\gamma\to\M_{\gamma+1}$ from the construction of $\h_{\gamma+1}$. As $\dom(\k) = \rel(\M_\gamma)$, we have $\i_{\gamma,\kappa^+}^{-1}(X)\in\dom(\k)$. Then
    \[
        \k(\i_{\gamma,\kappa^+}^{-1}(X)) = \k(\i_{\gamma,\gamma+1}^{-1}(\i_{\gamma+1,\kappa^+}^{-1}(X))) = \h_{\gamma+1}(\i_{\gamma+1,\kappa^+}^{-1}(X)),
    \]
    whence $\i_{\gamma+1,\kappa^+}^{-1}(X)\in\dom(\h_{\gamma+1})$. But this means that $X\in\dom(\pi)$. Using a symmetric argument, and odd $\gamma$, one can show that $\pi$ is surjective. Moreover, by construction,
    \[
        \pi\circ\f = \h_{\kappa^+}\circ\f_{\kappa^+} = \g_{\kappa^+}\circ\h = \g\circ\h.
    \]
    Finally, since $\M$ is a general model and contains all singleton relations, we have that $\pi$ is element-total and element-surjective. Therefore, by  \cref{extension.of.isomorphisms} there is a $\tau$-automorphism $\pi'$ of $\M$ such that $\hat\pi'\supseteq\pi$. Hence $\pi$ ($=\hat\pi'\restriction\rel(\M)$) is an automorphism of the category $\K$ by closure under isomorphisms.
\end{proof}

The above $\kappa$-monster model $\M$ is $\kappa$-saturated in the following sense.
\begin{proposition}
    Let $\K$ be an AETC with JEP and AP, and let $\M$ be a  $\kappa$-universal and $\kappa$-model-homogeneous model of $\K$, for $\kappa>\LS(\K)$. Suppose that $\g\colon\M\to\B$ is a morphism. Then for any $\alpha,\beta<\kappa$, $\vec Y\in\rel(\B)^\alpha$ and $\vec D\in\rel(\M)^\beta$, there is $\vec X\in\rel(\M)^\alpha$ such that
    \[
        \gtype(\vec X\vec D / \emptyset; \M) = \gtype(\vec Y\g(\vec D) / \emptyset; \B).
    \]
\end{proposition}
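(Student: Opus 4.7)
The plan is to combine Löwenheim--Skolem with $\kappa$-universality and $\kappa$-model-homogeneity in order to ``pull back'' $\vec Y$ from $\B$ into $\M$ while keeping $\g(\vec D)$ in place, and then read off the Galois-type equality via transitivity. First, applying Löwenheim--Skolem inside $\M$ to the parameter set $\{D_j \mid j<\beta\}$, I obtain $\A_0 \in \K$ with $|\rel(\A_0)|<\kappa$, a morphism $\iota\colon\A_0\to\M$, and a tuple $\vec D' \in \rel(\A_0)^\beta$ with $\iota(\vec D') = \vec D$. Since $\g(\iota[\rel(\A_0)])$ still has size $<\kappa$, a second Löwenheim--Skolem step, this time in $\B$ with parameter set
$\{Y_i\mid i<\alpha\}\cup\{\g(D_j)\mid j<\beta\}\cup \g(\iota[\rel(\A_0)])$, yields $\A_1 \in \K$ with $|\rel(\A_1)|<\kappa$ and a morphism $\jmath\colon\A_1\to\B$ whose range contains this whole set. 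Set $\vec Y^{*} = \jmath^{-1}(\vec Y)$ and $\vec D^{*} = \jmath^{-1}(\g(\vec D))$.

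Next, define a partial team isomorphism $\h\colon\A_0\to\A_1$ by $\h(R) = \jmath^{-1}(\g(\iota(R)))$; by the choice of $\A_1$ this is well-defined on $\rel(\A_0)$, and $\jmath\circ\h = \g\circ\iota$ is the composition of two morphisms, so by coherence $\h$ is itself a morphism of $\K$. In particular, $\h(\vec D') = \vec D^{*}$. By $\kappa$-universality of $\M$, there is a morphism $\f\colon\A_1\to\M$. Applying $\kappa$-model-homogeneity to the morphisms $\f\circ\h,\,\iota\colon\A_0\to\M$ with $\id_{\A_0}$ as the $\K$-isomorphism, we obtain $\pi \in \Aut(\M)$ satisfying $\pi\circ\f\circ\h = \iota$. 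Define $\vec X = \pi(\f(\vec Y^{*})) \in \rel(\M)^\alpha$.

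We then have $\pi(\f(\vec D^{*})) = \pi(\f(\h(\vec D'))) = \iota(\vec D') = \vec D$, so the morphism $\pi\circ\f\colon\A_1\to\M$ sends $\vec Y^{*}\vec D^{*}$ to $\vec X\vec D$, while $\jmath\colon\A_1\to\B$ sends the same tuple to $\vec Y\,\g(\vec D)$. Pairing $\pi\circ\f$ with $\id_\M$ shows that $(\A_1,\vec Y^{*}\vec D^{*})$ and $(\M,\vec X\vec D)$ have the same Galois type, and pairing $\jmath$ with $\id_\B$ shows that $(\A_1,\vec Y^{*}\vec D^{*})$ and $(\B,\vec Y\,\g(\vec D))$ have the same Galois type. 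Transitivity of $\equiv_{\mathrm{g}}^\K$ from \cref{property.gtypes}, which relies on AP, then delivers the desired equality $\gtype(\vec X\vec D/\emptyset;\M) = \gtype(\vec Y\,\g(\vec D)/\emptyset;\B)$.

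The main obstacle is setting up the second Löwenheim--Skolem step so that \emph{all three} sets --- $\vec Y$, $\g(\vec D)$, and $\g(\iota[\rel(\A_0)])$ --- end up in the range of $\jmath$: this enlargement is exactly what is needed for the comparison morphism $\h\colon\A_0\to\A_1$ to exist, and hence for $\kappa$-model-homogeneity to synchronise $\iota$ with $\f\circ\h$ inside $\M$ and thereby fix the parameter $\vec D$. Once this is arranged, the Galois-type equality follows by a routine zig-zag between $\A_1$, $\M$, and $\B$.
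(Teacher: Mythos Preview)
Your proof is correct and follows essentially the same route as the paper's: two Löwenheim--Skolem reductions (one in $\M$ over $\vec D$, one in $\B$ over $\vec Y$ together with the image of the first small model), a coherence argument to get the comparison morphism between the two small models, then $\kappa$-universality plus $\kappa$-model-homogeneity to align things inside $\M$, and finally transitivity of $\equiv_{\mathrm g}^{\K}$. The only cosmetic differences are naming and the direction in which you apply model-homogeneity (you obtain $\pi$ with $\pi\circ\f\circ\h=\iota$, whereas the paper obtains the inverse relation and then uses $\pi^{-1}$), which is immaterial.
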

\begin{proof}\quad
    \renewcommand{\i}{\mathcal i}
    Let $\mu = |\alpha|+|\beta| < \kappa$, and denote $\vec B = \g(\vec D)$. By the Löwenheim--Skolem property, there is $\A\in\K$ with $|\A|\leq\mu+\LS(\K)<\kappa$ and $|\rel(\A)|\leq\mu+\aleph_0<\kappa$ and a morphism $\f\colon\A\to\M$ such that $\vec D\in\ran(\f)^\beta$. Now, let $\vec A = \f^{-1}(\vec D)$.
        
    Again, by the Löwenheim--Skolem property, there is $\C\in\K$ with $|\C|\leq\mu+\LS(\K)<\kappa$ and $|\rel(\C)|\leq\mu+\aleph_0 < \kappa$, and a morphism $\h\colon\C\to\B$, such that
    \[
        \cl(\ran(\g\circ\f)\cup\{Y_i \mid i<\alpha\}) \subseteq \ran(\h).
    \]
    Let $\vec Z = \h^{-1}(\vec Y)$ and $\vec C = \h^{-1}(\vec B)$, and denote $\k = \h^{-1}\circ\g\circ\f$. Now $\g\circ\f = \h\circ\k$, so by coherence $\k$ is a morphism $\A\to\C$. By $\kappa$-universality of $\M$, there is a morphism $\i\colon\C\to\M$. By $\kappa$-model-homogeneity, there is $\pi\in\Aut(\M)$ such that $\pi\circ\f =(\i\circ\k)\circ \id_{\A} = \i\circ\k$. Now, let $\vec X = \pi^{-1}(\i(\vec Z))$. Notice that $\k(\vec A) = \vec C$, so
    \begin{align*}
        \vec D &= \pi^{-1}(\pi(\f(\vec A))) = \pi^{-1}(\i(\k(\vec A))) = \pi^{-1}(\i(\vec C)).
    \end{align*}
    Now, as $\pi^{-1}\circ\i$ is a morphism $\C\to\M$ and $(\pi^{-1}\circ\i)(\vec Z\vec C) = \vec X\vec D$, it follows that $\gtype(\vec X\vec D / \emptyset; \M) = \gtype(\vec Z\vec C / \emptyset; \C)$. On the other hand, as $\h$ is a morphism $\C\to\B$ and $\h(\vec Z\vec C) = \vec Y\vec B$, we have $\gtype(\vec Z\vec C / \emptyset; \C) = \gtype(\vec Y\vec B / \emptyset; \B)$. By transitivity of Galois types, we then have $\gtype(\vec X\vec D / \emptyset; \M) = \gtype(\vec Y\vec B / \emptyset; \B)$, as desired.
\end{proof}

Like classically, the monster model makes it easier to deal with Galois types: instead of amalgamating into a third model, having the same type can be now witnessed by an automorphism of the monster.

\begin{proposition}
    Let $\alpha<\kappa$ and $\vec X,\vec Y\in\rel(\M)^\alpha$. Then $\gtype(\vec X/\emptyset; \M) = \gtype(\vec Y/\emptyset; \M)$ if and only if there is $\pi\in\Aut(\M)$ such that $\pi(\vec X) = \vec Y$.
\end{proposition}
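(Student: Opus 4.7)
The plan is to prove both directions separately. The right-to-left direction is essentially immediate from the definition of Galois type, while the left-to-right direction is the substantive content and uses the standard monster-model argument combining the Löwenheim--Skolem property, $\kappa$-universality, and $\kappa$-model-homogeneity of $\M$.

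For the easy direction, given $\pi \in \Aut(\M)$ with $\pi(\vec X) = \vec Y$, simply take $\C = \M$ in \cref{galois=type0} and witness the equality of Galois types using the two morphisms $\pi, \id_\M \colon \M \to \M$: clearly $\pi(\vec X) = \vec Y = \id_\M(\vec Y)$, so $(\M, \vec X) \equiv_{\mathrm g}^\K (\M, \vec Y)$ by definition.

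For the hard direction, suppose there exist $\C \in \K$ and morphisms $\h_0, \h_1 \colon \M \to \C$ with $\h_0(\vec X) = \h_1(\vec Y)$. First I would apply the Löwenheim--Skolem property twice to obtain small $\A, \B \in \K$ (with $|\rel(\A)|, |\rel(\B)| < \kappa$) and morphisms $\f \colon \A \to \M$, $\g \colon \B \to \M$ such that $\vec X \in \ran(\f)^\alpha$ and $\vec Y \in \ran(\g)^\alpha$; set $\vec X' = \f^{-1}(\vec X)$ and $\vec Y' = \g^{-1}(\vec Y)$. The compositions $\h_0 \circ \f$ and $\h_1 \circ \g$ are morphisms into $\C$ satisfying $(\h_0 \circ \f)(\vec X') = (\h_1 \circ \g)(\vec Y')$. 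A third application of LS, this time inside $\C$ to the union of their ranges (still of size $< \kappa$), produces a small $\D \in \K$ and a morphism $\i \colon \D \to \C$ whose range contains both $\ran(\h_0 \circ \f)$ and $\ran(\h_1 \circ \g)$. Coherence then promotes $\f'' := \i^{-1} \circ \h_0 \circ \f \colon \A \to \D$ and $\g'' := \i^{-1} \circ \h_1 \circ \g \colon \B \to \D$ to morphisms of $\K$, and one still has $\f''(\vec X') = \g''(\vec Y')$. Finally, $\kappa$-universality of $\M$ supplies a morphism $\j \colon \D \to \M$.

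At this point two pairs of morphisms into $\M$ are in hand, namely $\f, \j \circ \f'' \colon \A \to \M$ and $\g, \j \circ \g'' \colon \B \to \M$. Applying $\kappa$-model-homogeneity of $\M$ to each pair, with the identity map as the required $\K$-isomorphism, yields $\pi_1, \pi_2 \in \Aut(\M)$ with $\pi_1 \circ \f = \j \circ \f''$ and $\pi_2 \circ \g = \j \circ \g''$. Evaluating at $\vec X'$ and $\vec Y'$ and using $\f''(\vec X') = \g''(\vec Y')$ gives $\pi_1(\vec X) = \pi_2(\vec Y)$, so $\pi := \pi_2^{-1} \circ \pi_1 \in \Aut(\M)$ sends $\vec X$ to $\vec Y$, as required. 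The point needing most attention is the cardinality bookkeeping: since $\alpha < \kappa$ and $\kappa > \LS(\K)$ from the monster-model construction, every small object produced along the way has relation set of size strictly less than $\kappa$, which is exactly what the invocations of $\kappa$-universality and $\kappa$-model-homogeneity demand.
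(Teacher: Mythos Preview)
Your proposal is correct and follows essentially the same approach as the paper's proof: reduce to small models via the L\"owenheim--Skolem property, use coherence to factor through a small model mapping into the amalgam, embed that small model back into $\M$ via $\kappa$-universality, and then apply $\kappa$-model-homogeneity twice to produce two automorphisms whose composition does the job. The only cosmetic difference is that the paper takes a single small $\B\preceq\M$ containing both $\vec X$ and $\vec Y$ at once (rather than your two separate models $\A$ and $\B$), which slightly streamlines the bookkeeping but changes nothing substantive.
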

\begin{proof}
    \renewcommand{\i}{\mathcal{i}}
    If $\vec X$ and $\vec Y$ are conjugates by $\pi\in\Aut(\M)$, then the morphisms $\id_\M$ and $\pi$ witness that $\vec X$ and $\vec Y$ have the same type.
    
    Suppose that $\vec X$ and $\vec Y$ have the same type. Then there is $\A\in\K$ and morphisms $\f_0\colon\M\to\A$ and $\f_1\colon\M\to\A$ such that $\f_0(\vec X) = \f_1(\vec Y)$. By the Löwenheim--Skolem property, there is $\B\in\K$ with $|\B|,|\rel(\B)|<\kappa$ and a morphism $\g\colon\B\to\M$ such that $\vec X,\vec Y\in\ran(\g)^\alpha$. There is also $\C\in\K$ with $|\C|,|\rel(\C)|<\kappa$ and a morphism $\h\colon\C\to\A$ such that
    \[
        \cl(\ran(\f_0\circ\g)\cup\ran(\f_1\circ\g))\subseteq\ran(\h).
    \]
    Now by coherence, the maps $\k_i\coloneqq \h^{-1}\circ\f_i\circ\g$, $i<2$, are morphisms $\B\to\C$. By $\kappa$-universality of $\M$, there is a morphism $\i\colon\C\to\M$. Now $\i\circ\k_0$, $\g$ and $\i\circ\k_1$ are morphisms $\B\to\M$, so by $\kappa$-model-homogeneity, there are $\pi_0,\pi_1\in\Aut(\M)$ such that $\pi_0\circ\g = (\i\circ\k_0)\circ\id_\B = \i\circ\k_0$   and $\pi_1\circ(\i\circ\k_1) = \g\circ\id_\B = \g$. Let $\pi = \pi_1\circ\pi_0$. Then $\pi\in\Aut(\M)$ and
    \begin{align*}
        \pi(\vec X) &= \pi_1(\pi_0(\vec X)) = \pi_1(\pi_0(\g(\g^{-1}(\vec X)))) = \pi_1(\i(\k_0(\g^{-1}(\vec X)))) \\
        &= \pi_1(\i(\h^{-1}(\f_0(\g(\g^{-1}(\vec X)))))) = \pi_1(\i(\h^{-1}(\f_0(\vec X)))) \\
        &= \pi_1(\i(\h^{-1}(\f_1(\vec Y)))) = \pi_1(\i(\h^{-1}(\f_1(\g(\g^{-1}(\vec Y)))))) \\
        &= \pi_1(\i(\k_1(\g^{-1}(\vec Y)))) = \g(\g^{-1}(\vec Y)) \\
        &= \vec Y. \qedhere
    \end{align*}
\end{proof}

In particular, the previous lemma allows one to extend the notion of Galois type and take parameters into account. For a small set $\Z\subseteq\rel(\M)$ and given two sequences $\vec X,\vec Y\in\rel(\M)^\alpha$, we can define 
\[
    \gtype(\vec X/\Z; \M) = \gtype(\vec Y/\Z; \M)
\]
if there is $\pi\in\Aut(\M/\Z)$ such that $\pi(\vec X) = \vec Y$. One can then verify that $\gtype(\vec X/\Z; \M) = \gtype(\vec Y/\Z; \M)$ if and only if $\gtype(\vec X\vec Z/\emptyset; \M) = \gtype(\vec Y\vec Z/\emptyset; \M)$, where $\vec Z$ enumerates $\Z$. 

\begin{remark}
    To recap, in this section we introduced a notion of Galois type of a tuple of relations in an abstract elementary team category and built a monster model which is sufficiently saturated with respect to these types. However, we shall now see why this notion of type is not, in fact, a good basis for the definition of stability. Instead, it would seem that in the concrete example $\GMod(T)$ for a complete $\eso$-theory $T$ (cf.~\cref{Section_Categoricity}), the stability properties of the first-order reduct of $T$ are a more useful measure of complexity.
    
    More precisely, to see that Galois types in team semantics are ill-behaved from the stability-theoretic point of view, we consider the formula $\phi\coloneqq x\subseteq y$ in the context of an arbitrary $\eso$-theory $T$ with infinite models. We show that this formula has both a version of the \emph{indipendence property} (IP) and the \emph{strict order property} (SOP) in our context (see \cite[p.134]{tentziegler} for the standard definition of IP and SOP in first-order logic).

    First, consider any model $\A$ of $T$ and let $A=\{a_i\mid i<\omega \}$ be an enumeration of some countable subset thereof. Then for every  $I\subseteq \omega$ consider the set of formulas
    \begin{align*}
        \Gamma_I\coloneqq\{ y_i\subseteq x \mid i\in I     \} \cup \{\cneg (y_i\subseteq x) \mid i\notin I  \}.
    \end{align*}    
    Then consider a team $X_I$ with domain  $\dom(X_I) = \{x\} \cup\{y_i\mid i<\omega \}$ and such that $X_I[x]=\{a_i\mid i\in I \}$ and $X_I[y_i]=\{a_i\}$ for all $i<\omega$. Then we obtain $\A\models_{X_I} \Gamma_I$, showing that any theory $T$ with infinite models satisfies this version of IP.

    Similarly, we can also show that any theory $T$ with infinite model has the strict order property. Let again $\A$ be an arbitrary infinite model and $A=\{a_i\mid i<\omega \}$ a countable subset of it.  For every $i<\omega$, we let $A_i=\{a_j\mid i\leq j\}$. Consider the formula $\phi\coloneqq x\subseteq y$ and let $X_i$ be the team with domain $\{x_i\}$ such that $X_i[x_i]=A_i$. Then it is easy to verify that, for all $i<j<\omega$
    \[
        \A\models_{X_i\times X_j} \forall  y \; (y\subseteq x_i \wcimp y\subseteq x_j) \iff i\leq j.
    \]
    Which shows that any theory $T$ with infinite models has this version of SOP for team semantics.

    The two observations above hint at the fact that a robust classification theory for (complete) theories in $\eso$ (or $\foil$) cannot rely exclusively on the notion of Galois types introduced in this section. In particular, we obtain as a consequence that for any set of size $\kappa$ of parameters the number of resulting Galois types is $2^\kappa$, showing that the cardinality of the space of types does not allow to track any information about the underlying theory. 
        
    For these reasons, we take later  in \cref{Section_Categoricity} a different approach. In particular, given a complete $\eso$-theory $T$, we shall consider the (standard) stability properties of its first-order reduct $T^*$. We shall apply this approach especially to the problem of transferring categoricity among different cardinals, and prove two results in this direction. We take these preliminary findings as a hint that the standard notions of stability and independence also have a key role in the present second-order context and may be the correct ones for studying the model theory of complete theories in $\eso$ or $\foil$.
\end{remark}

\section{Application -- Lindström's Theorem for $\fot$}\label{Lindstrom}

A celebrated result by \textcite{lindstrom1969extensions} characterises first-order logic as a maximal abstract logic satisfying both the compactness and the downwards Löwenheim-Skolem theorems. Moreover, he also proved in \cite{lindstrom1974characterizing} that first-order logic is maximal among the abstract logics which are both compact and have the so-called Tarski union property. A similar result by \textcite{sgro1977maximal} shows that first-order logic is also a maximal abstract logic to satisfy the Łoś' theorem.

Motivated by these results, we provide an example of application of the abstract machinery of the previous sections and prove two maximality results for $\fot$ in terms of its model-theoretic properties. We believe our results could also be obtained from the classical Lindström's theorem via \cref{Translation between FOT and FO} with a clever translation, but here we merely demonstrate the use of our newly acquired tools. We start by defining a suitable notion of abstract logic in the setting of team semantics, which we call \emph{abstract team logic}.  Given a signature $\tau$, we denote by $\Str(\tau)$ the class of all $\tau$-structures.

In this section, we make a distinction between teams and their underlying relations, to allow teams with infinite domains. However, results about team maps are only applied in the case where the argument of the map is a finitary relation.

\begin{definition}
    An \emph{abstract team logic} is a pair $L=(\Fml,\Sat)$, where
    \begin{itemize}
        \item $\Fml$ is a class together with a function $\Fv$ that maps each element $\phi\in\Fml$ to a (possibly infinite) set $\Fv(\phi)$ of variables, and
        \item $\Sat$ is a class of triples $(\A,X,\phi)$, where $\phi\in\Fml$, $\A$ is a structure (of some signature) and $X$ is a team of $\A$ with $\dom(X)\supseteq\Fv(\phi)$,
    \end{itemize}
    satisfying the following.
    \begin{enumerate}
        \item $L$ is \emph{closed under isomorphisms}, i.e. if $\pi\colon\A\to\B$ is an isomorphism between $\tau$-structures $\A$ and $\B$, then for all $\phi\in\Fml$, $D\supseteq\Fv(\phi)$ and $X\subseteq\A^D$,
        \[
            (\A,X,\phi)\in\Sat \iff (\B,\pi^*(X),\phi)\in\Sat,
        \]
        where $\pi^*(X)=\{ \pi\circ s \mid s\in X \}$.

        \item $L$ satisfies the \emph{occurence condition}, i.e. for any $\phi\in\Fml$ there is a signature $\sigma$ such that for any $\tau$ and $\A\in\Str(\tau)$, $\A\models_X\phi$ implies $\tau\supseteq\sigma$, and for any $\tau\supseteq\sigma$, $D\supseteq\Fv(\phi)$, $\A\in\Str(\tau)$ and $X\subseteq\A^D$,
        \[
            (\A,X,\phi)\in\Sat \iff (\A\restriction\sigma,X,\phi)\in\Sat.
        \]

        \item $L$ is \emph{closed under renaming}, i.e. if $\tau$ and $\tau'$ are signatures and $\pi\colon\tau\to\tau'$ is a bijection that preserves the type and the arity of symbols, then for any $\phi\in\Fml$ there is $\psi\in\Fml$ with $\Fv(\psi)=\Fv(\phi)$ such that for all $D\supseteq\Fv(\phi)$, $\A\in\Str(\tau)$ and $X\subseteq\A^D$, we have
        \[
            (\A,X,\phi)\in\Sat \iff (\pi(\A),X,\psi)\in\Sat,
        \]
        where $\pi(\A)$ is the $\tau'$-structure with $\dom(\pi(\A)) = \dom(\A)$ and $\pi(R)^{\pi(\A)} = R^\A$ for any symbol $R\in\tau$.
    \end{enumerate}
\end{definition}

If $\phi\in\Fml$, we simply write $\phi\in L$. We say that $\phi$ is a $\sigma$-formula if $\sigma$ is as in the occurrence condition, and we denote $\sigma$ by $\Sig(\phi)$. If $(\A,X,\phi)\in\Sat$, we write $\A\models_X\phi$. If $\Sigma\subseteq\Fml$, we write $\A\models_X\Sigma$ if $\A\models_X\phi$ for all $\phi\in\Sigma$. We write $\Mod_L(\phi)$ for the class of all $(\A,X)$ such that $\A\models_X\phi$ and $X\neq\emptyset$. If $\A$ is a $\tau$-structure, we also denote by $\Th_L(\A,X)$ the set of all $\tau$-formulas $\phi\in L$ such that $\A\models_X\phi$. We write $\Th_L(\A)$ for $\Th_L(\A,\{\emptyset\})$. Note that if $\phi\in\Th_L(\A,X)$, then $\Fv(\phi)\subseteq\dom(X)$. We write $(\A,X)\equiv_L(\B,Y)$ if $\Th_L(\A,X) = \Th_L(\B,Y)$. We say that a set $\Sigma\subseteq\Fml$ is \emph{consistent} if $\bigcap_{\phi\in\Sigma}\Mod_L(\phi)\neq\emptyset$. We say that $\phi$ is consistent if $\{\phi\}$ is, and $\phi$ is consistent with $\Sigma$ if $\Sigma\cup\{\phi\}$ is consistent. We define the following properties of abstract team logics.

\begin{definition}
    We say that an abstract team logic $L$ is \emph{regular} if the following hold.
    \begin{enumerate}
        \item $L$ is \emph{closed under conjunction}, i.e., for all $\phi,\psi\in L$ there is $\chi\in L$ such that $\Fv(\chi)=\Fv(\phi)\cup\Fv(\psi)$ and for all $D\supseteq\Fv(\chi)$, and for all $\A$ and $X\subseteq\A^D$,
        \[
            \A\models_X\chi \iff \text{$\A\models_X\phi$ and $\A\models_X\psi$}.
        \]
        Furthermore, if $\phi$ and $\psi$ are $\sigma$-formulas, also $\chi$ can be chosen to be a $\sigma$-formula. We denote this $\chi$ by $\phi\land\psi$. 

        \item $L$ is \emph{closed under disjunction}, i.e., for all $\phi,\psi\in L$ there is $\chi\in L$ such that $\Fv(\chi)=\Fv(\phi)\cup\Fv(\psi)$ and for all $D\supseteq\Fv(\chi)$, and for all $\A$ and $X\subseteq\A^D$,
        \[
            \A\models_X\chi \iff \text{$\A\models_X\phi$ or $\A\models_X\psi$}.
        \]
        Furthermore, if $\phi$ and $\psi$ are $\sigma$-formulas, also $\chi$ can be chosen to be a $\sigma$-formula. We denote this $\chi$ by $\phi\ivee\psi$. 

        \item\label{closure under classical negation} $L$ is \emph{closed under weak classical negation}, i.e., for all $\phi\in L$ there is $\psi\in L$ such that $\Fv(\phi)=\Fv(\psi)$ and for all $D\supseteq\Fv(\psi)$, $\A$ and \emph{nonempty} $X\subseteq\A^D$,
        \[
            \A\models_X\psi \iff \A\nmodels_X\phi.
        \]
        Furthermore, if $\phi$ is a $\sigma$-formula, also $\psi$ can be chosen to be a $\sigma$-formula. We denote this $\psi$ by $\cneg\phi$.
    \end{enumerate}

    
    We say that $L$ is \emph{team-finitary} if for any $\phi\in L$ there is a finite set $D\supseteq\Fv(\phi)$ such that for every $(\A,X)\in\Mod_L(\phi)$, there is a team $Y\subseteq\A^D$ such that
    $\A\models_Y\phi$.
    In particular, $\Fv(\phi)$ is finite.
\end{definition}

\begin{definition}
    Let $L$ be an abstract team logic.
    \begin{enumerate}
        \item We say that $L$ is \emph{compact} if for any $\Sigma\subseteq\Fml$, $\Sigma$ is consistent if and only if every finite subset of $\Sigma$ is.

        \item We say that a team embedding $\f\colon\A\to\B$ preserves $\phi\in L$ if for all $D\supseteq\Fv(\phi)$ and $X\subseteq\A^D$,
        \[
            \A\models_{X}\phi \iff \B\models_{\f(X)}\phi.
        \]
        We say that $L$ has \emph{direct limits} if the following holds. Let $(\A_i,\f_{i,j})_{i,j\in I,i\leq j}$ be a directed system of $\tau$-structures, let $\B = \lim_{i\in I}\A_i$ and denote for all $i\in I$ by $\g_i$ the direct limit embedding $\A_i\to\B$. Then for all $\phi\in L$, if $\f_{i,j}$ preserves $\phi$ for all $i,j\in I$, $i\leq j$, then $\g_i$ preserves $\phi$ for all $i\in I$. This is the team logic counterpart of the Tarski union property.

        \item We say that $L$ has a \emph{Łoś' theorem} if for any set $I$, $\tau$-structures $\A_i$, teams $X_i\subseteq \A_i^D$ and ultrafilter $\U$ on $I$, for any formula $\phi\in L$:
        \[
    		\{i\in I \mid \A_i\models_{X_i}\phi\}\in\U \implies \prod_{i\in I} \A_i/\U \models_{\prod_{i\in I} X_i/\U } \phi.
		\]
        We say that $L$ has a \emph{strong Łoś' theorem} if the above holds in both directions:
		\[
    		\{i\in I \mid \A_i\models_{X_i}\phi\}\in\U \iff \prod_{i\in I} \A_i/ \U \models_{\prod_{i\in I} X_i/ \U } \phi.
		\]        
    \end{enumerate}
\end{definition}

We define a partial order among abstract team logics as follows.
\begin{definition}
    Let $L,L'$ be two abstract team logics. We write $L\leq L'$ if for any $\tau$-formula $\phi\in L$ there is a $\tau$-formula $\phi'\in L'$ such that $\phi\equiv\phi'$, i.e., for all $\A\in\Str(\tau)$ and all teams $X$ of $\A$,
    \[
        \A\models_X\phi \iff \A\models_X\phi'.
    \]
    We write $L\equiv L'$ if $L\leq L'$ and $L'\leq L$.
\end{definition}

If $\phi,\psi\in L$ and $\Mod_L(\phi) \subseteq \Mod_L(\psi)$, then we write $\phi\models\psi$. We write $\phi\models\psi$ even if $\phi\in L$, $\psi\in L'$ and $\Mod_L(\phi) \subseteq \Mod_{L'}(\psi)$. We write $\phi\equiv\psi$ if $\phi\models\psi$ and $\psi\models\phi$. Note that if $\phi$ and $\psi$ are consistent and $\phi\equiv\psi$, we must have $\Sig(\phi) = \Sig(\psi)$. If $\Sigma\subseteq\Fml$ and $\phi\in L'$, we write $\Sigma\models\phi$ if $\bigcap_{\psi\in\Sigma}\Mod_L(\psi)\subseteq\Mod_{L'}(\phi)$.

\begin{lemma}
    Let $L$ be a regular abstract team logic.
    \begin{enumerate}
        \item Let $\A$ be a $\tau$-structure and $X\subseteq\A^D$ nonempty. Then $\Th_L(\A,X)$ is complete with respect to $\tau$-formulas of $L$ whose free variables are contained in $D$, i.e. if $\phi\in L$ is a $\tau$-formula and $\Fv(\phi)\subseteq D$, then either $\phi\in\Th_L(\A,X)$ or $\cneg\phi\in\Th_L(\A,X)$.

        \item For any $\phi\in L$, $\phi \equiv \cneg\cneg\phi$.

        \item Let $\Sigma\cup\{\phi \}\subseteq L$. Then $\Sigma\cup\{\phi\}$ is inconsistent if and only if $\Sigma\models\cneg\phi$.
    \end{enumerate}
\end{lemma}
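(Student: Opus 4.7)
All three parts follow by unpacking the definitions of $\Mod_L$, $\Th_L$, and the weak classical negation $\cneg$, together with the fact that $\cneg$ is well behaved on nonempty teams. The subtlety to keep in mind throughout is the asymmetric treatment of the empty team: $\Mod_L(\phi)$ only records nonempty witnesses, and the defining clause for $\cneg\phi$ makes $\cneg\phi$ true on the empty team unconditionally.

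For (i), fix a $\tau$-formula $\phi\in L$ with $\Fv(\phi)\subseteq D$. Since $X$ is nonempty, the defining property of $\cneg$ yields the dichotomy $\A\models_X\phi$ or $\A\models_X\cneg\phi$, and $\cneg\phi$ is again a $\tau$-formula of $L$ by the ``furthermore'' clause in the definition of closure under weak classical negation. Hence at least one of $\phi,\cneg\phi$ lies in $\Th_L(\A,X)$.

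For (ii), I would argue that $\Mod_L(\phi)=\Mod_L(\cneg\cneg\phi)$. Given any $(\A,X)\in\Mod_L(\phi)$, the team $X$ is nonempty, so applying the clause for $\cneg$ twice gives $\A\models_X\cneg\cneg\phi$ iff $\A\nmodels_X\cneg\phi$ iff $\A\models_X\phi$, which is exactly our hypothesis; the converse direction is identical since membership in $\Mod_L(\cneg\cneg\phi)$ again forces $X$ to be nonempty. (Note that we never need to say anything about the empty team, which is the whole point of restricting $\Mod_L$ to nonempty teams.)

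For (iii), unwind both sides in terms of $\Mod_L$. Assume first $\Sigma\cup\{\phi\}$ is inconsistent and take any $(\A,X)\in\bigcap_{\psi\in\Sigma}\Mod_L(\psi)$; then $X\neq\emptyset$, and by inconsistency $\A\nmodels_X\phi$, so the clause for $\cneg$ on nonempty teams gives $\A\models_X\cneg\phi$, i.e.\ $(\A,X)\in\Mod_L(\cneg\phi)$, so $\Sigma\models\cneg\phi$. Conversely, if $\Sigma\models\cneg\phi$ and $(\A,X)$ witnessed consistency of $\Sigma\cup\{\phi\}$, then $(\A,X)$ would lie in both $\Mod_L(\phi)$ and $\Mod_L(\cneg\phi)$; but then $X\neq\emptyset$ and simultaneously $\A\models_X\phi$ and $\A\nmodels_X\phi$, a contradiction. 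No step here is a real obstacle; the only trap is forgetting the nonemptiness stipulation in $\Mod_L$, which is precisely what makes the equivalence $\A\models_X\cneg\phi\iff\A\nmodels_X\phi$ available exactly when needed.
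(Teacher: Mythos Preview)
Your proposal is correct and follows essentially the same approach as the paper: all three parts are direct unpackings of the definition of $\cneg$ on nonempty teams, with the paper treating (ii) as ``trivial'' where you spell out the $\Mod_L$ equality. Your explicit note that $\cneg\phi$ remains a $\tau$-formula (via the ``furthermore'' clause) is a small detail the paper omits but is worth recording.
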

\begin{proof}\quad
    \begin{enumerate}
        \item Since $X$ is nonempty, for any $\phi\in L$ we have
        \[
            \A\models_X\cneg\phi \iff \A\nmodels_X\phi.
        \]
        Clearly exactly one of the options $\A\models_X\phi$ and $\A\models_X\cneg\phi$ is true.

        \item Trivial.

        \item Suppose that $\Sigma\models\cneg\phi$. Then for any $\A$ and $X$ such that $\A\models_X\Sigma$, we have $\A\models_X\cneg\phi$. This means that for such $\A$ and $X$, whenever $X$ is nonempty, $\A\nmodels_X\phi$, which means that there are no $\A$ and nonempty $X$ such that $\A\models_X\Sigma\cup\{\phi\}$. Hence $\Sigma\cup\{\phi\}$ is inconsistent.

        On the other hand, if $\Sigma\cup\{\phi\}$ is inconsistent, then $\bigcap_{\psi\in\Sigma}\Mod_L(\psi)\cap\Mod_L(\phi) = \bigcap_{\psi\in\Sigma\cup\{\phi\}}\Mod_L(\psi) = \emptyset$. Hence for any $\A$ and nonempty $X$, if $\A\models_X\Sigma$, we have $\A\nmodels_X\phi$. But this means that $\A\models_X\cneg\phi$. Hence $\Sigma\models\cneg\phi$. \qedhere
    \end{enumerate}
\end{proof}

We can now prove the team logic counterparts to the results of Lindström and Sgro. The fist part of the following theorem characterises $\fot$ in the context of regular abstract team logics and adapts \cite{lindstrom1974characterizing} in our context, using direct limits (i.e. the appropriate variant of the Tarski union property). The second part applies to all (possibly irregular) abstract team logics and adapts \cite{sgro1977maximal}, characterizing $\fot$ in terms of Łoś' theorem.

\begin{theorem}\quad
    \begin{enumerate}
        \item The logic $\fot$ is maximal among all regular abstract team logics that are team-finitary, compact and closed under direct limits.
        \item The logic $\fot$ is maximal among all 
        abstract team logics that are team-finitary and have a strong Łoś' theorem.
    \end{enumerate}
\end{theorem}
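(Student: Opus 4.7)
The plan is to prove both clauses uniformly by reducing each to the single claim that, for every $\phi\in L$, the class $\Mod_L(\phi)$ is closed under $\equiv_\fot$; once this is established, a compactness argument extracts an explicit $\fot$-formula equivalent to $\phi$. Compactness of $L$ is built in for clause (1), and for clause (2) follows from the strong Łoś theorem via the standard ultrafilter-on-finite-subsets construction.

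For the extraction step, assume closure of $\Mod_L(\phi)$ under $\equiv_\fot$ and set
\[
    \Sigma\coloneqq\{\psi\in\fot\mid\Fv(\psi)\subseteq\Fv(\phi)\text{ and }\phi\models\psi\},
\]
which is well-defined since team-finitariness forces $\Fv(\phi)$ to be finite. I claim $\Sigma\models\phi$: given $\B\models_Y\Sigma$ with $Y$ nonempty, let $T\coloneqq\Th_\fot(\B,Y)$, which is $\fot$-complete. If $T\cup\{\phi\}$ were $L$-inconsistent, $L$-compactness would yield a finite $T_0\subseteq T$ with $T_0\cup\{\phi\}$ inconsistent, forcing $\cneg\bigwedge T_0\in\Sigma$ by regularity of $\fot$ and contradicting $\B\models_Y\bigwedge T_0$. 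So $T\cup\{\phi\}$ has a realizer $\A\models_X$; since $T$ is complete, $(\A,X)\equiv_\fot(\B,Y)$, whence $\B\models_Y\phi$ by the closure assumption. Now $L$-compactness produces a finite $\Sigma_0\subseteq\Sigma$ with $\Sigma_0\models\phi$, and $\phi\models\bigwedge\Sigma_0$ by the definition of $\Sigma$, so $\phi\equiv\bigwedge\Sigma_0\in\fot$.

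For clause (2), closure of $\Mod_L(\phi)$ under $\equiv_\fot$ is immediate from the Keisler--Shelah theorem. If $(\A,X)\equiv_\fot(\B,Y)$ with nonempty teams, \cref{teams_elementary_equivalent_as_predicates} gives $(\A,X[\vec v])\equiv(\B,Y[\vec v])$ as $\tau\cup\{R\}$-structures, hence there exist $\kappa$ and an ultrafilter $\U$ on $\kappa$ with $(\A,X[\vec v])^\kappa/\U\cong(\B,Y[\vec v])^\kappa/\U$. Unravelling the encoding via the team-ultraproduct identity $(X^\kappa/\U)[\vec v]=X[\vec v]^\kappa/\U$ promotes this to a team isomorphism $(\A,X)^\kappa/\U\cong(\B,Y)^\kappa/\U$. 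The strong Łoś theorem for $L$ then gives $\A\models_X\phi$ iff $(\A,X)^\kappa/\U\models\phi$ iff $(\B,Y)^\kappa/\U\models\phi$ iff $\B\models_Y\phi$, as required.

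Clause (1) is the main obstacle, since Łoś' theorem is unavailable and closure under $\equiv_\fot$ must instead be obtained through the direct-limit axiom. Given $\A\models_X\phi$, $\B\nmodels_Y\phi$, and $(\A,X)\equiv_\fot(\B,Y)$, the plan is to construct by simultaneous back-and-forth two $\omega$-chains of $L$-elementary team embeddings starting at $(\A,X)$ and $(\B,Y)$, together with partial team isomorphisms $p_n\colon(\A_n,X_n)\to(\B_n,Y_n)$ extended at alternating steps by one new element from either side, in such a way that at the $\omega$-limit $p_\omega$ is a bijective team isomorphism between the direct limits $(\A_\omega,X_\omega)$ and $(\B_\omega,Y_\omega)$ (viewed as $\tau\cup\{R\}$-structures). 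Each successor step uses $L$-compactness applied to the joint $L$-elementary diagram of $\A_n$ and $\B_n$ over the already-matched parameters, with \cref{diagram.team.embeddings} providing the required $L$-elementary extension; the consistency of the joint diagram is guaranteed by the $\fot$-equivalence that is preserved throughout the construction. The direct-limit hypothesis on $L$ then transports the truth value of $\phi$ from $(\A,X)$ and $(\B,Y)$ to their respective limits, and closure of any abstract logic under isomorphism transfers it across $(\A_\omega,X_\omega)\cong(\B_\omega,Y_\omega)$, producing the contradiction. The delicate point is ensuring that $p_\omega$ is a genuine team isomorphism and not merely an element bijection: this is arranged by enumerating at every step the closure-theoretic relations from \cref{definition.team.maps} (projections, products, intersections and joins of previously matched relations) and matching them across the two sides using $\fot$-formulas, which are preserved by the $L$-elementary embeddings since $\fot\leq L$.
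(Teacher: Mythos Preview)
Your overall strategy coincides with the paper's: both clauses are reduced to showing that $\Mod_L(\phi)$ is closed under $\equiv_\fot$, after which an equivalent $\fot$-formula is extracted. For clause~(2) you invoke Keisler--Shelah together with the strong Łoś theorem, and for clause~(1) you run a back-and-forth construction terminated by the direct-limit axiom; this is precisely the paper's route.

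There is, however, a genuine gap in your extraction step as applied to clause~(2). You write ``$L$-compactness produces a finite $\Sigma_0\subseteq\Sigma$ with $\Sigma_0\models\phi$'', but in clause~(2) the logic $L$ is only \emph{positive}, so $\cneg\phi$ need not belong to $L$, and you therefore cannot rephrase $\Sigma\models\phi$ as the $L$-inconsistency of $\Sigma\cup\{\cneg\phi\}$. Compactness of $L$ (which you correctly derive from strong Łoś) is thus not applicable at this point. The repair is to invoke strong Łoś directly: if for every finite $\Sigma_0\subseteq\Sigma$ there were $(\A_{\Sigma_0},X_{\Sigma_0})\models\Sigma_0$ with $\A_{\Sigma_0}\nmodels_{X_{\Sigma_0}}\phi$, then the ultraproduct over finite subsets of $\Sigma$ would satisfy $\Sigma$ (by Łoś for $\fot$) while failing $\phi$ (by the converse direction of strong Łoś for $L$), contradicting $\Sigma\models\phi$. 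This is exactly how the paper organises its argument for (2), working contrapositively from the start. In clause~(1) your extraction step is sound, since there $L$ is regular and $\cneg\phi\in L$.

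A smaller remark on clause~(1): you speak of ``$L$-elementary team embeddings'' along each chain, but neither you nor the paper actually secures full $L$-elementarity. What is arranged (and what suffices) is that each successor model is chosen, via $L$-compactness applied to the $\fot$-team diagram together with the single formula $\phi$ (respectively $\cneg\phi$) and the formula $\theta$ pinning the team to the named predicate, so that the truth of $\phi$ and $\cneg\phi$ at the distinguished teams is maintained along the two chains. The direct-limit axiom then transfers this to the limits. Phrasing the embeddings as ``$L$-elementary'' overstates what is achieved and obscures where compactness is actually used.
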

\begin{proof}\quad
    \renewcommand{\i}{\mathcal i}
    \renewcommand{\j}{\mathcal j}
    \begin{enumerate}
        \item Suppose this is not the case. Then there is a regular abstract team logic $L$ such that $\fot\leq L\nleq \fot$ and $L$ is team-finitary, compact and closed under direct limits. Let $\phi\in L$ be such that $\phi\not\equiv\psi$ for all $\psi\in\fot$. By the occurrence condition, there is $\tau$ such that $\phi$ is a $\tau$-formula, and by team-finitarity there is a finite $D\supseteq\Fv(\phi)$ such that whenever $\phi$ is satisfied in a team, it is satisfied in a team with domain $D$. Let $\Sigma$ be the set of all $\tau$-formulas $\psi\in\fot$ such that $\phi\models\psi$ and $\Fv(\psi)\subseteq D$. We show that $\Sigma\cup\{\cneg\phi\}$ is consistent. Let $\Sigma'\subseteq\Sigma$ be finite. Now if $\Sigma'\cup\{\cneg\phi\}$ were inconsistent, then we would have $\Sigma'\models\cneg\cneg\phi$, whence $\bigwedge\Sigma'\models\phi$. As $\Sigma$ is clearly closed under conjunction, we would also have $\bigwedge\Sigma'\in\Sigma$, whence $\phi\models\bigwedge\Sigma'$. But then $\phi\equiv\bigwedge\Sigma'$, contradicting the choice of $\phi$. Hence we conclude that $\Sigma'\cup\{\cneg\phi\}$ is consistent. Thus by compactness of $L$, $\Sigma\cup\{\cneg\phi\}$ is consistent. Thus there is a $\tau$-structure $\A$ and nonempty $X\subseteq\A^D$ with $\A\models_X\Sigma\cup\{\cneg\phi\}$.
        
        Let $\X = \{\{a\} \mid a\in\A\}\cup\{X[\vx]\}$, where $\vx$ lists $D$. Now we consider the set $\Diag(\A,\X)\cup\{\phi,\theta(\underline{X[\vx]}, \vx)\}$, where $\theta$ is as in \cref{FOT-formula that characterises being a predicate}, and show that it is consistent. Let $\Sigma'\subseteq\Diag(\A,\X)$ be finite. Let $a_0,\dots,a_{n-1}$ enumerate all $a\in\A$ such that $\underline{\{a\}}$ occurs in a formula of $\Sigma'$. Now there is a first-order $\tau$-formula $\psi(y_0,\dots,y_{n-1},R)$, where $R$ is a second-order variable with arity $|D|$, such that for any $\tau$-structure $\B$ and its $\tau\cup\{\underline{\{a_0\}},\dots,\underline{\{a_{n-1}\}},\underline X\}$-expansion $\hat\B$, we have
        \[
            \hat\B\models\bigwedge\Sigma'\iff \B\models\psi(b_0,\dots,b_{n-1},\underline{X[\vx]}^{\hat\B}),
        \]
        where $b_i$ is the unique element inhabiting $\underline{\{a_i\}}^{\hat\B}$.
        Let $\psi^*(\vx)$ be an $\fot$-translation of $\exists y_0\dots\exists y_{n-1}\psi(y_0,\dots,y_{n-1},R)$, i.e. such a $\tau$-formula of $\fot$ that for any $\tau$-structure $\B$ and $Y\in\B^D$,
        \[
            \B\models\exists y_0\dots\exists y_{n-1}\psi(y_0,\dots,y_{n-1},Y[\vx]) \iff \B\models_Y\psi^*.
        \]
        Now suppose for a contradiction that $\Sigma'\cup\{\phi,\theta(\underline{X[\vx]},\vx)\}$ is inconsistent. Then so is $\{\psi^*,\phi\}$. This means that $\phi\models\cneg\psi^*$, whence $\cneg\psi^*\in\Sigma$. As $\A\models_X\Sigma$, we have $\A\models_X\cneg\psi^*$. Since $X\neq\emptyset$, we obtain $\A\nmodels_X\psi^*$. Thus $\A\nmodels\exists y_0\dots\exists y_{n-1}\psi(y_0,\dots,y_{n-1},X[\vx])$, which is a contradiction since $\A\models\psi(a_0,\dots,a_{n-1},X[\vx])$. Therefore $\Sigma'\cup\{\phi,\theta(\underline{X[\vx]},\vx)\}$ is consistent and, by compactness of $L$, so is $\Diag(\A,\X)\cup\{\phi,\theta(\underline{X[\vx]},\vx)\}$. Thus there is a $\tau(\X)$-structure $\hat\B$ such that $\hat\B\models\Diag(\A,\X)\cup\{\phi,\theta(\underline{X[\vx]},\vx)\}$.
        
        Let $\B = \hat\B\restriction\tau$. Then by \cref{diagram.team.embeddings}, there is partial elementary team map $\f\colon\A\to\B$ with $\dom(\f) = \cl(\X)$. Now notice that $\A$, $\B$, $X$ and $\f$ are such that
        \[
            \A\models_X\cneg\phi \quad\text{and}\quad \B\models_{\f(X)}\phi.
        \]

        Now we define, by recursion on $n<\omega$, $\tau$-structures $\A_n$ and $\B_n$, sets $\X_n\subseteq\R(\A_n)$ and $\Y_n\subseteq\R(\B_n)$, teams $X_n\in\X_n$ and $Y_n\in\Y_n$, and partial elementary team maps $\mathcal i_n\colon\A_n\to\A_{n+1}$, $\mathcal j_n\colon\B_n\to\B_{n+1}$, $\f_n\colon\A_n\to\B_n$ and $\g_n\colon\B_n\to\A_{n+1}$, satisfying the following.
        \begin{enumerate}
            \item $\X_n = \{X_n\}\cup\{\{a\}\mid a\in\A_n\}$ and $\Y_n = \{Y_n\}\cup\{\{b\} \mid b\in\B_n\}$.
            \item $\A_n\models_{X_n}\cneg\phi$ and $\B_n\models_{Y_n}\phi$.
            \item $\dom(\i_n) = \dom(\f_n) = \cl(\X_n)$ and $\dom(\j_n) = \dom(\g_n) = \cl(\Y_n)$.
            \item $\ran(\i_n)\cup\ran(\g_n)\subseteq\cl(\X_{n+1})$ and $\ran(\j_n)\cup\ran(\f_n)\subseteq\cl(\Y_{n+1})$.
            \item $Y_n = \f_n(X_n)$, $Y_{n+1} = \j_n(Y_n)$ and $X_{n+1} = \i_n(X_n) = \g_n(Y_n)$.
            \item The resulting diagram commutes.
        \end{enumerate}
        \begin{center}
            \bigskip
            \begin{tikzcd}
                {\A_0} && {\A_1} && {\A_2} && {\A_3} && {} \\
                &&&&&& \\
                && {\B_0} && {\B_1} && {\B_2} && {}
                \arrow["{\i_0}", from=1-1, to=1-3]
                \arrow["{\i_1}", from=1-3, to=1-5]
                \arrow["{\i_2}", from=1-5, to=1-7]
                \arrow["{\j_0}"', from=3-3, to=3-5]
                \arrow["{\j_1}"', from=3-5, to=3-7]
                \arrow["{\f_0}", from=1-1, to=3-3]
                \arrow["{\f_1}", from=1-3, to=3-5]
                \arrow["{\f_2}", from=1-5, to=3-7]
                \arrow["{\g_0}"', from=3-3, to=1-3]
                \arrow["{\g_1}"', from=3-5, to=1-5]
                \arrow["{\g_2}"', from=3-7, to=1-7]
                \arrow[dotted, from=3-7, to=3-9]
                \arrow[dotted, from=1-7, to=1-9]
            \end{tikzcd}
            \bigskip
        \end{center}
        
        \noindent We start by letting $\A_0\coloneqq\A$, $X_0\coloneqq X$, $\B_0\coloneqq\B$ and $Y_0\coloneqq\f(X)$. Then we let $\X_0 = \X = \{X_0\}\cup\{\{a\} \mid a\in\A_0\}$, $\Y_0 = \{Y_0\}\cup\{\{b\} \mid b\in\B_0 \})$, and $\f_0 = \f$. These are all clearly as desired.
        
        Suppose we have constructed $\A_i$, $\B_i$, $\X_i$, $\Y_i$, $X_i$, $Y_i$ and $\f_i$ for $i\leq n$ and $\i_i$, $\j_i$ and $\g_i$ for $i<n$. We now construct $\A_{n+1}$, $\B_{n+1}$, $\f_{n+1}$, $\i_n$, $\j_n$ and $\g_n$. Let $\Gamma$ be the set
        \[
            \hspace{3em} \{\cneg \phi,\theta(\underline{X_n},\vx)\} \cup \Diag(\A_n,\X_n) \cup \Diag(\B_n,\Y_n) \cup \{ \text{``$\underline{\f_n(R)}=\underline R$''} \mid R\in\X_n \},
        \]
        where ``$S = S'$'' is a shorthand for $\forallone\vec w (S(\vec w)\wcequiv S'(\vec w))$.
        We use compactness of $L$ to show that $\Gamma$ is consistent. By the induction hypothesis, $\A_n\models_{X_n}\cneg \phi$, and clearly $\A_n\models_{X_n}\{\theta(\underline{X_n},\vec x)\}\cup\Diag(\A_n,\X_n)$. Now, look at a finite $\Gamma'\subseteq\Diag(\B_n,\Y_n)$. Then $\bigwedge\Gamma'$ is equivalent to a first-order $\tau(\Y_n)$-sentence of the form $\psi(\underline{\f_n(\{a_0\})},\dots,\underline{\f_n(\{a_{k-1}\})},\underline{\{b_0\}},\dots,\underline{\{b_{m-1}\}},\underline{Y_n})$ for some $a_0,\dots,a_{k-1}\in\A_n$ and $b_0,\dots,b_{m-1}\in\B_n$ such that $\{b_i\}\notin\ran(\f_n)$ for all $i<m$. Now
        \[
            \hspace{2em}\B_n\models\exists y_0\dots\exists y_{k-1}\psi(\f_n(a_0),\dots,\f_n(a_{k-1}),y_0,\dots,y_{m-1},\f_n(X_n)),
        \]
        so as $\f_n$ is elementary, we have
        \[
            \A_n\models\exists y_0\dots\exists y_{k-1}\psi(a_0,\dots,a_{k-1},y_0,\dots,y_{m-1},X_n),
        \]
        so we can find $a_k,\dots,a_{m+k-1}\in\A_n$ such that $\bigwedge\Gamma'$ is satisfied in an expansion of $\A_n$ that interprets $\underline{\{b_i\}}$ as $\{a_{k+i}\}$, $\underline{\f(\{a_i\})}$ as $\{a_i\}$ and $\underline{Y_n}$ as $X_n$. Such an expansion clearly also satisfies the sentences ``$\underline{\f_n(R)}=\underline R$''. Then by compactness, $\Gamma$ has a model, so we let $\hat\A_{n+1}$ be one such. As $\hat\A_{n+1}\models\Diag(\A_n,\X_n)$, we can find a partial elementary team map $\i_n\colon\A_n\to\hat\A_{n+1}$ with $\dom(\i_n) = \cl(\X_n)$ and $\underline R^{\hat\A_{n+1}} = \i_n(R)$ for all $R\in\X_n$. As $\hat\A_{n+1}\models\Diag(\B_n,\Y_n)$, we can find a partial elementary team map $\g_n\colon\B_n\to\hat\A_{n+1}$ with $\dom(\g_n)=\cl(\Y_n)$ and $\underline R^{\hat\A_{n+1}} = \g_n(R)$ for all $R\in\Y_n$. As $\hat\A_{n+1}\models\text{``$\underline{\f_n(R)} = \underline R$''}$ for all $R\in\X_n$, we have
        \[
            \g_n(\f_n(R)) = \underline{\f_n(R)}^{\hat\A_{n+1}} = \underline R^{\hat\A_{n+1}} = \i_n(R)
        \]
        for all $R\in\X_n$, and hence $\i_n = \g_n\circ\f_n$. Then we can let $\A_{n+1} = \hat\A_{n+1}\restriction\tau$ and $\X_{n+1} = \{X_n\}\cup\{\{a\} \mid a\in\A_{n+1}\}$ as desired.

        The model $\B_{n+1}$, the set $\Y_{n+1}$ and the maps $\j_n$ and $\f_{n+1}$ are constructed in a similar manner, using $\B_n$, $\Y_n$, $\A_{n+1}$, $\X_{n+1}$ and $\g_n$.

        In the end, we let $\A_\omega = \lim_{i<\omega}\A_i$, $X_\omega = \i_\omega(X_0)$, where $\i_\omega$ is the direct limit map $\A_0\to\A_\omega$ of the system $(\A_i)_{i<\omega}$, and $\B_\omega = \lim_{i<\omega}\B_i$ and $Y_\omega = \j_\omega(Y_0)$, where $\j_\omega$ is the direct limit map $\B_0\to\B_\omega$ of the system $(\B_i)_{i<\omega}$. Now, since $L$ has direct limits, we have $\A_\omega\models_{X_\omega}\cneg\phi$ and $\B_\omega\models_{Y_\omega}\phi$. But as both $(\A_i)_{i<\omega}$ and $(\B_i)_{i<\omega}$ are cofinal in the directed system, we must have $\A_\omega\cong\B_\omega$ and furthermore, the isomorphism maps $X_\omega$ to $Y_\omega$, which is impossible.

        \item Suppose this is not the case. Then there is an abstract positive team-finitary team logic $L$ which has a strong Łoś' theorem with the following property: there exists a formula $\phi\in L$ such that for all $\psi\in \fot$ either
        \begin{enumerate}
            \item there is a model $\A_\psi$ and a nonempty team $X_\psi$ such that $\A_\psi\nmodels_{X_\psi} \phi$ and $ \A_\psi\models_{X_\psi} \psi$, or
            \item there is a model $\B_\psi$ and a nonempty team $Y_\psi$ such that $\B_\psi\models_{Y_\psi} \phi$ and  $\B_\psi\nmodels_{Y_\psi} \psi$.
        \end{enumerate}
        We also remark that since $L$ has a strong Łoś' theorem then it is also compact, as the usual ultraproduct proof works in this setting (see \cite[Theorem 3.14]{puljujärvi2022compactness}).

        Let $D$ be a set of variables satisfying the team-finitarity condition for $\phi$ and $\tau$ a signature satisfying the occurrence condition for $\phi$. Consider the set $\Gamma$ of all  $\tau$-formulas $\psi\in\fot$ such that $\phi\models\psi$ and $\Fv(\psi)\subseteq D$. Also, we let $\S$ be the family of finite subsets of $\Gamma$. Since $\phi\models \Gamma$, it follows by the choice of $\phi$ that for every $\Sigma\in \S$ there are a model $\A_\Sigma$ and a nonempty team $X_\Sigma$ on $\A_\Sigma$ such that $\dom(X_\Sigma)=D$,  $\A_\Sigma\models_{X_\Sigma} \Sigma$ and $\A_\Sigma\nmodels_{X_\Sigma} \phi$. For every $\psi\in \Gamma$, we let  $[\psi]=\{ \Sigma\in \S \mid \psi \in \Sigma \}$ and $F=\{[\psi] \mid \psi \in \Gamma  \}$.   Then, given  $\psi_0,\dots,\psi_{n-1}\in \Gamma$ we have that $\{ \psi_i \mid i<n\}\in \bigcap_{i<n}[\psi_i]$, showing that $F$ has the finite intersection property. It follows that we can extend $F$ to an ultrafilter $\U$ on $\S$. 
    
        Consider now the ultraproduct $\prod_{\Sigma\in\S}\A_\Sigma/\U$. For every formula $\psi\in  \Gamma$ we have that $[\psi]\in \U$ and for every $\Sigma'\supseteq \{\psi \}$ we have  that $\A_{\Sigma'}\models_{X_{\Sigma'}} \psi$. It follows by the Los' Theorem of $\fot$ that $\prod_{\Sigma\in\S}\A_\Sigma/\U\models_{\prod_{\Sigma\in\S}X_\Sigma/\U} \psi$, and so that $\prod_{\Sigma\in\S}\A_\Sigma/\U\models_{\prod_{\Sigma\in\S}X_\Sigma/\U} \Gamma$. Moreover, for any $\Sigma\in\S$ we have that $\A_\Sigma\nmodels_{X_{\Sigma}}  \phi$, thus by the strong Łoś' theorem  of $L$  we obtain that $\prod_{\Sigma\in\S}\A_\Sigma/\U\nmodels_{\prod_{\Sigma\in\S}X_\Sigma/\U} \phi $.

        Now consider the theory $T= \Th_\fot(\prod_{\Sigma\in\S}\A_\Sigma/\U,\prod_{\Sigma\in\S}X_\Sigma/\U)$. We claim this is consistent with $\phi$. If not, by compactness of $L$, there is a finite set of formulas  $\Delta\subseteq T$ such that $\Delta\cup \{\phi\}$ is not consistent, whence $\phi\models\cneg\bigwedge\Delta$. It follows that $\cneg\bigwedge\Delta \in \Gamma$ and so
        \[
            \prod_{\Sigma\in\S}\A_\Sigma/\U\models_{\prod_{\Sigma\in\S}X_\Sigma/\U} \cneg\bigwedge\Delta,
        \]
        meaning that $\cneg\bigwedge\Delta\in T$. As $T$ is closed under conjunction, we also have $\bigwedge\Delta\in T$, whence $(\bigwedge\Delta)\land(\cneg\bigwedge\Delta)\in T$. But this is impossible, as $\prod_{\Sigma\in\S}X_\Sigma/\U$ is nonempty.
        
        We conclude that there are a $\tau$-structure $\B$ and a team $Y$ on $\B$ such that $\Th_\fot(\prod_{\Sigma\in\S}\A_\Sigma/\U,\prod_{\Sigma\in\S}X_\Sigma/\U)=    \Th_\fot(\B,Y)$ and $\B\models_{Y}\phi$. Notice that, by the choice of $D$ and the fact that $D=\dom(\prod_{\Sigma\in\S}X_\Sigma/\U)$, we have that both these teams are finitary. Then, by the translation from $\fot$ to $\fol$ it follows that
        \[
            (\B,Y) \equiv_\fol \left(\prod_{\Sigma\in\S}\A_\Sigma/\U, \prod_{\Sigma\in\S}X_\Sigma/\U\right)
        \]
        and so, by Keisler--Shelah, there are two elementary maps $\pi_0\colon\B\to\C$ and $\pi_1\colon\prod_{\Sigma\in\S}\A_\Sigma/\U\to \C $ into some common ultrapower $\C$ (up to isomorphism). In particular, notice that since we are expanding the signature with symbols for $Y$ and $\prod_{\Sigma\in\S}X_\Sigma/\U$, it follows that $\pi_0(Y)=\pi_1(\prod_{\Sigma\in\S}X_\Sigma/\U)$. By assumption, $L$ has a strong Łoś' theorem, and so does $\fot$. Thus it follows that $\C\models_{\pi_0(Y)}\phi$ and $\C\nmodels_{\pi_1(\prod_{\Sigma\in\S}X_\Sigma/\U)}\phi$, which contradicts $\pi_0(Y)=\pi_1(\prod_{\Sigma\in\S}X_\Sigma/\U)$.  \qedhere
    \end{enumerate}
\end{proof}

\section{Addendum -- Complete Theories and Categoricity in $\eso$}\label{section:complete_theories}

As emphasized e.g.\ in \cite{baldwin_categoricity}, the problem of categoricity is one of the most classical topics in model theory and clasification theory. In this last section, we begin to consider the problem of categoricity in the setting of existential second-order logic and of independence logic. To the best of our knowledge, this is the first study where categoricity is investigated in this context. This is in stark contrast to the categoricity problem for full second-order logic, where the issue of categoricity has been the subject of several studies, as it also bears very interesting connections to set theory (we refer the reader to \cite{sep-logic-higher-order} for an overview, and to \cite{SAARINEN_2025} for some recent results).

We proceed in this section as follows. In \cref{Section: Complete Theories}, we study some basic properties of complete $\eso$-theories and relate them to so-called resplendent models. Then, in \cref{Section_Categoricity}, we use stability theory to prove some versions of categoricity transfer for complete theories in $\eso$. We prove in, \cref{section.downwards.categoricity}, a downwards categoricity transfer result and, in \cref{section.upwards.categoricity}, an upwards categoricity transfer one. We try to provide details for most of the proofs. However, given the more technical aspects of the following sections, and most specifically of \cref{section.upwards.categoricity}, we will have to use technical notions and definitions from the literature without providing much explanations. We refer the reader to classical source books in stability and classification theory for the necessary background, most specifically to \cite{baldwin_categoricity,tentziegler,poizat2012course,baldwin2017fundamentals,shelah1990classification}. We emphasize that the results in the final section provide, in our eyes, some interesting connections between $\eso$ and classical model theory which have not been, so far, considered in the literature.

\subsection{Complete Theories \& Resplendent Models}\label{Section: Complete Theories}

In this section, we study complete theories in $\eso$ and their models. As we are always dealing with sentences, in all results one may replace first-order logic by $\fot$ and $\eso$ by $\foil$. The main result in this section is \cref{complete.resplendent}, which relates our setting of elementary team embeddings to the classical notion of \emph{resplendency} from first-order model theory. We will employ resplendent models also later in the proof of \cref{ESO categoricity transfer up to uncountable}.

Recall (\cref{definition: complete eso theory}) that a theory $T$ of $\eso$ is complete if for all $\A,\B\models T$ and sentences $\phi$ of $\eso$,
\[
    \A\models\phi \iff \B\models\phi.
\]
Note that a theory which is purely first order is, of course, complete in this sense if and only if it is complete in the usual sense. A theory of $\eso$ is first-order-complete if all its models are elementarily equivalent (\cref{definition: complete foil-theory and first-order-completeness}).

\begin{proposition}\label{prop:unique_eso_completion}
    Let $T$ be a complete first-order $\tau$-theory. Then there is a unique $\eso$-theory $\bar T$ such that
    \begin{enumerate}
        \item $\bar T\supseteq T$,
        \item $\bar T$ is closed under logical consequence and
        \item $\bar T$ is complete.
    \end{enumerate}
\end{proposition}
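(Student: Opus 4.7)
My plan is to define the candidate explicitly as
\[
  \bar T \;=\; \{\phi\in\eso \;:\; T\cup\{\phi\}\text{ is consistent}\},
\]
and verify that this $\bar T$ satisfies (1)--(3), and is the unique such theory. That $T\subseteq \bar T$ is immediate, since each $\phi\in T$ is trivially consistent with $T$. Closure under consequence is easy as well: if $\bar T\models\psi$ for some $\psi\in\eso$, then any model $\A$ of $\bar T$ witnesses $\A\models T\cup\{\psi\}$, so $\psi\in\bar T$. The completeness of $\bar T$ is also almost formal given the definition: if $\A,\B\models\bar T$ and $\phi\in\eso$, then either $T\cup\{\phi\}$ is consistent, in which case $\phi\in\bar T$ and both $\A,\B$ satisfy $\phi$, or it is inconsistent, in which case no model of $T$ (hence neither of $\A,\B$) can satisfy $\phi$.

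The real work is showing that $\bar T$ itself is consistent. By compactness of $\eso$ (noted earlier in the Preliminaries), it suffices to handle a finite subset, and by induction it suffices to show that if $T\cup\{\phi_1\}$ and $T\cup\{\phi_2\}$ are each consistent, then so is $T\cup\{\phi_1\wedge\phi_2\}$. Let $\A_i\models T\cup\{\phi_i\}$. Since $T$ is a complete first-order theory, $\A_1\equiv\A_2$, so by the Keisler--Shelah theorem there exist $\kappa$ and an ultrafilter $\U$ on $\kappa$ with $\A_1^\kappa/\U\cong\A_2^\kappa/\U$; call this common ultrapower $\C$. Applying the one direction of Łoś' theorem for $\foil$ (equivalently, $\eso$) given by Theorem~\ref{Los theorem of FOIL}, from $\A_i\models\phi_i$ we get $\A_i^\kappa/\U\models\phi_i$, hence $\C\models\phi_1\wedge\phi_2\wedge T$, as desired.

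For uniqueness, let $\bar T'$ be any (consistent) $\eso$-theory satisfying (1)--(3). One inclusion is immediate: any $\phi\in\bar T'$ belongs to a consistent extension of $T$, so $T\cup\{\phi\}$ is consistent and $\phi\in\bar T$. For the other direction, take $\phi\in\bar T$ and pick $\A\models T\cup\{\phi\}$ and $\B\models\bar T'$; since both model the complete first-order theory $T$, we have $\A\equiv\B$, so another application of Keisler--Shelah plus the one direction of Łoś (applied to $\phi$ coming from $\A$, and to every sentence of $\bar T'$ coming from $\B$) gives a structure $\C$ with $\C\models\bar T'\cup\{\phi\}$. Since $\bar T'$ is complete, the existence of one model of $\bar T'$ satisfying $\phi$ forces $\bar T'\models\phi$, and then closure under consequence gives $\phi\in\bar T'$.

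The main subtlety I anticipate is making sure the Łoś-theorem step for $\eso$ really lets us push sentences up into an ultrapower in both directions of the argument, since Theorem~\ref{Los theorem of FOIL} only supplies one direction; fortunately this one direction is exactly what is needed in both Step~1 (pushing $\phi_i$ up into $\A_i^\kappa/\U$) and in the uniqueness argument (pushing $\phi$ and each sentence of $\bar T'$ up through their respective ultrapowers). A secondary issue is the degenerate case where one allows $\bar T'=\eso$ (vacuously complete if it has no models); the statement is most naturally read as quantifying over consistent theories, and the proof above establishes uniqueness within that class.
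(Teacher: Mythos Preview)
Your proof is correct and takes a somewhat different route from the paper's. The paper builds $\bar T$ by a Lindenbaum-style transfinite recursion---enumerate all $\eso$-sentences and add each one if doing so preserves consistency---so that consistency of $\bar T$ follows immediately from compactness, and the model-theoretic input (Keisler--Shelah, packaged there as the joint embedding property) is deferred entirely to the uniqueness argument. You instead give the explicit one-line definition $\bar T=\{\phi\in\eso:T\cup\{\phi\}\text{ is consistent}\}$, which makes uniqueness nearly automatic but shifts the real work to showing that this set is itself consistent; that is where you invoke Keisler--Shelah and the one-directional \L o\'s theorem. The two proofs thus use the same key ingredient, just in different places. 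Your approach has the advantage of producing a canonical, enumeration-free description of $\bar T$ at the outset; the paper's approach has the advantage that the existence half is purely syntactic and the heavier tool appears only once.

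Your remark about the degenerate inconsistent $\bar T'$ is well taken: the paper's uniqueness argument also tacitly assumes $T'$ has a model (it begins ``Let $\A\models T'$\ldots''), so strictly speaking both proofs establish uniqueness only among consistent extensions.
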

\begin{proof}
    Let $\kappa = |\tau|+\aleph_0$, and let $\phi_i$, $i<\kappa$, enumerate all $\tau$-sentences of $\eso$. We define $\bar T$ by recursion. Let $T_0 = T$ and $T_i = \bigcup_{j<i}T_j$ for $i$ limit. For $i = j+1$, we let $T_i = T_j\cup\{\phi_j\}$ if it is consistent and $T_i = T_j$ otherwise. Then we let $\bar T = \bigcup_{i<\kappa}T_i$. By compactness of $\eso$, $\bar T$ is consistent, and as such it is clearly a maximal consistent extension of $T$ and hence closed under logical consequence. Now, let $\A$ and $\B$ be models of $\bar T$ and $\phi$ a sentence of $\eso$. Now if $\phi\in\bar T$, we have $\A\models\phi$ and $\B\models\phi$, as both are models of $\bar T$. On the other hand, if $\phi\notin\bar T$, then $\bar T\cup\{\phi\}$ is inconsistent, whence $\A\nmodels\phi$ and $\B\nmodels\phi$. Hence $\bar T$ is complete.
    
    For uniqueness, let $T'$ be another complete theory in $\eso$ extending $T$. If $T'\cup\bar T$ is consistent, then $T'\subseteq\bar T$, as $\bar T$ is a maximal consistent theory, whence the closure of $T'$ under logical consequence is exactly $\bar T$. Thus it suffices to show that $T'\cup\bar T$ is consistent. Let $\A\models T'$ and $\B\models\bar T$. As $T$ is first-order-complete, it has the joint embedding property (by \cref{class.properties}). Hence there is $\C$ and elementary team embeddings $\f\colon\A\to\C$ and $\g\colon\B\to\C$. Since elementary team embeddings are independence team embeddings, $\A\models\phi$ implies $\C\models\phi$ and $\B\models\phi$ implies $\C\models\phi$ for all $\phi\in\eso$. Hence $\C\models T'\cup\bar T$.
\end{proof}

\begin{definition}
    We say that a $\tau$-structure $\A$ is \emph{complete} if for any other $\tau$-structure $\B$ such that there is an elementary team embedding $\A\to\B$, we have
    \[
        \A\models\phi \iff \B\models\phi
    \]
    for all sentences $\phi\in\eso$. 
\end{definition}

\begin{proposition}\label{complete-theory-models}
    For a first-order-complete theory $T$ of $\eso$ and a structure $\A$, the following are equivalent.
    \begin{enumerate}
        \item $\A\models\bar T$, where $\bar T$ is the unique completion of $T$ given by \cref{prop:unique_eso_completion}.
        \item $\A$ is a complete model of $T$.
    \end{enumerate}
\end{proposition}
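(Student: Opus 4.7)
The plan is to handle the two directions separately, using two key tools established earlier in the paper: that elementary team embeddings are automatically independence team embeddings (\cref{elementary.team.embeddings.are.independence}), and the joint embedding property for models of a first-order complete theory (proved via Keisler--Shelah in \cref{class.properties}).

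For $(1)\Rightarrow(2)$, assume $\A \models \bar T$ and suppose $\f\colon\A\to\B$ is an elementary team embedding. First I would observe that any elementary team embedding preserves $\eso$-sentences in the forward direction: by \cref{elementary.team.embeddings.are.independence}, $\f$ is an independence team embedding, and the translation of $\eso$-sentences to $\foil$ then gives $\A\models\phi\Rightarrow\B\models\phi$ for every $\phi\in\eso$. In particular, $\B\models\bar T$. Since $\bar T$ is complete, $\A$ and $\B$ satisfy the same $\eso$-sentences, so $\A$ is a complete model of $T$.

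For $(2)\Rightarrow(1)$, assume $\A$ is a complete model of $T$ and let $\phi\in\bar T$. Since $\bar T$ is consistent, fix any $\B\models\bar T$; then $\B\models T$ and in particular $\B\equiv\A$ (as $T$ is first-order complete). By the JEP argument from \cref{class.properties}, the Keisler--Shelah theorem provides a cardinal $\kappa$, an ultrafilter $\U$ on $\kappa$, and an isomorphism $\pi\colon\B^\kappa/\U\to\A^\kappa/\U$; composing with the ultrapower team embeddings (which are elementary team embeddings by \cref{Los theorem of FOT}) yields elementary team embeddings $\f\colon\A\to\C$ and $\g\colon\B\to\C$ into a common target $\C = \A^\kappa/\U$. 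Applying the forward preservation of $\eso$ to $\g$, we get $\C\models\phi$. Since $\A$ is complete and $\f\colon\A\to\C$ is an elementary team embedding, $\A$ and $\C$ agree on $\eso$-sentences, so $\A\models\phi$. Thus $\A\models\bar T$.

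Both directions are essentially routine once the right lemmas are in place; the only mildly subtle point is that for $(2)\Rightarrow(1)$ one must realize that completeness of $\A$ is a two-way condition, whereas elementary team embeddings only preserve $\eso$ in one direction, so the argument crucially uses the assumed reverse preservation for the specific embedding $\f$ to pull $\phi$ back from $\C$ to $\A$.
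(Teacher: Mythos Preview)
Your proof is correct and follows essentially the same approach as the paper: both directions rely on forward preservation of $\eso$ along elementary team embeddings (via \cref{elementary.team.embeddings.are.independence}) together with joint embedding from Keisler--Shelah. The only minor organizational difference is in $(2)\Rightarrow(1)$: the paper shows $\ESOTh(\A)$ is complete and then invokes uniqueness of $\bar T$, whereas you pick $\phi\in\bar T$ and a witness $\B\models\bar T$ and pull $\phi$ back directly---your route is slightly more direct since it does not appeal to the uniqueness clause of the preceding proposition.
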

\begin{proof}
    Suppose first that $\A\models\bar T$. Let $\f\colon\A\to\B$ be an elementary team embedding, and recall by \cref{elementary.team.embeddings.are.independence} that $\f$ is also an independence team embedding. Then, in particular, for all sentences $\phi\in\eso$ we have that $\A\models\phi$ entails $\B\models\phi$. It follows that $\B\models\bar T$. But then, as $\bar T$ is complete, we have
    \[
        \A\models\phi \iff \B\models\phi
    \]
    for all sentences $\phi\in\eso$. Hence $\A$ is complete.

    Then, suppose that $\A$ is a complete model of $T$. Let $\B\models\ESOTh(\A)$. By joint embedding, there is $\C$ and elementary team embeddings $\f\colon\A\to\C$ and $\g\colon\B\to\C$. Let $\phi$ be a sentence of $\eso$. Now, if $\A\models\phi$, then $\phi\in\ESOTh(\A)$, whence $\B\models\phi$. On the other hand, if $\A\nmodels\phi$, since $\f$ is an elementary team embedding, by completeness of $\A$ we have $\C\nmodels\phi$. If it were the case that $\B\models\phi$, since $\g$ is an independence team embedding, we would have $\C\models\phi$, a contradiction. Hence $\B\nmodels\phi$. Therefore $\ESOTh(\A)$ is complete, whence $\bar T = \ESOTh(\A)$.
\end{proof}

We conclude this section by highlighting a connection between complete structures and so-called resplendent models. The following can be found in \cite[Ch.~9.3]{poizat2012course}.

\begin{definition}
    A $\tau$-structure $\A$ is \emph{resplendent} if
    for any elementary extension $\B$ of $\A$ and a $\tau$-formula $\phi(x_0,\dots,x_{n-1})$ of $\eso$, we have
    \[
        \B\models\phi(a_0,\dots,a_{n-1}) \iff \A\models\phi(a_0,\dots,a_{n-1})
    \]
    for all $a_0,\dots,a_{n-1}\in\A$.
\end{definition}

\noindent For the proof of the following fact, we refer the reader again to \cite[Ch.~9.3]{poizat2012course}.
 
\begin{fact}
    Let $T$ be a first-order theory. Then every model of $T$ with cardinality $\geq|T|$ has a resplendent elementary extension of the same cardinality.
\end{fact}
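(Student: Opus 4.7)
The plan is to construct a resplendent elementary extension of $\A$ via an $\omega$-chain of first-order elementary extensions in progressively enlarged signatures. Setting $\kappa = |\A|$ and $\tau_0 = \tau$, I will build $\tau_n$-structures $\A_n^+$ with $\tau_n \subseteq \tau_{n+1}$, $|\tau_n|, |\A_n^+| \leq \kappa$, and $\A_n^+ \preceq \A_{n+1}^+ \restriction \tau_n$ in the usual first-order sense, starting from $\A_0^+ = \A$. The resplendent structure will be $\A' := \A_\omega^+ \restriction \tau$, where $\A_\omega^+ = \bigcup_n \A_n^+$ lives in signature $\tau_\omega = \bigcup_n \tau_n$; the Tarski--Vaught chain lemma applied in each $\tau_n$ then gives $\A_n^+ \preceq \A_\omega^+ \restriction \tau_n$ for each $n$, so in particular $\A \preceq \A'$ and $|\A'| = \kappa$.

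At stage $n+1$, for every pair $(\phi, \vec a)$ where $\phi(\vec x) = \exists \vec R\, \psi(\vec R, \vec x)$ is a $\tau$-formula of $\eso$ and $\vec a \in \A_n$ is a tuple such that $\phi(\vec a)$ is realized in some first-order $\tau$-elementary extension of $\A_n$, I introduce a pairwise-fresh tuple of relation symbols $\vec R_{\phi, \vec a}$, with $\tau_{n+1}$ being $\tau_n$ augmented by all these new symbols. I then consider the $\tau_{n+1}$-theory, with a constant $c_a$ for each element of $\A_n^+$, consisting of the complete first-order $\tau_n$-elementary diagram of $\A_n^+$ together with the set $\{\psi(\vec R_{\phi, \vec a}, \vec a) : (\phi, \vec a) \text{ as above}\}$. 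Once this theory is shown consistent, I take $\A_{n+1}^+$ to be any model of size $\kappa$, using downward Löwenheim--Skolem, which applies since $|\tau_{n+1}| \leq \kappa$ (as there are at most $\kappa$ many formulas $\phi$ and at most $\kappa$ many parameter tuples at this stage).

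Resplendency of $\A'$ then follows as a chain argument. Suppose $\B \succeq \A'$ is a $\tau$-elementary extension and $\B \models \phi(\vec a)$ for some $\vec a \in \A'$ and $\phi = \exists \vec R\, \psi \in \eso$. Choose $n$ with $\vec a \in \A_n$; since $\A_n \preceq \A' \preceq \B$, the formula $\phi(\vec a)$ is realized in the $\tau$-elementary extension $\B$ of $\A_n$, so the stage-$(n+1)$ construction ensures $\A_{n+1}^+ \models \psi(\vec R_{\phi, \vec a}, \vec a)$ for the corresponding fresh $\vec R_{\phi, \vec a} \in \tau_{n+1}$. The Tarski--Vaught chain lemma applied in signature $\tau_{n+1}$ gives $\A_{n+1}^+ \preceq \A_\omega^+ \restriction \tau_{n+1}$, hence $\A_\omega^+ \models \psi(\vec R_{\phi, \vec a}, \vec a)$, so interpreting $\vec R_{\phi, \vec a}^{\A_\omega^+}$ as a witness yields $\A' \models \exists \vec R\, \psi(\vec R, \vec a)$.

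The hard part will be verifying consistency of the stage-$(n+1)$ theory, where witnesses for different $\eso$-formulas must coexist in a single structure, despite the fact that $\eso$-satisfaction is not preserved upwards under arbitrary first-order elementary extensions. By first-order compactness it suffices to handle any finite collection of witness conditions $\psi_{i_1}, \dots, \psi_{i_k}$. For each $i_j$, the assumption provides a $\tau$-elementary extension $\B_j \succeq \A_n$ and a relation $\vec S_j$ on $\B_j$ with $(\B_j, \vec S_j) \models \psi_{i_j}$. The key move is an iterated application of Robinson's joint consistency theorem: arranging the $\B_j$'s to be pairwise disjoint outside $\A_n$ and the $\vec R_{i_j}$'s to be distinct, the $\tau_n$-elementary diagram of $\A_n^+$ and each $\tau \cup \{\vec R_{i_j}\}$-elementary diagram of $(\B_j, \vec S_j)$ share as common vocabulary only $\tau \cup \{c_a : a \in \A_n\}$, and on this common vocabulary they all agree on the complete $\tau$-diagram of $\A_n$, which is complete in that vocabulary. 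Robinson's theorem then yields consistency of their union, which contains the desired stage-$(n+1)$ theory and completes the construction.
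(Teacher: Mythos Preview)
The paper does not prove this fact; it simply refers the reader to \cite[Ch.~9.3]{poizat2012course}. Your argument is correct and is essentially the standard proof one finds there: build an $\omega$-chain of elementary extensions in successively larger signatures, at each stage using compactness together with Robinson's joint consistency theorem to simultaneously realize all pending $\eso$-witnesses over the current model, and then take the $\tau$-reduct of the union. The only cosmetic points to tidy are notational: you switch between $\A_n$ and $\A_n^+$ without defining the former (presumably $\A_n = \A_n^+\restriction\tau$), and in the iterated Robinson step you should remark explicitly that after the first amalgamation the complete theory in the shared language $\tau\cup\{c_a : a\in\A_n^+\}$ is still the $\tau$-elementary diagram of $\A_n$, so the induction goes through. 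With those clarified, the proof is complete.
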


\begin{proposition}\label{complete.resplendent}
    The following are equivalent for a structure $\A$.
    \begin{enumerate}
        \item $\A_\A$ is complete.
        \item $\A$ is resplendent.
    \end{enumerate}
\end{proposition}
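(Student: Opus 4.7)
The plan is to reduce both conditions to a common reformulation in terms of the complete $\eso$-extension $\bar T$ of $T := \Th_\fol(\A_\A)$. By \cref{complete-theory-models}, $\A_\A$ is complete iff $\A_\A \models \bar T$, i.e.\ $\A_\A$ realizes every $\eso$-sentence in signature $\tau(A)$ that is consistent with $T$. So the task is to verify that this realization property coincides with resplendence of $\A$.

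The key intermediate step is the following characterization: for any $\eso$-sentence $\phi$ in $\tau(A)$, written as $\phi'(\underline{\vec a})$ with $\vec a \in A$ and $\phi'(\vec x) \in \eso$ in signature $\tau$, consistency of $\phi$ with $T$ is equivalent to the existence of a first-order elementary extension $\B \succeq \A$ with $\B \models \phi'(\vec a)$. The reverse direction is immediate: $\B_\A \models T \cup \{\phi\}$ witnesses consistency. For the forward direction, start with any model $\C$ of $T \cup \{\phi\}$ in signature $\tau(A)$; the interpretation $a \mapsto \underline a^\C$ realizes a first-order elementary embedding $\A \to \C \restriction \tau$. Applying the Keisler--Shelah theorem to $\A_\A$ and $\C$ yields a common ultrapower, and \cref{Los theorem of FOIL}, combined with the $\foil$-to-$\eso$ translation \cref{Translation between independence logic and ESO}, ensures that $\phi$ is preserved across the ultrapower embedding from $\C$. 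The $\tau$-reduct of the resulting ultrapower furnishes the desired $\B$.

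Granting the lemma, the equivalence is routine bookkeeping. For $(2) \Rightarrow (1)$: every $\eso$-sentence $\phi$ consistent with $T$ yields, by the lemma, some $\B \succeq \A$ with $\B \models \phi'(\vec a)$, and then resplendence delivers $\A \models \phi'(\vec a)$; hence $\phi \in \ESOTh(\A_\A)$, so $\ESOTh(\A_\A) = \bar T$. For $(1) \Rightarrow (2)$: if $\B \succeq \A$ satisfies $\phi'(\vec a)$, the lemma gives consistency of $\phi$ with $T$, whence $\phi \in \bar T = \ESOTh(\A_\A)$ by completeness, so $\A \models \phi'(\vec a)$. The main obstacle I anticipate is the forward direction of the key lemma, where the existential second-order sentence $\phi$ must be preserved under the Keisler--Shelah amalgamation; this is precisely the positive-direction Łoś' theorem for $\foil$, applied via the fact (\cref{elementary.team.embeddings.are.independence}) that ultrapower embeddings are independence team embeddings. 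A secondary caveat is that the literal biconditional reading of the definition of resplendence would also demand $\A \models \phi'(\vec a) \Rightarrow \B \models \phi'(\vec a)$ for every $\B \succeq \A$, which is not valid for arbitrary $\eso$-sentences under first-order elementary extensions; the nontrivial content captured by the argument above is the $(\Leftarrow)$ direction, matching the standard Poizat formulation cited.
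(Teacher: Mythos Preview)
Your proof is correct, and its core ingredient (Keisler--Shelah to amalgamate and push an $\eso$-sentence forward) is the same as the paper's. The packaging differs: the paper works directly from the definitions, whereas you detour through $\bar T$ via \cref{complete-theory-models}. For $(2)\Rightarrow(1)$ the paper is more economical: given an elementary team embedding $\f\colon\A_\A\to\B$, it invokes \cref{From team embedding to a substructure} to replace $\B$ by a first-order elementary extension of $\A_\A$, and then applies resplendence immediately---no Keisler--Shelah needed. Your route instead uses the forward direction of your key lemma (hence Keisler--Shelah) even for this implication, and also implicitly relies on the identity $\bar T=\{\phi\in\eso:\ T\cup\{\phi\}\text{ consistent}\}$, which is true but requires the JEP argument from the uniqueness proof of $\bar T$. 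For $(1)\Rightarrow(2)$ the two proofs are essentially the same: the paper's use of JEP is exactly your key lemma unpacked. Two minor remarks: preservation of an $\eso$-sentence under an ultrapower already follows from ordinary \L o\'s, so the appeal to \cref{Los theorem of FOIL} and \cref{elementary.team.embeddings.are.independence} is heavier than necessary; and your caveat about the biconditional in the definition of resplendence is well-taken---the paper's own proof of $(1)\Rightarrow(2)$ establishes only the $\Leftarrow$ direction, confirming that this is the operative content.
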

\begin{proof}
    Suppose first that $\A$ is resplendent. To show that $\A_\A$ is complete, let $\B$ be a $\tau(\A)$-structure and $\f\colon\A_\A\to\B$ an elementary team embedding, and let $\phi$ be a $\tau(\A)$-sentence of $\eso$ such that $\B\models\phi$. By \cref{From team embedding to a substructure}, we may assume that $\B$ is an elementary extension of $\A_\A$ and $\f$ is the identity on singletons. Now $\phi = \psi(\underline a_0,\dots,\underline a_{n-1})$ for some $\tau$-formula $\psi(x_0,\dots,x_{n-1})\in\eso$ and $a_0,\dots,a_{n-1}\in\A$, so as $\B\models\phi$, we have $\B\restriction\tau\models\psi(a_0,\dots,a_{n-1})$. Now $\B\restriction\tau$ is an elementary extension of $\A$, so by the resplendence of $\A$, we obtain $\A\models\psi(a_0,\dots,a_{n-1})$, whence $\A_\A\models\phi$. Hence $\A_\A$ is complete.

    Conversely, suppose that $\A_\A$ is complete. Let $\B$ be an elementary extension of $\A$ and let $\phi(x_0,\dots,x_{n-1})\in\eso$ be such that $\B\models\phi(a_0,\dots,a_{n-1})$ for some $a_0,\dots,a_{n-1}\in\A$. Since $\B$ is an elementary extension of $\A$, we have $\A_\A\equiv\B_\A$, so by the joint embedding property there is a $\tau(\A)$-structure $\C$ and elementary team embeddings $\f\colon\A_\A\to\C$ and $\g\colon\B_\A\to\C$. Now $\B_\A\models\phi(\underline a_0,\dots,\underline a_{n-1})$, so as $\g$ is an independence team embedding, we have $\C\models\phi(\underline a_0,\dots,\underline a_{n-1})$. As $\A_\A$ is complete and $\f$ is an elementary team embedding, we have $\A_\A\models\phi(\underline a_0,\dots,\underline a_{n-1})$. But then $\A\models\phi(a_0,\dots,a_{n-1})$. Thus $\A$ is resplendent.
\end{proof}

\subsection{Categoricity Transfer}\label{Section_Categoricity}

As we mentioned above, we consider in this section the categoricity problem for complete theories in $\eso$ (and $\foil$). The main results from this section are two categoricity tranfer results between countable and uncountable cardinals for complete $\eso$-theories, i.e., \cref{ESO categoricity transfer down to countable} and \cref{ESO categoricity transfer up to uncountable}.  We assume some familiarity with the standard notions from classification theory and refer to \cite{baldwin_categoricity,poizat2012course,baldwin2017fundamentals,shelah1990classification,tentziegler} for  background and motivation. We also stress that the following results provide a first study of the spectrum function for existential second order theories. In connection to this topic, we remark the following fact.
\begin{fact}
    Let $T$ be a complete theory in $\eso$ and $T^*$ its first-order reduct.  If $T^*$ is unstable, then $T$ has $2^\kappa$ models in any cardinal $\kappa>|T|$.
\end{fact}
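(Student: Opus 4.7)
The overall approach is to reduce the claim to Shelah's classical theorem that every complete first-order theory with the order property has $2^\kappa$ pairwise non-isomorphic models of size $\kappa$ for every $\kappa>|T^*|$, and then to transfer this count from $T^*$ up to $T$ using the framework of resplendent models developed in Subsection~6.1. First I would observe that, since $T$ is a complete $\eso$-theory, its first-order reduct $T^*=T\cap\fol$ is first-order complete (any two models of $T$ are $\eso$-equivalent and thus in particular first-order equivalent), and satisfies $|T^*|\le|T|<\kappa$. Applied to $T^*$, Shelah's theorem on unstable first-order theories then yields $I(\kappa,T^*)=2^\kappa$.

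The next step is to connect $T^*$-models to $T$-models. By Proposition~\ref{complete.resplendent}, a model $\A$ of $T^*$ satisfies $\bar{T^*}=T$ whenever $\A_\A$ is complete, which occurs precisely when $\A$ is resplendent; and by Proposition~\ref{complete-theory-models} this is in turn equivalent to $\A\models T$. Combined with the classical fact that every model of $T^*$ of cardinality $\lambda\ge|T^*|$ admits a resplendent elementary extension of the same cardinality, this shows that every model of $T^*$ of size $\kappa$ is contained, as a first-order elementary substructure, in some model of $T$ of size $\kappa$.

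The core difficulty is that a naive counting argument is insufficient at this point: a single $T$-model of size $\kappa$ could \emph{a priori} absorb arbitrarily many non-isomorphic $T^*$-models as elementary substructures, so merely extending each of Shelah's $2^\kappa$ models to a resplendent one need not preserve the number of isomorphism classes. To circumvent this, I would exploit the explicit form of Shelah's construction: the $2^\kappa$ non-isomorphic $T^*$-models of size $\kappa$ can be realised as Ehrenfeucht--Mostowski models $N_\eta$, for $\eta\in{}^\kappa 2$, built from a template witnessing the order property of some formula $\phi(\bar x,\bar y)$. Carrying out this construction inside the first-order reduct of the monster model $\mathfrak M$ of $\GMod(T)$ provided by Section~5 and then, for each $\eta$, taking a minimal $\tau$-elementary substructure $M_\eta\preceq\mathfrak M$ of cardinality $\kappa$ containing $N_\eta$ and closed under witnesses for the $\eso$-sentences of $T$ (such a closure exists in $\kappa$ steps by alternating the first-order Skolem-hull with $\eso$-witness closures, using that $\mathfrak M\models T$ is resplendent), yields a family of $2^\kappa$ models of $T$ of size $\kappa$.

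The main obstacle will be verifying that $M_\eta\not\cong M_{\eta'}$ whenever $\eta\ne\eta'$. The plan here is to argue that the EM-skeleton indexing $N_\eta$ can be recovered definably from $M_\eta$: the indiscernible sequence lives inside $M_\eta$, and the invariant used by Shelah to distinguish the $N_\eta$ (essentially the order type of this skeleton up to indiscernible-preserving equivalence) is determined by first-order data over $M_\eta$. Consequently any $\tau$-isomorphism $M_\eta\cong M_{\eta'}$ would induce an isomorphism of the underlying indexing structures, contradicting the construction. Together with the trivial upper bound $I(\kappa,T)\le 2^\kappa$, this gives $I(\kappa,T)=2^\kappa$.
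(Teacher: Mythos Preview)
The paper does not prove this statement from scratch. It simply observes that the class of models of a complete $\eso$-theory $T$ is pseudoelementary --- it is the class of $\tau$-reducts of models of the first-order theory $T^1$ obtained by adding Skolem witnesses for every $\eso$-sentence in $T$ --- and then invokes Shelah's many-models theorem for pseudoelementary classes (Ch.~VIII, Thm.~2.1 of \emph{Classification Theory}). Since the formula witnessing instability of $T^*$ is a $\tau$-formula, it also witnesses instability of $T^1$, and Shelah's theorem applies directly.

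Your route is more ambitious --- you are in effect trying to reprove a special case of that black-box result --- but the step you correctly flag as ``the main obstacle'' is a genuine gap, and your proposed resolution does not work. The invariants Shelah uses to distinguish the EM-models $N_\eta$ (realised cuts in the skeleton, cofinality patterns, etc.) are computed \emph{in} $N_\eta$, where every element lies in the Skolem hull of finitely many skeleton points. Once you pass to an elementary extension $M_\eta\succ N_\eta$, the original skeleton is still an indiscernible sequence, but it is no longer definable or even canonically distinguished: $M_\eta$ will contain many new elements realising the same types over every finite piece of the skeleton, and the maximal indiscernible set extending the skeleton may be much larger. An isomorphism $M_\eta\cong M_{\eta'}$ therefore need not carry one skeleton to the other, and there is no reason the order-type invariants survive. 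This is precisely what makes Shelah's PC-class theorem substantially harder than the elementary-class version; his proof avoids the problem by building the EM-models in the \emph{expanded} language $\tau^1$ from the start (so the reducts are automatically models of $T$) and choosing the indexing orders so that the relevant invariants are already visible via the unstable $\tau$-formula in the reduct.

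A secondary issue: the monster of $\GMod(T)$ built in Section~\ref{Section_Monster} is a \emph{general} structure $(\M,\X)$, and nothing in that construction forces the underlying $\tau$-structure $\M$ to be resplendent or even to satisfy $T$ (the set $\X$ need not be all of $\R(\M)$). If you want a resplendent ambient model, you should work with the ordinary first-order monster of $T^*$, as the paper does in the proof of Theorem~\ref{ESO categoricity transfer up to uncountable}.
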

\noindent If $T$ is a complete $\eso$-theory, the class of its models is clearly pseudoelementary, as it is the class of models of the theory $T'$ obtained by adding to the signature witnesses for every existential second-order statement in $T$. It follows that the fact above is an immediate corollary of Shelah's general result on the number of models of pseudoelementary classes \cite[Ch. VIII, Thm 2.1]{shelah1990classification}.

\subsubsection{Downwards Categoricity Transfer}\label{section.downwards.categoricity}

We show in this section that if the first-order reduct $T^*$ of an $\eso$-theory $T$ in a finite signature is uncountably categorical, then the theory $T$ is also $\aleph_0$-categorical. It follows that $T$ is quasi-categorical, i.e., it has a unique model (up to isomorphims) in all cardinals for which it has a model to begin with. 

The proof of this fact is essentially a corollary of Baldwin and Lachlan's study of uncountably categorical theories (see e.g.~\cite[Section 6.3]{tentziegler}). In particular, recall that if $T$ is an uncountably categorical first-order theory, then
\begin{enumerate}
    \item $T$ is totally transcendental and thus has a (countable) prime model $\A_0$,
    \item there are a formula $\phi(x,\vy)$ and $\vec a\in\A_0$ such that $\phi(x,\vec b)$ is strongly minimal for every $\vec b\models\foltype(\vec a/\emptyset)$, and
    \item since $\A_0$ is atomic, the type $\foltype(\vec a/\emptyset)$ is isolated by some $\psi(\vy)$.
\end{enumerate}
Then $\phi(x,\vec a)$ determines a pregeometry whose dimension does not depend on the choice of $\vec a$, as long as it realizes $\psi$. For $\A\models T$, we shall write $\dim_{\phi,\psi}(\A)$ for the dimension of the pregeometry $(\phi(\A,a),\acl)$. We denote the dimension of $\A_0$ by $m_0$. The following well-known theorem due to Baldwin and Lachlan \cite[Theorem 6.3.7]{tentziegler} classifies all countable models of uncountably categorical theories.

\begin{theorem}[Baldwin--Lachlan]
    Let $T$ be an uncountably categorical first-order theory in a countable signature. Then for each cardinal $\kappa\geq m_0$, there is a unique model $\A_\kappa$ with $\dim_{\phi,\psi}(\A_\kappa)=\kappa$.
\end{theorem}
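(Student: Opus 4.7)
The plan is to establish this as a standard consequence of $\omega$-stability together with Vaughtian-pair arguments, working in a large ambient monster model $\mathfrak M$ of $T$. Recall from the preceding remarks that uncountable categoricity forces $T$ to be totally transcendental (via Morley), that $\psi(\vy)$ isolates the type of $\vec a$ over $\emptyset$, and that for any $\vec b \models \psi$ the formula $\phi(x, \vec b)$ defines a strongly minimal set, so that algebraic closure restricted to $\phi(\mathfrak M, \vec b)$ gives a pregeometry. The first preliminary step is to verify that $\dim_{\phi,\psi}(\A)$ is well defined: any two realizations $\vec a, \vec a'$ of $\psi$ in $\A$ are conjugate by an automorphism of $\mathfrak M$ (since $\psi$ isolates a complete type), and such a conjugation sends a transcendence basis of $\phi(\A,\vec a)$ to one of $\phi(\A,\vec a')$, so the cardinality is the same.

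For existence, given $\kappa \geq m_0$, I would fix some $\vec a \in \A_0$ realizing $\psi$ and pick a set $I \subseteq \phi(\mathfrak M, \vec a)$ of $\acl$-independent elements with $|I| = \kappa$. Since $T$ is totally transcendental, there is a prime model $\A_\kappa$ over $\vec a \cup I$; a short argument using strong minimality and the fact that $\phi(\A_0,\vec a)$ is exchanged along automorphisms shows $\dim_{\phi,\psi}(\A_\kappa) = \kappa$ (nothing new is added to the pregeometry by taking the prime hull, and nothing is lost because $I$ is already independent). The case $\kappa = m_0$ is witnessed by $\A_0$ itself.

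For uniqueness, the key step is to show that if $\A, \A' \models T$ with $\dim_{\phi,\psi}(\A) = \dim_{\phi,\psi}(\A') = \kappa$, then $\A \cong \A'$. I would pick realizations $\vec a \in \A$ and $\vec a' \in \A'$ of $\psi$ and transcendence bases $I \subseteq \phi(\A,\vec a)$ and $I' \subseteq \phi(\A',\vec a')$, both of size $\kappa$. Since $\psi$ isolates $\tp(\vec a/\emptyset)$ and $I$, $I'$ are indiscernible independent sequences over $\vec a$, $\vec a'$ in their respective strongly minimal sets, the map $\vec a \mapsto \vec a'$ together with any bijection $I \to I'$ is elementary. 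Now $\A$ is atomic and prime over $\vec a \cup I$ (this is the heart of the Baldwin--Lachlan analysis: by the non-existence of Vaughtian pairs in an uncountably categorical theory, no proper elementary substructure contains $\phi(\A,\vec a)$, which forces $\A$ to be prime and minimal over it), and likewise for $\A'$ over $\vec a' \cup I'$. Uniqueness of prime models in totally transcendental theories then lifts the elementary bijection to an isomorphism $\A \to \A'$.

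The main obstacle is the no-Vaughtian-pair step, which is where uncountable categoricity is genuinely used: one must show that $\A$ is both prime and minimal over $\phi(\A,\vec a) \cup \vec a$, equivalently that there is no elementary $\B \precneq \A$ with $\phi(\B,\vec a) = \phi(\A,\vec a)$. This is proved by a two-cardinal argument: if such a Vaughtian pair existed, one would iterate it to produce models of the same uncountable cardinality with differing dimensions of $\phi(-,\vec a)$, contradicting uncountable categoricity. Once this is in place, the rest of the argument is a comparatively routine combination of $\omega$-stability (prime model existence and uniqueness) and strong minimality (dimension theory).
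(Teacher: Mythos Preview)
The paper does not give its own proof of this statement: it is quoted as a classical result due to Baldwin and Lachlan, with a reference to \cite[Theorem~6.3.7]{tentziegler}, and used as a black box in the subsequent argument. So there is no proof in the paper to compare your proposal against.

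That said, your sketch is the standard outline of the Baldwin--Lachlan argument and is essentially correct. One small point worth tightening: in the existence step you should check more carefully that the prime model over $\vec a \cup I$ really has dimension exactly $\kappa$ and not larger---this uses that in a prime (hence atomic) model over $\vec a \cup I$, every element of $\phi(\A_\kappa,\vec a)$ is algebraic over $\vec a \cup I$, which follows from strong minimality (any nonalgebraic element would realize the generic type, which is not isolated over an infinite independent set). Also, your claim that any two realizations of $\psi$ in $\A$ are conjugate by an automorphism of $\mathfrak M$ does not by itself give that the dimensions computed \emph{inside} $\A$ agree; you need that the automorphism can be taken to fix $\A$ setwise, or else a direct argument comparing the two pregeometries within $\A$. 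But these are refinements of a correct plan, not genuine gaps.
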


In particular, since $m_0\leq\aleph_0$, this means that either $T$ is countably categorical, or it has $\aleph_0$-many countable models up to isomorphism. We shall see now how its $\eso$-completion is satisfied only by the countable model of dimension $\aleph_0$.

\begin{theorem}\label{ESO categoricity transfer down to countable}
    If $T$ is a complete theory in $\eso$ in a finite signature and its first-order reduct $T^*$ is uncountably categorical, then $T$ is categorical in all infinite cardinalities.
\end{theorem}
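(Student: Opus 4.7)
The plan is to handle uncountable and countable cardinalities separately. For $\kappa \geq \aleph_1$, any model of $T$ is a model of $T^*$ of cardinality $\kappa$, and uncountable categoricity of $T^*$ directly yields categoricity of $T$ in that cardinality. For $\kappa = \aleph_0$, I would invoke the Baldwin--Lachlan classification recalled just before the theorem: since $T^*$ is uncountably categorical in a finite signature, countable models of $T^*$ are classified up to isomorphism by their dimension $\dim_{\phi,\psi}(\A) \in \{m_0, m_0+1, \ldots, \aleph_0\}$. If $m_0 = \aleph_0$, so that $T^*$ is $\aleph_0$-categorical, the theorem is immediate; otherwise it suffices to show that among the countable models $\A_{m_0} \prec \A_{m_0+1} \prec \cdots \prec \A_\omega$, only the saturated one $\A_\omega$ models $T$.

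The positive direction ($\A_\omega \models T$) is straightforward: $\A_\omega$ is countable and saturated, hence resplendent. By \cref{complete.resplendent}, $(\A_\omega)_{\A_\omega}$ is complete, and since any elementary team embedding $\f\colon \A_\omega \to \B$ lifts canonically to an elementary team embedding $(\A_\omega)_{\A_\omega} \to \hat\B$ (where $\hat\B$ is the $\tau(\A_\omega)$-expansion of $\B$ interpreting each constant $\underline a$ as $\f(a)$), it follows that $\A_\omega$ itself is a complete model of $T^*$. Hence $\A_\omega \models \bar{T^*} = T$ by \cref{complete-theory-models}.

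The negative direction --- showing $\A_n \not\models T$ for each $n < \aleph_0$ --- is the crux of the proof. The strategy is to exhibit a single $\eso$-sentence $\sigma$ that is true in $\A_\omega$ (hence consistent with $T^*$, hence in $\bar{T^*} = T$) but fails in $\A_n$. The natural candidate asserts that the pregeometry induced by algebraic closure on the strongly minimal set $\phi(\cdot, \vec a)$ admits arbitrarily large algebraically independent subsets over $\vec a$, a property clearly violated in a finite-dimensional model. The main obstacle is the $\eso$-expressibility of this "infinite-dimensionality" statement: a direct formulation of algebraic independence requires a universal quantifier over first-order formulas of unbounded quantifier rank and so is prima facie $\Pi^1_1$ rather than $\Sigma^1_1$. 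To circumvent this I would use that $T^*$ is $\omega$-stable of finite Morley rank and that the signature is finite, so the generic type of $\phi$ is uniformly definable and $\acl$-classes within $\phi$ can be described via existentially quantified second-order witnesses; encoding the disjointness of $\acl$-classes on an infinite subset of $\phi$ as such a second-order datum should produce the desired $\eso$-sentence.
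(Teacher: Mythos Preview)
Your overall decomposition agrees with the paper's: uncountable cardinalities are immediate from categoricity of $T^*$, and for $\aleph_0$ one uses the Baldwin--Lachlan dimension to reduce to showing that every countable model of $T$ has $\dim_{\phi,\psi} = \aleph_0$. Your positive direction for $\A_\omega$ via resplendence is correct and is a pleasant alternative to the paper's implicit appeal to L\"owenheim--Skolem; the paper does not argue this separately but simply shows that any countable model of $T$ already has dimension $\aleph_0$.

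The genuine gap is in your negative direction. You correctly identify that expressing ``$b \notin \acl(\vec a, b_0,\dots,b_{n-1})$'' is prima facie $\Pi^1_1$, but your proposed workaround---uniform definability of the generic type, or describing $\acl$-classes by existentially quantified second-order data---does not go through as stated. Uncountably categorical theories need not have definable Morley rank, and without $\aleph_0$-categoricity there is no finite list of formulas in each arity to quantify over; so there is no evident first-order formula $\gamma_n(x,\vec y,\vec z)$ that uniformly picks out the generic extension of $\phi(x,\vec z)$ over $\vec y\vec z$. ``Disjointness of $\acl$-classes'' has the same problem, and in any case pairwise $\acl$-independence does not yield an infinite independent set unless the pregeometry is trivial.

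The paper's key device, which you are missing, is the one place where the finite-signature hypothesis is actually used: one can write down a first-order formula $\mathrm{aut}(f)$ saying that $f$ is a $\tau$-automorphism. Then ``$b$ and $c$ have the same type over the finite set $\{\vec a, b_0,\dots,b_{n-1}\}$'' becomes the $\eso$-expressible condition ``there exists $f$ with $\mathrm{aut}(f)$, $f(b)=c$, and $f$ fixes $\vec a, b_0,\dots,b_{n-1}$''. The paper then writes, for each $n$, an $\eso$-sentence $\Phi_n$ asserting that over any $\vec a\models\psi$ and any $b_0,\dots,b_{n-1}$ there is an infinite set of pairwise conjugates (over $\vec a, b_0,\dots,b_{n-1}$) inside $\phi(\cdot,\vec a)$; any such infinite set lies outside $\acl(\vec a,b_0,\dots,b_{n-1})$, so in a model of $\bigwedge_n \Phi_n$ one can recursively build an infinite independent sequence. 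Each $\Phi_n$ is checked in any uncountable model of $T^*$ (which is $\aleph_0$-saturated), hence lies in $T$, and the recursion then runs inside any countable model of $T$. This replaces your single undetermined sentence $\sigma$ by the countable scheme $\{\Phi_n\}_{n<\omega}$ and sidesteps the definability-of-$\acl$ issue entirely.
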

\begin{proof}
    First, notice that since the signature $\tau$ of $T$ is finite, $\eso$ can talk about automorphisms of a $\tau$-structure. In particular, for any second-order function variable $f$, there is a $\tau$-formula $\aut(f)$ such that
    \[
        \A\models\aut(\pi) \iff \text{$\pi$ is a $\tau$-automorphism of $\A$}.
    \]
    Also, $\eso$ can express when a set is infinite, namely there is a formula $\inf(X)$ in $\eso$ such that:
    \[
        \A\models \inf(A) \iff  |A|\geq \aleph_0.
    \]
    
    Let $\A_0$ be a prime model of $T^*$, and let $\phi(x,\vec{a})$ be strongly minimal, with $\vec{a}\in\A_0$ being a tuple $\vec{a}=(a_0,\dots,a_{m-1})$ for some $m<\omega$. Let $\theta(\vec{v})$ isolate $\foltype(\vec{a}/\emptyset)$. Now, for all $n<\omega$, let $\Phi_n$ be the following sentence of $\sol$:
    \begin{gather*}
        \forall v\big(\theta(\vec{v})\to\forall x_0\dots\forall x_{n-1}\exists X\big(\inf(X)\land\forall y(X(y)\to\phi(y,\vec{v}))\land\psi\big) \big),
    \end{gather*}
    where $\psi(\vec{v},x_0,\dots,x_{n-1},X)$ is the formula
    \[
        \forall y\forall z\bigg(\big(X(y)\land X(z)\big)\to\exists f\bigg(\aut(f) \land f(y) = z \land f(\vec{v}) = \vec{v} \land \bigwedge_{i<n}f(x_i) = x_i\bigg)\bigg).
    \]
    In other words, $\Phi_n$ expresses that whenever $\vec{c}$ is such that $\varphi(x,\vec{c})$ is strongly minimal, then for any $d_0,\dots,d_{n-1}$ there are infinitely many elements of the strongly minimal set such that any two of them are conjugates over $\vec{c},d_0,\dots,d_{n-1}$. One can easily check that $\Phi_n\in\eso$.

    First of all, we show that $T\models\Phi_n$ for all $n<\omega$. Let $\B$ be an uncountable elementary extension of $\A_0$. As $T$ has a model in every infinite cardinality (because of Löwenheim--Skolem) and $T^*$ is uncountably categorical, we must have $\B\models T$. Also, recall that since $T$ is uncountably categorical it follows that $\B$ is saturated. It is enough to show that $\B\models\Phi_n$ for all $n<\omega$. For this, fix $n$, let $\vec{b}\in\B$ be a tuple $\vec{b}=(b_0,\dots,b_{m-1})$ such that $\B\models\theta(\vec{b})$ and let $e_0,\dots,e_{n-1}\in\B$ be arbitrary. Now $\varphi(x,\vec{b})$ is strongly minimal, so it has a unique non-forking extension $p\in S(\{b_0,\dots,b_{m-1},e_0,\dots,e_{n-1}\})$. In particular, $p$ is not algebraic. Hence, as $\B$ is $\aleph_0$-saturated, $p$ is realized in $\B$, and as $p$ is not algebraic, there are actually infinitely many $c\in\B$ with $c\models p$. Let $C$ be the set of them. Now $C\subseteq\phi(\B,\bar{b})$ and any two elements of $C$ are conjugates over $\{b_0,\dots,b_{m-1},e_0,\dots,e_{n-1}\}$. Hence we have that $\B\models\Phi_n$.

    In order to show that $T$ is $\aleph_0$-categorical, it is enough to show that any countable model $\A$ of $T$ has dimension $\aleph_0$. Let $\A$ be a countable model of $T$ and let $\phi(\A,\vec{a})$ be strongly minimal with $\vec{a}=(a_0,\dots,a_{m-1})$. We construct by recursion an independent sequence $e_i$, $i<\omega$, in the pregeometry $(\phi(\A,\vec{a}),\acl)$. Suppose that we have already found independent $e_0,\dots,e_{n-1}$. As $T\models\Phi_n$, we have $\A\models\Phi_n$. Hence there is an infinite set $D\subseteq\phi(\A,\vec{a})$ such that any two elements of $D$ are conjugates over $\{a_0,\dots,a_{m-1},e_0,\dots, e_{n-1}\}$. It follows that all elements of $D$ have the same type over $\{a_0,\dots,a_{m-1},c_0,\dots, c_{n-1}\}$ and, since $D$ is infinite, it must be that $D\subseteq\phi(\A,\vec{a})\setminus\acl(\{a_0,\dots,a_{m-1},e_0,\dots, e_{n-1}\})$. Finally, it follows that any element of $D$ is algebraically independent of $a_0,\dots,a_{m-1},e_0,\dots, e_{n-1}$, whence it can be chosen as $e_n$.
\end{proof}

\begin{example}
    The motivating example for the result above is given by the theories of algebraically closed fields. In particular, let $T_{\mathrm{ACF}_0}$ be the theory of algebraically closed fields of characteristic 0 and let $\bar T_{\mathrm{ACF}_0}$ be its (unique) $\eso$-completion. Then the previous theorem shows that $\bar T_{\mathrm{ACF}_0}$ is $\aleph_0$-categorical and its only countable model is the algebraic closure of the field $\mathbb Q(t_0,t_1,\dots)$ of polynomial fractions over the rational numbers.
\end{example}

\subsubsection{Upwards Categoricity Transfer}\label{section.upwards.categoricity}

We conclude the paper by proving, in this section, an upwards categoricity transfer result for complete theories in existential second-order logic. In particular, we show that under the assumptions of $\omega$-stability and 1-basedness, $\aleph_0$-categoricity of the first-order reduct $T^*$ entails uncountable categoricity of the theory $T$. 

We follow in this section the standard notation and terminology from classification theory. In particular, we use the symbol $\ind$ to refer to forking independence and we identify the strong type $\stp(\vec a/C)$ of a tuple $\vec a$ over parameters $C$ with the set of all equivalence classes $E(\vx,\vec a)$, where $E$ is any finite equivalence relation definable over $C$. Recall in particular that, in this context, ``finite'' means that $E$ has only finitely many equivalence classes, not that it is finite as a set. The set of finite equivalence relations over $C$ we denote by $\FE(C)$.  As the notion of a $1$-based theory is less standard, we recall its definition and one of its key properties. The result is folklore, but we refer the reader to \cite[Fact 3.1]{de2003geometry} for a proof. The converse of the lemma is also true, but we shall not need it. In this subsection, we denote by $\M$ the ordinary elementary monster model of the first-order theory $T^*$, which is a $\kappa$-universal, $\kappa$-homogeneous and $\kappa$-saturated model for a large enough cardinal $\kappa$.

\begin{definition}
    A first-order theory $T$ is 1-based if for all $A,B\subseteq \M^{\mathrm{eq}}$, we have $A\Ind_{\acleq(A)\cap \acleq(B)} B$.
\end{definition}

\begin{lemma}\label{1-based.lemma}
    Suppose $T$ is 1-based and $C$ is finite. If $(a_i)_{i\in I}$ is $C$-indiscernible, then for all $0<i<j$ we have $a_i\ind_{Ca_0}(a_j)_{i\neq j\in I}$.
\end{lemma}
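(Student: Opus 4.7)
Every 1-based theory is stable, so I will work with forking and canonical bases. Recall that in a stable theory a strong type $p \in S(D)$ does not fork over $C \subseteq D$ if and only if $\operatorname{Cb}(p) \subseteq \acleq(C)$. The strategy is to compute the canonical base of $\stp(a_i / Ca_0 (a_k)_{k \in I,\, k \neq 0, i})$ and show, using 1-basedness and indiscernibility, that it lies in $\acleq(Ca_0)$.

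First I would verify the auxiliary claim that the sub-sequence $(a_k)_{k \in I,\, k \neq 0}$ is $Ca_0$-indiscernible. Given two strictly increasing $n$-tuples drawn from this sub-sequence, prepending $a_0$ produces two strictly increasing tuples in the original $C$-indiscernible sequence, so they share the same type over $C$; moving $a_0$ back into the parameter set, the original tuples share the same type over $Ca_0$.

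Next I would fix $i > 0$, set $\vec b := (a_k)_{k \in I,\, k \neq 0, i}$ and $e := \operatorname{Cb}(\stp(a_i / Ca_0 \vec b))$. The target independence $a_i \ind_{Ca_0} (a_j)_{j \in I,\, j \neq i}$ rewrites as $a_i \ind_{Ca_0} \vec b$ (since $a_0 \in Ca_0$), which by the canonical-base criterion is equivalent to $e \in \acleq(Ca_0)$. Applying 1-basedness to the canonical base yields $e \in \acleq(Ca_i) \cap \acleq(Ca_0 \vec b)$; since algebraicity factors through a finite sub-tuple, there is a finite $\{a_{k_1}, \ldots, a_{k_m}\} \subseteq \vec b$ with $e \in \acleq(Ca_0 a_{k_1} \cdots a_{k_m})$.

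The main obstacle is then promoting $e \in \acleq(Ca_i)$ to $e \in \acleq(Ca_0)$. The plan is to exploit $Ca_0$-indiscernibility of $(a_k)_{k \neq 0}$: by compactness I would extend this sequence to a $Ca_0$-indiscernible sequence of length greater than $|\acleq(Ca_0 a_{k_1} \cdots a_{k_m})|$, avoiding $\{a_{k_1}, \ldots, a_{k_m}, a_i\}$. For each new element $a_k$ of the extension, a $Ca_0 a_{k_1} \cdots a_{k_m}$-automorphism of the monster model sending $a_i$ to $a_k$ carries $e$ to some $e_k \in \acleq(Ca_0 a_{k_1} \cdots a_{k_m}) \cap \acleq(Ca_0 a_k)$. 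All such $e_k$ lie inside the finite set $\acleq(Ca_0 a_{k_1} \cdots a_{k_m})$, so by pigeonhole the assignment $k \mapsto e_k$ is constant on an infinite sub-sequence; indiscernibility then forces $e$ to be fixed by every $Ca_0$-automorphism permuting that sub-sequence, whence $e \in \acleq(Ca_0)$. This is precisely the folklore 1-basedness argument behind the cited reference.
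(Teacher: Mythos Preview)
The paper does not give a proof of this lemma: it explicitly flags the statement as folklore and refers the reader to \cite[Fact~3.1]{de2003geometry}. So there is no ``paper's own proof'' to compare against; your sketch is being measured only against the referenced folklore argument.

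Your first four steps are correct and are exactly the standard opening: pass to the $Ca_0$-indiscernible subsequence $(a_k)_{k\neq 0}$, set $e=\operatorname{Cb}(\stp(a_i/Ca_0\vec b))$, use 1-basedness to get $e\in\acleq(Ca_i)$, and use finite character to get $e\in\acleq(Ca_0a_{k_1}\cdots a_{k_m})$. Reducing the goal to $e\in\acleq(Ca_0)$ is also the right reformulation.

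The gap is in the final paragraph. After your pigeonhole you have a single element $e^*$ (a conjugate of $e$ over $Ca_0a_{k_1}\cdots a_{k_m}$) with $e^*\in\acleq(Ca_k)$ for infinitely many $k$ in the extended sequence. From this you assert ``indiscernibility then forces $e$ to be fixed by every $Ca_0$-automorphism permuting that sub-sequence, whence $e\in\acleq(Ca_0)$.'' Neither clause is justified. An automorphism over $Ca_0$ that merely permutes your infinite sub-sequence $K$ need not fix $a_i$ or the $a_{k_\ell}$'s, so it need not fix $e$; and even granting that $e^*$ lies in $\acleq(Ca_k)$ for infinitely many $k$, concluding $e^*\in\acleq(Ca_0)$ is exactly where one would want to invoke $a_k\ind_{Ca_0}a_{k'}$ for distinct $k,k'\in K$ --- which is the statement being proved. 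As written, the argument is circular at this point.

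What actually closes the gap in the cited reference is a further use of the structure of canonical bases: one shows that any automorphism fixing $Ca_i$ and permuting $\{a_j:j\neq i\}$ fixes $e$ (because it fixes the global nonforking extension whose canonical base is $e$), so that the finite tuple $a_{k_1},\dots,a_{k_m}$ witnessing $e\in\acleq(Ca_0a_{k_1}\cdots a_{k_m})$ can be slid to any other tuple of the same length while keeping the \emph{same} $e$; intersecting over disjoint such tuples (now with the same $e$, not conjugates $e_k$) then does give $e\in\acleq(Ca_0)$. Your pigeonhole produces varying conjugates $e_k$ rather than the fixed $e$, and that is precisely why the last inference does not go through.
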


Many steps in the proof of the following theorem are well-known results among stability theorists, but we include them for completeness.

\begin{theorem}\label{ESO categoricity transfer up to uncountable}
    Let $T$ be a complete $\eso$-theory and let $T^*$ be its first-order reduct. If $T^*$ is  $\omega$-stable, $\aleph_0$-categorical and  1-based, then $T$ is uncountably categorical.
\end{theorem}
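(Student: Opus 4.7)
The plan is to reduce to $\tau$-categoricity of $T^*$. Since every $T$-model is a $T^*$-model and any $\tau$-isomorphism automatically preserves all $\eso$-content, it suffices to show that any two models $\A, \B \models T$ of the same uncountable cardinality $\kappa$ are isomorphic as $\tau$-structures; such models exist in each infinite cardinality by the Löwenheim--Skolem theorem for $\eso$ of \cref{Section_Preliminaries}. The real task is therefore to establish uncountable categoricity of $T^*$ from the three hypotheses of $\omega$-stability, $\aleph_0$-categoricity, and 1-basedness, after which $T$-categoricity follows immediately.

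I would proceed by adapting the Baldwin--Lachlan analysis to the $\aleph_0$-categorical setting. By $\omega$-stability and $\aleph_0$-categoricity of $T^*$, there is a formula $\phi(x, \vec y)$ and a formula $\psi(\vec y)$ isolating a complete type in the prime model such that for every $\C \models T^*$ and every $\vec c$ with $\C \models \psi(\vec c)$, the set $\phi(\C, \vec c)$ is strongly minimal. By $\omega$-stability, every model $\C$ is prime and minimal over $\phi(\C, \vec c) \cup \{\vec c\}$, and its isomorphism type is determined by the dimension of $\phi(\C, \vec c)$. For a model of cardinality $\kappa > \aleph_0$, this dimension must be exactly $\kappa$: since prime models in an $\omega$-stable theory over sets of cardinality $\lambda \geq \aleph_0$ have cardinality $\lambda$, and the closure of a strongly minimal set has cardinality at most its dimension plus $\aleph_0$.

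The role of 1-basedness is to control the pregeometry on the strongly minimal set. By \cref{1-based.lemma}, any $\vec c$-indiscernible sequence $(a_i)_{i \in I}$ in $\phi(\C, \vec c)$ satisfies $a_i \ind_{\vec c a_0} (a_j)_{j \in I, j \neq i}$ for all $0 < i$, giving a strong modularity that allows a clean back-and-forth between independent sequences of the same cardinality in two different models of $T^*$. Using $\aleph_0$-categoricity to ensure the parameters $\vec a \in \A$ and $\vec b \in \B$ may be chosen to realize the same complete type $\psi$, this produces a partial elementary bijection $\phi(\A, \vec a) \cup \{\vec a\} \to \phi(\B, \vec b) \cup \{\vec b\}$, which by uniqueness of prime model extensions in $\omega$-stable theories extends to a $\tau$-isomorphism $\A \to \B$.

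The main technical obstacle will be the careful execution of the back-and-forth and its extension to the full models. Once the strongly minimal sets have been matched as above, one must verify that the prime-model extension is compatible with the partial elementary bijection (a standard consequence of $\omega$-stability, but requiring one to check that the two prime-model constructions agree along the match of strongly minimal sets). The resulting $\tau$-isomorphism then automatically preserves all $\eso$-sentences, yielding $\A \cong \B$ as $\eso$-models and completing the proof of uncountable categoricity of $T$.
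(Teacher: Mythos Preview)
Your approach has a genuine gap: you aim to prove that the first-order reduct $T^*$ is uncountably categorical, but this is simply false under the stated hypotheses. The paper's own example $T_E$ (an equivalence relation with infinitely many infinite classes) is $\omega$-stable, $\aleph_0$-categorical and 1-based, yet has $2^\kappa$ non-isomorphic models in every uncountable $\kappa$ (varying the sizes of the classes). What the theorem asserts is that the $\eso$-completion $T$ is uncountably categorical, not $T^*$; the $\eso$-completeness of $T$ is doing essential work and cannot be discarded.

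The specific step that fails is your claim that ``by $\omega$-stability, every model $\C$ is prime and minimal over $\phi(\C,\vec c)\cup\{\vec c\}$''. This is precisely the no-Vaughtian-pairs / unidimensionality condition that characterises uncountably categorical theories among the $\omega$-stable ones, and it does not follow from 1-basedness or $\aleph_0$-categoricity. In $T_E$ a single equivalence class is strongly minimal, but the model is certainly not prime over one class.

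The paper's proof takes a completely different route. It shows that every uncountable model $\A\models T$ is saturated. The key idea is that, for each finite parameter set $C$ and stationary type over $C$, one can write an $\eso$-sentence $\phi$ asserting the existence of a $C$-independent set of realisations of that type of the same size as the model (using $\aleph_0$-categoricity to make both the type and the independence relation first-order definable over $\emptyset$). The saturated monster $\M$ is resplendent, hence $\M\models T$, and $\M\models\phi$; by completeness of $T$ this forces $\A\models\phi$ as well. From the resulting long independent sequence in $\A$, the paper then uses Fodor's lemma, 1-basedness (via \cref{1-based.lemma}), and the locality and transitivity of non-forking to extract a realisation of any given type over a small set. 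Thus 1-basedness is used not to control a strongly minimal pregeometry, but to turn an independent sequence over a finite set into one that is genuinely free over $C$ from the rest of the sequence, enabling the saturation argument.
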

\begin{proof}
    Let $\lambda$ be an uncountable cardinal. We show that all models of $T$ of cardinality $\lambda$ are saturated, from which $\lambda$-categoricity follows. Let $\A$ be such a model, and let $p$ be a type over some parameter set of cardinality $<\lambda$. The general strategy of the proof is to show that $p$ is realised in $\A$. Without loss of generality, $p\in S_1(\B)$ for some elementary submodel $\B\preceq\A$ of power $<\lambda$. We denote $\kappa = |\B|$.

    Let $\M\succeq \A$ be the (first-order) monster model of $T^*$. Since $T^*$ is $\omega$-stable, we may have $\M$ be saturated. Note that a saturated structure is resplendent: as it has a resplendent elementary extension of the same cardinality and is itself saturated, it must be isomorphic to the extension. It thus follows from \cref{complete-theory-models} and \cref{complete.resplendent} that $\M\models T$.
    
    Let $a\in \M$ realize $p$. By superstability of $T^*$, there is a finite $B\subseteq\B$ such that $a\ind_{B}\B$. Since $T^*$ is $\aleph_0$-categorical, by Ryll-Nardzewski, there are only finitely many non-equivalent formulas with free variables $x,y$ and parameters from $B$. In particular, there is a finite list of formulas $\delta_i(x,y,\vec d_i)$, $i<n$, defining all finite equivalence relations over $B$. We let $E_i\in\FE(B)$ be the relation defined by $\delta_i$. Now, for each $i<n$, pick $c_i\in\B$ such that $(a,c_i)\in E_i$. The type $\foltype(a / B\cup\{c_i\mid i<n\})$ has a finite number of parameters, so by $\aleph_0$-categoricity it is isolated by some formula and thus  realised in $\B$ by some element $b$. Then we have, for all $i<n$,
    \begin{align*}
        (a,c_i)\in E_i &\iff \delta_i(x,c_i,\vec d_i)\in \foltype(a / B\cup\{c_i\mid i<n\}) \\
        &\iff \delta_i(x,c_i,\vec d_i)\in \foltype(b / B\cup\{c_i\mid i<n\}) \\
        &\iff (b,c_i)\in E_i,
    \end{align*}
    and so we obtain that $\stp(a / B) = \stp(b / B)$.

    Let $C = B\cup\{b\}$. We claim that $\foltype(a / C)$ ($= p\restriction C$) is stationary, i.e. has a unique non-forking extension over $\B$. To this end, suppose that $c,d\models\foltype(a/C)$, $c\ind_C\B$ and $d\ind_C\B$. Since $a\ind_B\B$, by monotonicity $a\ind_B C$. As $\foltype(c / C) = \foltype(a / C)$, we then obtain $c\ind_B C$. But then from $c\ind_B C$ and $c\ind_C \B$, transitivity gives $c\ind_B \B$. Similarly $d\ind_B \B$. 
    Now, for every $E\in\FE(B)$, if $\delta(x,y,\vec d)$ is the defining formula of $E$, then since $(a,b)\in E$, we have $\delta(x,b,\vec d)\in\foltype(a/C)$ and so $\delta(x,b,\vec d)\in\foltype(c/C),\foltype(d/C)$. Hence $(c,b),(d,b)\in E$. It follows that $(c,d)\in E$. Hence $\stp(c/C) = \stp(d/C)$. Thus, as $c$ and $d$ have the same strong type over $C$ and both of them are independent of $\B$ over $C$, by stationarity of strong types this means that $\foltype(c/\B) = \foltype(d/\B)$. Hence $\foltype(a/C)$ is stationary.
    In particular, we have so far found an element $a$ and a finite $C\subseteq\B$ such that $a\ind_C\B$, $\foltype(a/C)$ is stationary and $a\models p$.

    Now, since $\M$ is saturated, we can find a sequence $(b_i)_{i<|\M|}$ such that $b_i\models p\restriction C$ for all $i<|\M|$  and $b_i\ind_C b_j$ for all $i,j<|\M|$. We claim that the existence of such sequence can be expressed via a sentence of $\eso$. First we define the following formulas:
    \begin{enumerate}
        \item We write $\max(X)$ for the formula saying that there is a bijection between the background structure and $X$. This is clearly expressible in $\eso$.
        
        \item We let $\psi(x,\vec{y})$ be a formula isolating $\foltype(aC/\emptyset)$ and $\chi(\vec{y})$ be a formula isolating $\foltype(C/\emptyset)$. The existence of such formulas follows from $\aleph_0$-cate\-goricity.
        
        \item We let $\theta(x,y,\vz)$ be a first-order formula such that $\M\models\theta(d_0,d_1,\vec e)$ if and only if $d_0\ind_{\vec e}d_1$. The reason why such a formula exists is the following. First of all, if $d_0\ind_{\vec e} d_1$ and $\pi\in\Aut(\M)$, then $\pi(d_0)\ind_{\pi(\vec e)}\pi(d_1)$. Hence the property of $d_0$ being independent of $d_1$ over $\vec e$ only depends on the type $\foltype(d_0d_1\vec e / \emptyset)$. By Ryll-Nardzewski, there are only finitely many $|\vec e|+2$-types over $\emptyset$, so in particular there are only finitely many $q$ such that if $d_0d_1\vec e\models q$, then $d_0\ind_{\vec e}d_1$, so let $q_i$, $i<m$, list all of them. For each $i<m$, let $\theta_i(x,y,\vz)$ isolate $q_i$. Now clearly
        \[
            d_0\ind_{\vec e}d_1 \iff \M\models\bigvee_{i<m}\theta_i(d_0,d_1,\vec{e}).
        \]
        So we may choose $\theta = \bigvee_{i<m}\theta_i$.
    \end{enumerate}
    We then let $\phi$ be the sentence
    \begin{align*}
        \forall \vec{z}\ \big(&\chi(\vec{z})\to \exists X \big( \max(X) \land \forall x(X(x)\to\psi(x,\vec{z})) \land{} \\
        &\; \forall x\forall y((X(x)\land X(y)\land\neg x = y)\to\theta(x,y,\vec z))\big)\big).
    \end{align*}
    The sequence $(b_i)_{i<|\M|}$ witnesses that $\M\models\phi$, whence $T\models\phi$, and so $\A\models\phi$. Then in $\A$, we can find a sequence of length $|\A|$ of elements that are independent of each other over $C$ and realize the type $\foltype(aC/\emptyset)$. Hence they also realize the type $\foltype(a/C)$. It follows that, in $\A$, there is a sequence of distinct elements $(a_i)_{i<\kappa^+}$ such that $a_i\models p\restriction C$ for all $i<\kappa^+$  and $a_i\ind_C a_j$ for all $i,j<\kappa^+$. 

    We use some combinatorics to turn the former into a $C$-indiscernible sequence. By the locality property of non-forking and superstability of $T^*$, for any limit ordinal $i<\kappa^+$, there is $\alpha_i<i$ such that $a_i\ind_{C\cup \{a_j\mid j<\alpha_i \}} \{a_j \mid j<i\}$. Since the set of all limit ordinals below $\kappa^+$ is stationary in $\kappa^+$, we may apply Fodor's Lemma (see e.g. \cite[Thm. 8.7]{jech}) to find $\alpha<\kappa^+$ and a stationary $X\subseteq\kappa^+$ such that $a_i\ind_{C\cup \{a_j\mid j\in \alpha\cap X \}}\{a_j \mid j\in i\cap  X\}$ for all $i\in X$. Now, by locality, for every $i\in X$ there is a finite set $C_i\subseteq C\cup \{a_j\mid j\in\alpha\cap X  \}$ such that $a_i\ind_{C_i} \{a_j \mid j\in i\cap  X\}$. Since $|(\alpha\cap X)^{<\omega}|\leq\kappa$, there are at most $\kappa$-many such finite sets $C_i$, so by the pidgenhole principle there is $Y\subseteq X$ of power $\kappa^+$ and a finite set $D\subseteq C\cup \{ a_j\mid j<\alpha \}$ such that $C\subseteq D$ and for all $i\in Y$ we have $a_i\ind_{D} \{a_j \mid j\in i\cap Y \}$. Furthermore, we can choose $Y$ so that $\stp(a_i/D) = \stp(a_j/D)$ for all $i\in Y$ because the number of strong types over a finite set of parameters is countable. Then, by the stationarity of strong types, we obtain in particular that $(a_i)_{i\in Y}$ is indiscernible over $C$.

    By reindexing, we may assume that $Y=\kappa^+$, so $(a_i)_{i<\kappa^+}$ is $C$-indiscernible. Since $T^*$ is 1-based, it follows from \cref{1-based.lemma} that $a_1\ind_{Ca_0}(a_i)_{1<i<\kappa^+}$. Since we also have that $a_1\ind_{C}C a_0$, it follows by the transitivity of non-forking that $a_1\ind_{C}(a_i)_{1<i<\kappa^+}$.

    By locality of non-forking, for all finite tuples $\vec{b}\in\B^{<\omega}$, there is an ordinal $\gamma_{\vec{b}}$ such that $\vec{b}\ind_{C\cup \{a_j\mid 0<j< \gamma_{\vec{b}}\}} (a_j)_{0<j<\kappa^+}$. Let $\gamma=\sup\{\gamma_{\vec{b}} \mid \vec{b}\in \B^{<\omega} \}$. Now, as $\kappa^+$ is regular and $|\B^{<\omega}| = \kappa < \kappa^+$, it follows that $\gamma<\kappa^+$. Then $\vec{b}\ind_{C\cup \{a_j\mid 0<j< \gamma\}} (a_j)_{0<j<\kappa^+}$ for all $\vec b\in\B^{<\omega}$, which means by definition $\B\ind_{C\cup \{a_j\mid 0<j< \gamma\}} (a_j)_{0<j<\kappa^+}$. By symmetry and monotonicity of non-forking, this yields $a_\gamma \ind_{C\cup \{a_j\mid 0<j< \gamma\}} \B $. Moreover, by the fact that $(a_i)_{0<i<\kappa^+}$ is independent over $C$, we also have that $a_\gamma \ind_{C} C\cup \{a_i \mid 0<i <\gamma \}   $. It follows by the transitivity of non-forking that $a_\gamma \ind_{C} \B$. Finally, since we showed that $p\restriction C = \foltype(a/C)$ is stationary, we obtain $a_\gamma\models p$. As $a_\gamma\in\A$, this means that $p$ is realized in $\A$, which concludes the proof.
\end{proof}

\begin{example}
    Let $T_E$ be the first-order theory of a single equivalence relation $E(x,y)$ that partitions the domain into infinitely many infinite equivalence classes. Let $\bar T_E$ be its unique $\eso$-completion. Obviously $T_E$ is $\aleph_0$-categorical, and it is straightforward to verify that it is also $\omega$-stable and 1-based. Then the previous theorem shows that $\bar T_E$ is also uncountably categorical.
\end{example}

\printbibliography

@article{SAARINEN_2025, title={On the categoricity of complete second order theories}, DOI={10.1017/jsl.2025.32}, journal={The Journal of Symbolic Logic}, author={Saarinen, Tapio and Väänänen, Jouko Antero and Woodin, William Hugh}, year={2025}}

@book{baldwin_categoricity,
  title={Categoricity},
  author={Baldwin, J. T.},
  year={2009},
volume={50},
  publisher={American Mathematical Society}
}

@article{Hodges,
	author = {W. Hodges},
	volume = {5},
	pages = {539--563},
	publisher = {Oxford University Press},
	title = {Compositional Semantics for a Language of Imperfect Information},
	year = {1997},
	number = {4},
	journal = {Logic Journal of the IGPL}
}

@article{gradel2013dependence,
  title={Dependence and independence},
  author={Gr{\"a}del, Erich and V{\"a}{\"a}n{\"a}nen, Jouko},
  journal={Studia Logica},
  volume={101},
  number={2},
  pages={399--410},
  year={2013},
  publisher={Springer}
}

@article{lindstrom1969extensions,
  title={On extensions of elementary logic},
  author={Lindstr{\"o}m, Per},
  journal={Theoria},
  volume={35},
  number={1},
  pages={1--11},
  year={1969},
  publisher={Wiley Online Library}
}

@article{galliani2012inclusion,
  title={Inclusion and exclusion dependencies in team semantics—on some logics of imperfect information},
  author={Galliani, Pietro},
  journal={Annals of Pure and Applied Logic},
  volume={163},
  number={1},
  pages={68--84},
  year={2012},
  publisher={Elsevier}
}

@book{poizat2012course,
  title={A course in model theory: an introduction to contemporary mathematical logic},
  author={Poizat, Bruno},
  year={2012},
  publisher={Springer}
}

@article{de2003geometry,
  title={The geometry of 1-based minimal types},
  author={De Piro, Tristram and Kim, Byunghan},
  journal={Transactions of The American Mathematical Society},
  volume={355},
  number={10},
  pages={4241--4263},
  year={2003}
}

@inproceedings{shelah2006classification,
	title={Classification of non elementary classes II},
    subtitle={Abstract elementary classes},
	author={Shelah, Saharon},
	booktitle={Classification Theory: Proceedings of the US-Israel Workshop on Model Theory in Mathematical Logic held in Chicago, Dec. 15--19, 1985},
	pages={419--497},
	year={2006},
	organization={Springer}
}

@article{lieberman2016classification,
  title={Classification theory for accessible categories},
  author={Lieberman, Michael and Rosický, Jiří},
  journal={The Journal of Symbolic Logic},
  volume={81},
  number={1},
  pages={151--165},
  year={2016},
  publisher={Cambridge University Press}
}

@article{rosicky1997accessible,
  title={Accessible categories, saturation and categoricity},
  author={Jiří Rosický},
  journal={The Journal of Symbolic Logic},
  volume={62},
  number={3},
  pages={891--901},
  year={1997},
  publisher={Cambridge University Press}
}

@article {MR4594292,
    AUTHOR = {Kontinen, Juha and Yang, Fan},
     TITLE = {Complete logics for elementary team properties},
   JOURNAL = {The Journal of Symbolic Logic},
    VOLUME = {88},
      YEAR = {2023},
    NUMBER = {2},
     PAGES = {579--619},
      ISSN = {0022-4812,1943-5886},
   MRCLASS = {03B60},
  MRNUMBER = {4594292},
       DOI = {10.1017/jsl.2022.80},
       URL = {https://doi.org/10.1017/jsl.2022.80},
}

@article{beke2012abstract,
  title={Abstract elementary classes and accessible categories},
  author={Beke, Tibor and Rosick{\'y}, Jir{\'\i}},
  journal={Annals of Pure and Applied Logic},
  volume={163},
  number={12},
  pages={2008--2017},
  year={2012},
  publisher={Elsevier}
}

@book{adamek1994locally,
  title={Locally presentable and accessible categories},
  author={Ad{\'a}mek, Ji{\v{r}}{\'\i} and Rosick{\'y}, Ji{\v{r}}{\'\i}},
  year={1994},
  publisher={Cambridge University Press}
}

@inbook{lindstrom1974characterizing,
  title={On characterizing elementary logic},
  author={Lindstr{\"o}m, Per},
  booktitle={Logical Theory and Semantic Analysis: Essays Dedicated to Stig Kanger on His Fiftieth Birthday},
  pages={129--146},
  year={1974},
  publisher={Springer}
}

@article{sgro1977maximal,
  title={Maximal logics},
  author={Sgro, Joseph},
  journal={Proceedings of the American Mathematical Society},
  volume={63},
  number={2},
  pages={291--298},
  year={1977}
}

@article{VAANANEN2010817,
title = {Dependence of variables construed as an atomic formula},
journal = {Annals of Pure and Applied Logic},
volume = {161},
number = {6},
pages = {817-828},
year = {2010},
note = {The proceedings of the IPM 2007 Logic Conference},
issn = {0168-0072},
doi = {https://doi.org/10.1016/j.apal.2009.06.009},
url = {https://www.sciencedirect.com/science/article/pii/S0168007209001195},
author = {Jouko Väänänen and Wilfrid Hodges},
keywords = {Dependence logic, Game-theoretic semantics, Henkin quantifier, Second order logic},
abstract = {We define a logic D capable of expressing dependence of a variable on designated variables only. Thus D has similar goals to the Henkin quantifiers of [4] and the independence friendly logic of [6] that it much resembles. The logic D achieves these goals by realizing the desired dependence declarations of variables on the level of atomic formulas. By [3] and [17], ability to limit dependence relations between variables leads to existential second order expressive power. Our D avoids some difficulties arising in the original independence friendly logic from coupling the dependence declarations with existential quantifiers. As is the case with independence friendly logic, truth of D is definable inside D. We give such a definition for D in the spirit of [11], [2] and [1].}
}

@article{kamsma2020kim,
  title={The Kim--Pillay theorem for abstract elementary categories},
  author={Kamsma, Mark},
  journal={The Journal of Symbolic Logic},
  volume={85},
  number={4},
  pages={1717--1741},
  year={2020},
  publisher={Cambridge University Press}
}

@article{puljujärvi2022compactness,
        title={Compactness in team semantics},
  author={Puljuj{\"a}rvi, Joni and Quadrellaro, Davide Emilio},
  journal={Mathematical Logic Quarterly},
  volume={70},
  number={2},
  pages={142--161},
  year={2024},
  publisher={Wiley Online Library}
}

@article{magidor2011lowenheim,
  title={On L{\"o}wenheim--Skolem--Tarski numbers for extensions of first order logic},
  author={Magidor, Menachem and V{\"a}{\"a}n{\"a}nen, Jouko},
  journal={Journal of Mathematical Logic},
  volume={11},
  number={01},
  pages={87--113},
  year={2011},
  publisher={World Scientific}
}

@phdthesis{luck2020team,
  title={Team logic: axioms, expressiveness, complexity},
  author={L{\"u}ck, Martin},
  year={2020},
  school={Hannover: Institutionelles Repositorium der Leibniz Universit{\"a}t Hannover}
}

@book{Vaananen2007-VNNDLA,
	author = {Jouko V\"{a}\"{a}n\"{a}nen},
	title = {Dependence Logic: A New Approach to Independence Friendly Logic},
	year = {2007},
	publisher = {Cambridge University Press}
}

@book{tentziegler,
	doi = {10.1017/cbo9781139015417},
	url = {https://doi.org/10.1017/cbo9781139015417},
	year = {2009},
	publisher = {Cambridge University Press},
	author = {Katrin Tent and Martin Ziegler},
	title = {A Course in Model Theory}
}

@book{baldwin2017fundamentals,
  title={Fundamentals of stability theory},
  author={Baldwin, John T},
  volume={12},
  year={2017},
  publisher={Cambridge University Press}
}

@book{jech,
	author = {Thomas Jech},
	title = {Set Theory},
	year = {2014},
	publisher = {Springer},
	address =   {Berlin}
}

@book{shelah1990classification,
  title={Classification theory: and the number of non-isomorphic models},
  author={Shelah, Saharon},
  year={1990},
  publisher={Elsevier}
}

@article {MR0297554,
    AUTHOR = {Shelah, Saharon},
     TITLE = {Every two elementarily equivalent models have isomorphic
              ultrapowers},
  JOURNAL = {Israel Journal of Mathematics},
    VOLUME = {10},
      YEAR = {1971},
     PAGES = {224--233},
      ISSN = {0021-2172},
   MRCLASS = {02H99},
  MRNUMBER = {297554},
MRREVIEWER = {John\ L.\ Bell},
       DOI = {10.1007/BF02771574},
       URL = {https://doi.org/10.1007/BF02771574},
}

@article {MR0295904,
    AUTHOR = {Magidor, M.},
     TITLE = {On the role of supercompact and extendible cardinals in logic},
  JOURNAL = {Israel Journal of Mathematics},
    VOLUME = {10},
      YEAR = {1971},
     PAGES = {147--157},
      ISSN = {0021-2172},
   MRCLASS = {02K35},
  MRNUMBER = {295904},
MRREVIEWER = {John\ L.\ Bell},
       DOI = {10.1007/BF02771565},
       URL = {https://doi.org/10.1007/BF02771565},
}

@article {MR0142459,
    AUTHOR = {Frayne, T. and Morel, A. C. and Scott, D. S.},
     TITLE = {Reduced direct products},
   JOURNAL = {Fundamenta Mathematicae},
    VOLUME = {51},
      YEAR = {1962},
     PAGES = {195--228},
      ISSN = {0016-2736,1730-6329},
   MRCLASS = {02.50},
  MRNUMBER = {142459},
       DOI = {10.4064/fm-51-3-195-228},
       URL = {https://doi.org/10.4064/fm-51-3-195-228},
}

@article {MR0148547,
    AUTHOR = {Keisler, H. Jerome},
     TITLE = {Limit ultrapowers},
   JOURNAL = {Transactions of the American Mathematical Society},
    VOLUME = {107},
      YEAR = {1963},
     PAGES = {382--408},
      ISSN = {0002-9947,1088-6850},
   MRCLASS = {02.50},
  MRNUMBER = {148547},
MRREVIEWER = {A.\ Mostowski},
       DOI = {10.2307/1993808},
       URL = {https://doi.org/10.2307/1993808},
}

@book {enderton2001logic,
    AUTHOR = {Enderton, Herbert B.},
     TITLE = {A mathematical introduction to logic},
 PUBLISHER = {Academic Press, Burlington (MA)},
      YEAR = {2001},
      ISBN = {0-12-238452-0},
   MRCLASS = {03-01},
  MRNUMBER = {1801397},
MRREVIEWER = {Yehuda\ Rav},
}

@incollection {vanbenthem2001higher,
    AUTHOR = {van Benthem, Johan and Doets, Kees},
     TITLE = {Higher-order logic},
 BOOKTITLE = {Handbook of philosophical logic},
    VOLUME = {1},
     PAGES = {189--243},
 PUBLISHER = {Springer, Dordrecht},
      YEAR = {2001},
      ISBN = {0-7923-7018-X},
   MRCLASS = {03B15},
  MRNUMBER = {1884625},
}

@InCollection{sep-logic-higher-order,
	author       =	{Väänänen, Jouko},
	title        =	{{Second-order and Higher-order Logic}},
	booktitle    =	{The {Stanford} Encyclopedia of Philosophy},
	editor       =	{Edward N. Zalta},
	howpublished =	{\url{https://plato.stanford.edu/archives/fall2021/entries/logic-higher-order/}},
	year         =	{2021},
	edition      =	{{F}all 2021},
	publisher    =	{Metaphysics Research Lab, Stanford University}
}

@misc{kirby_abstract_2008,
	title = {Abstract {Elementary} {Categories}},
	author = {Kirby, Jonathan},
	month = aug,
	year = {2008},
	note = {Unpublished}
}

@phdthesis{keisler-thesis,
title={Ultraproducts and elementary classes},
  author={Keisler, H Jerome},
  year={1961},
  school={University of California, Berkeley}
}

@article {MR3028798,
    AUTHOR = {Galliani, Pietro},
     TITLE = {General models and entailment semantics for independence
              logic},
   JOURNAL = {Notre Dame Journal of Formal Logic},
    VOLUME = {54},
      YEAR = {2013},
    NUMBER = {2},
     PAGES = {253--275},
      ISSN = {0029-4527,1939-0726},
   MRCLASS = {03B60 (03F03)},
  MRNUMBER = {3028798},
MRREVIEWER = {Juha\ Kontinen},
       DOI = {10.1215/00294527-1960506},
       URL = {https://doi.org/10.1215/00294527-1960506},
}

@article{CODY2014620,
title = {On supercompactness and the continuum function},
journal = {Annals of Pure and Applied Logic},
volume = {165},
number = {2},
pages = {620-630},
year = {2014},
issn = {0168-0072},
doi = {https://doi.org/10.1016/j.apal.2013.09.001},
url = {https://www.sciencedirect.com/science/article/pii/S0168007213001358},
author = {Brent Cody and Menachem Magidor},
keywords = {Supercompact cardinal, Continuum function, Forcing, Large cardinal},
abstract = {Given a cardinal κ that is λ-supercompact for some regular cardinal λ⩾κ and assuming GCH, we show that one can force the continuum function to agree with any function F:[κ,λ]∩REG→CARD satisfying ∀α,β∈dom(F) α<cf(F(α)) and α<β⟹F(α)⩽F(β), while preserving the λ-supercompactness of κ from a hypothesis that is of the weakest possible consistency strength, namely, from the hypothesis that there is an elementary embedding j:V→M with critical point κ such that Mλ⊆M and j(κ)>F(λ). Our argument extends Woodinʼs technique of surgically modifying a generic filter to a new case: Woodinʼs key lemma applies when modifications are done on the range of j, whereas our argument uses a new key lemma to handle modifications done off of the range of j on the ghost coordinates. This work answers a question of Friedman and Honzik [5]. We also discuss several related open questions.}
}

\end{document}